\newcommand{\ttre}{\textcolor{red}}
\theoremstyle{plain}
\newtheorem{theorem}{Theorem}[section]
\newtheorem*{theorem*}{Theorem}
\newtheorem{lemma}[theorem]{Lemma}
\newtheorem{claim}[theorem]{Claim}
\newtheorem{remark}{Remark}
\newtheorem{corollary}[theorem]{Corollary}
\newtheorem{proposition}[theorem]{Proposition}
\newtheorem{condition}[theorem]{Condition}
\theoremstyle{definition} 
\newtheorem{example}[theorem]{Example}
\newtheorem{definition}[theorem]{Definition}
\newcommand{\tp}[1]{{\mathfrak L}{(#1)}} 
\newcommand{\lolli}[2]{L_{#1,#2}} 
\newcommand{\cshuf}[2]{\mathcal{CS}(#1,#2)} 
\newcommand{\shuf}[2]{\cshuf{ #1}{#2}} 
\newcommand{\lc}[1]{\mathcal{LMC}(#1)} 
\newcommand{\smc}[1]{\mathcal{SMC}(#1)} 
\newcommand{\stab}[2]{\mathcal{T}(#1,#2)} 
\newcommand{\redw}[1]{\mathcal{R}(#1)} 
\newcommand{\shredw}[1]{\mathcal{SR}(#1)} 
\newcommand{\balanced}[1]{\text{Bal}(#1)}
\DeclareMathOperator\remmult{mult}
\newcommand{\mult}[3]{\remmult_{#1}(#2,#3)}
\DeclareMathOperator\EG{EG} 
\DeclareMathOperator{\EGQ}{EG_Q} 
\newcommand{\sam}[1]{\todo[size=\tiny,inline,color=blue!30]{#1
      \\ \hfill --- Samantha}}
\newcommand{\susanna}[1]{\todo[size=\tiny,inline,color=red!30]{#1
    \\ \hfill --- Susanna}}
\definecolor{vividviolet}{rgb}{0.62, 0.0, 1.0}
\newcommand{\tvi}[1]{\todo[size=\tiny,inline,color=vividviolet!30]{#1
      \\ \hfill --- Samantha-tvi}}
\newcommand{\tre}[1]{\todo[size=\tiny,inline,color=red]{#1
      \\ \hfill --- Samantha-tre}}
\def\S{\mathfrak{S}}
\def\Sn{\S_n}
\def\Tam{\mathfrak{T}} 
\DeclareMathOperator\INV{Inv}
\def\R{\mathbb R}
\newcommand{\omitt}[1]{}
\newcommand{\mydef}[1]{\emph{#1}}
\newcommand{\fS}{{\mathfrak S}}
\newcommand{\Inv}{\text{Inv}}
\newcommand{\cR}{\mathcal{R}}
\newcommand{\cH}{\mathcal{H}}
\newcommand{\cF}{\mathcal{F}}
\newcommand{\LMC}[1]{\mathcal{LMC}({#1})} 
\newcommand{\LMR}[1]{\mathcal{LMR}({#1})} 
\newcommand{\SMC}[1]{\smc{#1}} 
\newcommand{\SMB}[1]{\mathcal{SMB}({#1})} 
\newcommand{\SMR}[1]{\mathcal{SMR}({#1})} 
\newcommand{\MB}[1]{\mathcal{MB}({#1})} 
\newcommand{\MR}[1]{\mathcal{MR}({#1})} 
\newcommand{\MH}[1]{\mathcal{MH}({#1})} 
\newcommand{\LMF}[1]{\mathscr{L\hspace{-.2cm} M\hspace{-.15cm} F}({#1})}
\newcommand{\SYT}{\text{SYT}}
\newcommand{\ii}{\text{id}}
\newcommand{\stair}{\mathfrak{s}} 
\DeclareMathOperator{\inv}{inv}
\def\longest{\omega_0}
\newcommand{\longestk}[1]{\longest^{(#1)}}
\newcommand{\wch}[1]{w^{(#1)}}
\newcommand{\vch}[1]{v^{(#1)}}
\newcommand{\uch}[1]{u^{(#1)}}
\newcommand*\circled[1]{\tikz[baseline=(char.base)]{
            \node[shape=circle,draw,inner sep=2pt,thick,red] (char) {#1};}}
\newcommand*\smallcircled[1]{\textcolor{red}{\bf #1}}
\def\vmn{v_{m,n}}
\newcommand{\stacktab}[2]{[#1:#2]}
\DeclareMathOperator{\Des}{Des}
\DeclareMathOperator{\set}{set}
\title{Maximal chains in lattices from graph associahedra: Tamari to the weak order }
\author[1]{Samantha~Dahlberg}
\author[2]{Susanna~Fishel\footnote{Partially supported by the Simons collaboration grants for Mathematicians \#359602 and \#709671}}
\affil[1]{Department of Applied Mathematics\\
Illinois Institute of Technology\\
Chicago, IL 60616, U.S.A.\\
\href{}{dahlberg.samantha@gmail.com}}
\affil[2]{School of Mathematical and Statistical Sciences\\
Arizona State University\\
Tempe, AZ 85287, U.S.A.\\
\href{}{sfishel1@asu.edu}}
\begin{document}

\maketitle
\begin{abstract}
In this paper, we study the maximal chains of lattices which
generalizes both the weak order and the Tamari lattice: certain
lattices of maximal tubings. A maximal tubing poset $\mathfrak{L}(G)$
is defined for any graph $G$, but for the graphs we consider in this
paper, the poset is a lattice.  Just as the weak order is an
orientation of the $1$-skeleton of the permutahedron and the Tamari of
the associahedron, each tubing lattice is an orientation of the
$1$-skeleton of a graph associahedron. The partial order on
$\mathfrak{L}(G)$ is given by a projection from $\mathfrak{S}_n$ to
$\mathfrak{L}(G)$. In particular, when the graph is the complete
graph, the graph associahedron is the the permutahedron, and when it
is the path graph, it is the Stasheff associahedron.

Our main results are for lollipop graphs, graphs that ``interpolate''
between the path and the complete graphs. For lollipop graphs, the
lattices consist of permutations which satisfy a generalization of
$312$-avoiding. The maximum length chains correspond to partially shiftable
tableaux under the Edelman-Greene's Coxeter-Knuth bijection. We also
consider functions defined analogously to Stanley's symmetric function for the maximum length chains
and find their expansion in terms of Young quasisymmetric Schur
functions.
  
  \end{abstract}

\tableofcontents

\section{Introduction}

Reduced decompositions of the longest permutation, which can also be seen as maximal chains in the weak order lattice of permutations $\Sn$, are in bijection with standard Young tableaux of staircase shape \cite{stan84, EG}. This result, and the techniques used to prove it, touch on many areas of algebraic combinatorics.  A related result is that the maximum length chains in the Tamari lattice are in bijection with shifted standard Young tableaux of staircase shape \cite{FN,STWW}.

In this paper, we study the maximal chains of a lattice which generalizes both the weak order and the Tamari lattice: certain lattices of maximal tubings. A maximal tubing poset $\tp{G}$ is defined for any graph $G$, but for the graphs we consider in this paper, the poset is a lattice. We will refer to them as tubing lattices, leaving out maximal.  Just as the weak order is an orientation of the $1$-skeleton of the permutahedron and the Tamari of the associahedron, each tubing lattice is an orientation of the $1$-skeleton of a graph associahedron. The partial order on $\tp{G}$ is given by a projection from $\Sn$ to $\tp{G}$. In particular, when the graph is the complete graph, the graph associahedron is the the permutahedron, and when it is the path graph, it is the Stasheff associahedron \cite{postnikov}. Our main results are for graphs that ``interpolate'' between these two graphs--the lollipop graph $\lolli m n$ on vertices $\{1,2,\ldots,m+n\}$. The lollipop is the complete graph on vertices $\{1,\ldots,m\}$ and the path graph on vertices $\{m, m+1,\ldots,m+n\}$, where $m$ and $n$ are integers such that $m\ge2$ and $n\ge0$ . 

Lollipop graphs are a particularly nice type of graph. They are filled and the projection from the weak order on the symmetric group to $\tp{\lolli m n}$ is very well-behaved. The tubing lattice of the lollipop ``interpolates'' between the Tamari ($m=2$) and the weak order ($n=0$) 

Our first main result sets up a bijection between maximum length chains in the tubing lattice of the lollipop graph and certain standard Young tableaux of staircase shape. We denote the set of chains of maximum length in a poset $P$ by $\LMC{P}$ 
 
\begin{theorem*}[\ref{thm:3equiv}]
  The following are equivalent. Let $m$ and $n$ be positive integers, let $\rho\in\redw{\longestk{m+n}}$, and let $C$ be the chain of permutations which corresponds to $\rho$.
  \begin{enumerate}
  \item\label{thmpart:shuf} $\rho\in\shuf m n$
  \item \label{thmpart:shif}$T=\EGQ(\rho)$ is an $n$ row shiftable tableau of shape $(m+n)$-staircase.
  \item \label{thmpart:tp}$C\in\lc {\lolli m n}$
    \end{enumerate}
  
  \end{theorem*}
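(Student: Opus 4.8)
The plan is to prove the three conditions equivalent by a cycle of implications, using the Edelman--Greene correspondence as the bridge between $(1)$ and $(2)$ and the projection $\Sn \to \tp{\lolli m n}$ as the bridge to $(3)$. Throughout I would invoke the standard fact that, for the longest element, every $\rho \in \redw{\longestk{m+n}}$ has the same Edelman--Greene insertion ($P$-)tableau of $(m+n)$-staircase shape, so that $\rho \mapsto \EGQ(\rho)$ is a bijection from $\redw{\longestk{m+n}}$ onto the standard Young tableaux of that shape. All of the content of $(1)\Leftrightarrow(2)$ therefore lives in the recording tableau.

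For $(1)\Rightarrow(2)$ I would run Edelman--Greene insertion on a shuffle $\rho\in\shuf m n$ and determine, by induction on the length of the word, which cell each inserted letter records. The shuffle decomposition splits $\rho$ into a ``complete-graph'' subword on the letters $\{1,\dots,m\}$, which behaves like a reduced word for the longest element of the symmetric group on $m$ letters, and a ``path'' subword on $\{m,\dots,m+n\}$ inserted in a rigidly controlled order. The goal is to show that the cells recorded by the path letters are exactly the ones characterizing $n$-row shiftability of $T=\EGQ(\rho)$. For the converse $(2)\Rightarrow(1)$ I would start from a shiftable $T$ and reconstruct the shuffle structure of $\EGQ^{-1}(T)$, again inductively; the key technical input is that Coxeter--Knuth equivalence preserves the insertion tableau, so the shuffle property may be tested on a convenient representative of the Coxeter--Knuth class.

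For the equivalence with $(3)$ I would use the description of maximal-length chains in the tubing lattice as images of weak-order maximal chains under the projection. A maximal chain of permutations $\ii=\pi_0\lessdot\pi_1\lessdot\cdots\lessdot\pi_N=\longestk{m+n}$ is recorded by $\rho$, and it descends cover-by-cover to a chain in $\tp{\lolli m n}$; this image has maximum length precisely when no cover collapses, that is, when no two consecutive permutations are identified by the projection. I would characterize the collapsing covers through the forbidden pattern contributed by the path part of the lollipop and show that ``no collapse along $C$'' is the same condition on $\rho$ as membership in $\shuf m n$, giving $(3)\Leftrightarrow(1)$ and closing the cycle.

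The main obstacle is the insertion analysis behind $(1)\Leftrightarrow(2)$. Because a single inserted letter can trigger a long cascade of bumps, one cannot simply read off its landing cell; the argument needs a precise invariant describing the entire recording tableau after each insertion and a verification that the shuffle structure is exactly what preserves this invariant. I would isolate this as a separate lemma proved by induction, using the jeu-de-taquin and Coxeter--Knuth machinery, after which the lattice-theoretic step $(3)\Leftrightarrow(1)$ should be comparatively routine once the collapsing covers of the projection are understood.
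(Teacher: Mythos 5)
There are two genuine gaps in your plan, and both are fatal as stated. First, you have misidentified the decomposition underlying $\shuf m n$. You split $\rho$ into a ``complete-graph'' subword behaving like a reduced word for the longest element of $\S_m$ and a path subword on $\{m,\dots,m+n\}$; but the actual definition is that $\rho$ is a commuting shuffle of $\sigma\in\redw{\vmn}$, where $\vmn=[m,m+1,\ldots,m+n,m-1,\ldots,2,1]$, together with $\tau\in\shredw{\longestk{n+1}}$, i.e.\ only those words for $\longestk{n+1}$ whose Edelman--Greene recording tableau is \emph{shiftable} (equivalently, by Proposition~\ref{prop:FN}, those corresponding to maximum length Tamari chains). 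Your split cannot even be repaired by bookkeeping: $\ell(\vmn)=(n+1)(m-1)+\binom{m-1}{2}$, so $\binom{m}{2}+\binom{n+1}{2}\neq\binom{m+n}{2}$ in general, and the letters of $\tau$ are all at most $n$ --- they record inversions among the \emph{values} $m,\ldots,m+n$, not transpositions on the path vertices. The restriction of $\tau$ to shiftable words is essential: dropping it (as your ``rigidly controlled order'' gloss suggests) produces chains that are $m$-$312$ free but not $m$-$132$ free, hence not of maximum length (Lemma~\ref{lem:m132}).

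Second, your proposed engine for $(2)\Rightarrow(1)$ --- ``Coxeter--Knuth equivalence preserves the insertion tableau, so the shuffle property may be tested on a convenient representative of the Coxeter--Knuth class'' --- cannot work. For $\longestk{m+n}$ every reduced word has the same $P$-tableau, so the entire set $\redw{\longestk{m+n}}$ is a single Coxeter--Knuth class; passing to a representative tells you nothing, and indeed membership in $\shuf m n$ is preserved by commutation moves (Lemma~\ref{lem:commute}) but destroyed by braid moves, so it is an invariant of the finer commutation class, not of the Coxeter--Knuth class. The paper's route around this is to work with the explicit promotion-based inverse $\Gamma$ of Definition~\ref{def:promo}: for an $n$-row shiftable $T$ one proves $\rho_{T_{i,j}}=j$ on the lower rows (Lemma~\ref{lem:shiftTabForm}), establishes an exchange lemma for swapping $k$ and $k+1$ across the row-$m$ boundary (Lemma~\ref{lem:biggerhigher}), and inducts on the number of shuffle inversions of $T$. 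Finally, you underestimate $(1)\Leftrightarrow(3)$ as ``comparatively routine'': it is the bulk of Proposition~\ref{prop:MLCsAndCSs}, requiring the pattern-avoidance characterizations ($m$-$312$ for lattice elements via Proposition~\ref{prop:m312}, $m$-$132$ for maximum length via Lemma~\ref{lem:m132}), the verification that the extracted $\tau$ is reduced, and the Bj\"orner--Wachs identification of the $312$-avoiding permutations on the values $\{m,\ldots,m+n\}$ with a Tamari lattice to force $\tau\in\shredw{\longestk{n+1}}$ rather than merely $\tau\in\redw{\longestk{n+1}}$.
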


The theorem generalizes both the result on maximal chains in the weak order and maximum length chains in the Tamari. The subset of reduced words $\shuf m n$ in \eqref{thmpart:shif} generalizes lattices words. A lattice word (or permutation) of shape $\lambda$ is a sequence $a_1,a_2,\ldots,a_N$ in which $i$ occurs $\lambda_i$ times and such that in any left factor $a_1,a_2,\ldots,a_j$, the number of $i$'s is as least as great as the number of $i+1$'s \cite{S99}. The sequence is a reverse lattice word if $a_N,a_{N-1},\ldots,a_2,a_1$ is a lattice word. In \cite{FN}, the first author and Nelson showed that the reduced decompositions of longest permutations are both lattice words and reverse lattice words of shape $(n-1,n-2,\ldots,1)$. The set of commuting shuffles $\shuf m n$ of reduced decompositions is defined in Section~\ref{sec:mlcinlollis} and generalizes the set of reduced decompositions which are both lattice and reverse lattice words.  We analyze the Edelman-Greene \cite{EG} insertion to show that maximal length chains in the tubing lattice of the lollipop graph map to tableaux where some of the rows are shifted, the $n$-row-shiftable tableaux.  The corresponding balanced tableaux are discussed, although not mentioned in the theorem.

A lot of the work to prove Theorem~\ref{thm:3equiv} is done using pattern avoidance. We show that the elements of the tubing lattice for $\tp{\lolli m n}$ can be described by a generalization of $312$-pattern avoidance. 

In Stanley's proof that the number of reduced decompositions of the longest permutations is also the number of standard Young tableaux of staircase shape, he introduced what's come to be known as Stanley's symmetric function. His function is the sum, over reduced decompositions, of the fundamental quasisymmetric function indexed by the descent set of the reduced decomposition. Not only is the function symmetric, but it is Schur positive. \omitt{\susanna{Samantha, is this right? That is, is the Stanley symmetric function Schur positive? Feb. 7, 2024} 
\tre{Yes, Stanley's symmetric function is Schur-positive for all permutations. (Feb 13, 2024)}}
We define a function analogous to Stanley's, $\LMF G$, where $G$ is a connected, filled graph. The sum is over the reduced decompositions which correspond to the maximum length chains in $\tp{G}$. Our second main result is that $\LMF G$ is a positve sum of Young quasisymmetric Schur functions: \omitt{
\susanna{Positive or nonnegative?}

\tvi{A positive sum. However, $\LMF{G}$ is actually a restriction of Stanley's function to just the longest chains in $\tp{G}$ and we are only working with filled $G$. The generalization where we sum over ALL chains in $\tp{G}$, $\cF_G$, for filled $G$, is not Schur positive and not Young quasi Schur positive. (Feb 13, 2024)
}
\susanna{Better? Feb 28, 2024}
\tvi{Yes, better. However, we should specify that the function corresponds to longest maximal length chains. We have another function that corresponds to all maximal length chains that is not Schur positive in any sense. (2-29-24) }
\tvi{(sam to do) In red text add what to say. (4/4)}
\tvi{What's added looks good to me. (4/45)}
}

\begin{theorem*} 
[\ref{thm:Longest chain quasi Schur}]
For $m\geq 1$ and $n\geq 0$,  $\LMF{L_{m,n}}$ is a positive sum of Young quasisymmetric Schur functions. In particular
$$\LMF{L_{m,n}}=\sum_{\alpha\in \text{comp}(m,n)}\hat{\mathscr{S}}_{\alpha}$$
where $\text{comp}(m,n)$ is the set of compositions $(\alpha_1,\alpha_2,\ldots ,\alpha_{m+1},n,\cdots,2,1)$ where $[\alpha_1,\alpha_2,\ldots, \alpha_{m+1}]$ is a permutation of $[n+1,n+m-1]$.

\end{theorem*}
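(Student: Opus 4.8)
The plan is to evaluate $\LMF{L_{m,n}}$ straight from its definition and then reorganize the resulting sum using Theorem~\ref{thm:3equiv}. By definition,
\[
\LMF{L_{m,n}}=\sum_{\rho}\fund{\Des(\rho)},
\]
where $\rho$ ranges over the reduced decompositions corresponding to the maximum length chains of $\tp{L_{m,n}}$; by Theorem~\ref{thm:3equiv} this index set is exactly the commuting shuffles $\shuf{m}{n}\subseteq\redw{\longestk{m+n}}$. The structural fact I would exploit is that Edelman--Greene insertion records descents, i.e.\ $\Des(\rho)=\Des(\EGQ(\rho))$, and that the insertion tableau is the same for every element of $\redw{\longestk{m+n}}$, so that $\rho\mapsto\EGQ(\rho)$ is injective and descent-preserving. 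Restricting to $\shuf{m}{n}$ and using the equivalence of parts~\ref{thmpart:shuf} and~\ref{thmpart:shif} of Theorem~\ref{thm:3equiv} then gives
\[
\LMF{L_{m,n}}=\sum_{T}\fund{\Des(T)},
\]
the sum now taken over the $n$-row shiftable tableaux $T$ of $(m+n)$-staircase shape.

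Next I would bring in the target basis. Every Young quasisymmetric Schur function has a fundamental expansion $\hat{\mathscr{S}}_\alpha=\sum_{U}\fund{\Des(U)}$, where $U$ runs over the standard Young composition tableaux of shape $\alpha$ and $\Des(U)$ is the composition-tableau descent set. Since the $\hat{\mathscr{S}}_\alpha$ form a basis of the quasisymmetric functions, it suffices to exhibit a descent-preserving bijection
\[
\{\,n\text{-row shiftable tableaux of }(m+n)\text{-staircase shape}\,\}\ \longleftrightarrow\ \bigsqcup_{\alpha\in\text{comp}(m,n)}\mathrm{SYCT}(\alpha),
\]
matching $\Des(T)$ with $\Des(U)$; grouping the shiftable tableaux by the composition $\alpha$ into which they map then turns the previous sum into $\sum_{\alpha\in\text{comp}(m,n)}\hat{\mathscr{S}}_\alpha$, which is the assertion.

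To construct this bijection I would read the composition $\alpha$ off the shifting data of $T$. The rows fixed in staircase position, which come from the path part of the lollipop, should contribute the decreasing tail $(n,n-1,\ldots,1)$, while the shiftable rows, coming from the complete part, should contribute a head $(\alpha_1,\ldots,\alpha_{m-1})$ that is a permutation of $\{n+1,\ldots,m+n-1\}$; sorting $\alpha$ recovers the staircase, and the admissible $\alpha$ are precisely those in $\text{comp}(m,n)$. A useful sanity check is the boundary case $n=0$, where the lollipop is the complete graph, $\shuf{m}{0}=\redw{\longestk{m}}$, the function $\LMF{L_{m,0}}$ is Stanley's full symmetric function $s_{(m-1,m-2,\ldots,1)}$, the tail is empty, and $\text{comp}(m,0)$ runs over all rearrangements of the staircase, recovering the classical refinement of a staircase Schur function into Young quasisymmetric Schur functions.

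The main obstacle is the descent-preserving bijection of the middle step, and within it the matching of the two descent statistics. One must first analyze the shifting mechanism to see that the positions available to an $n$-row shiftable staircase tableau are governed exactly by permuting the head values $\{n+1,\ldots,m+n-1\}$ while freezing the tail $(n,\ldots,1)$, so that the shape set is precisely $\text{comp}(m,n)$ and nothing more. The harder point is that the descent set transported from the reduced word through $\EGQ$ is defined by one reading rule, whereas $\Des(U)$ entering $\hat{\mathscr{S}}_\alpha$ is defined by the composition-tableau reading rule; reconciling these two conventions across the straightening of shifted rows into a composition diagram---rather than the enumerative matching of the two sides---is where I expect the real work to lie.
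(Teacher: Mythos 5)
Your outline coincides, step for step, with the paper's proof of Theorem~\ref{thm:Longest chain quasi Schur}: reduce via Theorem~\ref{thm:3equiv} and descent-preserving Edelman--Greene insertion to a sum of fundamental quasisymmetric functions over $n$-row-shiftable tableaux of shape $\stair_{m+n}$, then transfer to standard Young composition tableaux of shapes in $\text{comp}(m,n)$ by a descent-preserving bijection, and finally expand each $\hat{\mathscr{S}}_{\alpha}$ in the fundamental basis. The difference is in how the step you defer as ``the real work'' gets done. The paper does not construct the bijection or the descent reconciliation from scratch: the needed map is the known straightening bijection $\hat{\rho}_{\emptyset}$ of \cite{LMv13} between SYCT and SYT (sort the rows into a partition shape, then sort each column increasingly), and its compatibility with descents is simply cited from \cite{LMv13} (Section 4.3) together with \cite{EG} (Theorem 6.27), so no new descent analysis is required. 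The genuinely new content of the paper's proof is exactly the point you isolate --- that ``the shape set is precisely $\text{comp}(m,n)$ and nothing more'' --- proved as Lemma~\ref{lem:rho restricts} by a column-by-column induction on the partial staircases $\stair(n,c)$ (Lemma~\ref{lem:rho details}), showing that $\hat{\rho}_{\emptyset}$ restricts to a bijection between the $n$-row-shiftable staircase tableaux and the SYCT of shapes in $\text{comp}(m,n)$. So your plan is viable and becomes the paper's proof once you know to reach for $\hat{\rho}_{\emptyset}$ rather than inventing the middle bijection ad hoc; left as stated, the proposal has a genuine hole precisely at that bijection, which you correctly flagged.

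One correction to your structural reading, which is inverted. The $n$-row-shiftable condition constrains the \emph{bottom} $n+1$ rows of the staircase --- the rows that, by Lemma~\ref{lem:stackedTabs}, record the path/Tamari factor $\tau\in\shredw{\longestk{n+1}}$ --- and it is exactly this shiftability that pins those rows in place under $\hat{\rho}_{\emptyset}^{-1}$, forcing the frozen tail $(n,n-1,\ldots,1)$ of $\alpha$ (Lemma~\ref{lem:rho details}, parts (i) and (iii)). The permuted head, a rearrangement of $\{n+1,\ldots,m+n-1\}$, comes from the top $m-1$ rows, which record $\sigma\in\redw{\vmn}$ (the complete-graph side) and carry no shiftability constraint; the row-sorting step of $\hat{\rho}_{\emptyset}$ is what restores them to staircase order. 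Thus ``head $\leftrightarrow$ complete part'' and ``tail $\leftrightarrow$ path part'' are right, but the head rows are the unconstrained ones and the tail rows are the shiftable ones, not the other way around. Your $n=0$ sanity check is sound and consistent with the known expansion of $s_{\stair_m}$ into Young quasisymmetric Schur functions indexed by all rearrangements of the staircase.
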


In the weak order, all maximal chains have the same length. When $G$ is not the complete graph, not all maximal chains in $\tp{G}$ have the same length. Our third main result concerns the set $\SMR{\lolli m n}$, which are in bijection with the shortest maximal chains in $\tp{\lolli m n}$.

\begin{theorem*}[Theorem~\ref{thm:R to SMR bijection}]
Let $m\geq 1$ and $n\geq 0$. The map $\psi:\cR(w_{m,n})\rightarrow \SMR{L_{m,n}}$ is a bijection and $\Des(\sigma)=\Des(\psi(\sigma))$ for all $\sigma\in\cR(w_{m,n})$. 

\end{theorem*}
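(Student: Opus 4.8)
The plan is to establish the three assertions---that $\psi$ maps into $\SMR{\lolli m n}$, that it is a bijection, and that it preserves descent sets---by exhibiting an explicit two-sided inverse rather than by a counting argument, since the descent-preserving requirement already forces $\psi$ to be transparent at the level of words. I would begin by recalling the description, developed earlier, of the cover relations of $\tp{\lolli m n}$ and of the projection $\pi:\Sn\to\tp{\lolli m n}$, so that a maximal chain of $\tp{\lolli m n}$ is encoded by a reduced word and its \emph{length} is the number of covers it traverses. The key structural input is that the path part of the lollipop (the generators indexed by $m,m+1,\dots,m+n-1$) contributes the minimum possible number of covers along any maximal chain, so that a shortest maximal chain is one whose word uses these ``path'' generators as rigidly as possible while the generators indexed by $1,\dots,m-1$ (the complete part) remain free; the permutation $w_{m,n}$ records precisely this complete part, and $\cR(w_{m,n})$ is the data that remains after stripping the rigid path block.

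\textbf{Well-definedness.} First I would check that $\psi(\sigma)$ is a genuine reduced word realizing a maximal chain of minimal length: its length equals the shortest-chain length, and it is reduced because no braid or commutation relation allows cancellation between the complete-part letters (all of index $<m$) and the rigid path block (indices $\ge m$, arranged monotonically). This step leans on the earlier cover/projection description to confirm that the traced chain is maximal and of minimum length in $\tp{\lolli m n}$.

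\textbf{Bijectivity.} I would construct $\psi^{-1}$ explicitly by deleting the rigid path block from a word in $\SMR{\lolli m n}$ and reading the remaining subword as an element of $\cR(w_{m,n})$, then verify $\psi^{-1}\circ\psi=\mathrm{id}$ and $\psi\circ\psi^{-1}=\mathrm{id}$. Injectivity is immediate once deletion is a left inverse; surjectivity is the substantive point, amounting to the claim that \emph{every} word in $\SMR{\lolli m n}$ has its path generators occurring exactly in the canonical rigid positions, which I would derive from the minimality characterization together with the pattern-avoidance description of the elements of $\tp{\lolli m n}$ established earlier in the paper.

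\textbf{Descent preservation and main obstacle.} Since every complete-part generator has index strictly smaller than every path generator and the appended path block is increasing, adjoining it to $\sigma$ creates an ascent at the junction and no internal descents; hence no descent position is created or shifted and $\Des(\psi(\sigma))=\Des(\sigma)$. The obstacle I anticipate is surjectivity: proving that minimality of the chain genuinely forces the path to be traversed in the single rigid way, with no ``detour'' that interleaves complete-part letters among the path letters yet still attains minimal length. Establishing this rigidity---most plausibly through the pattern-avoidance and projection analysis already in place---is where the real work lies, after which well-definedness and descent preservation reduce to bookkeeping.
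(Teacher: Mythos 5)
You have misread both the codomain and the map, so the proof addresses different objects than the theorem. A shortest maximal chain of $\tp{\lolli m n}$ has length $\binom{m}{2}+n<\binom{m+n}{2}$, so it cannot be encoded as a reduced word for $\longestk{m+n}$ with the path generators in ``canonical rigid positions'': it is not a word in $\redw{\longestk{m+n}}$ at all. The paper records such a chain as the sequence of $m$-132-avoiding permutations that are the maximum elements of the equivalence classes it visits, consecutive ones differing by an \emph{adjacent cycle} $(a,b,b-1,\ldots,a+1)$ (Proposition~\ref{prop:chains to cycle seq}); $\SMR{L_{m,n}}\subseteq\MR{L_{m,n}}$ is a set of such cycle sequences, carrying the bespoke statistic $\Des(\gamma)=\{i:b_i\geq b_{i+1}\}$, not the usual word descent set. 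Moreover $\psi$ does not adjoin a path block to $\sigma$: it substitutes, position by position, each letter $\sigma_i$ with $i\in I_1$ (the unique letters $m+n-1,\ldots,m$, which occur in \emph{decreasing} order, not increasing) by the long cycle $(m+n-\sigma_i,m+n,m+n-1,\ldots,m+n-\sigma_i+1)$, and each remaining letter by a shifted transposition $(\sigma_i+l,\sigma_i+l+1)$. Your descent argument (``the appended block is increasing, creates an ascent at the junction, so no descent is created or shifted'') therefore never engages the statistic actually being compared; the paper needs a four-case analysis on whether $i,i+1$ lie in $I_1$ or $I_2$, comparing $b_i$ with $b_{i+1}$.

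Your surjectivity plan rests on a rigidity claim that is false even in spirit: minimality does \emph{not} force the path moves into fixed positions. The long cycles in elements of $\SMR{L_{m,n}}$ are genuinely interleaved with the transpositions, in exactly as many ways as the letters $m+n-1,\ldots,m$ interleave with $\rho\in\cR(\longestk{m})$ inside words of $\cR(w_{m,n})$ --- that interleaving freedom is the content of the bijection, so a ``deletion'' inverse reading off a stripped subword does not exist in the form you describe. The paper instead routes through balanced tableaux: Theorem~\ref{thm:varsigma bijection} proves $\varsigma:\SMB{L_{m,n}}\to\cR(w_{m,n})$ is a bijection via Tits' connectivity under braid and commutation moves, and the proof of Theorem~\ref{thm:R to SMR bijection} shows by induction on prefixes that the square relating $\varsigma$, the tableau lift of Lemma~\ref{lem:shortest balanced inverse}, and $\psi$ commutes; the crucial well-definedness step is that each prefix permutation $w^{(k)}$ is $m$-132 avoiding, hence the maximum element of its class by Proposition~\ref{prop:max's and min's of L(G)}, so the cycle sequence really lies in $\MR{L_{m,n}}$. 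None of these ingredients --- cycle sequences, the $b_i$ descent statistic, the interleaving structure, the tableau lift --- appears in your outline, so what you defer as ``bookkeeping'' is in fact where the substance of the proof lies.
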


In Section~\ref{sec:SMC} we define the permutation $w_{m,n}$, which depends on  integers $m\geq 1$ and $n\geq 0$, and then a bijection from the reduced decompositions of $w_{m,n}$ to the $\SMR{\lolli m n}$ which respects descents. Since Stanley's symmetric function for any permutation is Schur positive, this bijection 
together with Theorem~\ref{thm:Longest chain quasi Schur} hints at a potentially larger theory. In Section~\ref{sec:further} we define a generalization of Stanley's symmetric function that unifies the Young quasisymmetric positivity in Theorem~\ref{thm:Longest chain quasi Schur} with the implied Schur positivity from Theorem~\ref{thm:R to SMR bijection}. We view $\tp{G}$ for connected filled graphs with  vertices replaced by the permutations which are the maximum elements of their equivalence classes. Cover relations between permutations in this poset are adjacent cycles, which are a generalization of adjacent transpositions.
\omitt{\susanna{What should we add about Theorem 8.3? Feb 28, 2024}
\tvi{The interesting thing about this result is not that we have a bijection, but its implication regarding the Schur positivity of shortest maximal chains if only we could define a descent set for shortest maximal chains (which we do!). (4/4) }
\tvi{(sam to do) add a sentence to explain. (4/4)}
\tvi{A little adjustment in red for accuracy. (4/25)}}

Let us discuss a bit more of the history. The graph associahedron was introduced by Carr and Devadoss in \cite{CD}, who were looking at tilings of analogues of the real moduli space of curves. They defined tubes and tubing on the vertex set of a given graph and then a polytope based on the tubes of the graph. Davis, Januszkiewicz, and Scott in \cite{DJS} studied blow-ups of hyperplane arrangement  associated to a finite
reflection group $W$ where the blow-up locus is $W$-invariant, so that the resulting manifold 
admits a cell decomposition whose maximal cells are all combinatorially isomorphic to a given
convex polytope $P$. The polytopes they found were related to graph associahedra.  Postnikov defined graph permutahedra in \cite{postnikov}; graph associahedra are a special case. Ronco described a partial order on all tubings, which when restricted to the 1-skeleton of the graph associahedron generalizes the Tamari order in \cite{ronco2012}. See also \cite{ss}, by Son Nguyen and Andrew Sack.  Their generalized stack-sortable permutations, developed in the study of poset associahedra, appear to the same as our $m$-312 avoiding permutations.   

Our paper relies heavily on Barnard and McConville~\cite{BM}. We work with their projection from the weak order to the tubing lattice. The projection from the weak order to the poset of maximal tubings for a graph $G$ is a lattice quotient if and only if $G$ is filled \cite[Theorem 4.16]{BM}. They show it is a lattice if $G$ is either left-filled or right-filled.  The lollipop graphs are both, so the tubing posets we consider are indeed  lattices. Finally, we use their $G$-permutations, the minimal elements in the fibers of the projection. 

\omitt{
\susanna{Mention Nguyen Son and Andrew Sack here. Their generalized stack-sortable permutations, developed in the study of poset associahedra, appear to the same as our $m$-312 avoiding permutations.}
}
Background material and notation is in Section~\ref{sec:prelims}. In Section~\ref{sec:L_Gintro}, we define the tubing poset for a general graph $G$ and discuss the projection from $\Sn$ to $\tp{G}$. In Section~\ref{sec:Llollipop}, we specialize to the case $G=\lolli m n$ and introduce pattern avoidance. Our first main result is in Section~\ref{sec:mlcinlollis}, where we study maximum length chains in $\tp{\lolli m n}$ and introduce commuting shuffles, a subset of reduced decompositions, and $n$-row-shifted tableaux. We turn to quasisymmetric functions in Section~\ref{sec:LMF}, and introduce $\LMF{G}$, a restriction of Stanley's symmetric function to maximum length chains in $\tp{\lolli m n}$. In Section~\ref{sec:SMC}, we consider the shortest length maximal chains. Finally, in Section~\ref{sec:further}, we consider a possible generalization of Stanley's symmetric function that unifies some positivity results.

\omitt{
\susanna{2/1/24 Is it ok to remove the itemized list below?}
\begin{itemize}
\item[$\times$] Finish shortest maximal chains
\item[$\times$] Go over preliminaries. 
\begin{itemize}
\item Agree on notation 
\item Agree on what needs to be removed
\end{itemize}
\item[$\times$] Remove stuff
\begin{itemize}
\item Permutahedron notation, turn into comment. 
\item Same for associahedron
\end{itemize} 
\item[$\times$] Make one nice lemma with everything general about $L_{L_{m,n}}$
\item Do further directions that will include diameter stuff and general chain function stuff. 
\end{itemize}
}
\subsection{Acknowledgements} The authors thank Emily Barnard for interesting and useful conversations. 

\section{Preliminaries}
\label{sec:prelims}
Permutations $\fS_n$ of $[n]=\{1,2,\ldots, n\}$ play a large role in this paper. We often write write permutations $w:[n]\rightarrow [n]$ in one-line notation, $w=[w(1), w(2), \ldots, w(n)]$. 
We let $\ii_n=[1,2,\ldots, n]$ be the {\it increasing permutation} or {\it identity} and $\longest^{(n)}=[n,(n-1),\ldots, 1]$ be the {\it decreasing permutation} or {\it longest permutation}. 
\subsection{Posets, the weak order $\fS_n$,  and chains in $\fS_n$}
\label{subsec:weakorder}
A {\it poset}, or partially ordered set, is a set $P$ together with a partial order $\leq$ \omitt{that is reflexive, transitive and anti-symmetric}. \omitt{We use $P$ to denote both the set together with its partial order and the set of vertices.} Two elements $x,y\in P$  are said to be {\it comparable} if either $x\leq y$ or $y\leq x$. In any other case we say the two elements are {\it incomparable}. We say $x$ is covered by $y$ in the poset $P$ if $x<y$ and there does not exist a $z\in P$ such that $x<z<y$. We denote this $x\lessdot y$. We call $x\in P$ a  {\it minimal element}, respectively {\it maximal element}, if there does not exist a $y\in P$ such that $y<x$, respectively $x<y$. 

An \mydef{induced subposet} of $P$ is a subset $Q\subseteq P$ together with the partial order where for $x,y\in Q$ $x<y$ in $Q$ if and only if $x<y$ in $P$. \omitt{Given $Q\subseteq P$, the induced subposet of $P$ with respect to $Q$ is the poset where the elements are the elements of $Q$ and for $x,y\in Q$ $x<y$ in $Q$ if and only if $x<y$ in $P$.}  A {\it chain} is a poset where every element is comparable. We call the subset $C\subseteq P$ a chain if the induced subposet is a chain. We call $C\subseteq P$ a {\it saturated chain} if there does not exist a  $z\in P-C$ such that $x<z<y$ for some $x,y\in C$ with $C\cup \{z\}$ remaining a chain. The {\it length}  of a chain is $\ell(C)=|C|-1$. A chain $C$ is a {\it maximal chain} if there does not exist a $x\in P-C$ such that $C\cup \{x\}$ is a chain. We denote the set of chains of maximum length in a poset $P$ by $\LMC{P}$ A poset is {\it graded} or {\it ranked}  if all maximal chains have the same length. For graded posets, the {\it rank} of an element is defined inductively with $\rho(x)=0$ if $x$ is a minimal element and for non-minimal elements $x$ that cover some $y$, $\rho(x)=\rho(y)+1$. 

Two elements $x,y\in P$ have an {\it upper bound}, respectively {\it lower bound},  if there exists a $z$ such that $x,y<z$, respectively $z<x,y$. We say that $z$ is the {\it least upper bound, join,} of $x,y\in P$, $x\vee y$ if $z$ is an upper bound of $x$ and $y$ and given any other upper bound $u$, $u\geq z$. We say that $z$ is the {\it greatest lower bound, meet,} of $x,y\in P$, $x\wedge y$ if $z$ is a lower bound of $x$  and $y$ and given any other lower bound $u$, $u\leq z$. A {\it lattice} is a poset in which every pair of elements has a unique meet and join. Finite lattices have the nice property that they have a unique minimum, $\hat0$, and maximum, $\hat 1$. We frequently represent posets with a Hasse diagram. For more background on posets and lattices, please see \cite[Chapter 3]{ecI}. \omitt{A picture where elements of $P$ are represented as vertices, connected by edges if there is a cover relation, and larger elements are drawn higher. }


Let $L$ and $L'$ be two lattices. A map $f:L\rightarrow L'$ is a {\it lattice quotient map} if for all $x,y\in L $ we have that 
\begin{enumerate}
\item $f(x\vee y)=f(x)\vee f(y)$, 
\item $f(x\wedge y)=f(x)\wedge f(y)$, 
\item and $f$ is surjective. 
\end{enumerate}
We will then call $L'$ a {\it lattice quotient} of $L$.

For more details on any part of the following discussion of permutations, please see \cite{BB}, for example. The {\it adjacent transposition} $s_i$ is the permutation which interchanges $i$ and $i+1$ and fixes all other values. \omitt{The collection $\{s_1,s_2,\ldots,s_{N-1}\}$ generates $\fS_N$. } Every permutation $w$ can be written as a product of adjacent transpositions $w=s_{i_1}\circ s_{i_2}\circ \cdots \circ s_{i_k}$. If $k$ is minimal among all such expressions for $w$, then $s_{i_1}\circ s_{i_2}\circ \cdots \circ s_{i_k}$ is called a \mydef{reduced decomposition} (or \mydef{expression}) for $w$ and $k$ is the \mydef{length} of $w$, denoted $\ell(w)$.  We often write a reduced expression using its sequence of subscripts and call the sequence $i_1i_2\cdots i_l$ a \mydef{reduced word} for $w$ and let $\redw{w}$ be the collection of all reduced words of $w$.\omitt{An {\it adjacent transposition} is a permutation $s_i\in\fS_n$ for $i<n$ with  $s_i(j)=j$ for all $j\neq i,i+1$, $s_i(j)=j+1$ and $s_i(i+1)=i$. We say that for  $u,v\in\fS_n$ that $v$ covers $u$ in weak order when $u=v\circ s_i$ and $u(i)<u(i+1)$. \susanna{Finish this, transitive closure? 2/13/24}} See Figure~\ref{fig:Bruhat3}. 

The {\it set of inversions} \cite[Section 1.3]{ecI} of a permutation $w\in\fS_n$ is 
$$\Inv(w)=\{(w(i),w(j)):i<j\in[n]\text{ and }w(i)>w(j)\}$$
and the {\it number of inversions} is $\inv(w)=|\Inv(w)|=\ell(w)$. 

The (right) weak order is a partial order on permutations and plays a key role throughout this paper. We use $\fS_n$ to denote both the set of permutations of $[n]$ and the poset, which is in fact a ranked lattice. The rank of a permutation $w$ in the poset $\fS_n$ is $\inv(w)$. The lattice $\fS_n$ is the oriented $1$-skeleton of the permutahedron \cite{armstrong}. The element $u\le w$ in weak order if $w=us_{i_1}s_{i_2}\cdots s_{i_j}$ for adjacent transpositions $s_{i_1},\ldots,s_{i_j}$ and $\ell(us_{i_1}\cdots s_{i_h})=\ell(u)+h$, for $0\le h\le j$. Another description of weak order is $u\le w$ if $\Inv(u)\subseteq\Inv(w)$. We will often use the fact that the reduced words for $w$ are in bijection with maximal chains from  $\ii_n$ to $w$ in $\fS_n$.

We can describe weak order using the standard   geometric realization for type $A$. Consider the hyperplane arrangement in $\mathbb{R}^n$ with hyperplanes $x_a=x_b$  for all $a<b$ in $[n]$. We denote $x_a=x_b$ by $H_{a,b}$ and let $L_1$ be the collection of $H_{a,b}$, the {\it co-dimension one} hyperplanes. 
The hyperplanes in $L_1$ divide $\mathbb{R}^n$ into $n!$ disjoint  chambers, which we label with permutations from $\fS_n$. \omitt{Since this new point of view will be useful we will briefly describe the bijection.} The chamber described by $$\{(v_1,v_2,\ldots,v_n)\in\R^n:v_i<v_j\text{ for }i<j\}$$\omitt{$x_a>x_b$ for all $a>b$}is called the \mydef{base chamber} and has label $\ii_n=[1,2,\ldots, n]$. Suppose a chamber is separated from the base chamber $\ii_n$ by a collection of hyperplanes.  The permutation label  associated to this chamber has $a$ and $b$ in decreasing order  if and only if the hyperplane $H_{a,b}$ separates the chamber from the base chamber. The cover relations of the weak order can now be described geometrically. The permutation $v$ covers the permutation $u$ in the weak order if the chambers labelled by $u$ and $v$ are separated by exactly one hyperplane, say $H_{a,b}$ with $a<b$, and $a$ immediately precedes $b$ in $u$ in one-line notation. In this case, we say that the hyperplane $H_{a,b}$ \mydef{separates} the two permutations. \omitt{We say any permutation is less than or equal to itself and need the transitive closure to obtain the partial order.  
\susanna{Do we have to take the transitive closure to get the order? I think it is right weak, so added (right) 1/2/24} 
\tvi{I think we have to take the transitive closure. I only describe relations between adjacent terms in the poset. I am fine with you adding anything you think we need to add :) (4/4)}}
\omitt{\susanna{I decided to just say we are describing the cover relations only, forget transitive closure. 4/8/2024}}
See Figure~\ref{fig:Bruhat3} and~\ref{fig:PathEquiv}. \omitt{Let $L_1(w)$ be the collection of hyperplanes that separate $\ii_n$ and $w$. This construction also forms the (right) weak order $\fS_n$. }

\omitt{Any permutation $w$ can be written as a composition of adjacent transpositions $w=s_{i_1}\circ s_{i_2}\circ \cdots \circ s_{i_l}$, and there will be a certain collection of compositions that use a minimum number of adjacent transpositions. These shortest compositions are in bijection with maximal chain from  $\ii_n$ to $w$ in $\fS_n$. To simplify notation we focus on the sequence of subscripts and call the sequence $i_1i_2\cdots i_l$ a {\it reduced word} of $w$ and let $\cR(w)$ be the collection of all reduced words of $w$. The {\it length} of the the reduced word, and also the {\it length} of the permutation, is $\ell(w)=\inv(w)$. }
\begin{example}
The permutation $w=4231$ has $\cR(w)=\{32123, 31213, 13213, 31231, 13231, 12321\}$, $L_1(w)=\{H_{1,2},H_{1,3},H_{1,4},H_{2,4},H_{3,4}\}$ and length $\ell(w)=5$. 
\end{example}

\begin{figure}
\begin{center}
\begin{tikzpicture}[scale=1.25]
\draw (.25,.25)--(.75,.75);
\draw (-.25,.25)--(-.75,.75);
\draw (-1,1.25)--(-1,1.75);
\draw (1,1.25)--(1,1.75);
\draw (-.75,2.25)--(-.25,2.75);
\draw (.75,2.25)--(.25,2.75);
\draw (0,0) node {${[1,2,3]}$};
\draw (1,1) node {$[2,1,3]$};
\draw (-1,1) node {$[1,3,2]$};
\draw (1,2) node {$[2,3,1]$};
\draw (-1,2) node {$[3,1,2]$};
\draw (0,3) node {${[3,2,1]}$};
\draw (.75,.5) node {\small{$s_1$}};
\draw (1.25,1.5) node {\small{$s_2$}};
\draw (.75,2.5) node {\small{$s_1$}};
\draw (.25,.5) node {\tiny{$H_{1,2}$}};
\draw (.75,1.5) node {\tiny{$H_{1,3}$}};
\draw (.25,2.5) node {\tiny{$H_{2,3}$}};
\draw (-.25,.5) node {\small{$s_2$}};
\draw (-.75,1.5) node {\small{$s_1$}};
\draw (-.25,2.5) node {\small{$s_2$}};
\draw (-.75,.5) node {\tiny{$H_{2,3}$}};
\draw (-1.25,1.5) node {\tiny{$H_{1,3}$}};
\draw (-.75,2.5) node {\tiny{$H_{1,2}$}};
\begin{scope}[shift={(5,1.5)}]
\draw (1.65,1) node {$H_{1,2}$};
\draw (1.65,-1) node {$H_{1,3}$};
\draw (2.60,0) node {$H_{2,3}$};
\draw (1,.25) node {\small{${[1,2,3]}$}};
{\draw (1,-.25) node {\small{$[1,3,2]$}};}
{\draw (0,.75) node {\small{$[2,1,3]$}};}
{\draw (-1,.25) node {\small{$[2,3,1]$}};}
{\draw (0,-.75) node {\small{$[3,1,2]$}};}
{\draw (-1,-.25) node {\small{$[3,2,1]$}};}
\draw[<->] (-1,1)--(1,-1);
\draw[<->] (-2,0)--(2,0);
\draw[<->] (1,1)--(-1,-1);
\end{scope}
\end{tikzpicture}
\end{center}
\caption{On the right we have the hyperplane arrangement in $\mathbb{R}^3$ projected onto $\mathbb{R}^2$ along the vector $\langle 1,1,1\rangle$ with the chambers labeled by their permutation. On the left we have the weak order $\fS_3$ with both the adjacent transposition and hyperplane labels. }
\label{fig:Bruhat3}
\end{figure}
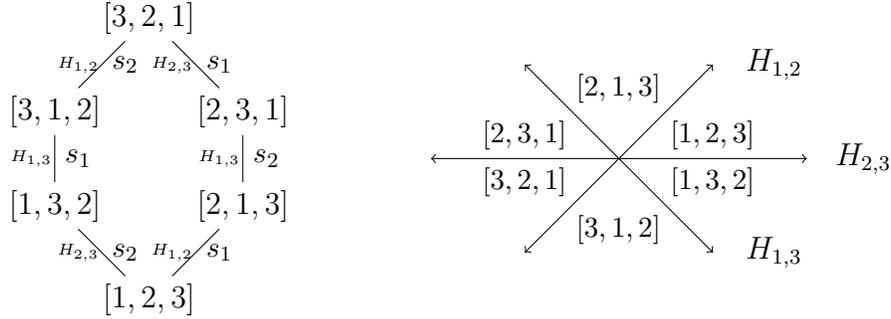

$$L_1(w)=\{H_{w(i),w(j)}:(w(i),w(j))\in\Inv(w)\}.$$

Cover relations of the weak order may be naturally labeled with adjacent transpositions and/or labeled with hyperplanes. See Figure~\ref{fig:Bruhat3}.  By using adjacent transpositions as labels, we view the maximum length chains as reduced words for the longest permutation $\longestk N$ of $\fS_N$.


\subsection{Partitions and tableaux}
\label{sec:prelim tableaux}


An {\it integer partition}, $\lambda=(\lambda_1,\lambda_2,\ldots,\lambda_L)$
of $N$ is a weakly decreasing sequence, $\lambda_1\geq \lambda_2\geq\cdots \geq \lambda_L$, of
positive integers $\lambda_i$ called {\it parts} that sum to $N$. We write $\lambda\vdash N$ and
the {\it length} of an integer partition $\ell(\lambda)=L$ is the number of parts. We can visualize an
integer partition with its {\it Young diagram}, which is an array of $L$ left-justified rows with $\lambda_i$ cells
(or boxes) in row $i$. Row $1$ is at the top, row $L$ at the bottom, and we number the columns left to right.
We will be particularly interested in the {\it staircase shape} $\stair_N=(N-1,N-2,\ldots, 2,1)$, an integer partition of $\binom{N}{2}$.


A tableau $T$ of shape $\lambda$ is an assignment of positive integers
to the cells in the diagram of $\lambda$. We refer to the entry in
row $i$ and column$j$ as either $T_{ij}$ or $T(i,j)$ and to the cell
itself as $(i,j)$.

\subsubsection{Balanced tableaux}

The hook of a cell in a Young diagram is the set of cells in the
diagram to the cell's right, of cells below it, and of the cell
itself. The cell's height is the number of cells below it, including
itself. A cell $(i,j)$ in a tableau $T$ is \mydef{balanced} if the number of
entries in its hook which are less than or equal to $T_{i,j}$ is equal
to the cell's height. In the case where $\lambda$ is of staircase
shape, the balanced condition simplifies to the number of smaller
entries to the left of the cell equals the number of larger entries
below it. Finally, a tableau is {\it balanced} if all its cells are
balanced. For $\lambda\vdash N$ let $\balanced{\lambda}$ be the
collection of Young diagrams with a balanced filling that uses each
number in $[N]$ once. We will call these {\it balanced tableaux}. We
call $\lambda$ the underlying {\it shape} or just the shape of the
tableau.

A \mydef{hyperplane walk} from $u\in\fS_N$ to $v\in\fS_N$ is a minimal length sequence of hyperplanes crossed, in order, when traveling from the chamber associated with $u$ to the chamber associated to $v$. If $u$ and $v$ are not mentioned, then $u$ is assumed to be the identity $\ii_N$ and $v$ to be the longest permutation $\longestk N$. The set of hyperplane walks from $\ii_N$ to $\longestk N$ is denoted $\cH(\longestk N).$ Each such hyperplane walk is matched with a balanced tableau of shape
$\stair_N$ as follows \cite{EG}.  Let $H\in \cH(\longestk N)$ and
suppose $H_{a,b}$ is the $i$th hyperplane in the hyperplane walk. We
then place $i$ in row $a$ and column $N-b+1$. The starting point of much of this paper is the well-known correspondences between reduced words, hyperplane walks, and tableaux \cite{EG,stan84}. 

Figure~\ref{fig:fourassoc}
has an example of a reduced word and its associated tableaux and hyperplane
walk. 

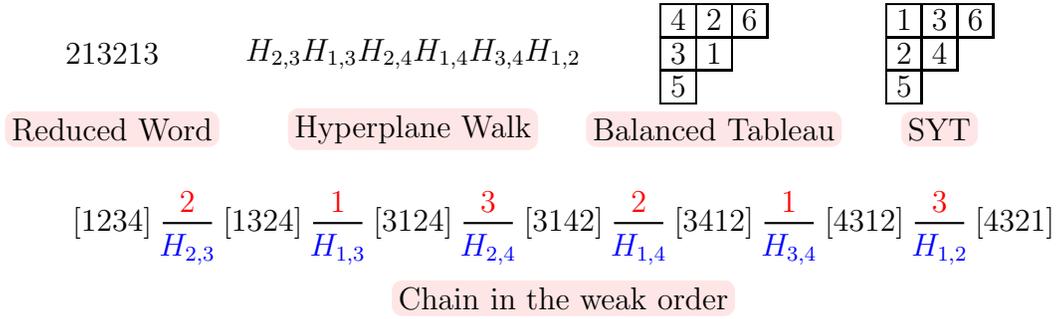
\begin{figure}
\begin{center}
\begin{tikzpicture}[information text/.style={rounded corners,fill=red!10,inner sep=.5ex}]

\def\pd{2.}
\draw (0,0) node {$213213$};
\draw (0,-1) node[information text] {Reduced Word};
\begin{scope}[shift={(4,0)}]
\draw (0,0) node {$H_{2,3}H_{1,3}H_{2,4}H_{1,4}H_{3,4}H_{1,2}$};
\draw (0,-1) node[information text] {Hyperplane Walk};
\end{scope}
\begin{scope}[shift={(8,0)}]
\draw (0,0) node {$\begin{Young} 4&2&6\cr 3&1 \cr 5\cr \end{Young}$};
\draw (0,-1) node[information text] {Balanced Tableau};
\end{scope}
\begin{scope}[shift={(11,0)}]
\draw (0,0) node {$\begin{Young} 1&3&6\cr 2&4 \cr 5\cr \end{Young}$};
\draw (0,-1) node[information text] {SYT};
\end{scope}
\begin{scope}[shift={(0,-2.25)}]
\node (p0) at (0,0) {$[1234]$};
\node (p1) at (\pd,0) {$[1324]$};
\node (p2) at (2*\pd,0) {$[3124]$};
\node (p3) at (3*\pd,0) {$[3142]$};
\node (p4) at (4*\pd,0) {$[3412]$};
\node (p5) at (5*\pd,0) {$[4312]$};
\node (p6) at (6*\pd,0) {$[4321]$};
\draw[thick,black] (p0)--(p1) node[above,midway,red]{$2$} node[below,midway,blue] {$H_{2,3}$};
\draw[thick,black] (p1)--(p2) node[above,midway,red]{$1$} node[below,midway,blue] {$H_{1,3}$};
\draw[thick,black] (p2)--(p3) node[above,midway,red]{$3$} node[below,midway,blue] {$H_{2,4}$};
\draw[thick,black] (p3)--(p4) node[above,midway,red]{$2$} node[below,midway,blue] {$H_{1,4}$};
\draw[thick,black] (p4)--(p5) node[above,midway,red]{$1$} node[below,midway,blue] {$H_{3,4}$};
\draw[thick,black] (p5)--(p6) node[above,midway,red]{$3$} node[below,midway,blue] {$H_{1,2}$};
\node[information text] at (3*\pd,-1) {Chain in the weak order};
\end{scope}
\end{tikzpicture}
\end{center}
\caption{A reduced word and its associated tableaux, hyperplane walk, and chain of permutations in the weak order.}
\label{fig:fourassoc}
\end{figure}
\omitt{I think $H_{2,4}$ and $H_{1,3}$ should be swapped in Figure~\ref{fig:fourassoc} this reduced word. The BT would need a change then too, but I think the SYT is fine. changed 12/11/23}
\subsubsection{Standard Young tableaux}

If we instead fill the Young diagram of $\lambda\vdash N$ with
positive integers such that the entries weakly increase left to right across rows and
columns strictly increase top to bottom, then we call the filled Young diagram a
{\it semi-standard Young tableau}.  If each number in $[N]$ appears
exactly once we call it a {\it standard Young tableau}. Let
$\SYT(\lambda)$ be the collection of standard Young tableaux of shape
$\lambda$ where $\lambda$ is the underlying {\it shape} of the
tableau.

There is a bijection from reduced words $\cR(\omega^{(N)})$ to
$\text{SYT}(\stair_N)$. The bijection is known as Edelman-Greene
insertion \cite{EG}, although the authors named it Coxeter-Knuth.

\begin{definition}[\cite{EG},\cite{LP20}]
  \label{def:EG}
We define Edelman-Greene insertion recursively. Let $\alpha=\alpha_1\cdots\alpha_{\binom{N}{2}}\in \redw{\longestk{N}}$ be a reduced word. We start with a pair of empty tableaux with the shape of the empty partition $(P_0,Q_0)$. Now suppose we have inserted $\alpha_1\alpha_2\ldots \alpha_{k-1}$, resulting in $(P^{(k-1)},Q^{(k-1)})$, where $P^{(k-1)}$ is a semi-standard Young tableau and $Q^{(k-1)}$ is a standard Young tableau of the same shape. Denote the rows of $P^{(k-1)}$ by $R_1,R_2,\ldots,R_h$. 
We describe how we insert $\alpha_{k}$ to obtain $(P^{(k)},Q^{(k)})$. We set $x_0$ to be $\alpha_{k}$ to begin. At step $i\ge1$, we attempt to insert $x_{i-1}$ into row $R_i$.  
\begin{enumerate}
\item If $x_{i-1}$ is larger than all entries in $R_i$ or $R_i$ is empty, then place $x_{i-1}$ at the end of $R_i$. This completes the insertion of $x_{i-1}$. 
\item In any other case there exists a smallest number $z$ in $R_i$ greater than or equal to $x_{i-1}$. 
\begin{enumerate}
\item If $z=x_{i-1}$, set $x_i=z+1$ and insert $x_i$ into $R_{i+1}$. We say $x_{i-1}$ \mydef{bumps} $z$.
\item If $z>x_{i-1}$, replace $z$ in $R_i$ by $x_{i-1}$ and insert $x_i=z$ in $R_i$. Again, we say $x_{i-1}$ \mydef{bumps} $z$.
\end{enumerate}
Continue the bumping process and insertion, inserting $x_i$ into $R_{i+1}$, until $x_i$ is placed at the end of a row or into an empty row. 
\end{enumerate}
The insertion of $x_0=\alpha_k$ into $P^{(k-1)}$ produces a sequence of length less than or equal to $h$ of bumped elements $x_0,x_1,\ldots$. The \mydef{bump path} is the sequence of cells holding $x_1,x_2,\ldots$ in $P^{(k-1)}$. 
After the insertion of $\alpha_k$, $P^{(k)}$ has one more cell than $P^{(k-1)}$ in row $i$ and column $j$. In $Q^{(k-1)}$ place a $k$ in a new cell in row $i$ and column $j$ to form $Q^{(k)}$. 
After inserting $\alpha_k$ for each $k$, $1\le k\le \binom{N}{2}$, we have $\EG(\alpha)=(P^{(L)},Q^{(L)})=(P_{\alpha},Q_{\alpha})$ for $L=\binom{N}{2}$. Both $P$ and $Q$ have shape $\stair_N$. The standard Young tableau associated to $\alpha$ is $\EGQ(\alpha)=Q_{\alpha}=Q$. The tableau $P_{\alpha}$ is the same for all $\alpha\in\redw{\longestk{N}}$. 
\end{definition}

 See Figure~\ref{fig:fourassoc} for an example.  We have a standard Young tableau $\EGQ(\rho)$ associated to a reduced word $\rho$ via the Edelman-Greene insertion. We mention the insertion is defined for the reduced word of any permutation.

With the map in Definition~\ref{def:EG},  Edelman and Greene provided a bijective proof of Stanley's celebrated
result \cite{stan84} that the number of reduced words for
$\longestk{N}$ is equal to the number of standard Young tableaux of
shape $(N-1,N-2,\ldots,1)$. They also define the inverse $\Gamma$ to $\EG$, which we describe now. 

\begin{definition}[\cite{EG}]
  \label{def:promo}
  Let $T$ be a tableau of shape $(\lambda_1,\lambda_2,\ldots,\lambda_k)$ with entries increasing from left to right along rows and increasing down columns, and let $M=\sum_i\lambda_i$. Let $T_{i,j}$ denote the entry in the cell in row $i$ and column $j$. We allow entries in $T$ to be negative. Suppose the largest entry of $T$ is $T_{p,q}$.

  The \mydef{evacuation path} $\pi=(\pi_1,\pi_2,\ldots,\pi_{p+q-1})$ is a sequence of cells $\pi_i=(p_i,q_i)$ constructed as follows:
  \begin{enumerate}
  \item $\pi_1=(p_1,q_1)=(p,q)$
  \item \label{part:promo_comp}If $\pi_i=(p_i,q_i)$ \omitt{and $i<p_i+q_i$}, then $\pi_{i+1}$ is
    \begin{enumerate}
    \item $(p_i-1,q_i)$ if $q_i=1$ or $T_{p_i-1,q_i}>T_{p_i,q_i-1}$;
    \item $(p_i,q_i-1)$ if $p_i=1$ or $T_{p_i-1,q_i}<T_{p_i,q_i-1}$;
      \end{enumerate}
  \end{enumerate}

    The tableau $\delta(T)$ is obtained from $T$ by replacing $T_{p_i,q_i}$ by $T_{p_{i+1},q_{i+1}}$ for $1\leq i<p+q-1$ and replacing $T_{11}$ by the minimum entry of $T$, minus 1. Following \cite{EG}, a single application of $\delta$ is called \mydef{elementary promotion}.

    Now fix $T$ to be a standard Young tableau with entries $\{1,2,\ldots,M\}$. Define $T^{(k)}$ to be the result of applying $\delta$ $k$ times to $T$. Finally, define $\Gamma(T)$ to be the reduced word $\rho=\rho_1\cdots\rho_M$, where $\rho_{M-k+1}$ is the column in $T^{(k-1)}$ of its largest entry.

\end{definition}

We give an example of $\Gamma$ in Figure~\ref{fig:Gamma}. 

\ytableausetup{boxsize=1.25em}

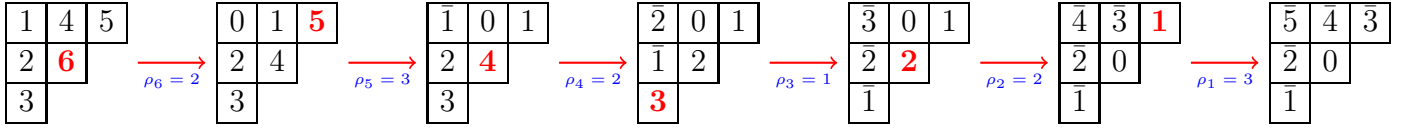
\begin{figure}[h]
  \centering
  \begin{tikzpicture}
    \def\h{2.8}
    \node (t1) at (0,0) {\begin{ytableau}1&4&5\\2&\smallcircled{6}\\3\end{ytableau}};
    \node (t2) at (\h,0) {\begin{ytableau}0&1&\smallcircled{5}\\2&4\\3\end{ytableau}};
    \draw[thick,red,->] (t1.east)--(t2.west)node[midway,below,blue,font=\tiny]{$\rho_6=2$};
    \node (t3) at (2*\h,0) {\begin{ytableau}\bar{1}&0&1\\2&\smallcircled{4}\\3\end{ytableau}};
    \draw[thick,red,->] (t2.east)--(t3.west)node[midway,below,blue,font=\tiny]{$\rho_5=3$};
    \node (t4) at (3*\h,0) {\begin{ytableau}\bar{2}&0&1\\\bar{1}&2\\\smallcircled{3}\end{ytableau}};
    \draw[thick,red,->] (t3.east)--(t4.west)node[midway,below,blue,font=\tiny]{$\rho_4=2$};
    \node (t5) at (4*\h,0) {\begin{ytableau}\bar{3}&0&1\\\bar{2}&\smallcircled{2}\\\bar{1}\end{ytableau}};
    \draw[thick,red,->] (t4.east)--(t5.west)node[midway,below,blue,font=\tiny]{$\rho_3=1$};
    \node (t6) at (5*\h,0) {\begin{ytableau}\bar{4}&\bar{3}&\smallcircled{1}\\\bar{2}&0\\\bar{1}\end{ytableau}};
    \draw[thick,red,->] (t5.east)--(t6.west)node[midway,below,blue,font=\tiny]{$\rho_2=2$};    
    \node (t7) at (6*\h,0) {\begin{ytableau}\bar{5}&\bar{4}&\bar{3}\\\bar{2}&0\\\bar{1}\end{ytableau}};
    \draw[thick,red,->] (t6.east)--(t7.west)node[midway,below,blue,font=\tiny]{$\rho_1=3$};    
    
    \end{tikzpicture}
  \caption{Let $T$ be the first tableau on the left. The largest number in each tableaux is in red and negative numbers are overlined. The tableau $\delta(T)$
    is the second, $\delta^2(T)$ the third, etc. The evacuation path
    for the first application of $\delta$ is
    $\{(2,2),(1,2),(1,1)\}$. The reduced word $\Gamma(T)$ is
    $321232$. Please see Definition~\ref{def:promo}.}
  \label{fig:Gamma}
  \end{figure}

\ytableausetup{boxsize=normal}

We record the correspondences among chains, walks, and tableaux in Theorem~\ref{thm:longestwordSets}. 
\begin{theorem}
\label{thm:longestwordSets}
The following sets are in bijection with each other. 
\begin{enumerate}
\item \label{part:redw}The set of reduced words the longest permutation $\longestk{N}\in\fS_N$, $\redw{\longestk{N}}$.
\item \label{part:walks}The set of hyperplane walks, $\cH(\longestk{N})$. 
\item \label{part:satchains}The set of maximal chains in the weak order $\lc{\fS_N}$. 
\item \label{part:BT}The set of balanced tableaux of the staircase shape, $\balanced{\stair_N}$. 
\item \label{part:SYT}The set of of standard Young tableaux of the staircase shape $\SYT(\stair_n)$. 
\end{enumerate}
\label{thm:chain_bijection}
\end{theorem}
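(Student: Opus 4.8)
The plan is to prove this five-way equivalence by assembling a web of pairwise bijections, each already available either from the geometric description of the weak order recalled above or from the cited results of Edelman--Greene~\cite{EG} and Stanley~\cite{stan84}. Since ``in bijection with each other'' is an equivalence relation, it suffices to connect all five sets through a spanning tree of explicit bijections; I would route everything through the set of reduced words $\redw{\longestk N}$.

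First I would establish the link between \eqref{part:redw} and \eqref{part:satchains}. The weak order $\fS_N$ is a graded lattice with $\hat 0=\ii_N$ and $\hat 1=\longestk N$, and its rank function is $\inv$, so every maximal chain has length $\binom N2=\ell(\longestk N)$; in particular $\lc{\fS_N}$ consists of \emph{all} maximal chains. Each cover relation in such a chain is labeled by a unique adjacent transposition $s_i$, and reading the labels from bottom to top sends a maximal chain to the word $i_1 i_2\cdots$; conversely a reduced word $s_{i_1}\cdots s_{i_k}$ determines the chain $\ii_N\lessdot s_{i_1}\lessdot s_{i_1}s_{i_2}\lessdot\cdots\lessdot\longestk N$. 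These are mutually inverse.

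Next, the link between \eqref{part:satchains} and \eqref{part:walks} follows from the geometric realization recalled in Section~\ref{subsec:weakorder}: two permutations related by a cover label chambers separated by exactly one hyperplane $H_{a,b}$. Recording the hyperplane crossed at each step of a maximal chain from $\ii_N$ to $\longestk N$ produces a minimal-length sequence of crossings, i.e.\ an element of $\cH(\longestk N)$, and this assignment is invertible since the hyperplane crossed at a step determines the adjacent transposition performed there. For the link between \eqref{part:walks} and \eqref{part:BT}, the explicit placement map described above sends a walk in which $H_{a,b}$ is the $i$th hyperplane crossed to the filling with $i$ in row $a$, column $N-b+1$; the content of~\cite{EG} is that this lands in $\balanced{\stair_N}$ and is a bijection. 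Finally, the link between \eqref{part:redw} and \eqref{part:SYT} is Edelman--Greene insertion $\EGQ$ of Definition~\ref{def:EG}, whose inverse $\Gamma$ is recalled in Definition~\ref{def:promo}; that $\EGQ$ restricts to a bijection $\redw{\longestk N}\to\SYT(\stair_N)$ is the theorem of~\cite{EG} giving a bijective proof of Stanley's count~\cite{stan84}.

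The links \eqref{part:redw}$\leftrightarrow$\eqref{part:satchains} and \eqref{part:satchains}$\leftrightarrow$\eqref{part:walks} are immediate from definitions, so the real substance sits in the two genuinely nontrivial steps, both due to Edelman--Greene: that the hyperplane-placement rule actually produces a \emph{balanced} tableau and is bijective, and that Coxeter--Knuth insertion is a bijection onto $\SYT(\stair_N)$. Since both are established in~\cite{EG}, from the standpoint of this paper the argument is one of assembly: cite these two bijections, verify the two definitional correspondences, and observe that bijectivity is transitive across the spanning path \eqref{part:BT}$\leftrightarrow$\eqref{part:walks}$\leftrightarrow$\eqref{part:satchains}$\leftrightarrow$\eqref{part:redw}$\leftrightarrow$\eqref{part:SYT}. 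The one point warranting an explicit sentence is that $\lc{\fS_N}$, the set of maximum-length chains, coincides with the set of all maximal chains, which holds precisely because $\fS_N$ is graded.
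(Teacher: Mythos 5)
Your proposal is correct and takes essentially the same route as the paper, which likewise treats the correspondences among reduced words, maximal chains, and hyperplane walks as holding ``by construction'' and cites Edelman--Greene and Stanley for the two tableau bijections. Your added observation that $\lc{\fS_N}$ coincides with the set of all maximal chains because the weak order is graded makes explicit a point the paper leaves implicit, but it does not change the argument.
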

\begin{proof}
The first three \eqref{part:redw}-\eqref{part:satchains}  are in bijection with each other by construction. Item \eqref{part:BT} and \eqref{part:SYT} are in bijection with \eqref{part:redw} by \cite{EG, stan84}. 
\end{proof}


\section{The tubing poset $\tp{G}$}
\label{sec:L_Gintro}

In this section we introduce the lattices we study in this paper. These
are lattice quotients of the weak Bruhat order determined by a
graph $G$. We begin with graph terminology.

A {\it graph} $G$ is a collection of vertices $V$ and a collection of
edges $E$ that connect pairs of vertices. In this paper we will only
consider simple graphs without loops and without multi-edges. Edges
will be represented as $uv$ for $u,v\in V$ when the expressions for $u$ and $v$ are simple, and as $\{u,v\}$ when they are more complicated. Since our edges do not
have direction, $uv=vu$. Two edges are {\it adjacent} if they share a
vertex and two vertices are {\it adjacent} if they are on opposite
ends of an edge.  A {\it path} between two vertices $u$ and $v$ is a
sequence of adjacent edges, $uu_1,u_1u_2,\ldots, u_kv$ that starts at
$u$ and ends at $v$. The {\it length} of the path is the number of
edges. The {\it distance} between two vertices $u$ and $v$, $d(u,v)$,
is the length of the shortest path, if a path exists. We call a graph
{\it connected} if there is a path between all pairs of vertices.

\omitt{Recall that a graph is connected if there exists a path between every pair of vertices.} Given a subset of vertices $W\subseteq V$, the {\it induced graph}, $G|_W$, is the graph on vertices $W$ with edges $uv$ if $u,v\in W$ and $uv$ was an edge of $G$.  There are a few common graphs that we will be consistently using. The {\it complete graph} $K_n$ is a graph on vertices $V=[n]=\{1,2,3,\ldots, n\}$ with edges $ij$ for all $i\neq j$ in $[n]$. The {\it path} $P_n$ is a graph on vertices $V=[n]$ with edges $i(i+1)$ for all $i\in[n-1]$. A {\it lollipop } $\lolli m n$ is the graph on vertices $V=[m+n]$ such that there is a complete graph on vertices $[m]$ and a path on vertices in $\{m,m+1,\ldots m+n\}$. There is some overlap among graph notation, since $L_{1,n-1}=L_{0,n}=P_{n}$ and $L_{m,0}=K_m$. We set $P_n=\lolli{1}{n-1}$ for technical reasons and we assume that $m\ge 1$ and $n\ge 0$. The path graph $P_n$ is also equal to $\lolli{2}{n-2}$. \omitt{so typically for lollipop graphs $L_{m,n}$ we will assume that $m,n\geq 1$.} We call a graph on vertices $[n]$ {\it filled} if for all edges $ij$ for $i<j$ we have the edges $ik$ and $kj$ for all $i<k<j$. Complete graphs, paths and lollipop graphs are all filled graphs. See Figure~\ref{fig:graphs} for examples. 

\begin{figure}
\begin{center}
\begin{tikzpicture}
\coordinate (A) at (0,0);
\coordinate (B) at (1,0);
\coordinate (C) at (1,1);
\coordinate (D) at (0,1);
\draw[black] (A)--(B)--(C)--(D)--(A)--(C);
\draw[black] (B)--(D);
\filldraw[black] (A) circle [radius=2pt] node[left] {$1$};
\filldraw[black] (B) circle [radius=2pt] node[right] {$2$};
\filldraw[black] (C) circle [radius=2pt] node[right] {$3$};
\filldraw[black] (D) circle [radius=2pt] node[left] {$4$};
\begin{scope}[shift={(3,.5)}]
\coordinate (A) at (0,0);
\coordinate (B) at (1,0);
\coordinate (C) at (2,0);
\coordinate (D) at (3,0);
\draw[black] (A)--(B)--(C)--(D);
\filldraw[black] (A) circle [radius=2pt] node[below] {$1$};
\filldraw[black] (B) circle [radius=2pt] node[below] {$2$};
\filldraw[black] (C) circle [radius=2pt] node[below] {$3$};
\filldraw[black] (D) circle [radius=2pt] node[below] {$4$};
\end{scope}
\begin{scope}[shift={(8,0)}]
\coordinate (A) at (0,.5);
\coordinate (B) at (1,0);
\coordinate (C) at (1,1);
\coordinate (D) at (2,.5);
\coordinate (E) at (3,.5);
\coordinate (F) at (4,.5);
\draw[black] (A)--(C);
\draw[black] (B)--(D)--(A)--(B)--(C)--(D)--(F);
\filldraw[black] (A) circle [radius=2pt] node[left] {$1$};
\filldraw[black] (B) circle [radius=2pt] node[left] {$2$};
\filldraw[black] (C) circle [radius=2pt] node[left] {$3$};
\filldraw[black] (D) circle [radius=2pt] node[below] {$4$};
\filldraw[black] (E) circle [radius=2pt] node[below] {$5$};
\filldraw[black] (F) circle [radius=2pt] node[below] {$6$};
\end{scope}
\end{tikzpicture}
\end{center}
\caption{From left to right we have $K_4$, $P_4$, and $\tp{\lolli 4 2 }$}.
\label{fig:graphs}
\end{figure}

We study the
\mydef{tubing poset} $\tp{G}$ of $G$, for $G$ a filled and connected graph on $N$ vertices. We don't use tubings, or even
define them. We use an alternate description, found in \cite{BM}, for
the case of filled connected graphs. It turns out, that since $G$ is
filled, $\tp{G}$ is a lattice and in fact the lattice quotient of the weak
order $\fS_N$ by the \mydef{$G$-equivalence relation}.  The
$G$-equivalence relation is based on the $G$-tree of a
permutation. A $G$-tree is a labeled rooted tree, meaning a graph
without a cycle, and a special vertex to be called a {\it root}. If
$uv$ is an edge of the tree, and $u$ is closer to the root than $v$,
we call $v$ a {\it child} of $u$. Given a graph $G$ and a permutation
$w\in\fS_N$ we define the {\it $G$-tree of $w$} on vertex set $[N]$
inductively.
\begin{enumerate}
\item Let $w(N)$ be the root. 
\item $G|_{[N]-\{w(N)\}}$ has connected components $G_1, G_2, \ldots, G_k$ on vertex sets $V_1, V_2, \ldots, V_k$. Let $w_1, w_2, \ldots w_k$ be the associated subwords of $w$ where $w_i$ is the subword of $w$ containing all numbers in $V_i$. By induction each $w_i$ has a $G_i$-tree $T_i$. 
\item Let the roots of $T_1, T_2, \ldots, T_k$ be the children of $w(N)$ in the $G$-tree of $w$. 
\end{enumerate}
Two permutations $u$ and $v$ are in the same equivalence class, or {\it $G$-equivalent},  if they have the same $G$-tree. In that case, we write $[\![u]\!]=[\![v]\!]$. See Figure~\ref{fig:Gtree} for an example. 
 
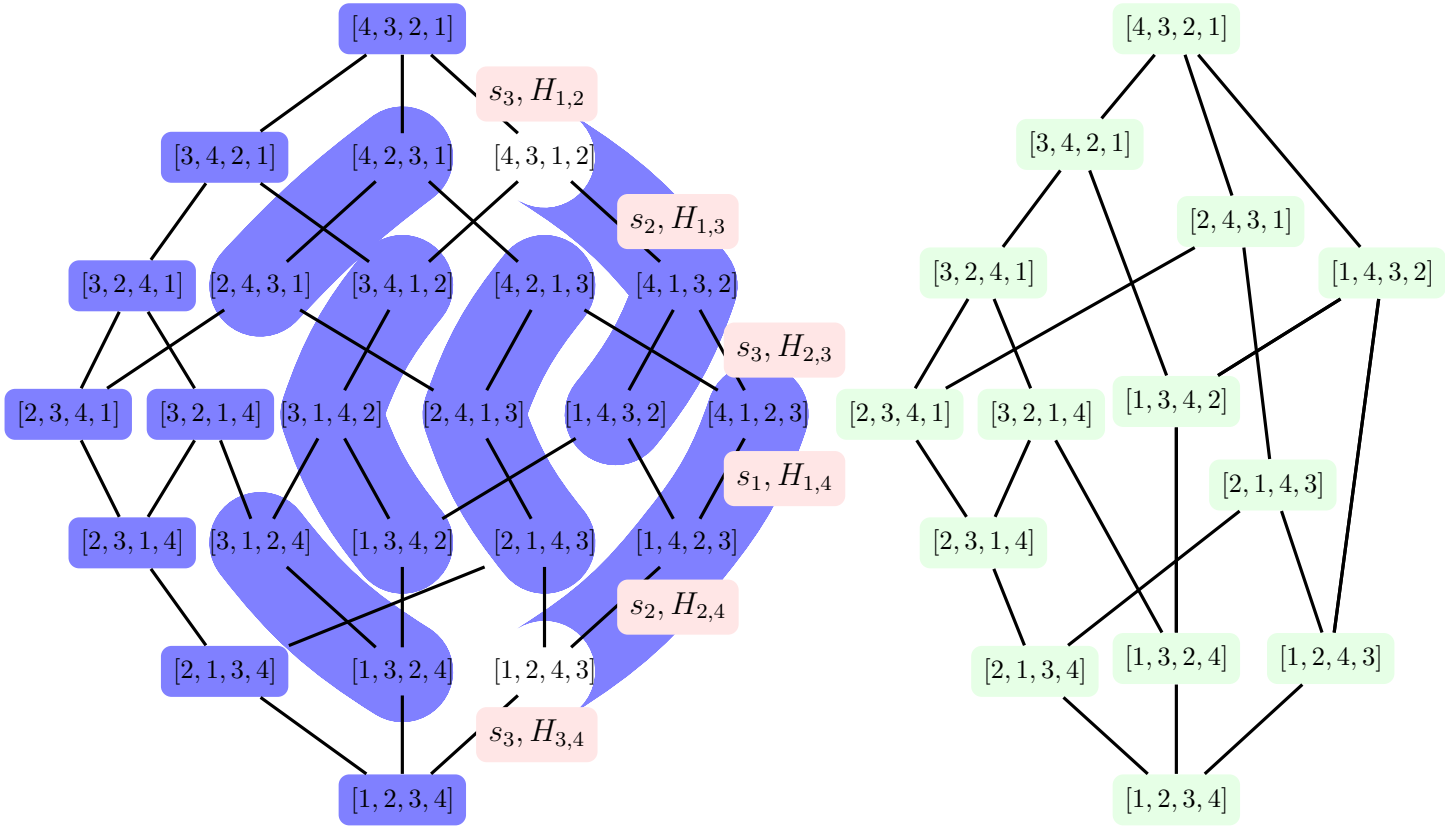
\begin{figure}
\begin{center}
\begin{tikzpicture}
\begin{scope}

\def\h{2.1cm}
\def\v{1.9cm}
\def\th{.58cm}
\def\w{1.35cm}
\begin{scope}[scale=.9,perm/.style={font=\footnotesize},
information text/.style={rounded corners,fill=red!10,inner sep=.9ex},
singleton perm/.style={rounded corners,fill=blue!50,inner sep=.9ex,font=\footnotesize}]
\coordinate (1234) at (0*\h,0*\v);

\coordinate (2134) at (-1.25*\h,1*\v);
\coordinate (1324) at (0*\h,1*\v);
\coordinate (1243) at (1*\h,1*\v);

\coordinate (2314) at (-1.9*\h,2*\v);
\coordinate (3124) at (-1*\h,2*\v);
\coordinate (1342) at (0*\h,2*\v);
\coordinate (2143) at (1*\h,2*\v);
\coordinate (1423) at (2*\h,2*\v);

\coordinate (2341) at (-2.35*\h,3*\v);
\coordinate (3214) at (-1.35*\h,3*\v);
\coordinate (3142) at (-.5*\h,3*\v);
\coordinate (2413) at (.5*\h,3*\v);
\coordinate (1432) at (1.5*\h,3*\v);
\coordinate (4123) at (2.5*\h,3*\v);

\coordinate (3241) at (-1.9*\h,4*\v);
\coordinate (2431) at (-1*\h,4*\v);
\coordinate (3412) at (0*\h,4*\v);
\coordinate (4213) at (1*\h,4*\v);
\coordinate (4132) at (2*\h,4*\v);

\coordinate (3421) at (-1.25*\h,5*\v);
\coordinate (4231) at (0*\h,5*\v);
\coordinate (4312) at (1*\h,5*\v);

\coordinate (4321) at (0*\h,6*\v);


\path[fill=blue!10,draw=blue!50, line width=\w,line cap=round, line join=round] 
    (1342) to [bend left=10] (3142) to [bend left=10]
        (3412)to [bend right=10](3142) to[bend right=10] (1342)--cycle;

\path[fill=blue!10,draw=blue!50, line width=\w,line cap=round, line join=round] 
    (2143) to [bend left=10] (2413) to [bend left=10]
        (4213)to [bend right=10](2413) to[bend right=10] (2143)--cycle;
        
\path[fill=blue!10,draw=blue!50, line width=\w,line cap=round, line join=round] 
    (1432) to [bend right=10] (4132) to [bend right=10]
        (4312)to [bend left=10](4132) to[bend left=10] (1432)--cycle;
        
\path[fill=blue!10,draw=blue!50, line width=\w,line cap=round, line join=round] 
    (1243) to [bend right=10] (1423) to [bend right=10]
        (4123)to [bend left=10](1423) to[bend left=10] (1243)--cycle;

\path[fill=blue!10,draw=blue!50, line width=\w,line cap=round, line join=round] 
    (1432) to [bend right=10] (4132) to [bend right=10]
        (4312)to [bend left=10](4132) to[bend left=10] (1432)--cycle;

\path[fill=blue!10,draw=blue!50, line width=\w,line cap=round, line join=round] 
    (1324) to [bend left=10] (3124) to [bend right=10]
        (1324)--cycle;

\path[fill=blue!10,draw=blue!50, line width=\w,line cap=round, line join=round] 
    (2431) to [bend left=5] (4231) to [bend right=5](2431)--cycle;

\node[perm,singleton perm] (e) at (1234) {$[1,2,3,4]$};

\node[perm,singleton perm] (r11) at (2134) {$[2,1,3,4]$};
\node[perm] (r12) at (1324) {$[1,3,2,4]$};
\node[perm] (r13) at (1243) {$[1,2,4,3]$};

\node[perm,singleton perm] (r21)at (2314) {$[2,3,1,4]$};
\node[perm] (r22)at (3124) {$[3,1,2,4]$};
\node[perm] (r23)at (1342) {$[1,3,4,2]$};
\node[perm] (r24)at (2143) {$[2,1,4,3]$};
\node[perm] (r25)at (1423) {$[1,4,2,3]$};

\node[perm,singleton perm] (r31) at (2341) {$[2,3,4,1]$};
\node[perm,singleton perm,] (r32) at (3214) {$[3,2,1,4]$};
\node[perm] (r33) at (3142) {$[3,1,4,2]$};
\node[perm] (r34) at (2413) {$[2,4,1,3]$};
\node[perm] (r35) at (1432) {$[1,4,3,2]$};
\node[perm] (r36) at (4123) {$[4,1,2,3]$};

\node[perm,singleton perm] (r41) at (3241){$[3,2,4,1]$};
\node[perm] (r42) at (2431){$[2,4,3,1]$};
\node[perm] (r43) at (3412){$[3,4,1,2]$};
\node[perm] (r44) at (4213){$[4,2,1,3]$};
\node[perm] (r45) at (4132){$[4,1,3,2]$};

\node[perm,singleton perm] (r51) at (3421){$[3,4,2,1]$};
\node[perm] (r52) at (4231){$[4,2,3,1]$};
\node[perm] (r53) at (4312){$[4,3,1,2]$};

\node[perm,singleton perm] (r61) at (4321){$[4,3,2,1]$};

\draw[very thick] (e)--(r11);
\draw[very thick] (e)--(r12);
\draw[very thick] (e)--(r13) node[midway,right,information text]{$s_3,H_{3,4}$};

\draw[very thick] (r11)--(r21);
\draw[very thick] (r11)--(r24);
\draw[very thick] (r12)--(r22);
\draw[very thick] (r12)--(r23);
\draw[very thick] (r13)--(r24);
\draw[very thick] (r13)--(r25)node[midway,right,information text]{$s_2,H_{2,4}$};

\draw[very thick] (r21)--(r31);
\draw[very thick] (r21)--(r32);
\draw[very thick] (r22)--(r32);
\draw[very thick] (r22)--(r33);
\draw[very thick] (r23)--(r33);
\draw[very thick] (r23)--(r35);
\draw[very thick] (r24)--(r34);
\draw[very thick] (r25)--(r35);
\draw[very thick] (r25)--(r36)node[midway,right,information text]{$s_1,H_{1,4}$};

\draw[very thick] (r31)--(r41);
\draw[very thick] (r31)--(r42);
\draw[very thick] (r32)--(r41);
\draw[very thick] (r33)--(r43);
\draw[very thick] (r34)--(r42);
\draw[very thick] (r34)--(r44);
\draw[very thick] (r35)--(r45);
\draw[very thick] (r36)--(r45)node[midway,right,information text]{$s_3,H_{2,3}$};
\draw[very thick] (r36)--(r44);

\draw[very thick] (r41)--(r51);
\draw[very thick] (r42)--(r52);
\draw[very thick] (r43)--(r51);
\draw[very thick] (r43)--(r53);
\draw[very thick] (r44)--(r52);
\draw[very thick] (r45)--(r53)node[midway,right,information text]{$s_2,H_{1,3}$};

\draw[very thick] (r51)--(r61);
\draw[very thick] (r52)--(r61);
\draw[very thick] (r53)--(r61)node[midway,right,information text]{$s_3,H_{1,2}$};
\end{scope}

\begin{scope}[shift={(4.9*\h,0*\v)},scale=.9,every node/.style={font=\footnotesize,rounded corners, fill=green!10, inner sep=.9ex},
information text/.style={rounded corners,fill=red!10,inner sep=.9ex}]
\def\h{1.9cm}
\coordinate (1234) at (0*\h,0*\v);
\coordinate (2134) at (-1.1*\h,1*\v);
\coordinate (1324) at (0*\h,1.1*\v);
\coordinate (1243) at (1.2*\h,1.1*\v);
\coordinate (2314) at (-1.5*\h,2*\v);
\coordinate (2341) at (-2.15*\h,3*\v);
\coordinate (3214) at (-1.05*\h,3*\v);
\coordinate (3241) at (-1.5*\h,4.1*\v);
\coordinate (3421) at (-.75*\h,5.1*\v);
\coordinate (4321) at (0*\h,6*\v);
\coordinate (2143) at (.75*\h,2.45*\v);
\coordinate (1342) at (0*\h,3.1*\v);
\coordinate (2431) at (.5*\h,4.5*\v);
\coordinate (1432) at (1.6*\h,4.1*\v);

\node (e) at (1234) {$[1,2,3,4]$};

\node (r11) at (2134) {$[2,1,3,4]$};
\node (r12) at (1324) {$[1,3,2,4]$};
\node (r13) at (1243) {$[1,2,4,3]$};

\node (r21)at (2314) {$[2,3,1,4]$};
\node (r23)at (1342) {$[1,3,4,2]$};
\node (r24)at (2143) {$[2,1,4,3]$};

\node (r31) at (2341) {$[2,3,4,1]$};
\node (r32) at (3214) {$[3,2,1,4]$};
\node (r35) at (1432) {$[1,4,3,2]$};

\node (r41) at (3241){$[3,2,4,1]$};
\node (r42) at (2431){$[2,4,3,1]$};

\node (r51) at (3421){$[3,4,2,1]$};

\node (r61) at (4321){$[4,3,2,1]$};

\draw[very thick] (e)--(r11);
\draw[very thick] (e)--(r12);
\draw[very thick] (e)--(r13);

\draw[very thick] (r11)--(r21);
\draw[very thick] (r11)--(r24);
\draw[very thick] (r12)--(r23);
\draw[very thick] (r13)--(r24);

\draw[very thick] (r21)--(r31);
\draw[very thick] (r21)--(r32);
\draw[very thick] (r12)--(r32); 
\draw[very thick] (r12)--(r23); 
\draw[very thick] (r23)--(r35);
\draw[very thick] (r13)--(r35);

\draw[very thick] (r31)--(r41);
\draw[very thick] (r31)--(r42);
\draw[very thick] (r32)--(r41);
\draw[very thick] (r24)--(r42); 
\draw[very thick] (r13)--(r35); 

\draw[very thick] (r41)--(r51);
\draw[very thick] (r23)--(r51); 
\draw[very thick] (r23)--(r35);

\draw[very thick] (r51)--(r61);
\draw[very thick] (r42)--(r61);
\draw[very thick] (r35)--(r61); 
\end{scope}

\end{scope}    
\end{tikzpicture}
\end{center}
\caption{The left figure is the weak order $\fS_4$ with the equivalence classes under the $G$-equivalence relation shaded. To the right we have $\tp{P_4}$, written as induced subposet of $\fS_4$ on $G$-permutations. The adjacent transpositions and hyperplanes are marked in the rightmost maximal chain of $\fS_4$. }
\label{fig:PathEquiv}
\end{figure}

\omitt{\tvi{Sam (12/18): Could we make the pink color darker and the circles around the numbers the same thickness as the pink line segments? Could we also add these circles on the diagram on the right? If not that's ok too.}
\susanna{Is this better? 3/24/24}
\tvi{Beautiful (4/4)}
}
Barnard and McConville~\cite{BM} proved that $G$-equivalence is a
lattice congruence when $G$ is filled and connected, so that the
quotient $\tp{G}$ is a lattice and the projection is
order-preserving. They described the minimal elements in each class:
the \mydef{$G$-permutations.} We use this description of the tubing
lattice as our working definition: the sublattice of $\fS_N$ induced
by the $G$-permutations. We also reference the projection from $\fS_N$ and the
description of the equivalence classes.

\begin{definition}\label{def:Gperm}
 For a graph $G$ we call a permutation $w=[w(1),w(2),\cdots, w(n)]\in\fS_n$ a {\it $G$-permutation} if $w(i)$ is in the same connected component as $\max\{w(1),w(2),\ldots, w(i)\}$ in the induced graph $G|_{\{w(1),w(2),\ldots, w(i)\}}$ for all $i\in[n]$.
 \end{definition}
 
See Figure~\ref{fig:Gtree} for an example of a $G$-permutation.  Each
$G$-permutation $w$ will be in a different $G$-equivalence class of
$\fS_n$, and each class has
exactly one $G$-permutation. The set of vertices for $\tp{G}$ is
$\{[\![w]\!]:w\text{ is a $G$-permutation}\}$. The partial order on 
$\tp{G}$ is $[\![u]\!]\leq[\![v]\!]$ if $w\le v$ in the weak order. See Figure~\ref{fig:PathEquiv} for an example.

\begin{figure}
\begin{center}
\begin{tikzpicture}[scale=.7]
\coordinate (A) at (0,0);
\coordinate (B) at (-1,-1);
\coordinate (C) at (1,-1);
\coordinate (D) at (1,-2);
\coordinate (E) at (-1,-2);
\coordinate (F) at (-1,-3);
\draw[black] (D)--(C)--(A)--(B)--(E)--(F);
\filldraw[black] (A) circle [radius=2pt] node[above] {$4$};
\filldraw[black] (B) circle [radius=2pt] node[left] {$1$};
\filldraw[black] (C) circle [radius=2pt] node[right] {$5$};
\filldraw[black] (D) circle [radius=2pt] node[right] {$6$};
\filldraw[black] (E) circle [radius=2pt] node[left] {$3$};
\filldraw[black] (F) circle [radius=2pt] node[left] {$2$};
\end{tikzpicture}
\end{center}
\caption{The permutation $[2,3,1,6,5,4]$ is a $L_{4,2}$-permutation and $[6,2,3,5,1,4]$ is not, but $[6,2,3,5,1,4]\in[\![231654]\!]$ since their $L_{4,2}$-trees are the same. }
\label{fig:Gtree}
\end{figure}
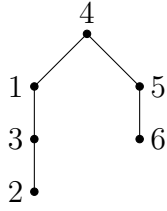

\omitt{ 
\begin{theorem}\cite[Theorem 1.1]{BM}
The map $\Psi_G:\fS_n\rightarrow \tp{G}$, $w\mapsto [\![w]\!]$, is a lattice quotient map if and only if $G$ is filled.
\end{theorem}
 }
 
\begin{remark}The weak order lattice $\fS_n$ is $\tp{K_n}$ and $\tp{P_n}$ is the Tamari lattice \cite{ronco2012}.
\end{remark}
 
Note that some chains in the weak order may project to a chain of the same length in $\tp{G}$ and other chains become shorter because they contain permutations that
are not $G$-permutations.  In chains which become shorter, some cover relations in weak order are between two permutation in the same $G$-equivalence class.  Lemma~\ref{lem:ab-cut} gives a condition for determining when a cover relation in the weak order ``collapses'' under the projection to $\tp{G}$. 


\omitt{Using the definition of the equivalence classes by $G$-trees  we can determine whether a cover relation in the weak order is between two permutations in the same $G$-equivalence class.}  
We say a subset $S\subseteq V$ of  vertices is an {\it $ab$-cut set} if vertices $a,b\notin S$ and   $a$ and $b$ are in different connected components of $G|_{V-S}$. 

\begin{lemma} Let $G$ be a connected graph on vertex set $V$, where $V$ has $N$ vertices. Two permutations $u\lessdot v$ in the weak order (so $u=v\circ s_i$ for some $i$) are in the same equivalence class, $[\![u]\!]=[\![v]\!]$, if and only if the set $S=\{u(i+2),u(i+3),\ldots, u(N)\}$ is  a $u(i)u(i+1)$-cut set in $G$. 
\label{lem:ab-cut}
\end{lemma}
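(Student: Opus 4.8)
The plan is to run the recursive $G$-tree construction of Section~\ref{sec:L_Gintro} on $u$ and $v$ simultaneously and pinpoint where, if anywhere, they diverge. First I would translate the statement into tree language. Write $a=u(i)$ and $b=u(i+1)$; since $u\lessdot v$ with $u=v\circ s_i$, the geometric description of the cover relation gives $a<b$, and $u$ and $v$ agree in every position outside $\{i,i+1\}$, differing only by exchanging the values $a$ and $b$ at these two adjacent positions. Hence the values in positions $i+2,\dots,N$ are identical for $u$ and $v$, so $S=\{u(i+2),\dots,u(N)\}$ and the complement $V-S=\{u(1),\dots,u(i+1)\}$ contains both $a$ and $b$. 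The requirement $a,b\notin S$ in the definition of an $ab$-cut set is then automatic, so it suffices to prove that $[\![u]\!]=[\![v]\!]$ if and only if $a$ and $b$ lie in different connected components of $G|_{V-S}$.

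The core observation is that the two constructions stay synchronized as long as they process a component that still meets $S$. Following the branch of the recursion whose vertex set $W$ contains both $a$ and $b$: while $W\not\subseteq V-S$, the subword of $u$ on $W$ contains a letter in a position $\ge i+2$, so its last letter (the root of this subtree) sits in a position $\ge i+2$, hence lies in $S$ and equals the corresponding last letter for $v$; in particular the root is never $a$ or $b$. Deleting this root depends only on the graph, not the word, so the resulting components coincide for $u$ and $v$, and the subwords on every component other than the one containing $\{a,b\}$ are literally equal (the exchange of $a$ and $b$ is confined to the component holding both). Thus the two $G$-trees can differ only inside the nested components containing $\{a,b\}$, and I recurse there.

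Next I would analyze the two ways this descent terminates. If at some step removing a root $r\in S$ splits $a$ and $b$ into different components $C_a\ni a$ and $C_b\ni b$, then the exchange becomes invisible: because $a$ and $b$ occupy the adjacent positions $i,i+1$ and no integer lies strictly between them, moving $a$ from position $i$ to $i+1$ does not change its order relative to any vertex of $C_a$, so the subwords of $u$ and $v$ on $C_a$ coincide, and likewise on $C_b$. From this point on all subwords agree, so the subtrees, and hence the full trees, are equal. If instead the recursion reaches a connected component $W\subseteq V-S$ still containing both $a$ and $b$, then every vertex of $W$ sits in a position $\le i+1$, and $b=u(i+1)$ occupies the maximal position, so the last letter of the subword of $u$ on $W$ is $b$, while that of $v$ is $a=v(i+1)$; the roots of this subtree differ, whence $[\![u]\!]\neq[\![v]\!]$.

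Finally I would match these two terminal behaviors to the cut-set condition by a path argument, which is the step I expect to need the most care. Every deleted root belongs to $S$. So if $a$ and $b$ are connected in $G|_{V-S}$, a connecting path $P\subseteq V-S$ is disjoint from $S$, is therefore never deleted, and remains inside the current component at every step (being connected and containing $a$); hence $a$ and $b$ can never be separated, the recursion terminates in the second way, and $[\![u]\!]\neq[\![v]\!]$. Conversely, if $a$ and $b$ lie in different components of $G|_{V-S}$, the recursion cannot terminate in the second way, since a connected $W\subseteq V-S$ containing both would connect them inside $G|_{V-S}$; so it must separate them, giving $[\![u]\!]=[\![v]\!]$. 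The main obstacle is exactly the bookkeeping that keeps the two parallel recursions in lockstep, namely the two facts that the deleted roots always come from $S$ while $W\not\subseteq V-S$ and that the adjacency of $a$ and $b$ renders the transposition invisible to every component not containing both, since the whole argument rests on localizing the only possible discrepancy to the shrinking $\{a,b\}$-component.
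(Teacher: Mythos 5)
Your proposal is correct and takes essentially the same approach as the paper: the paper proves the lemma by induction on $N$, deleting the common root $u(N)=v(N)$ (which lies in $S$) and restricting to the components, with the only possible discrepancy localized to the component containing both $u(i)$ and $u(i+1)$ --- exactly your parallel descent, and your two terminal cases (the swap becoming invisible once $u(i)$ and $u(i+1)$ are separated, versus differing roots once the surviving component lies in $V-S$) match the paper's cut-set case and its converse. The only cosmetic difference is that you unroll the induction into an explicit iteration and certify non-separation via a path in $G|_{V-S}$ surviving all root deletions, where the paper simply invokes the inductive hypothesis.
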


\begin{proof}
Suppose that $u$ and $v$ are as in the lemma's statement. We will show
that $S=\{u(i+2),u(i+3),\ldots, u(N)\}$ is a $u(i)u(i+1)$-cut set in
$G$ if and only if $u$ and $v$ have the same $G$-tree. This would prove the 
statement by the definition of $G$-equivalence.

We use induction on $N$. The base case is 
$N=2$. In this case $u=[1,2]$, $v=[2,1]$ and $G=K_2$. We can confirm $u$ and
$v$ have distinct $G$-trees and are in distinct equivalence classes.

Now assume that $N>2$. First we consider the case that
$S=\{u(i+2),u(i+3),\ldots, u(N)\}$ is a $u(i)u(i+1)$-cut set.  Because
$G$ is connected and $S$ is nonempty, $i+2\le N$. Thus $u(N)=v(N)$ and
$u$'s and $v$'s $G$-trees have the same root. Let $V'=V-\{u(N)\}$, where $V=V(G)$, and $S'=S-\{u(N)\}$. We have $V-S=V'-S'$, and $G|_{V-S}=G|_{V'-S'}=G_1\cup G_2\cup\cdots\cup G_k$, where
$G_1,G_2,\ldots,G_k$ are the connected components of
$G|_{V-S}$. Since $S$ is a $u(i)u(i+1)$-cut set, $u(i)$ and
$u(i+1)$ are in different components of $G|_{V'-S'}$. 
\omitt{
\tvi{Sam (Dec 18): Comment not addressed yet. Note that $u(i)$ and $u(i+1)$ are in different $G_j$ only if $u(N)$ by itself makes a $u(i)u(i+1)$-cut set. We are  assuming that it might take all of $\{u(i+2),\ldots, u(N)\}$ to form the cut set, so we also need to consider the case where $u(i)$ and $u(i+1)$ are in the same $G_j$. }
\susanna{I think the converse is in the last paragraph. 12/15/23}
\susanna{12/24/23 I think I understand my confusion and have changed $V-\{u(N)\}$.  }
\tvi{(Dec 26) The proof is correct and looks great. One comment: in $V'=V-\{u(N)\}$ and further, can we use $V(G)$ instead of $V$? It took me a moment to remember that $V$ might be the collection of vertices, but if we use $V(G)$ we won't confuse our readers.}
\susanna{There wasn't even a definition of $V$! I put it in the statement of the lemma. Is this ok? If so, we can remove all the comments.}
\tvi{Yes, this looks good. Comments are ready to be deleted. (3-19-24)}
}
The subwords of $u$ and $v$ restricted to the connected components are
therefore identical and produce the same $G$-trees on each component. The $G$-trees of $u$ and $v$ are identical, so $[\![u]\!]=[\![v]\!]$ when $S$ is a $u(i)u(i+1)$-cut set.

Continue to assume $N>2$, but now that $S=\{u(i+2),u(i+3),\ldots, u(N)\}$ is not a
$u(i)u(i+1)$-cut set of $G$. That is, $u(i)$ and $u(i+1)$ are in the same
component of $G|_{V-S}$. Say they are both in $G_1$ with vertices $V_1$. Denote the restriction of
$u$ to $V_1$ by $u'$ and of $v$ by $v'$. Then for some $i'\le i$,
$u's_{i'}=v'$. Since $\{u'(i'+2),\ldots,u'(|V_1|)\}\subseteq S$,
$\{u'(i'+2),\ldots,u'(|V_1|)\}$ is not a $u'(i')u'(i'+1)$-cut set. By
induction, $u'$ and $v'$ have different $G_1$-trees, which forces $u$
and $v$ to have different $G$-trees.
\end{proof}

\subsection{Hyperplane walks and the projection to $\tp{G}$}

\omitt{
\susanna{Do hyperplanes separate perms or connect them? Maybe we should have a definition when we introduce hyperplane walks.}
\susanna{Added def in Section~\ref{subsec:weakorder} for hyperplane separating two perms. 1/2/24}
}

Recall that chains in the weak order can be seen as hyperplane walks
in $\cH(\omega^{(n)})$. Fix a graph $G$ and a hyperplane walk $H$. We
will now describe which \omitt{consecutive} hyperplanes separate
permutations in the same $G$-equivalence class. If $H_{a,b}\in {H}$ separates two permutations in the same $G$-equivalence class, we say that $H_{a,b}$ is an {\it intra-class hyperplane} in ${H}$, otherwise $H_{a,b}$ is an {\it inter-class
hyperplane} in ${H}$. For an example, refer to the maximal chain on the right of $\fS_4$, shown on the left in Figure~\ref{fig:PathEquiv}. The hyperplane $H_{3,4}$ is not an intra-class hyperplane for this walk, whereas $H_{2,4}$ is.  

\omitt{In a balanced tableau we call a cell an {\it
intra-class cell} if it is associated to an intra-class
hyperplane. All other cells will be called {\it inter-class cells}.
\susanna{non edge cells and edge cells here}
}

Let $G$ be a filled, connected graph on $[N]$. Given two hyperplane walks in
$\fS_n$ and their corresponding chains of permutations, the two chains of permutations will project to the same chain in
$\tp{G}$ if removing all intra-class hyperplanes in each walk produces the same sequence of hyperplanes. We will call these two
hyperplane walks {\it $G$-equivalent}. This can be seen in
Figure~\ref{fig:PathEquiv}.
\label{rem:same_hyperplanewalk_check}  Let $\MH{G}$ be the collection of all hyperplane walks with their intra-class hyperplanes removed. 
\omitt{\sam{Let $\MH{G}$ be the collection of all hyperplane walks after a projection into $\tp{G}$ are removing all intra-class hyperplanes. (4/4)}
\tvi{New text added in red. Delete red if OK. Add example of $\MH{G}$? (4/4)}
\susanna{Is this ok? Let $\MH{G}$ be the collection of all hyperplane walks with their intra-class hyperplanes removed. These are the hyperplane walks corresponding to $\tp{G}$. }
\susanna{I changed your red text to a box here. Also, I wouldn't add an example here. 4/8/24}
} 

We want to determine whether two hyperplane walks in $\fS_n$ are $G$-equivalent in terms of their associated balanced tableaux. We will also characterize the balanced tableaux of hyperplane walks which are $G$-equivalent only to themselves and therefore correspond to longest length maximal chains in $\tp{G}$.

\omitt{

Let $\lambda=(\lambda_1,\lambda_2,\ldots, \lambda_l)$ be an integer partition and $\mu=(\mu_1,\mu_2,\ldots,\mu_k)$ be another such that $\lambda_i>\mu_i$ for all $i\in[k]$ where $l<k$. Looking at the Young diagram of $\lambda$ and $\mu$, we have $\mu$ sitting inside of $\lambda$. In any column, there is a range of cells that are outside of $\mu$, but are in $\lambda$. If $[i,j]=\{i, i+1,\ldots, j\}$ is the range of rows outside of $\mu$ but inside $\lambda$ for some column $c$, we call the interval $[i,j]$ a {\it row difference interval} in $\lambda// \mu$. See Figure~\ref{fig:intraEX} for an example. 
\tvi{When defining integer partition define these skew partitions too. }

Let $G$ be a filled graph on $n$ vertices. The {\it non-edge partition} is $\mu=(\mu_1,\mu_2,\ldots,\mu_k)$ where $\mu_i$ is $n-i$ minus the number of edges $ij$ where $i<j$. Exclude any parts that become 0. The idea is that there are $n-i$ possible edges $ij$ for any $i$ with $i<j$. We are having $\mu_i$ count the number of vertices greater than $i$ that are not adjacent to $i$. Given a filled graph $G$ on $[n]$ define the {\it row difference intervals of $G$} to be the row difference intervals of $\stair_{n} // \mu$ where $\mu$ is the non-edge partition of $G$. 
} 

\omitt{
\susanna{Do we use this lemma anywhere? Do we use $H_{a,b,c}$ anywhere?}
\tvi{Sam (Dec 18): No we don't use Lemma~\ref{lem:bal_L2} anywhere. It can be removed. }
\begin{lemma}
Let $G$ be a filled graph and  $\mathcal{H}$ be a hyperplane walk in $\mathcal{B}_n$ and $T$ be its associated balanced tableau. We have that $H_{a,b,c}\in L_2(\mathcal{H})$ if and only if the filling in cell $(a,n-c+1)$ is larger than the filling in cell $(b,n-c+1)$. 
\label{lem:bal_L2}
\end{lemma}
\begin{proof}
If $H_{a,b,c}\in L_2(\mathcal{H})$ that means that the braid move involving $H_{a,b}H_{a,c}H_{b,c}$ occurred and in $\cH$ we have these hyperplanes appearing in order $H_{b,c}H_{a,c}H_{a,b}$, but they may not be adjacent. This happens exactly when $(a,n-c+1)$ is larger than the filling in cell $(b,n-c+1)$ because $H_{a,b}H_{a,c}H_{b,c}$  can only be in two possible orders, forwards and backwards. 
\end{proof}
}
\omitt{
\susanna{Is a filled graph always connected?}
\tvi{According to the definition, not all filled graphs have to be connected. all graphs with no edges are filled.}
\tvi{(Dec 18) I think a lot of the theory we developed assumes we are working with a connected filled graph. }
}
\begin{lemma}
Let $G$ be a connected, filled graph with edge set $E$.  A set $S$ is an $ab$-cut set of $G$ if and only if we can find a $c\in[a+1,b]$  where $ac$ is not an edge of $G$, and $S$ contains all $j$  such that $j<c$ and $jc\in E$. 
\label{lem:filled_cut}
\end{lemma}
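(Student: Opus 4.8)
The plan is to reduce everything to one structural feature of filled graphs and then handle the two implications by an explicit choice of the witness $c$. Throughout I read the statement with $a<b$ (forced by the interval $[a+1,b]$) and, since an $ab$-cut set by definition omits $a$ and $b$, I assume $a,b\notin S$; in the forward implication this is automatic, while in the backward implication I take it as a standing hypothesis (it is genuinely needed, since otherwise the right-hand side can hold with $a\in S$). I abbreviate $N^-(c)=\{j<c: jc\in E\}$.

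First I would record the key fact about filled graphs: if $p<q$ and $pq\in E$, then the entire interval $[p,q]$ is a clique. This is immediate by iterating the filled condition (from $pq\in E$ one gets $xq\in E$ for all $x\in[p,q)$, and then $xy\in E$ for all $p\le x<y\le q$). Two consequences drive the proof. First, a \emph{no-crossing} principle: if $p<z<q$ and $pq\in E$ then $z$ is adjacent to both $p$ and $q$, so any edge joining a vertex below $z$ to a vertex above $z$ is incident to a neighbour of $z$. Second, for any vertex $c$ the set $N^-(c)$ is an interval $[\ell,c-1]$, and there is no edge at all between $\{v<\ell\}$ and $\{v\ge c\}$, since such an edge $pq$ with $p<\ell\le c\le q$ would force $pc\in E$, i.e.\ $p\ge\ell$, a contradiction.

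For the backward implication I would argue directly from the second consequence. Given $c\in[a+1,b]$ with $ac\notin E$ and $N^-(c)=[\ell,c-1]\subseteq S$, note that $ac\notin E$ together with $a<c$ gives $a<\ell$. Deleting $S$ removes the whole block $[\ell,c-1]$, so $V-S$ is contained in $\{v<\ell\}\cup\{v\ge c\}$ with no edges between these two blocks. Since $a<\ell$ and $b\ge c$, and neither lies in $S$, the vertices $a$ and $b$ fall into different blocks, hence into different components of $G|_{V-S}$, so $S$ is an $ab$-cut set.

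The forward implication is the substantive half, and the crucial decision is which $c$ to produce. The plan is to let $B$ be the connected component of $b$ in $G|_{V-S}$ and set $c=\min B$. Then $c\le b$ trivially, and $c>a$ because otherwise a path inside $B$ from $c<a$ to $b>a$ would avoid $a$ (as $a\notin B$) yet cross the value $a$, producing an edge $pq$ with $p<a<q$ and $p\in B$; the no-crossing principle then gives $pa\in E$ and hence $a\in B$, a contradiction. The same idea gives $ac\notin E$ (an edge $ac$ with $c\in B$, $a\notin S$ would put $a\in B$) and $N^-(c)\subseteq S$ (any surviving smaller neighbour $j$ of $c=\min B$ would join $B$, contradicting $j<\min B$). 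I expect the main obstacle to be exactly this choice of witness: a naive guess such as the first non-neighbour of $a$ need not satisfy $N^-(c)\subseteq S$ (a short example on a path already shows this), whereas $c=\min B$ works precisely because the bottom vertex of a component automatically has all its smaller neighbours deleted. Stating the filled no-crossing principle cleanly is the one technical point on which the whole argument turns.
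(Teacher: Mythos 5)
Your proof is correct, and in the substantive direction it takes a genuinely different and slicker route than the paper's. For the implication (cut set $\Rightarrow$ witness $c$), the paper decomposes $S\cap[a+1,b]$ into maximal intervals $[b_i+1,c_i-1]$, sets $c_i'=\min\{c : cc_i\in E\}$, and argues by contradiction: if no $c_i$ had $[c_i',c_i-1]\subseteq S$, each surviving interval $[c_i,b_{i+1}]$ would connect to one strictly to its left, and an induction on the number of intervals assembles an $a$--$b$ path in the complement of $S$. Your choice $c=\min B$, with $B$ the component of $b$ in $G|_{V-S}$, replaces all of that bookkeeping with three one-line verifications ($c>a$, $ac\notin E$, and all smaller neighbours of $c$ deleted), each an instance of the same no-crossing principle; in particular the condition on $S$ is automatic because the bottom vertex of a component has every smaller neighbour removed, which is exactly the insight the paper's interval decomposition is groping toward. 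For the easier direction the two arguments are the same filled-graph fact in different clothing: the paper shows every $a$--$b$ path must hit the interval $[c',c-1]$ via a minimal-crossing-index argument, while you state it globally as the absence of edges between $\{v<\ell\}$ and $\{v\ge c\}$. Two minor points: you implicitly use that the set of smaller neighbours of $c$ is nonempty (so that $\ell$ is defined), which holds because connectivity plus filledness forces $c-1$ to be adjacent to $c$ --- a fact the paper invokes elsewhere and that is worth one sentence here; and your standing hypothesis $a,b\notin S$ in the backward direction matches the paper exactly, whose proof of that direction likewise assumes it, since it is built into the definition of an $ab$-cut set.
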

\begin{proof}
First suppose we have a vertex $c\in[a+1,b]$ such that $ac\notin E$
and a set $S$ such that $a,b\notin S$ and $\{j<c|jc\in E\}\subseteq
S$. We show that $S$ is an $ab$-cut set. Note that since $G$ is filled
and $ac$ is not an edge, there is not an edge $jc$ where $j\le
a$. Further, $\{j<c:jc\in E\}=[c',c-1]$ for some $c'>a$. By
assumption, $[c',c-1]\subseteq S$.

It will be sufficient to show the following. Every path $P$ between
$a$ and $b$ must contain some vertex in $[c',c-1]$. Suppose, for
contradiction, that we have a path $v_0=a,\ldots,v_{k-1},v_k=b$ that
does not contain a vertex of $[c',c-1]$. Let $i$ be the smallest index
such that $v_i\ge c$, which means that $v_{i-1}<c$ and therefore
$v_{i-1}<c'$. Because $G$ is filled and $v_{i-1}v_i\in E$, the edges
$v_{i-1}c$ and $cv_i$ exist. But $v_{i-1}c\in E$ forces $c'\le
v_{i-1}$, which is a contradiction.

Conversely, assume $S$ is an $ab$-cut set. The set $S\cap[a+1,b]$ is the union of disjoint, nonempty intervals $[b_1+1,c_1-1], [b_2+1,c_2-1], \ldots, [b_k+1,c_k-1]$ where $a=c_0\leq b_1<c_1\leq b_2<c_2\leq \cdots \leq b_k<c_k\leq b=b_{k+1}$.
\omitt{Let
$\mathcal{C}=\{x\in[a+1,b]|x\notin S,x-1\in S\}$ and let
$\mathcal{B}=\{x\in[a+1,b]|x\notin S,x+1\in S\}$. The set
$\mathcal{C}\ne\emptyset$ because $b\not\in S$ and
$\mathcal{B}\ne\emptyset$ because $a\not\in S$. 
\tvi{Why is $\mathcal(C)\neq \emptyset$? I can see that $b\notin S$, but we may not have $b-1\in S$. I have a similar question for $\mathcal{B}\neq \emptyset$. It might be easier to define your $b_i$'s and $c_i$'s by saying that $S\cap [a+1,b]$ is the union of disjoint intervals $[b_1+1,c_1-1], [b_2+1,c_2-1], \ldots, [b_k+1,c_k-1]$ where so $a\leq b_1<c_1\leq b_2<c_2\leq \cdots \leq b_k<c_k\leq b=b_{k+1}$. Your picture was great at explaining what you are doing. The concept of the proof looks correct up to these small details although I haven't fully read it through with detail.}
Write $\mathcal{C}=\{
a<c_1<c_2<\cdots<c_k\le b\}$ and $\mathcal{B}=\{ a\le
<b_1<b_2<\cdots<a_k< b\}$. 
}We have that $a=c_0\le
b_1<c_1\le b_2<\cdots<b_k<c_k\le b=b_{k+1}$, $G|_{[c_i,b_{i+1}]}$ is connected,
and $[c_i,b_{i+1}]$ is contained in the complement of $S$, whereas
$(b_i,c_i)$ is contained in $S$. See Figure~\ref{fig:abCut}.

\begin{figure}[h]
\centering
\begin{tikzpicture}
\def\d{.7}
\def\r{2}
\def\v{.5}
\coordinate (v-1) at (-\d*2,0);
\coordinate (v0) at (-\d*1,0);
\coordinate (v1) at (\d*0,0);
\coordinate (v2) at (\d*1,0);
\coordinate (v3) at (\d*2,0);
\coordinate (v4) at (\d*3,0);
\coordinate (v45) at (\d*3.5,0);
\coordinate (v5) at (\d*4,0);
\coordinate (v6) at (\d*5,0);
\coordinate (v7) at (\d*6,0);
\coordinate (v8) at (\d*7,0);
\coordinate (v9) at (\d*8,0);
\coordinate (v10) at (\d*9,0);
\coordinate (v11) at (\d*10,0);
\coordinate (v12) at (\d*11,0);
\coordinate (v13) at (\d*12,0);
\coordinate (v14) at (\d*13,0);
\coordinate (v15) at (\d*14,0);
\coordinate (v16) at (\d*15,0);
\coordinate (v17) at (\d*16,0);

\fill[red!30] (v45) circle[x radius=2*\d,y radius=.7*\d]; 
\fill[red!30] (v11) circle[x radius=1.5*\d,y radius=.7*\d]; 

\node[above=\v,black] at (v45) {$S$};
\node[above=\v,black] at (v11) {$S$};

\draw[black,dashed](v-1)--(v0);
\draw[black,dashed](v16)--(v17);
\draw[black](v0)--(v1)--(v2)--(v3)--(v4)--(v16);

\fill[red](v1) circle(\r pt) node {};
\fill[red](v2) circle(\r pt) node[below=\v,blue] {$b_i$};
\fill[red](v3) circle(\r pt) node {};
\fill[red](v4) circle(\r pt) node {};
\fill[red](v5) circle(\r pt) node {};
\fill[red](v6) circle(\r pt) node {};
\fill[red](v7) circle(\r pt) node[below=\v,blue] {$c_i$};
\fill[red](v8) circle(\r pt) node {};
\fill[red](v9) circle(\r pt) node[below=\v,blue] {$b_{i+1}$};
\fill[red](v10) circle(\r pt) node {};
\fill[red](v11) circle(\r pt) node {};
\fill[red](v12) circle(\r pt) node {};
\fill[red](v13) circle(\r pt) node[below=\v,blue] {$c_{i+1}$};
\fill[red](v14) circle(\r pt) node {};
\fill[red](v15) circle(\r pt) node {};
\end{tikzpicture}
\caption{The figure represents a portion of a filled, connected graph $G$: some of the vertices in $[a,b]$. The proof of Lemma~\ref{lem:filled_cut} focuses on this portion. We have drawn only edges between consecutive vertices and $G$ may have more edges.}
\label{fig:abCut}
\end{figure}
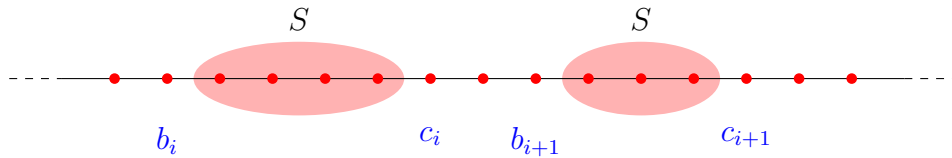

For each $i, 1\le i\le k$, let $c'_i=\min\{c|cc_i\in E\}$. It is
sufficient to show that there is an $i, 1\le i\le k$ such that
$[c_i',c_i-1]\subseteq S$ and $ac_i$ is not an edge.

Let $i, 1\le i\le k$ be such that $[c'_i,c_i-1]\not\subseteq S$. There is a $b'_i\in[c'_i,c_i-1]$ where $b_i'\notin S$. Since $G$ is filled, $b_i'c_i\in E$. There is then for each $i$ an $1\le h=h_i<i$ such that $c_{h-1}\leq
b_i'\le b_h$ and the distinct induced subgraphs $G|_{[c_{h-1},b_h]}$ and
$G|_{[c_i,b_{i+1}]}$ are connected to each other. We have shown that each interval $[c_i,b_{i+1}]$ in the complement of $S$ such that $[c'_i, c_i-1]\not\subseteq S$ is connected to at least one interval that is also in the complement of $S$ and is strictly to its left. For contradiction, suppose there is no $i, 1\le i\le k$, such that $[c'_i,c_i-1]\subseteq S$. We show by induction on $k$ that we can build a path from $a$ to $b$. If $k=1$, there is only one interval, namely $[c_0a,b_1]$, strictly to the left of $[c_1,b_2=b]$. They are therefore connected and there is a path from $a$ to $b$. Assume the statement holds for $k-1$. There is then a path from $a$ to $b_k$ and an edge from $c_k$ to $b'_k\in[c_{h-1},b_h]$ for some $h\le k$. We may then construct a path from $a$ to $b$ that is in the complement of $S$, contradicting $S$ being an $ab$-cut set.  

Therefore, there is at least
one $i,1\le i\le k$ such that $[c'_i,c_i-1]\subseteq S$. Since $[c'_i,c_i-1]\subseteq S$ and $a\notin S$, $ac_i\notin E$. We may set the $c$ described in the statement of the lemma equal to any of the $c_i$ such that $[c'_i,c_i-1]\subseteq S$. \omitt{Suppose that $G$ is filled with edge set $E$. Pick a $c\in[a+1,b]$ where $ac\notin E$. Note that since $G$ is filled and $ac$ is not an edge, there is not an edge $jc$ when $j\leq a$. Otherwise $ac$ would have to be an edge. Further, the collection of $j<c$ where $jc$ is an edge is some interval $[c',c-1]$ for $a<c'$. It will be sufficient to show the following. Given any path $P$ that starts at $a$ and ends at $b$, this path must contain some vertex in $[c',c-1]$. 

Suppose the opposite that we have a path $P$, $v_0=a,v_1,v_2,\ldots, v_{k-1},v_k=b$, that does not contain a vertex in $[c',c-1]$. Let $i$ be the smallest index with $v_i\geq c$. This means that $v_{i-1}<c'$. Because $G$ is filled if $v_i=c$, then we have the edge $v_{i-1}c$. If $v_i<c$, then we have the edges $v_{i-1}c,cv_i$. In any case we have the edge $v_{i-1}c$ with $v_{i-1}<c'$, which according to our set up is impossible. }
\end{proof}

Let  $G$ be  a graph  on vertex set $[N]$ and  $T$ a
tableau  of  shape  $(N-1,N-2,\ldots,2,1)$.   We  say  that  the  cell
$(a,N+1-b)$ is an \mydef{edge cell} of $T$  if $ab$ is an edge of $G$;
otherwise, it is a \mydef{non-edge cell}.

In addition, let $G$ be filled and connected. For each column $j$ of $T$, there
is the set $I_j$ of row indices of the edge cells in that
column. Because $G$ is a filled graph, the set is in fact an interval,
which we call an \mydef{edge interval} of $G$. In other words,
$I_j=\{i|\{i,N-j+1\}\text{ is an edge of $G$}\}=[c,d]$, for some $1\le c,d\le N-1$. Since $G$ is filled and connected, $N-j$ is always an element of $I_j$. See Figure~\ref{fig:intraEX}.

\begin{theorem}
Let $G$ be a filled and connected graph on $N$ vertices with edge set $E$, let ${H}$ in $ \cH(\omega^{(N)})$ be a hyperplane walk, and let $T$ be the balanced tableau corresponding to ${H}$.  The hyperplane $H_{a,b}\in{H}$ is an intra-class hyperplane if and only if any one of these equivalent conditions holds.
\begin{enumerate}
\item\label{thmpart:connect} $H_{a,b}$ separates two permutations $v=u\circ s_i$ where $\{u(i+2),u(i+3),\ldots, u(n)\}$ is an $ab$-cut set. 
\item \label{thmpart:before} There is a $j\in[N]$ such that $N-j+1\in[a+1,b]$, $\{a,N-j+1\}\notin E$, and for all $i$ in the edge interval $I_j$, $i<b$ and the hyperplane $H_{i,b}$ precedes $H_{a,b}$ in $H$. 
\item \label{thmpart:baltab} There is $j\in[N]$ such that $N-j+1\in[a+1,b]$, $\{a,N-j+1\}\notin E$, and for all $i$ in the edge interval $I_j$, $i<b$ and the entry in the cell $(a,N-b+1)$ in $T$ is larger than the entry in $(i,N-b+1)$. In this case, we call the cell $(a,N-b+1)$ an \mydef{intra-class cell} of the tableau $T$.
\end{enumerate}
\label{thm:intra_class_conditions}
\end{theorem}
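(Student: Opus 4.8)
The plan is to prove the three conditions mutually equivalent by anchoring (1) directly to the definition of ``intra-class'' via Lemma~\ref{lem:ab-cut}, dispatching (2)$\iff$(3) as a translation between crossing order and tableau entries, and reserving the real work for (1)$\iff$(2), which I would obtain from Lemma~\ref{lem:filled_cut}. For the first step, I would unwind (1): by the geometric description of cover relations, the hyperplane $H_{a,b}$ (with $a<b$) separating consecutive permutations in the walk means there is a position $p$ with $u(p)=a$, $u(p+1)=b$ and $u,v$ related by $s_p$; here $S=\{u(p+2),\ldots,u(N)\}$ is exactly the set of values lying strictly after $b$ in $u$. Lemma~\ref{lem:ab-cut} then states that $[\![u]\!]=[\![v]\!]$ — that is, $H_{a,b}$ is intra-class — precisely when $S$ is an $ab$-cut set, which is the content of (1). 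Thus ``$H_{a,b}$ is intra-class'' and (1) are the same statement.

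Next I would dispatch (2)$\iff$(3). By construction of the balanced tableau, cell $(x,N-y+1)$ records the step at which $H_{x,y}$ is crossed along $H$; in particular both $(a,N-b+1)$ and $(i,N-b+1)$ lie in the column associated with the larger value $b$. Hence the entry in $(a,N-b+1)$ exceeds the entry in $(i,N-b+1)$ if and only if $H_{a,b}$ is crossed after $H_{i,b}$, i.e. if and only if $H_{i,b}$ precedes $H_{a,b}$ in $H$. Since the clauses ``$N-j+1\in[a+1,b]$'' and ``$\{a,N-j+1\}\notin E$'' are literally identical in (2) and (3), the two statements coincide.

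The core is (1)$\iff$(2). I would set $c=N-j+1$, so that $j$ indexes the column associated with $c$ and $I_j$ is the set of row indices of edge cells in that column; because the staircase column $j$ has only rows $1,\ldots,c-1$, this $I_j$ equals $\{i<c:\{i,c\}\in E\}$, the set of lower neighbors of $c$ — precisely the set appearing in Lemma~\ref{lem:filled_cut}. The key bridge is that, for a value $i$, the statement ``$H_{i,b}$ precedes $H_{a,b}$'' holds if and only if $b$ precedes $i$ in $u$ (the inversion $(b,i)$ has already been created by the time we reach $u$), which holds if and only if $i\in S$, since $b$ sits at position $p+1$. As every $i\in I_j$ satisfies $i<c\le b$, the clause ``$i<b$'' in (2) is automatic, so (2) asserts exactly $I_j\subseteq S$. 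Matching $c$ with the vertex of the same name in Lemma~\ref{lem:filled_cut} yields: $S$ is an $ab$-cut set $\iff$ there is a $c\in[a+1,b]$ with $\{a,c\}\notin E$ and $I_j\subseteq S$ $\iff$ (2) holds. Running this through the (1)$\Rightarrow$(2) and (2)$\Rightarrow$(1) directions separately (forward: produce $c$ from the lemma and check each $i\in I_j$ lies in $S$; backward: read off $I_j\subseteq S$ from the precedence clauses and apply the lemma) closes the loop.

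The step I expect to be the main obstacle is this last bridge, specifically the careful verification of ``$i\in S\iff H_{i,b}$ precedes $H_{a,b}$'' together with the reconciliation of the theorem's $I_j$ (the rows actually present in column $j$) with the lower-neighbor set of $c$ used in Lemma~\ref{lem:filled_cut}. Here filledness must be invoked to ensure $a\notin I_j$ (since $\{a,c\}\notin E$ with $a<c$ forbids any edge $\{i,c\}$ for $i\le a$), so that the lower endpoint of the neighborhood exceeds $a$, and to guarantee $N-j\in I_j$. The indexing conventions — column $N-b+1$ belonging to $b$ versus column $j=N-c+1$ belonging to $c$ — are the easiest place to slip, so I would track rows and columns explicitly throughout.
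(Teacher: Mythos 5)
Your proposal is correct and follows essentially the same route as the paper's proof: it anchors condition (1) to the definition of intra-class via Lemma~\ref{lem:ab-cut}, handles (2)$\iff$(3) through the dictionary between walk positions and balanced-tableau entries, and proves (1)$\iff$(2) by setting $c=N-j+1$ and invoking Lemma~\ref{lem:filled_cut}, with the same key bridge that $H_{i,b}$ precedes $H_{a,b}$ exactly when $(b,i)$ is already an inversion of the permutation just before the crossing, i.e.\ when $i\in S$. The details you flag as delicate (the automatic clause $i<b$, the identification of $I_j$ with the lower-neighbor set of $c$, and $a\notin I_j$) are precisely the points the paper's argument also checks, so no gap remains.
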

\begin{proof} Let ${H}\in \cH(\omega^{(n)})$ for a filled and connected graph $G$ with edge set $E$ and write $H=H_{a_1,b_1}H_{a_2,b_2}\ldots H_{a_M,b_M}$ for $M=\binom{N}{2}$. Denote the chain of permutations in $\fS_N$ which corresponds to $H$ by $\{\ii=\wch{0}\lessdot\wch{1}\lessdot\wch{2}\lessdot\cdots\lessdot\wch{M}=\longestk{N}\}$ and the reduced word by $s_{i_1}s_{i_2}\cdots s_{i_M}$, where $\wch{k}=\wch{k-1}s_{i_k}$ and $H_{a_k,b_k}$ separates $\wch{k-1}$ and $\wch{k}$.   
We know that $\wch{k-1}$ and $\wch{k}$
 are in the same $G$-equivalence class if and only if $S=\{\wch{k}(i_k+2),
 \wch{k}(i_k+3), \ldots, \wch{k}(N)\}$ is an $a_{k-1}b_{k-1}$-cut set in $G$ by
 Lemma~\ref{lem:ab-cut}.  Thus \eqref{thmpart:connect} is equivalent to $H_{a,b}$ being an intra-class hyperplane.
 
 By Lemma~\ref{lem:filled_cut}, \eqref{thmpart:connect} is equivalent to the existence of $c$ such that $c\in[a_{k-1}+1, b_{k-1}]$ with $a_{k-1}c\notin
 E$ and such that the set $S$ contains all $i$ where $ic\in E$ and $i<c$. The set $\{i:i<c\text{ and }ic\in E\}$ is the edge interval $I_{N-c+1}$. For $i\in I_{N-c+1}$, we have $i<c\le b$, and $i\in S$, so that $i$ and $b$ correspond to an inversion in $\wch{k-1}$. Thus $H_{i,b}$ precedes the $k^{\text{th}}$ hyperplane, namely $H_{a,b}$, and \eqref{thmpart:connect} implies \eqref{thmpart:before}. Since the entries in the tableau $T$ indicate the position of the hyperplanes in the walk, \eqref{thmpart:before} implies \eqref{thmpart:baltab}. Likewise, \eqref{thmpart:baltab} implies \eqref{thmpart:before}, again using the definition of the correspondence between balanced tableaux and hyperplane walks. 

 Finally, suppose \eqref{thmpart:before} holds: There is a$j\in[N]$ such that $N-j+1\in[a+1,b]$, $\{a,N-j+1\}\notin E$, and for all $i$ in the edge interval $I_j$, the hyperplane $H_{i,b}$ precedes $H_{a,b}$ in $H$. Suppose $H_{a,b}$ is the $k^{\text{th}}$ hyperplane in the walk $H$ and that $h$ is such that $\wch{k-1}(h)=a$ and $\wch{k-1}(h+1)=b$. Let $S=\{\wch{k-1}(h+2),\ldots,\wch{k-1}(N)\}$ and let $c=N-j+1$. The number $c\in[N]$ is an element of $S$. By Lemma~\ref{lem:filled_cut}, in order to show \eqref{thmpart:connect}, it is enough to show that $c\in[a+1,b]$, $ac\notin E$, and $S$ contains all $i$ such that $i<c$ and $ic\in E$. By hypothesis, $c\in[a+1,b]$ and $ac$ is not an edge of $G$. Let $i<c$ and $ic\in E$. Then $i\in I_j$, so that by hypothesis, $H_{i,b}$ precedes $H_{a,b}$ in $H$. Therefore $(i,b)$ is an inversion in $\wch{k-1}$, so that $w(\ell)=i$ for $\ell>h+1$. Therefore $i\in S$ and \eqref{thmpart:before} implies \eqref{thmpart:connect}. \omitt{
 Since $G$ is filled and connected, ${c-1,c}$ is an edge of $G$, so that $c-1\in I_j$. In particular, $H_{c-1,b}$ is a hyperplane, so that $(c-1$
 
 Let $J$ be
 the collection of all these $j$'s. Note that $J$ is a row difference
 interval of $G$ \tvi{(Justify more? Perhaps a lemma how these certain
 intervals are row difference intervals of unit interval graphs.)} so
 \eqref{thmpart:connect} if and only if \eqref{thmpart:temp}.  Considering where $a_k$ is in the
 permutation $w_k$, adjacent to $b_k$. We know that all $j\in J$
 appear right of $b_ka_k$ in $w_k$.  Meaning that $j\in J$ and $b_k$
 appear in decreasing order, since all $j\in J$ have the additional
 property where $j<b_k$. This means by
 Remark~\ref{rem:hyperwalks_perms_inv} that the hyperplane $H_{j,b_k}$
 is left of $H_{a_k,b_k}$ in ${H}$. Hence (3) if and only if (2).
As a result in the balanced tableau the cell $(a_k,n-b_k+1)$ has an entry that is larger than all the entries in cells $(j,n-b_k+1)$ for all $j\in J$ by Lemma~\ref{lem:bal_L2}. Hence, (2) if and only if (4).
} 
\end{proof}

\omitt{
\begin{remark}
The first condition in Theorem~\ref{thm:intra_class_conditions} does not require $G$ to be filled. 
\end{remark}
\susanna{Maybe we can get rid of this remark? It is not so clear anymore. 12/11/2023}
\tvi{(Dec 18) I'm OK removing it. }
}
\begin{example}
We examine the hyperplane walk given in Figure~\ref{fig:fourassoc} in the case $G=P_4$ with Theorem~\ref{thm:intra_class_conditions} in mind. See also Figure~\ref{fig:PathEquiv}. The edge intervals are $I_1=\{3\}$, $I_2=\{2\}$, and $I_3=\{1\}$. The hyperplane $H_{1,3}$ is an intra-class hyperplane. The permutation $\wch{1}$ is $[1324]$, $\wch{2}$ is $\wch{1}\circ s_{i_2}=[1324]s_1=[3124]$, $a$ is $\wch1(1)=1$, $b$ is $\wch1(2)=3$ and the $ab$-cut set is $S=\{2,4\}$. Lemma~\ref{lem:filled_cut} guarantees a $c$, $c\in[a+1,b]=[2,3]$ and $ac\notin E$, such that $S$ contains all $j$ such that $j<c$ and $jc\in E$. In this case, $c=3$ and $N-c+1=2$. Since $I_2=\{2\}$, we confirm that for all $j\in I_2$, $H_{j,b}$ precedes $H_{a,b}$ in $H$. The entry in row $1$, column $2$ of the balanced tableau is larger than the entry in row $2$, column $2$.

The hyperplane $H_{1,4}$ is also intra-class for this walk and graph. For this hyperplane, the fourth in the walk, $a=1$, $b=4$, $i_4=2$, $c\in[2,4]$. It separates $\wch{3}=[3142]$ and $\wch{4}=[3412]$, so that $S=\{2\}$ and $c=2$. Again, we confirm that for all $j\in I_2$, $H_{j,4}$ precedes $H_{1,4}$ in $H$. The entry in row $1$, column $1$ of the balanced tableau is larger than the entry in row $2$, column $1$.  
\end{example}

Recall that two hyperplane walks are $G$-equivalent if removing the intra-class hyperplanes from both of them results in the same sequence of hyperplanes. Theorem~\ref{thm:intra_class_conditions} gives us the condition for determining those hyperplanes, both in terms of walks and in terms of balanced tableaux. In a balanced tableau, we remove the entries in the the intra-class cells, then standardize the remaining entries by replacing the $i^{\text{th}}$ smallest with $i$. See Figure~\ref{fig:intraEX}. The new tableau is the $G$-balanced tableau of the walk. Two walks are $G$-equivalent if their $G$-balanced tableaux are the same.   Let $\MB{G}$ be the collection of all $G$-balanced tableaux.
\omitt{\tvi{New text added in red. Delete red if OK.  (4/4)}


\tvi{Sam (12/18): Could we add a proposition that summarize all objects that are in bijection with maximal chain of $\tp{G}$ in general?}
\susanna{I can add one at the end of Section~\ref{sec:mlcinlollis} for maximal length chains and we already have the list in Theorem~\ref{thm:lollipop chains}. Do you want something different from either of these? 1/3/24}
\tvi{(3-19-24) Yes, it would be different. In this section we are talking about $\tp{G}$ in general. In Sections 4 and 5 we talk about only $\tp{L_{m,n}}$. In any case, what exactly we add or don't add can be determined as we do a full read through.}
\tvi{(4/4) Here is a proposition I am proposing to include (Theorem~\ref{thm:general chains}).}
}
\begin{theorem}
\label{thm:general chains}
The following are in bijection and are in bijection with maximal chains in $\tp{G}$ for a connected filled $N$-vertex graph $G$.
\begin{enumerate}
\item The subsequence of all $G$-permutations within a maximal chain in the weak order $\fS_N$. 
\item \omitt{The collection of hyperplane walks projected onto $\tp{G}$ with intra-hyperplanes deleted, $\MH{G}$.}The collection $\MH{G}$ of hyperplane walks of $\fS_n$ with their intra-class hyperplanes deleted. 
\omitt{\susanna{I would write this as The collection $\MH{G}$ of hyperplane walks of $\fS_n$ with their intra-class hyperplanes deleted. Otherwise, the theorem is fine. 4/17/24}}
\item The $G$-balanced tableaux, $\MB{G}$.
\end{enumerate}
\end{theorem}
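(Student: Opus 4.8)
The plan is to treat all three objects as different encodings of a single maximal chain in $\tp{G}$, using the projection $\fS_N \to \tp{G}$, $w \mapsto [\![w]\!]$, together with the correspondences already recorded in Theorem~\ref{thm:longestwordSets} and Theorem~\ref{thm:intra_class_conditions}. First I would recall from~\cite{BM} that, since $G$ is filled and connected, this projection is a surjective lattice quotient map. A standard consequence is that it is order preserving and sends each cover relation either to a cover relation or to an equality, so the image of a maximal chain $\ii_N = w^{(0)} \lessdot \cdots \lessdot w^{(M)} = \longestk{N}$ of $\fS_N$ is, after deleting repetitions, a maximal chain $\hat 0 = B_0 \lessdot \cdots \lessdot B_s = \hat 1$ of $\tp{G}$. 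Conversely every maximal chain of $\tp{G}$ arises this way: the projection restricts to an order embedding on the $G$-permutations, so each cover $B_t \lessdot B_{t+1}$ lifts to a saturated chain of $\fS_N$ between the minimal (i.e.\ $G$-permutation) representatives of $B_t$ and $B_{t+1}$, and concatenating these lifts produces a maximal chain of $\fS_N$ projecting to the given one. Because $\tp{G}$ is realized as the induced subposet of $\fS_N$ on the $G$-permutations, recording for each visited class its unique $G$-permutation representative expresses this maximal chain of $\tp{G}$ as the asserted sequence of $G$-permutations, which is item~(1).

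The bijections with items~(2) and~(3) then come from transporting this picture through Theorem~\ref{thm:longestwordSets}. A maximal chain of $\fS_N$ is the same data as a hyperplane walk in $\cH(\longestk{N})$ and as a balanced tableau of shape $\stair_N$. Under the projection, the cover $w^{(k-1)} \lessdot w^{(k)}$ collapses precisely when the hyperplane crossed at that step is an intra-class hyperplane, equivalently when the corresponding cell of the balanced tableau is an intra-class cell; this is exactly the content of Theorem~\ref{thm:intra_class_conditions}, parts~\eqref{thmpart:before} and~\eqref{thmpart:baltab}. Hence the surviving inter-class hyperplanes, read in order, record exactly the covers $B_0 \lessdot \cdots \lessdot B_s$ of the projected chain, so deleting the intra-class hyperplanes attaches to that maximal chain of $\tp{G}$ an element of $\MH{G}$, while deleting the intra-class cells and standardizing attaches the corresponding $G$-balanced tableau in $\MB{G}$. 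Since the walk-to-tableau bijection of Theorem~\ref{thm:longestwordSets} matches intra-class hyperplanes with intra-class cells, it restricts directly to a bijection $\MH{G} \leftrightarrow \MB{G}$.

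What remains, and what I expect to be the main point requiring care, is to verify that these deletions are faithful invariants of the projected chain: two maximal chains of $\fS_N$ have the same image in $\tp{G}$ if and only if deleting their intra-class hyperplanes yields the same sequence, equivalently the same $G$-balanced tableau. The ``if'' direction is the observation already made when $G$-equivalent hyperplane walks were defined. For ``only if'' I would argue that the ordered list of inter-class hyperplanes determines the chain of classes: starting from $\hat 0$, the next class is the unique one separated from the current class by the next inter-class hyperplane, using the inversion description of the weak order. Combining this injectivity with the surjectivity from the first paragraph shows that $\MH{G}$ and $\MB{G}$ are each in bijection with the set of maximal chains of $\tp{G}$, completing the three-way correspondence. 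A subtlety worth flagging is that in item~(1) the $G$-permutations one records are the minimal representatives of the classes visited by the chain, and these need not all literally appear on the chosen maximal chain of $\fS_N$ (a chain may enter a class above its minimum); phrasing item~(1) through the projection $w \mapsto [\![w]\!]$, rather than through the $G$-permutations physically occurring in the chain, is what makes the statement a genuine bijection.
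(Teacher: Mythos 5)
Your overall route matches the paper's: the paper states this theorem without a separate proof, treating it as a summary of the constructions that precede it --- Lemma~\ref{lem:ab-cut} and Theorem~\ref{thm:intra_class_conditions} identify exactly which weak-order covers collapse under the projection, and Theorem~\ref{thm:longestwordSets} transports chains to hyperplane walks and balanced tableaux --- and that is precisely the skeleton you flesh out. Two of your additions are genuinely worthwhile. First, the lifting argument for surjectivity: a cover $B_t\lessdot B_{t+1}$ of $\tp{G}$ lifts to a saturated chain between minimal representatives, and order-preservation plus the cover relation forces every intermediate permutation into $B_t$ or $B_{t+1}$, so the concatenation projects to the given chain. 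Second, your closing remark about item (1) is correct and important: a maximal chain of $\fS_N$ can enter a class strictly above its minimum (this already happens in Figure~\ref{fig:PathEquiv}), so the literal subsequence of $G$-permutations lying on the chain can miss classes; reading (1) as the sequence of minimal representatives of the visited classes is the right repair of the statement.

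There is, however, one genuine gap: your argument for the ``only if'' direction proves the wrong implication. Showing that ``the ordered list of inter-class hyperplanes determines the chain of classes'' establishes that the map from reduced walks in $\MH{G}$ to chains of $\tp{G}$ is well defined --- the same content as the ``if'' direction --- but injectivity requires the converse: for a quotient cover $B\lessdot B'$, every weak-order cover $w\lessdot w'$ with $[\![w]\!]=B$ and $[\![w']\!]=B'$ must cross one and the same hyperplane. Without this, two walks with different inter-class sequences could project to the same chain and $\MH{G}$ would be strictly larger than the set of maximal chains. The fact is true and provable with the paper's tools. Write $B=[u,v]$ and $B'=[u',v']$ (classes of a lattice congruence are intervals in the weak order, as the paper notes via \cite{BM} and \cite{reading2004}), and suppose $w\lessdot w'$ adds the inversion $(b,a)$, so $\Inv(w')=\Inv(w)\cup\{(b,a)\}$. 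If $(b,a)\in\Inv(v)$ then $\Inv(w')\subseteq\Inv(v)$, hence $w'\le v$ and $B'\le B$, a contradiction; so $(b,a)\notin\Inv(v)$. Also $\Inv(u')\not\subseteq\Inv(w)$ (else $u'\le w$ and $B'\le B$), and $\Inv(u')\setminus\Inv(w)\subseteq\Inv(w')\setminus\Inv(w)=\{(b,a)\}$, so $(b,a)\in\Inv(u')$. If now $x\lessdot x'$ is a second cover between the same two classes, adding the inversion $(d,c)$, the identical reasoning gives $(d,c)\in\Inv(u')\setminus\Inv(v)$; since $\Inv(w)\subseteq\Inv(v)$ we get $(d,c)\notin\Inv(w)$, whence $(d,c)\in\Inv(u')\setminus\Inv(w)\subseteq\{(b,a)\}$ and $(d,c)=(b,a)$. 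With this uniqueness in hand, the inter-class hyperplane sequence depends only on the projected chain, and your bijection argument closes.
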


\omitt{
Just like in Remark~\ref{rem:same_hyperplanewalk_check} we can determine whether two hyperplane walks are the same by removing all intra-class hyperplanes, we will determine whether two balanced tableaux are the same chain in $L_G$ by removing the intra-class cells.  In this way we will have tableaux on Young diagrams with holes, so it is no longer technically a Young diagram, but a subset of a grid of cells. Suppose that a Young diagram, with perhaps some holes, has each cell filled with a positive integer. We can {\it standardize} the tableau by replacing the $i$th smallest integer with $i$. 

\begin{corollary}
Let $G$ be a connected, filled graph. Suppose that $T_1$ and $T_2$ are two balanced tableaux associated to two hyperplane walks in $\fS_n$. The two tableaux become the same after removing all intra-class cells and standardizing, then they are the same chain in $\tp{G}$. 
\end{corollary}
\begin{proof}
This is true by Remark~\ref{rem:same_hyperplanewalk_check} and Theorem~\ref{thm:intra_class_conditions}. 
\end{proof}
}
\ytableausetup
{mathmode, boxsize=1.8em}
\begin{figure}
\begin{center}
\begin{tikzpicture}
\node at (5,3){
\begin{tikzpicture}
    
\coordinate (A) at (0,.5);
\coordinate (B) at (1,0);
\coordinate (C) at (1,1);
\coordinate (D) at (2,0);
\coordinate (E) at (2,1);
\coordinate (F) at (3,.5);
\draw[black] (B)--(C)--(A);
\draw[black] (B)--(D)--(E)--(B);
\draw[black] (F)--(E)--(D)--(F);
\draw[black] (D)--(E)--(C);
\filldraw[black] (A) circle [radius=2pt] node[left] {$1$};
\filldraw[black] (B) circle [radius=2pt] node[below] {$3$};
\filldraw[black] (C) circle [radius=2pt] node[above] {$2$};
\filldraw[black] (D) circle [radius=2pt] node[below] {$5$};
\filldraw[black] (E) circle [radius=2pt] node[above] {$4$};
\filldraw[black] (F) circle [radius=2pt] node[right] {$6$};
\end{tikzpicture}};

\node at (0,1) {\begin{ytableau}
{H_{16}} & {H_{15}} & {H_{14}} & {H_{13}} &{H_{12}}\\
    {H_{26}}&{H_{25}}&H_{24}&H_{23}\\
    H_{36}&H_{35}&H_{34}\\
    H_{46}&H_{45}\\
    H_{56}
\end{ytableau}};
\ytableausetup{mathmode, boxsize=1.2em}


\node at (4.3,0) {\begin{ytableau} *(orange)&*(orange)&*(orange)&*(orange)&{ }\\
*(orange)&*(orange)&{ }&{ }\\
*(orange)&{ }&{ }\\
{ }&{ }\\
{ }\cr \end{ytableau}};

\node at (8,0){
\begin{ytableau} {9}&{6}&{4}&{2}&{13}\cr {8}&{5}&{3}&{1}\cr {12}&{10}&{15}\cr {11}&{7}\cr {14}\cr \end{ytableau}
};
\node at (11.7,0){
\begin{ytableau} { }&{ }&{ }&{ }&{9}\cr {5}&{3}&{2}&{1}\cr {8}&{6}&{11}\cr {7}&{4}\cr {10}\cr \end{ytableau}
};
\end{tikzpicture}
\end{center}
\caption{ The graph $G$ (top and center) is connected and filled. The first tableau indicates the correspondence between hyperplanes and cells.
In the second tableau on the left, the non-edge cells are orange. The edge intervals are $I_1=\{4,5\}$, $I_2=\{3,4\}$, $I_3=\{2,3\}$, $I_4=\{2\}$, and $I_5=\{1\}$. The third tableau is balanced and for reference, call it $T$. The cell $(1,1)$ of $T$ is intra-class, since the entry $9$ in cell $(1,1)$ (row 1 and column 1) is larger then the entry in cell $(i,1)$ for all $i\in I_4$. The cells $(1,2)$, $(1,3)$, and $(1,4)$ are also intra-class. The tableau on the right is the tableau with the entries deleted from the intra-class cells and standardized.
}

\omitt{For the unit interval graph above, the non-edge partition is $\mu=421$, which is. illustrated by the x's in $\stair_6$. The row intervals include $[1,1]$, $[2,2]$, $[2,3]$, $[3,4]$, $[4,5]$. This makes cells $(1,1), (1,2), (1,3), (1,4)$ all intra-class cells because they have entries larger than those in their same row two. The tableau on the right is the tableau with the intra-class cells deleted and standardized.}  
\label{fig:intraEX}
\end{figure}
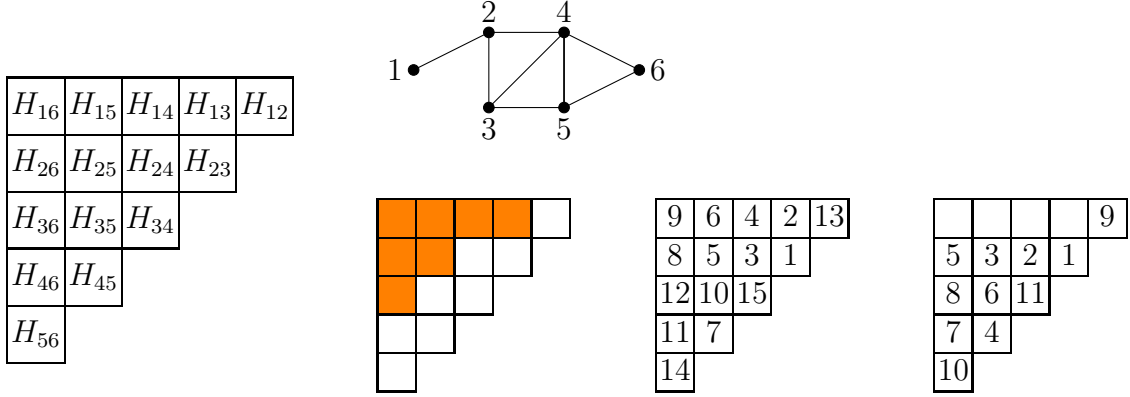


\section{The elements and chains of $\tp{\lolli m n}$ and pattern avoidance}
\label{sec:Llollipop}
Many of the results of later sections are for the lollipop graph case $G=\lolli m n$. In this section, we specialize the results of Section~\ref{sec:L_Gintro} to this case.   We begin with the specialization of Theorem~\ref{thm:intra_class_conditions} to this case. Throughout the section, let $N=m+n$, where $m,n$ are integers such that $m\ge 1$ and $n\ge 0$.

\begin{corollary}
\label{cor:intraclassLmn}
Let $H_{a,b}$ be a hyperplane in the hyperplane walk ${H}\in \cH(\longestk{N})$ and let $T$ be its corresponding balanced tableau. For a lollipop $L_{m,n}$ the hyperplane $H_{a,b}$ is an intra-class hyperplane if and only if any one of these equivalent conditions holds.
\begin{enumerate}
\item $H_{a,b}$ separates two permutations $v=u\circ s_i$ where the set $\{u(i+2),u(i+3),\ldots,u(N)\}$ contains $c$ such that $a<c<b$ and $c\ge m$.
\item There is an integer $j$, where $j\le n$ and $a<N-j\le b-1$, such that the hyperplane $H_{N-j,b}$ precedes $H_{a,b}$ in $H$.
\item There is an integer $j$, where $j\le n$ and $a<N-j\le b-1$, such that in the balanced tableau $T$, the entry in cell $(a,N-b+1)$ is larger than the entry in $(N-j,N-b+1)$.
\end{enumerate}
\label{thm:intra_class_conditions_lolli}
\end{corollary}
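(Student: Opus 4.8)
The plan is to read off each of the three conditions of Corollary~\ref{cor:intraclassLmn} as the specialization of the correspondingly numbered condition of Theorem~\ref{thm:intra_class_conditions} to $G=L_{m,n}$; once the edge data of the lollipop is understood, everything is a substitution. First I would record the non-edge structure of $L_{m,n}$: for $x<y$ in $[N]$, the pair $\{x,y\}$ is \emph{not} an edge precisely when $y>m$ and $x<y-1$. Indeed, any two vertices of $[m]$ are adjacent through the complete part, whereas a vertex $y>m$ lies only on the path, so its unique neighbor strictly below it is $y-1$. This immediately gives the edge intervals: the column of a vertex $c=N-j+1$ has $I_j=[1,c-1]$ when $c\le m$, and the singleton $I_j=\{c-1\}$ when $c>m$.

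With this in hand I would treat condition~\eqref{thmpart:before} of Theorem~\ref{thm:intra_class_conditions}, which is the cleanest place to fix the index dictionary. The theorem chooses a column vertex $c=N-j+1\in[a+1,b]$ with $\{a,c\}\notin E$; by the non-edge description this forces $c>m$ and $c\ge a+2$, the value $c=a+1$ being excluded because $\{a,a+1\}$ is always an edge once $a\ge m$. Hence $I_j=\{c-1\}$ is a singleton, and the clause ``for all $i\in I_j$'' collapses to the single requirement that $c-1<b$ and that $H_{c-1,b}$ precede $H_{a,b}$. Setting $d=c-1$ converts $c>m$ into $d\ge m$, $c\ge a+2$ into $a<d$, and $c\le b$ into $d\le b-1$; writing $d=N-j$ yields $j\le n$ and $a<N-j\le b-1$, which is exactly the second condition of the corollary. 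The reverse passage is the same computation read backwards, with $c=d+1$.

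The remaining two conditions reuse this dictionary. For the first, I would combine Lemma~\ref{lem:ab-cut} with the cut-set criterion of Lemma~\ref{lem:filled_cut}, by which $S=\{u(i+2),\dots,u(N)\}$ is an $ab$-cut set iff some $c\in[a+1,b]$ has $ac\notin E$ and $\{i<c:ic\in E\}\subseteq S$; the lollipop data collapses the inner set to $\{c-1\}$ and forces $c>m$, so the criterion becomes ``$S$ contains some $d=c-1$ with $d\ge m$ and $a<d<b$,'' the corollary's first condition. The third condition follows from the second through the standard correspondence between a balanced tableau and its hyperplane walk: the entry of $T$ in a cell $(x,N-b+1)$ records the position of $H_{x,b}$ in the walk, so $H_{N-j,b}$ precedes $H_{a,b}$ exactly when the entry in $(a,N-b+1)$ is larger than the entry in $(N-j,N-b+1)$.

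The only genuinely delicate point---and the step I would write out in full---is the single-unit index shift between the column vertex $c=N-j+1$ that the general theorem quantifies over and the neighbor $d=c-1=N-j$ that appears in the corollary, together with the boundary bookkeeping ($c\ne a+1$, and $c\le b$ versus $a<d\le b-1$) that makes the translation exact; all the rest is a direct insertion of the lollipop's edges and edge intervals into Theorem~\ref{thm:intra_class_conditions}.
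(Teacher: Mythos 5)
Your proposal is correct and follows essentially the same route as the paper: the paper's proof likewise specializes Theorem~\ref{thm:intra_class_conditions} by computing the lollipop's edge intervals ($I_j=\{N-j\}$ for $j\le n$ and $I_j=[N-j]$ for $j>n$) and observing that the requirements $\{a,N-j+1\}\notin E$ and $N-j+1\in[a+1,b]$ translate into $j\le n$ and $a<N-j\le b-1$, which is precisely your $c\mapsto d=c-1$ dictionary spelled out in more detail. One cosmetic note: $\{a,a+1\}$ is an edge of $L_{m,n}$ for \emph{every} $a$, not just $a\ge m$, but this only strengthens your exclusion of $c=a+1$ and creates no gap.
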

\begin{proof}
This is a consequence of Theorem~\ref{thm:intra_class_conditions}, using the following observation about edge intervals of the graph $\lolli m n$.

\begin{equation*}
    I_j=\begin{cases}\{N-j\}&\text{for $1\le j\le n$}\\ 
    [N-j]&\text{for $j>n$}\end{cases}
\end{equation*}

We have the conditions $j\le n$ and $a<N-j\le b-1$ in order to satisfy the requirements from Theorem~\ref{thm:intra_class_conditions} that $\{a,N-j+1\}\notin E$ and that $N-j+1\in[a+1,b]$.\omitt{For the lollipop graph $L_{m,n}$, the non-edge partition is $n^{m-1}(n-1)(n-2)\ldots 21$ where $n^{m-1}$ is $n$ repeated $m-1$ times. This means that the row difference intervals are $[1,j]$ for $j\in [m-1]$ and $[j,j]$ for $j\in [m,m+n-1]$. The first set of row difference intervals, $[1,j]$ for $j\in [m-1]$,  can not make any hyperplane an intra-class hyperplane. This means that the essential row difference intervals are $\{j\}$ for $j\in [m,m+n-1]$.} 
\end{proof}
\omitt{
\tvi{Subsection probably not needed. Can we delete it?}
\susanna{Added pattern avoidance to the section title, so that the definition is easy to find. 12/12/23}
\tvi{(Dec 18) Sounds good to me. I'm OK deleting these comments. }
} 
Let $G$ be a graph on $N$ vertices and let $w=w(1)\cdots w(N)\in\fS_N$ be a permutation. 
Recall from Definition~\ref{def:Gperm} that $w$  is a $G$-permutation if $w(i)$ is in the same connected component of $G|_{\{w(1),\ldots,w(i)\}}$ as $\max({\{w(1),\ldots,w(i)\}})$, for all $i\in[N]$. Additionally, recall that there is one $G$-permutation per $G$-equivalence class.  In the case $G=\lolli m n$, $\tp{\lolli m n}$ is a lattice quotient, so we may identify its elements with the subposet of $\fS_N$ whose elements are $\lolli m n$-permutations. 

The description of the elements of $\tp{\lolli m n}$ is particularly simple. 

\omitt{The following definition and lemmas give a convenient characterization of maximal chains in tubing lattices for the case $G=\tp{\lolli m n}$.}

\begin{definition}
\label{def:m312free}\label{def:m132free}
Let $m$ and $N$ be positive integers, with $m\leq N$. We say that $w\in\S_{N}$ is \mydef{$m$-312 avoiding} or \mydef{$m$-312 free} if there do not exist integers $1\le i<j<k\le m+n$ such that $w(j)<w(k)<w(i)$ and $w(k)\ge m$. A permutation $w\in\fS_N$ is said to be $m$-132 free if there do not exist $i,j,k$ such that $1\leq i<j<k\leq N$, $w(i)<w(k)<w(j)$, and $m\leq w(k)$.
\end{definition}

Pattern avoidance is well-studied. The usual 312-avoiding permutations are the case $m\le 2$\cite{BW}. See also \cite{ss}.

\begin{proposition}
\label{prop:m312}
Let $w$ be a permutation in $\fS_{m+n}$. The permutation $w$ is $m$-312 free if and only if $w$ is a $\lolli m n$-permutation. 
\end{proposition}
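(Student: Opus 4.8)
The plan is to reduce both implications to a single combinatorial criterion describing exactly when the $\lolli m n$-permutation condition of Definition~\ref{def:Gperm} fails at a given index, and then to read the $m$-312 pattern directly off that criterion. First I would analyze the connected components of an induced subgraph $\lolli m n|_U$ for $U\subseteq[m+n]$. Since $\{1,\dots,m\}$ spans a clique and $\{m,m+1,\dots,m+n\}$ a path, glued along the single vertex $m$, the set $U\cap[m]$ forms one ``clique blob,'' while the tail vertices of $U$ lying in $\{m+1,\dots,m+n\}$ break into maximal runs of consecutive integers, each run being a sub-path. The only bridge between clique and tail is the edge $\{m,m+1\}$, so a run attaches to the clique blob precisely when it descends to $m+1$ and $m\in U$. (This is the component description underlying Lemma~\ref{lem:filled_cut} for the lollipop.)

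Next I would specialize to the component of $M:=\max U$, the only component relevant to the $G$-permutation condition. If $M\le m$ then $M$ lies in the clique blob, which already contains every element of $U$ that is $\le m$, so nothing below $M$ is separated from it. If $M>m$, let $\{p^*,\dots,M\}$ be the maximal run of consecutive integers of $U$ ending at $M$. When $p^*\le m$ this run reaches the clique and the component of $M$ is all of $U\cap[M]$, so again nothing below $M$ is separated; when $p^*>m$ the component of $M$ is exactly $\{p^*,\dots,M\}$. Combining the cases yields the criterion I expect to be the engine of the proof: for a prefix $U_\ell=\{w(1),\dots,w(\ell)\}$, the value $w(\ell)$ is separated from $\max U_\ell$ in $\lolli m n|_{U_\ell}$ if and only if there is an integer $c$ with $m\le c<\max U_\ell$, $c\notin U_\ell$, and $w(\ell)<c$. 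One direction takes $c=p^*-1$; the other observes that $c<M$ and $c\notin U_\ell$ force $c<p^*$, hence $p^*>m$ and $w(\ell)<c<p^*$.

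With this criterion both implications become a formal translation. For the forward direction, if $w$ is not a $\lolli m n$-permutation, fix an index $\ell$ at which the condition fails; the criterion supplies an integer $c\ge m$ with $w(\ell)<c<\max U_\ell$ and $c\notin U_\ell$. Writing $\max U_\ell=w(i)$ with $i<\ell$ and $c=w(k)$ with $k>\ell$ (possible since $c$ is absent from the prefix), the positions $i<\ell<k$ satisfy $w(\ell)<w(k)<w(i)$ with $w(k)=c\ge m$, which is exactly an $m$-312 occurrence. Conversely, given an $m$-312 pattern at positions $i<j<k$ with $w(j)<w(k)<w(i)$ and $w(k)\ge m$, I would apply the criterion at index $\ell=j$ with $c=w(k)$: here $\max U_j\ge w(i)>w(k)\ge m$, so $w(k)<\max U_j$; moreover $w(k)\notin U_j$ because $k>j$, and $w(j)<w(k)$. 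Thus $w(j)$ is separated from $\max U_j$ and $w$ fails to be a $\lolli m n$-permutation.

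The hard part is entirely in the component analysis of the first two paragraphs: correctly pinning down the component of the maximum, including the boundary behavior at the bridging vertex $m$ and the degenerate cases. I would check $n=0$ (where $\lolli m n=K_m$ and no pattern can occur, since $w(k)\ge m$ together with $w(k)<w(i)\le m+n=m$ is impossible) and $m=1$ as sanity tests. Once the separation criterion is established and verified against these edge cases, the equivalence with $m$-312 avoidance is the bookkeeping above, and it is consistent with the later hyperplane-level statement of Corollary~\ref{cor:intraclassLmn}.
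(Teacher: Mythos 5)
Your proof is correct, and it rests on the same underlying observation as the paper's: a value $c$ with $m\le c<\max\{w(1),\ldots,w(j)\}$ that is missing from the prefix is a cut vertex of $\lolli m n$ witnessing simultaneously the failure of the condition in Definition~\ref{def:Gperm} and, via the later position $k$ where $c=w(k)$ occurs, the $m$-312 pattern. The packaging, however, is genuinely different. The paper proves both implications directly and ad hoc: in the forward direction it argues by contradiction, splitting into the cases $w(j)<m$ and $w(j)\ge m$ to locate a missing value in $\{m,\ldots,w(i)\}$ or in $\{w(j)+1,\ldots,w(i)-1\}$ respectively; in the converse it must first replace an arbitrary $m$-312 occurrence by one in which $w(i)$ is maximal, so that $w(i)$ is the prefix maximum and $w(k)$ lies on every path between $w(i)$ and $w(j)$. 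You instead isolate a standalone separation criterion --- $w(\ell)$ is cut off from $\max U_\ell$ in $\lolli m n|_{U_\ell}$ if and only if some $c\notin U_\ell$ satisfies $m\le c<\max U_\ell$ and $w(\ell)<c$ --- derived from an explicit component taxonomy (clique blob, maximal tail runs, bridge at $m$). This buys two things: the converse becomes an immediate instantiation at $\ell=j$, $c=w(k)$, with no extremal-triple selection needed, since you only use $\max U_j\ge w(i)>w(k)$ and never need $w(i)$ itself to be the prefix maximum; and your criterion is precisely the lollipop specialization of the paper's general Lemma~\ref{lem:filled_cut}, which the paper proves separately but does not invoke in its proof of Proposition~\ref{prop:m312}, so your route unifies the two statements. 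The cost is the front-loaded component analysis, which the paper's shorter inline argument avoids; the boundary checks you flag (the run reaching $p^*\le m$, the cases $M\le m$ and $w(\ell)=M$, and the degenerate instances $n=0$ and $m=1$) are exactly the places where such a criterion could fail, and all of them check out.
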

\begin{proof}
    Let $w$ be $m$-312 free and $j\in[m+n]$. Denote $\{w(1),\ldots,w(j)\}$ by $S$. We must show that $w(j)$ is in the same component of ${\lolli m n}|_S$ as $\max(S)$. Let $w(i)$, $i\le j$, be $\max(S)$. Suppose, for contradiction, that $w(i)$ and $w(j)$ are in different components.

    Since $w(i)$ and $w(j)$ are in different components, $i<j$ and $w(i)>w(j)$. If $w(i)$ and $w(j)$ were both less than or equal to $m$, they would be adjacent in $\lolli m n$, so we must have $w(i)> m$. Since $h$ is adjacent to $h+1$ in $\lolli m n$ for $h\in[m+n-1]$, we must also have $w(i)-w(j)\ge 2.$ In the case $w(j)<m$, any path in $\lolli m n$ containing the vertices $\{m,m+1,\ldots,w(i)-1\}$ would connect $w(i)$ and $w(j)$. Therefore, if $w(j)<m$, there must exist $k$, $k>j$, such that $w(k)\in\{m,m+1,\ldots,w(i)\}$. Similarly, in the case $w(j)\ge m$, any path containing $\{w(j)+1, w(j)+2,\ldots,w(i)-1\}$ would connect $w(i)$ and $w(j)$. Therefore, in this case $w(k)\in\{w(j)+1, w(i)+2, \ldots,w(i)-1\}$. But then we have $i<j<k$ such that $w(j)<w(k)<w(i)$ and $w(k)\ge m$.

    For the converse, suppose that $w$ is not $m$-312 free. That is, there exist $i<j<k$ such that $w(j)<w(k)<w(i)$ and $w(k)\ge m$. There may be more than one such a triple. Choose one such that for all triples with the same $j$ and $j$, $w(i)$ is maximum. We claim $w(i)$ and $w(j)$ are in different components of ${\lolli m n}|_{\{w(1),w(2),\ldots,w(j)\}}$, because $w(k)$ would be on any path between them. We also claim that $w(i)$ is $\max(\{w(1),\ldots,w(j)\})$ by our choice of triple. Therefore, $w$ is not a $\lolli m n$-permutation. 
\end{proof}

\begin{corollary}
\label{thm:lollipop vertices}
The vertices of $\tp{L_{m,n}}$ are in bijection with $m$-312 avoiding permutations in $\fS_{m+n}$.
\end{corollary}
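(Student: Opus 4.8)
The plan is to assemble the corollary directly from the structure already laid out in this section. The key observation, recorded in the discussion surrounding Definition~\ref{def:Gperm}, is that each $\lolli m n$-equivalence class contains exactly one $\lolli m n$-permutation, and that the vertex set of $\tp{\lolli m n}$ is precisely $\{[\![w]\!] : w \text{ is a } \lolli m n\text{-permutation}\}$. Consequently, the map sending each vertex $[\![w]\!]$ to the unique $\lolli m n$-permutation it contains is a bijection between the vertices of $\tp{\lolli m n}$ and the set of $\lolli m n$-permutations in $\fS_{m+n}$.

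Next I would invoke Proposition~\ref{prop:m312}, which identifies the $\lolli m n$-permutations in $\fS_{m+n}$ as exactly the $m$-312 avoiding permutations. Composing this identification with the class-representative bijection from the previous paragraph yields the desired bijection between the vertices of $\tp{\lolli m n}$ and the $m$-312 avoiding permutations in $\fS_{m+n}$.

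Because both inputs are already in hand, this corollary presents no genuine obstacle; it is a bookkeeping composition of the class-representative bijection with the pattern-avoidance characterization. The only point worth stating explicitly is the first one, that vertices correspond one-to-one with $G$-permutations, which is immediate from the fact that the projection $\fS_{m+n} \to \tp{\lolli m n}$ is a lattice quotient whose fibers are the $\lolli m n$-equivalence classes, each containing a single $\lolli m n$-permutation (here using that $\lolli m n$ is filled and connected, as noted earlier).
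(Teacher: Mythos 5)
Your proposal is correct and takes essentially the same route as the paper: the corollary is stated there as an immediate consequence of Proposition~\ref{prop:m312} combined with the fact (from Barnard--McConville, recorded after Definition~\ref{def:Gperm}) that each $G$-equivalence class contains exactly one $G$-permutation and that the vertices of $\tp{\lolli m n}$ are precisely these classes. Your explicit composition of the class-representative bijection with the pattern-avoidance characterization is exactly the intended argument.
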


\omitt{\susanna{ Will you take care of the red text? 3/20/24}
\tvi{Done. (4/4)}}
\begin{proposition}
\label{prop:max's and min's of L(G)}
Let $G=L_{m,n}$. Consider an equivalence class $[\![u]\!]$ for $u\in\fS_{m+n}$ in $\tp{G}$. There is exactly one $G$-permutation in $[\![u]\!]$, which is the minimal element and avoids $m$-312, and exactly one $m$-132 avoiding permutation in $[\![u]\!]$, which is the maximal element. 
\end{proposition}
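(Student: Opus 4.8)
The plan is to reduce the whole statement to two ingredients: the fact that each $G$-equivalence class is an \emph{interval} in the weak order (a general feature of lattice congruences, available here since Barnard and McConville show $G$-equivalence is a lattice congruence for filled connected $G$), and a cover-by-cover local analysis via Corollary~\ref{cor:intraclassLmn}. The minimal-element half is essentially already in hand: Barnard and McConville identify the $G$-permutations as the unique minimal elements of the classes, and Proposition~\ref{prop:m312} identifies the $G$-permutations with the $m$-312 avoiding permutations, so $\min[\![u]\!]$ is the unique $m$-312 avoiding member of the class. Thus the genuine content is to prove that $\max[\![u]\!]$ is the unique $m$-132 avoiding member.

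For the maximal element I would first prove a local statement: a permutation $w$ has an intra-class upward cover exactly when it contains an $m$-132 pattern whose first two entries occupy adjacent positions. This is a direct reading of Corollary~\ref{cor:intraclassLmn}(1). An upward cover $w\lessdot ws_i$ comes from an ascent $w(i)<w(i+1)$, and it is intra-class iff $\{w(i+2),\ldots,w(N)\}$ contains some $c$ with $w(i)<c<w(i+1)$ and $c\ge m$; writing $c=w(\ell)$ with $\ell>i+1$, this is precisely an $m$-132 occurrence at positions $i<i+1<\ell$.

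Next I would establish a \emph{shifting lemma}: if $w$ contains any $m$-132 pattern, then it contains one whose first two positions are adjacent. Given an occurrence at $p<q<r$ with $w(p)<w(r)<w(q)$ and $w(r)\ge m$, I set $s+1$ to be the leftmost position in $(p,q]$ whose value exceeds $w(r)$ (nonempty, since $q$ qualifies); then $w(s)<w(r)<w(s+1)$, so $(s,s+1,r)$ is the desired adjacent occurrence. Combining this with the local statement, $w$ has an intra-class upward cover iff $w$ fails to be $m$-132 avoiding; equivalently, $w$ is $m$-132 avoiding iff $w$ is a maximal element of its class. Since each class is an interval, its only maximal element is its top, so the $m$-132 avoiding members of $[\![u]\!]$ are exactly $\{\max[\![u]\!]\}$, completing the proposition.

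The main obstacle, and the only step that is not a formal translation, is the shifting lemma: moving from an arbitrary $m$-132 occurrence to one whose ``$1$'' and ``$3$'' are adjacent, while keeping the same large middle value $w(r)\ge m$. Everything else is either cited (the congruence and its interval classes, and the minimal-element description) or an unwinding of Corollary~\ref{cor:intraclassLmn}. In carrying this out I would pay attention to the boundary case $s=p$ of the shifting argument, where $w(s)<w(r)$ holds by the original hypothesis rather than by the minimality of $s+1$.
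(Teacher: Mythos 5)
Your proposal is correct and takes essentially the same route as the paper: the minimal-element half is cited identically, the interval property of the classes (from \cite{BM} and \cite{reading2004}) plays the same role, your local statement is exactly the paper's application of Lemma~\ref{lem:ab-cut} (your Corollary~\ref{cor:intraclassLmn}(1) is the same cut-set condition, re-derived in the paper from the structure of $\lolli m n$), and your shifting lemma is precisely the paper's step producing an ascent $a\in[i,j-1]$ with $v(a)<v(k)<v(a+1)$. The only cosmetic difference is that you package the argument as one biconditional (maximal in class iff $m$-132 avoiding) where the paper proves the two directions as separate claims.
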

\omitt{ 
\begin{proof}
Let $G=L_{m,n}$ and  $u\in\fS_{m+n}$. By~\cite{BM} (Corollary 4.9) and Proposition~\ref{prop:m312} we know that the unique $G$-permutation in $[\![u]\!]$ is the minimal element.

Suppose there exists a $m$-132 avoiding permutation in $[\![u]\!]$. Call this $v$. We claim that  $v$ is a maximal element in $[\![u]\!]$. If our claim is true, then we know that there is at most one $m$-132 avoiding permutation in each class since maximal elements are unique in each equivalence class. This follows from a similar argument as in~\cite{BM} (Proposition 3). All we have to show now is that any maximum element avoids $m$-132. 

We now prove our claim that $v$ is a maximal element in $[\![u]\!]$.
We will show that by showing for any $s_i$ with $v<v\circ s_i$ that the permutation $v\circ s_i\notin [\![u]\!]$ is not in the same equivalence class. 
By Lemma~\ref{lem:ab-cut} it will be sufficient to show for any $i$ with $v(i)<v(i+1)$ that $\{v(i+2),v(i+3),\ldots, v(m+n)\}$ is not a $v(i)v(i+1)$-cut set in $L_{m,n}$. Suppose towards a contradiction that $\{v(i+2),v(i+3),\ldots, v(m+n)\}$ is a $v(i)v(i+1)$-cut set in $L_{m,n}$. Considering the structure of $L_{m,n}$  the set $\{v(i+2),v(i+3),\ldots, v(m+n)\}$ must contain a cut vertex that would put $v(i)$ and $v(i+1)$ in separate connected components upon its removal form $L_{m,n}$. 
This means that for some $j\geq  2$ that $v(i+j)$ is more than $v(i)$, at least $m$ and less than $v(i+1)$. All together this means that $v(i)v(i+1)v(i+j)$ is a $m$-132 pattern contradicting our assumption that $v$ was $m$-132 avoiding. 

Now we show that a maximum element $v\in [\![u]\!]$ avoids $m$-132. Suppose that instead that $v$ contains a $m$-132 pattern $v(i)v(j)v(k)$. Because $v(i)<v(k)$ and $v(j)>v(k)$ there must exist a $a\in[i,j-1]$ such that $v(a)<v(a+1)$, $v(a)<v(k)$ and $v(a+1)>v(k)$. However since $v(k)\geq m$, this implies that $\{v(k)\}$ is a $v(a)v(a+1)$-cut set. Further since $k>a+1$ we can conclude that $\{v(a+2),v(a+3),\ldots, v(m+n)\}$ is a $v(a)v(a+1)$-cut set contradicting our assumption that $v$ was maximal. 
\end{proof}
} 

\begin{proof}
Let $G=L_{m,n}$ and $u\in\fS_{m+n}$. By~\cite{BM} (Corollary 4.9) and
Proposition~\ref{prop:m312} we know that the unique $G$-permutation in
$[\![u]\!]$ is the minimum element. In fact, since $G$ is filled, by \cite[Theorem 4.16]{BM} and \cite{reading2004}, each equivalence class is an interval in the weak order. In particular, each class has a maximum element.

Suppose there exists a $m$-132 avoiding permutation in
$[\![u]\!]$. Call this $v$. We claim that $v$ is a maximum element in
$[\![u]\!]$. If our claim is true, then we know that there is at most
one $m$-132 avoiding permutation in each class since maximal elements
are unique in each equivalence class.  All we have to show now is
that any maximum element avoids $m$-132.

We now prove our claim that $v$ is a maximum element in $[\![u]\!]$.
We will show that by showing for any $s_i$ with $v<v\circ s_i$ that
the permutation $v\circ s_i\notin [\![u]\!]$ is not in the same
equivalence class.  By Lemma~\ref{lem:ab-cut} it will be sufficient to
show for any $i$ with $v(i)<v(i+1)$ that $\{v(i+2),v(i+3),\ldots,
v(m+n)\}$ is not a $v(i)v(i+1)$-cut set in $\lolli m n$. Suppose towards
a contradiction that $\{v(i+2),v(i+3),\ldots, v(m+n)\}$ is a
$v(i)v(i+1)$-cut set in $\lolli m n$. Considering the structure of
$\lolli m n$, the set $\{v(i+2),v(i+3),\ldots, v(m+n)\}$ must contain a
cut vertex that would put $v(i)$ and $v(i+1)$ in separate connected
components upon its removal from $\lolli m n$.  This means that for some
$j\geq 2$ that $v(i+j)$ is greater than $v(i)$, at least $m$, and less
than $v(i+1)$. Taken together this means that $v(i)v(i+1)v(i+j)$ is a
$m$-132 pattern,  contradicting our assumption that $v$ was $m$-132
avoiding.

Now we show that a maximum element $v\in [\![u]\!]$ avoids
$m$-132. Suppose instead that $v$ contains a $m$-132 pattern
$v(i)v(j)v(k)$. Because $v(i)<v(k)$ and $v(j)>v(k)$ there must exist a
$a\in[i,j-1]$ such that $v(a)<v(a+1)$, $v(a)<v(k)$ and
$v(a+1)>v(k)$. However since $v(k)\geq m$, this implies that
$\{v(k)\}$ is a $v(a)v(a+1)$-cut set. Further since $k>a+1$ we can
conclude that $\{v(a+2),v(a+3),\ldots, v(m+n)\}$ is a $v(a)v(a+1)$-cut
set contradicting our assumption that $v$ was maximal.  \end{proof}

\omitt{
\tvi{Could we state in one of the propositions, theorems, corollaries that the $m$-312 avoiding permutations are the maximal elements of their equivalence classes?}
\susanna{Barnard and McConville show that G-perms are the mins in their class and we show that in the $\lolli m n$ case, the G-perms are $m$-312 avoiding. I don't think we have any results on max in class. 12/15/23}
\tvi{(Dec 18) That is very interesting that $G$-permutations are the minimal elements. Maybe we can add that as a short remark in the the general filled graph section? Also, I use later that the maximal elements of the equivalence classes are $m$ 132 avoiding permutations, so it would be great if we could add that to a result I could cite. }
\susanna{Do you have a proof that $m$-132 are maxs? 1/2/24}
\tvi{Sam to do: make proposition that $m$-132 avoiders are maximals in their classes (2/13/24)}
\tvi{I noticed that many of the definitions are with $m$-312 avoiding rather than the $m$-132 avoiding? Are we just doing one or are we doing both? In the proposition above I make conclusions about both. In what result have we shown each equivalence class contains a $m$-132 avoiding permutation? Have we shown that the permutation is unique? Since $m$-132 avoiding hasn't been defined yet this may or may not be the correct location for Proposition~\ref{prop:max's and min's of L(G)}. (3/19/24)}
\susanna{m-132 free are defined in Definition~\ref{def:m132free}. All verts in $\tp{\lolli m n}$ are $m$-312 free. If they are in a maximal length chain they are in addition $m$-132 free. There may be mistakes, but please just put in a comment box that I should check it, rather than changing anything. 3/20/24}
\susanna{
\begin{proof}
Let $G=L_{m,n}$ and $u\in\fS_{m+n}$. By~\cite{BM} (Corollary 4.9) and
Proposition~\ref{prop:m312} we know that the unique $G$-permutation in
$[\![u]\!]$ is the minimal element. In fact, since $G$ is filled, by \cite[Theorem 4.16]{BM} and \cite{reading2004}, each equivalence class is an interval in the weak order. In particular, each class has a maximum element.

Suppose there exists a $m$-132 avoiding permutation in
$[\![u]\!]$. Call this $v$. We claim that $v$ is a maximal element in
$[\![u]\!]$. If our claim is true, then we know that there is at most
one $m$-132 avoiding permutation in each class since maximal elements
are unique in each equivalence class.  All we have to show now is
that any maximum element avoids $m$-132.

We now prove our claim that $v$ is a maximum element in $[\![u]\!]$.
We will show that by showing for any $s_i$ with $v<v\circ s_i$ that
the permutation $v\circ s_i\notin [\![u]\!]$ is not in the same
equivalence class.  By Lemma~\ref{lem:ab-cut} it will be sufficient to
show for any $i$ with $v(i)<v(i+1)$ that $\{v(i+2),v(i+3),\ldots,
v(m+n)\}$ is not a $v(i)v(i+1)$-cut set in $\lolli m n$. Suppose towards
a contradiction that $\{v(i+2),v(i+3),\ldots, v(m+n)\}$ is a
$v(i)v(i+1)$-cut set in $\lolli m n$. Considering the structure of
$\lolli m n$ the set $\{v(i+2),v(i+3),\ldots, v(m+n)\}$ must contain a
cut vertex that would put $v(i)$ and $v(i+1)$ in separate connected
components upon its removal from $\lolli m n$.  This means that for some
$j\geq 2$ that $v(i+j)$ is greater than $v(i)$, at least $m$, and less
than $v(i+1)$. Taken together this means that $v(i)v(i+1)v(i+j)$ is a
$m$-132 pattern,  contradicting our assumption that $v$ was $m$-132
avoiding.

Now we show that a maximum element $v\in [\![u]\!]$ avoids
$m$-132. Suppose that instead that $v$ contains a $m$-132 pattern
$v(i)v(j)v(k)$. Because $v(i)<v(k)$ and $v(j)>v(k)$ there must exist a
$a\in[i,j-1]$ such that $v(a)<v(a+1)$, $v(a)<v(k)$ and
$v(a+1)>v(k)$. However since $v(k)\geq m$, this implies that
$\{v(k)\}$ is a $v(a)v(a+1)$-cut set. Further since $k>a+1$ we can
conclude that $\{v(a+2),v(a+3),\ldots, v(m+n)\}$ is a $v(a)v(a+1)$-cut
set contradicting our assumption that $v$ was maximal.  \end{proof} I made some slight changes. Biggest is citing that the classes are intervals. Also used lolli macro for $L_{m,n}$ (4/15/24)
}
} 

\omitt{
\susanna{Should Theorem~\ref{thm:lollipop chains} be maximal chains or maximum length chains? 3/20/24}
\tvi{I was hoping Theorem~\ref{thm:lollipop chains} would be about all Maximal chains of any length. (4/4)}
} 
We summarize our results on maximal chains in $\tp{\lolli m n}.$
\begin{theorem}
\label{thm:lollipop chains}
The following are in bijection and are in bijection with maximal chains in $\tp{L_{m,n}}$
\begin{enumerate}
\item The subsequence of all $m$-312 avoiding permutations within a maximal chain in the weak order $\fS_{m+n}$. 
\item The subsequence of all $m$-132 avoiding permutations within a maximal chain in the weak order $\fS_{m+n}$. 
\item The collection of hyperplane walks projected onto $\tp{L_{m,n}}$ with intra-hyperplanes deleted, $\MH{L_{m,n}}$.
\item The $L_{m,n}$-balanced tableaux, $\MB{L_{m,n}}$.
\end{enumerate}
\end{theorem}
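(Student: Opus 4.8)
The plan is to obtain items (1), (3), and (4) essentially for free as the specialization $G=\lolli m n$ of the general result Theorem~\ref{thm:general chains}, and then to prove the one genuinely new item, item (2), as a ``dual'' of item (1) in which the minimum of each equivalence class is replaced by its maximum.

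First recall that Theorem~\ref{thm:general chains} asserts, for any connected filled $N$-vertex graph $G$, that the subsequence of $G$-permutations within a maximal chain of $\fS_N$, the hyperplane walks with their intra-class hyperplanes deleted $\MH{G}$, and the $G$-balanced tableaux $\MB{G}$ are all in bijection with one another and with the maximal chains of $\tp{G}$. Setting $G=\lolli m n$ and $N=m+n$ gives items (3) and (4) verbatim. For item (1), Proposition~\ref{prop:m312} identifies the $\lolli m n$-permutations with the $m$-312 avoiding permutations of $\fS_{m+n}$, so the ``subsequence of $G$-permutations'' of Theorem~\ref{thm:general chains} is precisely the ``subsequence of $m$-312 avoiding permutations'' of item (1). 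Hence items (1), (3), (4) and the maximal chains of $\tp{\lolli m n}$ are already known to be in bijection.

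It remains to fit item (2) into this picture, and the key input is Proposition~\ref{prop:max's and min's of L(G)}: every $\lolli m n$-equivalence class $[\![u]\!]$ is an interval of the weak order whose minimum is its unique $m$-312 avoiding element (its $G$-permutation) and whose maximum is its unique $m$-132 avoiding element. Thus a maximal chain of $\tp{\lolli m n}$, regarded as a saturated chain of classes $\hat 0=K_0\lessdot K_1\lessdot\cdots\lessdot K_r=\hat 1$, may be recorded equally well by the sequence of class minima (the $m$-312 avoiders of item (1)) or by the sequence of class maxima (the $m$-132 avoiders of item (2)); both records are well defined and injective precisely because each class has a unique minimum and a unique maximum. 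This already yields a bijection between the objects of item (1) and those of item (2), and hence between item (2) and the maximal chains of $\tp{\lolli m n}$. To match the precise phrasing of item (2) I would rerun the argument of Theorem~\ref{thm:general chains} with the up-projection $\pi^{\uparrow}$ (sending each permutation to the maximum of its class) in place of the projection to $G$-permutations; since $\lolli m n$ is filled, $\lolli m n$-equivalence is a lattice congruence \cite{BM, reading2004}, so $\pi^{\uparrow}$ is order-preserving and carries a maximal chain of $\fS_{m+n}$ to a maximal chain of $\tp{\lolli m n}$ recorded by class maxima, which by Proposition~\ref{prop:max's and min's of L(G)} are exactly the $m$-132 avoiders.

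I expect the only real work to lie in item (2): checking that reading off class maxima along a maximal chain reproduces the lattice-congruence picture dual to the $G$-permutation case. This is not a deep obstacle, as it follows from the interval structure of the classes in Proposition~\ref{prop:max's and min's of L(G)} together with the standard fact that both the down- and up-projections of a lattice congruence are order-preserving; but it is the one place where item (2) goes beyond a direct citation of Theorem~\ref{thm:general chains}, since that theorem is stated only for the minimal representatives. I would also remark that there is no clean pattern-level involution swapping $m$-312 and $m$-132 avoidance (reversal sends $312$ to $213$, and complementation turns the threshold condition $w(k)\ge m$ into $w(k)\le N+1-m$), which is why the symmetry is most cleanly expressed through the min/max structure of the classes rather than through a direct symmetry of $\fS_{m+n}$.
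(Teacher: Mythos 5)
Your proposal matches the paper's own (implicit) justification: the paper states Theorem~\ref{thm:lollipop chains} without a separate proof, presenting it as a summary obtained by specializing Theorem~\ref{thm:general chains} to $G=\lolli m n$ via Proposition~\ref{prop:m312} for items (1), (3), and (4), and invoking Proposition~\ref{prop:max's and min's of L(G)} (unique $m$-312 avoiding minimum, unique $m$-132 avoiding maximum in each class) for item (2), which is exactly the assembly you carry out. Your extra step of rerunning the general argument with the up-projection $\pi^{\uparrow}$, so that maximal chains are recorded by class maxima, is slightly more care than the paper itself supplies, but it is the same route rather than a different one.
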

\omitt{
\tvi{(Dec 18) This Theorem~\ref{thm:lollipop chains} could use some rephrasing. The next comment is one idea on how we can phrase it more  by setting up some more definitions. }
\tvi{(Dec 18) This might be a good place to use the notations for $\MB{L_{m,n}}$ and $\MH{L_{m,n}}$ and define $\MB{G}$ and $\MH{G}$ for general connected filled graphs in the section of general filled graphs. For definitions I've been using $\MB{G}$ for the collection of objects we get from balanced tableaux after deleting the cells associated to intra-class hyperplanes and then standardizing (i.e. replace the $i$th smallest number with $i$). More easily $\MH{G}$ is the collection of hyperplane sequences we get from hyperplane walks by deleting the intra-class hyperplanes. }
\susanna{I think you added Theorem~\ref{thm:lollipop chains} in October and you may rephrase it however is best for your use of it. Can you also check to make sure all the conditions are correct? 1/2/24 }
\tvi{Sam to do: Theorem 4.5 above, rephrase with notation as needed. Also, check.}
\tvi{Rephrasing done. Susanna you are welcome to adjust as needed. (3-19-24)}
\susanna{There is no proof here, and no definition of the sets in 3 and 4. The definition of $m$-132 is later, do you want me to move it up. This seems to still need a fair amount of work. 3/23/24}
\tvi{Sam To Do: add definitions of hyperplane and balanced tableaux above (3-26)}
\tvi{Done (4-4)}
}
\section{Maximum length chains in $\tp{\lolli m n}$}
\label{sec:mlcinlollis}


In this section, the focus is on the case $G=\lolli m n$, where $m$ and $n$ are integers with $m\ge 1$ and $n\ge 0$. We begin with a characterization of the elements in $\tp{\lolli m n}$ which are maximum length chains. \omitt{The rest of the section is devoted to describing the tableaux and reduced expressions which correspond maximum length chains in $\tp{\lolli m n}$.} We then study the tableaux in this case. 
Every maximum length chain in $\tp{\lolli m n}$ is also a maximal
chain in the weak order poset and as such corresponds to a standard
Young tableau of staircase shape, as well as to a balanced tableau of
the same shape. We describe those tableaux which arise in the $\tp{\lolli m n}$ case. The descriptions of the permutations which are possible in a maximum length chain is also simple.

Recall that we may use view the tubing lattice $\tp{\lolli m n}$ as the sublattice of $\fS_{m+n}$ induced by the $\lolli m n$-permutations. By Proposition~\ref{prop:m312}, the elements of $\tp{\lolli m n}$ satisfy a pattern avoidance condition. The elements in the maximum length chains satisfy an additional avoidance condition. 
\omitt{
\susanna{Once Samantha has shown that the max in class is $m$-132 avoiding, a lot of this intro to Section~\ref{sec:mlcinlollis} can be shortened. (4/1/24)}
\begin{definition}
\label{def:m132free}
    A permutation $w\in\fS_N$ is said to be $m$-132 free if there do not exist $i,j,k$ such that $1\leq i<j<k\leq N$, $w(i)<w(k)<w(j)$, and $m\leq w(k)$.
\end{definition}
}

\begin{lemma}\label{lem:m132}
Let $C=\{\ii=\wch{0}\lessdot\wch{1}\lessdot\cdots\lessdot\wch{\ell}=\longestk{m+n}\}$ be a maximal chain in $\tp{\lolli m n}$. 
\begin{enumerate}
\item If the chain has length $\ell=\binom{m+n}{2}$, then permutation $\wch{h}$, for $1\le h\le m+n$, is $m$-132 free.
\item If every permutation in the chain is $m$-132 free, then $k=\binom{m+n}{2}$.
\end{enumerate}
\end{lemma}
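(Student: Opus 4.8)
The plan is to connect the length of the chain $C$ with the inversion numbers of its elements. Since $\tp{\lolli m n}$ is the induced subposet of $\fS_{N}$ (with $N=m+n$) on the $\lolli m n$-permutations, each $\wch h$ is a $\lolli m n$-permutation, hence $m$-$312$ free by Proposition~\ref{prop:m312}, and the telescoping sum $\sum_{h=1}^{\ell}\bigl(\inv(\wch h)-\inv(\wch{h-1})\bigr)=\inv(\longestk N)=\binom N2$ holds (as $\inv(\ii)=0$). Because each summand is at least $1$, we get $\ell\le\binom N2$, with equality exactly when every cover $\wch{h-1}\lessdot\wch h$ of $C$ is already a cover in the weak order. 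Throughout I would use Proposition~\ref{prop:max's and min's of L(G)} in the form: a $\lolli m n$-permutation is $m$-$132$ free if and only if it is simultaneously the minimum and the maximum of its class, i.e.\ its class is a singleton.

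For the second statement, suppose every $\wch h$ is $m$-$132$ free, so each class $[\![\wch h]\!]$ is a singleton. I would show every cover $\wch{h-1}\lessdot\wch h$ of $C$ is a weak-order cover: choose a weak-order cover $z$ of $\wch{h-1}$ with $z\le\wch h$. Since the class of $\wch{h-1}$ is $\{\wch{h-1}\}$ and $z\neq\wch{h-1}$, the class $[\![z]\!]$ lies strictly above $[\![\wch{h-1}]\!]$, yet $[\![z]\!]\le[\![\wch h]\!]$; tightness of the cover in $\tp{\lolli m n}$ forces $[\![z]\!]=[\![\wch h]\!]$, and as this class is a singleton, $z=\wch h$. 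Hence every weak-order gap equals $1$ and $\ell=\binom N2$.

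For the first statement, assume $\ell=\binom N2$, so $C$ is a saturated chain from $\ii$ to $\longestk N$ in the weak order consisting entirely of $\lolli m n$-permutations. I would prove by downward induction on $h$ that each $\wch h$ is the maximum of its class (equivalently, $m$-$132$ free): the top element $\wch\ell=\longestk N$ is strictly decreasing and so trivially $m$-$132$ free. For the inductive step I use that the projection $\fS_N\to\tp{\lolli m n}$ is join-preserving (since $\lolli m n$ is filled). If $\wch h$ were not the maximum of its class, then, as each class is an interval in the weak order, there is a weak-order cover $z\gtrdot\wch h$ with $z$ in the same class as $\wch h$. Because $z$ and $\wch{h+1}$ are distinct covers of $\wch h$, their join satisfies $z\vee\wch{h+1}>\wch{h+1}$, while join-preservation gives $[\![z\vee\wch{h+1}]\!]=[\![z]\!]\vee[\![\wch{h+1}]\!]=[\![\wch h]\!]\vee[\![\wch{h+1}]\!]=[\![\wch{h+1}]\!]$; thus $\wch{h+1}$ is not the maximum of its class. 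Contrapositively, maximality of $\wch{h+1}$ forces maximality of $\wch h$, completing the induction.

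The main obstacle is this inductive step: one must pass from a local fact about $\wch h$ (it is not the top of its class) to a statement about its successor $\wch{h+1}$ in the chain. Join-preservation of the projection is exactly the tool that transports the ``collapse'' $z\sim\wch h$ upward, and the two points that make it work are that equivalence classes are intervals (so a witnessing cover $z$ exists) and that $z$ and $\wch{h+1}$ are genuinely distinct covers of $\wch h$ (so their join strictly exceeds $\wch{h+1}$). Verifying these carefully, together with the clean base case $\wch\ell=\longestk N$, is where the real content lies; the reductions in the first paragraph and the argument for the second statement are then routine.
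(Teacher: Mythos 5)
Your proposal is correct, and part (2) is essentially the paper's argument with the details filled in: both reduce to Proposition~\ref{prop:max's and min's of L(G)} making every class along the chain a singleton, after which covers in $\tp{\lolli m n}$ must lift to weak-order covers (the paper compresses your cover-lifting argument via $z$ into the single sentence that the projection then preserves the chain's length). For part (1), however, you take a genuinely different route. The paper argues by pattern-chasing: if some $\wch{h}$ contained an $m$-132 pattern $\wch{h}(i)<\wch{h}(k)<\wch{h}(j)$ with $\wch{h}(k)\ge m$, then since a full-length chain adds inversions one at a time until reaching $\longestk{m+n}$, and the pair $(\wch{h}(j),\wch{h}(k))$ is already inverted and stays so, the value $\wch{h}(i)$ must cross $\wch{h}(j)$ strictly before it crosses $\wch{h}(k)$; at that intermediate moment one finds an $m$-312 pattern, contradicting Proposition~\ref{prop:m312}. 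Your argument is instead structural: downward induction from $\longestk{m+n}$, using that congruence classes are weak-order intervals (to produce the witness cover $z\gtrdot\wch{h}$ inside the class) and that the projection is join-preserving (to push the collapse upward via $z\vee\wch{h+1}$). Both are valid, and both of your ingredients are available in the paper ($G$ filled makes the projection a lattice quotient by \cite[Theorem 4.16]{BM}, and the interval property is cited in the proof of Proposition~\ref{prop:max's and min's of L(G)}). The paper's proof buys elementarity — it needs only $m$-312 avoidance of the chain's elements and no lattice machinery — while yours buys generality: it shows, for any connected filled graph $G$ on $N$ vertices, that a maximal chain of $\tp{G}$ of full length $\binom{N}{2}$ passes only through singleton classes, with the pattern characterization entering only at the very end to translate ``maximum of its class'' into ``$m$-132 free.''
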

\begin{proof}
We use the notation for the chain given in the statement.
\begin{enumerate}    
\item Suppose, for contradiction, $C$ has length $\binom{m+n}{2}$ and yet $\wch{h}$ is not $m$-132 free for some $h$. There is an $i,j,k$ where $1\le i<j<k\le m+n$, $\wch{h}(i)<\wch{h}(k)<\wch{h}(j)$, and $m\le \wch{h}(k)$.
Since the final element of the chain $C$ is $\longestk{n+m}$, it is a maximum length chain in $\fS_{m+n}$, and $\wch{h}(k)$ and $\wch{h}(j)$ maintain their relative positions in $\wch{\ell}$ for $\ell\ge h$, there must exist $h'>h$ where the relative order in $\wch{h'}$ is $\ldots\wch{h}(j)\ldots\wch{h}(i)\ldots\wch{h}(k)$, which means $\wch{h'}$ contains an $m$-312 pattern. That is, if $\wch{h}$ contains a $m$-132 pattern, since $\wch{\binom{m+n}{2}}$ contains a $m$-321 pattern, then there must be an $h'$ such that $\wch{h'}$ contains a $m$-312 pattern. By Proposition~\ref{prop:m312}, $\tp{\lolli m n}$ has only permutations which avoid $m$-312, our contradiction.
\item 
Since $\wch{h}$ is both $m$-132 and $m$-312 free for $0\le h\le k$, it is alone in its equivalence class by Proposition~\ref{prop:max's and min's of L(G)}. Therefore, the projection to $\tp{\lolli m n}$ preserves the length of the chain.\omitt{Suppose, again for contradiction, there is an $h$ such that $\wch{h+1}$ covers $\wch{h}$ in $\tp{\lolli m n}$ but not in $\fS_{m+n}$. There is a permutation $w\notin\tp{\lolli m n}$ such that $\wch{h}\lessdot w<\wch{h+1}$ in $\fS_{m+n}$. Since $w\notin\tp{\lolli m n}$, there are $i,j,k$ such that $1\le i<j<k\le m+n$ such that $w(j)<w(k)<w(i)$ and $w(k)\ge m$. The permutations $\wch{h}$ and $w$ differ by an adjacent transposition and since $\wch{h}$ is $m$-312 free, we must have $j=i+1$ and the transposition is in positions $i$ and $i+1$. But this would mean that $\wch{h}$ is not $m$-132 free, a contradiction.} \end{enumerate}
\end{proof}

\subsection{Commuting shuffles}
We continue our study of maximum length chains by examining their corresponding reduced words.
The set of maximum length chains we study turns out to be a union of
commutation classes of reduced words. See \cite[Section 1.2]{Stembridge1997}, for example, for the definition of commutation classes. However, we also need a more concrete
description: sets of \mydef{commuting shuffles}.  Let
$\sigma=\sigma_1\cdots\sigma_k$ and $\tau=\tau_1\cdots\tau_j$ be two
reduced words such that the concatenation of $\sigma$ and $\tau$,
$\sigma\tau$, is reduced. We define the set $\cshuf\sigma \tau$ of
commuting shuffles inductively.  The concatenation $\sigma\tau$
is in $\cshuf\sigma \tau$ with the index set of $\sigma$ being
$\{1<2<\cdots<k\}$ and the index set of $\tau$ being
$\{k+1<\cdots<k+j\}$. Suppose
$\rho=\rho_1\ldots\rho_{k+j}\in\cshuf\sigma\tau$ with $\sigma$-index
set $I$ and $\tau$-index set $J$. The reduced expression
$\rho'=\rho_1\ldots\rho_{i+1}\rho_i\ldots\rho_{k+j}$, which is $\rho$
with its $i^{\text{th}}$ and $(i+1)^{\text{st}}$ entries swapped, is a
commuting shuffle of $\sigma$ and $\tau$ if $i\in I$, $i+1\in J$, and
$\rho_i$ and $\rho_{i+1}$ commute. The $\sigma$-index set of $\rho'$
is $(I\cup\{i+1\})-\{i\}$ and the new $\tau$-index set is
$(J\cup\{i\})-\{i+1\}$. Informally, $\cshuf\sigma\tau$ is the set of
all shuffles of $\sigma$ and $\tau$ which are obtained by commuting
elements of $\tau$ with elements of $\sigma$.

We will show that the reduced words coming from maximum length chains in $\tp{\lolli m n}$ are commuting shuffles of two types, described below, of reduced words.  Let $v_{m,n}$ be the permutation in $\S_{m+n}$ written
\begin{equation}\label{eq:vmn}\vmn=[\overbrace{m,m+1,\ldots,m+n}^{n+1},\overbrace{m-1,m-2,\ldots,2,1}^{m-1}]\end{equation} in
one-line notation.  Let $T$ be a standard Young tableau $T$ shape $(\lambda_1,\ldots,\lambda_k)$ with entry $T_{i,j}$ in the cell in row $i$, column $j$. We call $T$ \mydef{shiftable} if for all $i$ and $j$ such that $1\leq i\le k-1$ and  $1\leq j\leq\lambda_i-1$, either $T_{i,j}<T_{i+1,j-1}$ or $\lambda_{i+1}<j-1$. See Definition~\ref{def:m-row-shift} for a more general concept; here, shiftable means that $T$ is the result of left-justifying a shifted tableau. Let $\shredw{\longestk{n+1}}$ denote the
reduced words of $\longestk{n+1}$ whose $Q$ tableau under Edelman-Greene insertion is a shiftable tableau.

  We define $\shuf m n$ to be the set of 
    commuting shuffles of $\sigma\in\redw {\vmn}$ and $\tau\in\shredw{\longestk{n+1}}$. In
    other words,
$$\shuf m n=\bigcup_{\substack{\sigma\in\redw {v_{m,n}}\\\tau\in\shredw{\longestk{n+1}}}}\cshuf \sigma{\tau}.$$

\begin{example}
  Let $m=3$ and $n=1$. The permutation $v_{3,1}$ is $[3,4,2,1]$ and the
  only element of $\shredw{\longestk{1+1}}$ is $\tau=1$. There are
  five reduced words for $v_{3,1}$. The reduced words and
  their commuting shuffles with $\tau$ are
  \begin{itemize}
  \item $\sigma=12312$, $\cshuf\sigma\tau=\{123121\}$;
  \item $\sigma=12132$, $\cshuf\sigma\tau=\{121321\}$;
  \item $\sigma=21232$, $\cshuf\sigma\tau=\{212321\}$;
  \item $\sigma=21323$, $\cshuf\sigma\tau=\{213231,213213\}$; and
     \item $\sigma=23123$, $\cshuf\sigma\tau=\{231231,231213\}$.   
    \end{itemize}
There are seven commuting shuffles in $\shuf 3 1$. 
  
\end{example}

We establish some facts about $\vmn$
and its reduced wordss.
\begin{claim} Fix integers $m$ and $n$, with $m\ge 1$ and $n\ge 0$.\label{claim:vmn}
  \begin{enumerate}
  \item For $\sigma\in\redw{\vmn}$ and $\tau\in\shredw{\longestk{n+1}}$, the expression $\sigma\tau$ is reduced.
  \item \label{claim:vex} The permutation $\vmn$ has the same number of reduced words as there are standard Young tableaux of shape $(m+n-1,m+n-2,\ldots,n+2,n+1)$.
  \item\label{claim:decseq} Fix $\sigma\in\redw{\vmn}$. For every $k$ with
    $m-1\le k\le n+m-1$, there exist integer sequences
    $d^k=(d^k_1<d^k_2<\cdots<d^k_{m-1})$ such that
    $\sigma_{d^k_i}=k-i+1$ and $d^{k-1}_i<d^k_i$. What's more, if $k_1\ne k_2$, then 
    $d^{k_1}$ and $d^{k_2}$ are disjoint as sets. In other words, $\sigma$ has $n+1$ disjoint decreasing sequences of length $m-1$, starting with $m-1,m,m+1,\ldots,m+n-1$. Additionally, there
    exists a sequence of indices $a=(a_1<a_2<\cdots< a_{m+n-1})$ such
    that $\sigma_{a_i}=i$. 
\omitt{    \item The length of the longest decreasing subsequence of $\vmn$ is $m-1$.}
    \end{enumerate}
  \end{claim}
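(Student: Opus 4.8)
The plan is to read the combinatorial structure of $\vmn$ off its one-line notation and then handle the three parts essentially independently; the only genuinely delicate statement is part~(3), which I would prove with a wiring-diagram (sorting-network) argument. The starting observation for everything is that $\vmn=[m,m+1,\dots,m+n,\,m-1,\dots,1]$ agrees with $\longestk{m+n}=[m+n,\dots,1]$ on its last $m-1$ positions (both read $m-1,m-2,\dots,1$), while on the first $n+1$ positions $\vmn$ has the block $m,m+1,\dots,m+n$ in increasing order and $\longestk{m+n}$ has it reversed. For part~(1): right-multiplying $\vmn$ by the longest element of $\fS_{n+1}$ (acting on positions $1,\dots,n+1$ through the letters $1,\dots,n$) precisely reverses that first block, so $\vmn\cdot\longestk{n+1}=\longestk{m+n}$. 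A direct count gives $\ell(\vmn)=(m-1)(m+2n)/2$ and $\ell(\longestk{n+1})=\binom{n+1}{2}$, which sum to $\binom{m+n}{2}=\ell(\longestk{m+n})$. Since the lengths add, the concatenation of any $\sigma\in\redw{\vmn}$ with any reduced word of $\longestk{n+1}$ is a reduced word for the product; in particular $\sigma\tau$ is reduced for every $\tau\in\shredw{\longestk{n+1}}$.

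For part~(2) I would compute the Lehmer code of $\vmn$, namely $(\underbrace{m-1,\dots,m-1}_{n+1},m-2,m-3,\dots,1,0)$, and observe it is weakly decreasing. Hence $\vmn$ is dominant (equivalently $132$-avoiding), so it is vexillary, its Stanley symmetric function is a single Schur function $s_{\mu}$ with $\mu$ the code partition, and by the Edelman--Greene correspondence $|\redw{\vmn}|=f^{\mu}$. Transposing the code gives $\mu'=(m+n-1,m+n-2,\dots,n+1)$, and since $f^{\mu}=f^{\mu'}$ this is exactly the number of standard Young tableaux of the asserted shape.

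For part~(3) I would pass to the maximal chain $\ii=\wch{0}\lessdot\cdots\lessdot\wch{L}=\vmn$ in the weak order ($L=\ell(\vmn)$) and track each value as a ``wire'' whose height at time $t$ is its position in $\wch{t}$; two wires cross exactly once, precisely when their values form an inversion of $\vmn$. The key fact is that $\Inv(\vmn)$ consists exactly of the pairs $(a,b)$ with $a>b$ and $b\le m-1$. Thus a value $v\ge m$ forms inversions only with $1,\dots,m-1$, so its wire starts at position $v$, ends at position $v-m+1$, and makes exactly $m-1$ crossings, each moving it down by one; reading these crossings in time order yields letters $v-1,v-2,\dots,v-m+1$, i.e.\ a decreasing subsequence $d^{\,v-1}$ of $\sigma$. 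Letting $k=v-1$ range over $m-1\le k\le m+n-1$ gives the required $n+1$ sequences. They are disjoint because any two first-block wires $v,v''\ge m$ never cross (their pair is not an inversion), so no single crossing can lie on two of them. Symmetrically, the wire of value $1$ forms an inversion with every larger value, so it rises monotonically from position $1$ to $m+n$, crossing through letters $1,2,\dots,m+n-1$ in time order and producing the increasing sequence $a$ with $\sigma_{a_i}=i$.

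The main obstacle is the interleaving inequality $d^{k-1}_i<d^k_i$. Here the two relevant wires are $k$ and $k+1$, both $\ge m$ in the range where the inequality is non-vacuous, hence never crossing; since wire $k$ starts immediately left of wire $k+1$, it stays strictly left of it for all time. At the instant $d^k_i$ when wire $k+1$ steps down across the letter $k-i+1$, the wire that swaps \emph{up} out of position $k-i+1$ cannot be wire $k$ (that would be a forbidden crossing of $k$ and $k+1$), so wire $k$ already occupies a position $\le k-i$ at that moment; because its heights decrease monotonically, its own $i$-th down-step (the letter $k-i$, occurring at time $d^{k-1}_i$) must have happened strictly earlier. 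This gives $d^{k-1}_i<d^k_i$ and completes part~(3).
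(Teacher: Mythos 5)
Your proof is correct and takes essentially the same route as the paper's: part (1) by additivity of lengths ($\ell(\vmn)+\ell(\longestk{n+1})=\binom{m+n}{2}$ and $\vmn\cdot\longestk{n+1}=\longestk{m+n}$), part (2) through Stanley's vexillary machinery (you verify dominance of the Lehmer code and conjugate to get $(m+n-1,\ldots,n+1)$, where the paper instead checks Stanley's $\lambda(w)=\mu(w)$ condition via a pattern-avoidance equivalence---the same underlying theorem), and part (3) by tracking the trajectories of the values $m,\ldots,m+n$ and of $1$ through the chain, which is exactly the paper's argument. Your careful non-crossing/monotonicity justification of the interleaving $d^{k-1}_i<d^k_i$ is a sound expansion of the paper's terse one-line remark that the values $j-1+m-1$ and $j+m-1$ never change their relative positions.
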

\begin{example}
  We illustrate Claim~\ref{claim:vmn} Part~\ref{claim:decseq} in this example. Let $m=4$
  and $n=2$, so that $\vmn=v_{4,2}=[4,5,6,3,2,1]$. Let
  $\sigma=132145324354\in\redw{v_{4,2}}$. The decreasing sequence indices
  are $d^5=(6,9,10)$, $d^4=(5,7,8)$ and $d^3=(2,3,4)$. The ascending
  sequence index $a$ is $(1,3,7,9,11)$.
  \end{example}

\begin{proof}[Proof of Claim~\ref{claim:vmn}]
  Fix positive integers $m$ and $n$. 
  \begin{enumerate}
    \item We need to explain why $\sigma\tau$ is a reduced expression for any
$\sigma\in\redw{\vmn}$. The length of $\sigma\in\redw{\vmn}$ is
$(n+1)(m-1)+\binom{m-1}{2}$ and the length of $\tau$ is
$\binom{n+1}{2}$, so there are $\binom{m+n}{2}$ elements in
the list $\sigma\tau$. The result of applying $\tau$ to $\vmn$ is the
longest element of $\S_{m+n}$, so $(v_{nm})\tau$ also has
$\binom{n+m}{2}$ elements, which shows $\sigma\tau$ is reduced, and is an element of $\redw {\longestk{m+n}}.$
\item 
In order to use Corollary~4.2 of \cite{stan84}, which would prove this part of the claim, we must show that $\lambda(\vmn)=\mu(\vmn)$, where for a permutation $w=[w_1,\ldots,w_k]\in\S_k$, $\lambda(w)$ is the partition given by the non-increasing rearrangement of $(r_1(w),r_2(w),\ldots,r_k(w))$, $\mu(w)$ is the partition given by the conjugate of the non-increasing rearrangement of $(s_1(w),s_2(w),\ldots,s_k(w))$, and

$$r_i(w)=|\{j:j<i\text{ and }w_j>w_i\}|\text{ and }s_i(w)=|\{j:j>i\text{ and }w_j<w_i\}|.$$ By \cite[Exercise 7.22]{S99}, for example, this is the same as showing $\vmn$ is 4132-avoiding, which is true by definition of $\vmn$. 
\omitt{Since $$r_i(\vmn)=\begin{cases}0&\text{for $1\le i\le n+1$}\\i-1&\text{for $n+2\le i\le n+m$}\end{cases}\text{ and }s_i(\vmn)=\begin{cases}m-1&\text{for $1\le i\le n+1$}\\m+n-i-2&\text{for $n+2\le i\le n+m$}\end{cases},$$ we have $\lambda(w_{mn})=\mu(\vmn)=(m+n-1,m+n-2,\ldots,n+1)$ and this part of the claim is proved.} 
\item Consider $\sigma_1\sigma_2\cdots\sigma_{\ell(\vmn)}\in\redw{\vmn}$ acting on
$[1,2,\ldots, m+n]\in\fS_{m+n}$, where $\sigma_1$ is applied first and the
transposition $\sigma_k=s_{i_k}$ swaps the values in positions $i_k$ and
$i_k+1$. Fix $j$, $1\le j\le n+1$. Since $\vmn(j)=j+m-1$, there are
indices $i_1<i_2<\cdots<i_{m-1}$ such that $\sigma_{i_1}$ swaps the
value $j+m-1$ from position $j+m-1$ to position $j+m-2$ ($i_1=j+m-2$),
$\sigma_{i_2}$ swaps the value $j+m-1$ from position $j+m-2$ to
position $j+m-3$, and so on, until $\sigma_{i_{m-2}}$ swaps the value
$j+m-2$ from position $j+m-(m-1)$ to position $j$. We define
$d^{m-1+j-1}_h$ to be $i_h$, so that $\sigma_{d^k_h}=k-h+1$. Since
$\sigma_{d^k_h}$ moves the value $k+1$ from position $k+1-h$ to
position $k-h$, the sequences $d^{k_1}$ and $d^{k_2}$ are disjoint if
$k_1\ne k_2$.  We have $\sigma_{d^k_i}=k-i+1$ and $d^{k-1}_i<d^k_i$, because $j-1+m-1$ and $j+m-1$ never change their relative positions.

The reasoning to explain that there exists a sequence of indices
$a=(a_1<a_2<\cdots <a_{m+n-1})$ such that $\sigma_{a_i}=i$ is
similar, but uses the fact that $\vmn(m+n)=1$.\omitt{\item A decreasing subsequence in $\sigma$ corresponds to a pair $(i,\vmn(i))$ such that $\vmn(i)>i$, with the length of the subsequence being $\vmn(i)-i$. The definition of $\vmn$ is $$\vmn(i)=\begin{cases}i+m-1&\text{if $1\le i\le m+1$}\\m+n-i+1&\text{if $n+2\le i\le m+n$}.\end{cases}$$ Based on the definition, we have $v(i)-i\le m-1$ for all $i$.}
  \end{enumerate}\end{proof}

We will need some properties of elements of $\shredw{\longest^{(n+1)}}$. A \mydef{lattice word}
is a sequence composed of positive integers, in which every prefix contains at least as many positive integers $i$ as integers $i + 1$. A \mydef{reverse lattice word}, or \mydef{Yamanouchi word}, is a string whose reversal is a lattice word \cite{Fulton}.

\begin{proposition}[\cite{FN}]
  \label{prop:FN}
  Let $\rho$ be a reduced expression for $\longest^{(n+1)}$ and $C$ the corresponding maximal (and necessarily maximum length) chain in the weak order on $\S_{n+1}$. 
  \begin{enumerate}
  \item The reduced expression $\rho$ also corresponds to a maximum length chain in the Tamari lattice $\Tam_{n+1}$ if and only if $\rho$ is a lattice word and reverse lattice word with $n-i+1$ occurrences of $i$.
   \item The reduced expression $\rho$ also corresponds to a maximum length chain in the Tamari lattice $\Tam_{n+1}$ if and only if $\EG(\rho)$ is a shiftable tableau of shape $(n,n-1,\ldots,1)$.  
  \end{enumerate}
  \end{proposition}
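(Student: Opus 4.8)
The plan is to identify the Tamari lattice with a tubing lattice and reduce both characterizations to a single pattern-avoidance condition, then treat the two parts separately. Since $\Tam_{n+1}=\tp{P_{n+1}}=\tp{\lolli 2 {n-1}}$, every maximal chain in $\Tam_{n+1}$ arises by projecting the weak-order chain $C$ of $\rho$, and its length is at most $\binom{n+1}{2}$, with equality exactly for the maximum length chains. Applying Lemma~\ref{lem:m132} with lollipop parameters $(2,n-1)$ shows that $C$ projects to a maximum length chain in $\Tam_{n+1}$ if and only if every $\wch h$ is $2$-$132$ free, i.e. ordinarily $132$-avoiding. By Proposition~\ref{prop:max's and min's of L(G)} a permutation is alone in its $\lolli 2 {n-1}$-class precisely when it is both $132$- and $312$-avoiding; moreover a short argument using Proposition~\ref{prop:m312} shows that a chain all of whose permutations are $312$-avoiding already projects length-preservingly (distinct $\lolli 2 {n-1}$-permutations lie in distinct classes) and hence is maximum length. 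Combined with Lemma~\ref{lem:m132}, this makes the three conditions ``$\wch h$ is $132$-avoiding for all $h$'', ``$\wch h$ is $312$-avoiding for all $h$'', and ``$C$ is a maximum length chain in $\Tam_{n+1}$'' all equivalent. This common target is what both parts will match.

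For Part 1, I would first record the duality of $\longestk{n+1}$: reversing a reduced word and applying $s_i\mapsto s_{n+1-i}$ is an involution on $\redw{\longestk{n+1}}$ that exchanges $312$- and $132$-avoidance of the associated chain, exchanges the lattice and reverse-lattice conditions, and fixes the content $(n,n-1,\ldots,1)$. It therefore suffices to prove a single equivalence, say that $\rho$ is a lattice word of content $(n,n-1,\ldots,1)$ if and only if every $\wch h$ is $312$-avoiding. I would prove this by induction on the word length, tracking how appending a letter $s_p$, which turns the ascent in positions $p,p+1$ into a descent, creates a $312$-pattern exactly when the prefix inequality $\#i\ge\#(i+1)$ first fails; the content $(n,n-1,\ldots,1)$ is forced by the same bookkeeping. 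The reverse-lattice statement then follows by the duality, and the conjunction gives Part 1.

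For Part 2, since $\shredw{\longestk{n+1}}$ is defined as the reduced words whose $Q$-tableau $\EGQ(\rho)$ is shiftable, the task is to show that $\EGQ(\rho)$ is shiftable if and only if $C$ is a maximum length chain. Here I would invoke Corollary~\ref{cor:intraclassLmn}, condition (3), with $m=2$: the hyperplane $H_{a,b}$ is intra-class iff some cell strictly below it in its column of the balanced tableau carries a smaller entry, so $C$ is maximum length iff the balanced tableau has strictly increasing columns. I would then transport this column-increasing condition, through the common indexing by reduced words that links balanced tableaux and standard Young tableaux (Theorem~\ref{thm:longestwordSets}, \cite{EG}), to the shiftable condition on $\EGQ(\rho)$, matching it with the bijection of \cite{FN,STWW} between maximum length Tamari chains and shifted standard Young tableaux of staircase shape, under which the left-justification of the shifted tableau is exactly $\EGQ(\rho)$.

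The main obstacle is Part 2: controlling how Edelman-Greene insertion converts the local data (``all $\wch h$ are $132$-avoiding'', equivalently the column-increasing balanced tableau) into the global shape condition ``shiftable''. This requires understanding the bump paths of the insertion and is where the real work lies. By contrast, Part 1 is, modulo the duality involution, a finite inductive bookkeeping, and the content computation there is routine once the lattice-word structure is established.
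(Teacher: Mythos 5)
You should first be aware that the paper contains no proof of this proposition: it is imported verbatim from \cite{FN} (see also \cite{STWW}) and used as a black box, e.g.\ inside the proof of Proposition~\ref{prop:MLCsAndCSs}, so your attempt has to stand on its own. Your opening reduction is sound and consonant with the paper's framework: identifying $\Tam_{n+1}$ with $\tp{\lolli{2}{n-1}}$ and using Lemma~\ref{lem:m132}, Propositions~\ref{prop:m312} and~\ref{prop:max's and min's of L(G)}, and Bj\"orner--Wachs to make ``maximum length in the Tamari,'' ``every $\wch{h}$ is $312$-avoiding,'' and ``every $\wch{h}$ is $132$-avoiding'' equivalent is essentially correct (with the small caveat that Lemma~\ref{lem:m132} is stated for chains of $G$-permutations in $\tp{\lolli m n}$, not for the weak-order chain $C$, so the transfer needs a sentence). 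The fatal gap is Part 2, which is circular. After correctly using Corollary~\ref{cor:intraclassLmn} with $m=2$ to reduce ``maximum length'' to ``the balanced tableau of $\rho$ has strictly increasing columns,'' you propose to conclude by matching this with ``the bijection of \cite{FN,STWW} between maximum length Tamari chains and shifted standard Young tableaux of staircase shape, under which the left-justification of the shifted tableau is exactly $\EGQ(\rho)$.'' That bijection \emph{is} statement (2) of the proposition being proved. Remove it, and what remains is exactly the step you concede is missing: showing that Edelman--Greene insertion converts the column-increasing/$132$-free condition into shiftability of $\EGQ(\rho)$, which requires genuine control of bump paths throughout the insertion (compare the amount of bump-path and promotion analysis the paper needs in Lemmas~\ref{lem:commutingInsert}, \ref{lem:EGForShifted}, \ref{lem:shiftTabForm}, and \ref{lem:biggerhigher} even \emph{granting} Proposition~\ref{prop:FN}). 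So Part 2 is a restatement of the problem, not a proof.

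Part 1 also has two concrete defects. First, the involution you specify --- reversal composed with $s_i\mapsto s_{n+1-i}$ --- does not fix the content: it sends a word with $n-i+1$ occurrences of $i$ to one with $i$ occurrences of $i$; moreover the relabeling alone (conjugation by $\longestk{n+1}$) carries a $312$ pattern in the chain to $231$, not $132$. The correct move is plain reversal: if $\rho$ has chain $\wch{h}$, the reversed word has chain $h\mapsto \longestk{n+1}\wch{M-h}$, i.e.\ value complementation, which preserves content, exchanges lattice with reverse-lattice, and exchanges $312$- with $132$-avoidance. Second, and more seriously, your plan proves a strictly stronger claim than Part 1: since by your own first-paragraph equivalences $312$-avoidance of the whole chain already implies maximality, your proposed ``single equivalence'' (lattice word with staircase content if and only if the chain is $312$-avoiding) would render the reverse-lattice hypothesis in \cite{FN} redundant. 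Whether or not that strengthening is true, you offer for it only an unexecuted ``inductive bookkeeping'' sketch; the assertion that appending a letter creates a $312$ pattern exactly when a prefix inequality first fails is precisely the delicate point, and no argument is given. As written, neither part of the proposition is established.
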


\omitt{
We say a chain in the symmetric group $\S_{m+n}$ is $m$-132 free if all permutations in it avoid $m$-132 and we say a reduced expression for $\longestk{m+n}$ is $m$132 free if the chain it corresponds to is.
}

\omitt{\susanna{Add something at the start about the correspondence between chains and reduced words and between reduced words and tabs}
}
\omitt{
\susanna{Say something about the existence of chains of length $\binom{m+n}{2}$}
}
\begin{lemma}\label{lem:commute}
Suppose $C$ is a maximum length chain in $\tp{\lolli m n}$ and $D$ is
related to $C$ by commutation relations only. Then $D$ is also a
maximum length chain.
\end{lemma}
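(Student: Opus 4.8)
The plan is to reduce to a single commutation move and then control the one permutation that changes. Since $D$ is obtained from $C$ by a sequence of commutations, I would induct on the number of moves, so it suffices to prove the following: if $C'$ is a maximum length chain in $\tp{\lolli m n}$ and $C''$ differs from $C'$ by swapping two adjacent commuting letters of the corresponding reduced word, then $C''$ is again a maximum length chain. The crucial input, available by the inductive hypothesis together with Lemma~\ref{lem:m132}\,(1), is that \emph{every} permutation of the maximum length chain $C'$ is both $m$-312 free and $m$-132 free; I will use both properties.

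Write $C'$ as $\ii=\wch0\lessdot\cdots\lessdot\wch M=\longestk{m+n}$ and suppose the commuted letters sit in positions $h+1,h+2$, so that $\wch{h+1}=\wch h s_i$ and $\wch{h+2}=\wch h s_i s_j$ with $|i-j|\ge 2$. The move replaces only the single permutation $\wch{h+1}$ by $\tilde w=\wch h s_j$, leaving every other element of the chain (in particular $\wch h$ and $\wch{h+2}$) unchanged; and since $s_i$ fixes positions $j,j+1$, the cover $\wch{h+1}\lessdot\wch{h+2}$ forces $\wch h(j)<\wch h(j+1)$, so $\wch h\lessdot\tilde w$ is a genuine cover (the reverse commutation is symmetric in $i$ and $j$). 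Thus the whole problem reduces to one claim: $\tilde w=\wch h s_j$ is $m$-312 free, given that $\wch h$ is both $m$-312 free and $m$-132 free.

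To prove the claim I would argue by contradiction. The permutation $\tilde w$ agrees with $\wch h$ except that the ascent at positions $j,j+1$ is turned into a descent, so any $m$-312 pattern of $\tilde w$ not already present in $\wch h$ must use position $j$ or $j+1$. I would split into the cases where the pattern uses both of $\{j,j+1\}$, only $j$, or only $j+1$, and in each case record which role (the ``$3$'', the ``$1$'', or the ``$2$'') the occupied position plays. In every case the value relations, together with $\wch h(j)<\wch h(j+1)$, let me transfer the pattern back to $\wch h$: replacing the used position by its partner among $\{j,j+1\}$, alongside the two unchanged positions, yields either an $m$-312 or an $m$-132 pattern of $\wch h$, contradicting one of its two avoidance properties. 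For instance, if $\{j,j+1\}$ occupy the first two positions of the pattern one obtains an $m$-132 pattern of $\wch h$, while if $j$ alone is the ``$3$'' one recovers an $m$-312 pattern of $\wch h$ using $j+1$ in its place. This casework is the technical heart of the argument and the main obstacle; having both avoidance properties of $\wch h$ on hand is exactly what lets every subcase close.

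Once the claim holds, every permutation of $C''$ is $m$-312 free, hence a $\lolli m n$-permutation by Proposition~\ref{prop:m312}, i.e.\ the unique minimal element of its $\lolli m n$-class. Consecutive permutations of $C''$ are distinct $\lolli m n$-permutations and so lie in distinct classes, so no cover of $C''$ collapses under the projection to $\tp{\lolli m n}$; therefore $C''$ projects to a maximal chain of $\tp{\lolli m n}$ of full length $\binom{m+n}{2}$, which is the maximum possible. (Equivalently, one checks by the symmetric casework that $\tilde w$ is also $m$-132 free and invokes Lemma~\ref{lem:m132}\,(2).) By induction on the number of commutation moves, $D$ is a maximum length chain.
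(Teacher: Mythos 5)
Your proposal is correct, and the casework you sketch does close in every subcase, but your mechanism is genuinely different from the paper's. The paper likewise reduces to a single commutation move and argues by contradiction that the one altered permutation $\vch{k}$ cannot contain an $m$-312 pattern; however, it sandwiches $\vch{k}$ between its two unchanged neighbors and uses only $m$-312 freeness, of \emph{both} of them: since $\wch{k-1}$ is $m$-312 free, the hypothetical pattern must be \emph{created} by $\rho_{k+1}$, forcing the ``3'' and ``1'' of the pattern into adjacent positions ($\rho_{k+1}=a$, $b=a+1$); since $\wch{k+1}$ is also $m$-312 free, the pattern must be \emph{destroyed} by $\rho_k$, forcing the ``1'' and ``2'' adjacent ($\rho_k=b$, $c=b+1$); together these give $\rho_{k}=\rho_{k+1}+1$, contradicting commutativity. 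You instead consult only the predecessor $\wch{h}$, but need both of its avoidance properties: every transferred occurrence lands as an $m$-312 pattern of $\wch{h}$ except the one subcase where $j,j+1$ carry the ``3'' and ``1'', which lands as an $m$-132 pattern---this is exactly where Lemma~\ref{lem:m132}(1) becomes an essential input carried through your induction, whereas the paper's proof never uses the $m$-132 condition at all. (Your observation that $(p_2,p_3)=(j,j+1)$ is impossible, because the swap is an ascent of $\wch{h}$, is the mirror of the paper's remark that the ``1''--``2'' swap cannot be the creating move.) What each approach buys: the paper's creation/destruction argument is shorter and hypothesis-light; yours is one-sided, fully elementary in its casework, and it makes explicit the closing step the paper leaves implicit---that a weak-order chain of $m$-312-free permutations consists of distinct $\lolli m n$-permutations lying in distinct classes by Proposition~\ref{prop:m312}, and hence projects to a chain of the maximum possible length $\binom{m+n}{2}$ in $\tp{\lolli m n}$.
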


\begin{proof} We need only show that a single commutation relation cannot create a $m$-132 pattern in a maximum length chain in $\tp{\lolli m n}$. Suppose $C$ is a maximum length chain and its corresponding reduced expression is $\rho_1\rho_2\cdots\rho_k\rho_{k+1}\cdots\rho_{\binom{m+n}{2}}$. Let $D$ correspond to $\rho_1\rho_2\cdots\rho_{k+1}\rho_{k}\cdots\rho_{\binom{m+n}{2}}$, where $|\rho_k-\rho_{k+1}|>1$. In terms of permutations, write $$C=\{\wch{0}\lessdot\wch{1}\lessdot\cdots\lessdot\wch{\binom{m+n}{2}}=\longestk{m+n}\},$$where $\wch{j+1}=\wch{j}\rho_{j+1}$. The chain $D$ is then $D=\{\wch{0}\lessdot\wch{1}\lessdot\cdots\lessdot\wch{k-1}\lessdot\vch{k}\lessdot\wch{k+1}\cdots\lessdot\wch{\binom{m+n}{2}}\}$, where $\vch{k}=\wch{k-1}\rho_{k+1}$ and $\wch{k+1}=\vch{k}\rho_k$. See Figure~\ref{fig:twochains}. For contradiction, suppose $D$ is not $m$-312 free. If it is not, then it would have to be $\vch{k}$ which contains the $m$-312 pattern.  There are $1\le a<b<c\leq m+n$ such that $\vch{k}(b)<\vch{k}(c)<\vch{k}(a)$ and $\vch{k}(c)\ge m $. In the one-line notation for $\vch{k}$, we have $[\ldots\vch{k}(a)\ldots\vch{k}(b)\ldots\vch{k}(c)\ldots]$. Since $\wch{k-1}$ is $m$-312 free, the adjacent transposition $\rho_{k+1}$ must transpose $\vch{k}(b)$ and $\vch{k}(a)$, so that $\wch{k-1}$'s one-line form is $[\ldots\vch{k}(b)\ldots\vch{k}(a)\ldots\vch{k}(c)\ldots]$, $\rho_{k+1}=a$ and $b=a+1$. Similarly, since $\wch{k+1}$ is also $m$-312 free, $\rho_k$ must transpose $\vch{k}(b)$ and $\vch{k}(c)$, so that $\rho_k=b$ and $c=b+1$. But this means that $\rho_{k+1}=\rho_k-1$, contradicting $|\rho_{k+1}-\rho{k}|>1$.
\end{proof}

\begin{figure}
\centering
\begin{tikzpicture}
\def\h{2.1}
\def\v{.7071067811}
\node (w0) at (0,0) {$\ii$};
\node (w1) at (\h,0) {$\wch{1}$};
\node (w2) at (2*\h,0) {$\wch{2}$};
\node (wk-1) at (3*\h,0) {$\wch{k-1}$};
\node (wk) at (3*\h+\h*\v,\h*\v) {$\wch{k}$};
\node (wkp) at (3*\h+\h*\v,-\h*\v) {$\vch{k}$};
\node (wk+1) at (3*\h+2*\h*\v,0) {$\wch{k+1}$};
\node (wk+2) at (4*\h+2*\h*\v,0) {$\wch{k+2}$};
\node (end-1) at (5*\h+2*\h*\v,0) {$\wch{\binom{m+n}{2}-1}$};
\node (end) at (6*\h+2*\h*\v,0) {$\longestk{m+n}$};
\draw[blue,thick] (w0)--(w1) node[above,red,midway]{$\rho_1$};
\draw[blue,thick] (w1)--(w2) node[above,red,midway]{$\rho_2$};
\draw[blue,thick,dashed] (w2)--(wk-1);
\draw[blue,thick] (wk-1)--(wk) node[above,red,midway,left]{$\rho_k$};
\draw[blue,thick] (wk-1)--(wkp) node[below,red,midway,left]{$\rho_{k+1}$};
\draw[blue,thick] (wkp)--(wk+1) node[below,red,midway,right]{$\rho_{k}$};
\draw[blue,thick] (wk)--(wk+1) node[above,red,midway,right]{$\rho_{k+1}$};
\draw[blue,thick] (wk+1)--(wk+2) node[above,red,midway]{$\rho_{k+2}$};
\draw[blue,thick,dashed] (wk+2)--(end-1);
\draw[blue,thick] (end-1)--(end) node[above,red,midway]{$\rho_{\binom{m+n}{2}}$};
\end{tikzpicture}
\caption{Two chains in $\S_{m+n}$ that differ by a commutation move.}
\label{fig:twochains}
\end{figure}
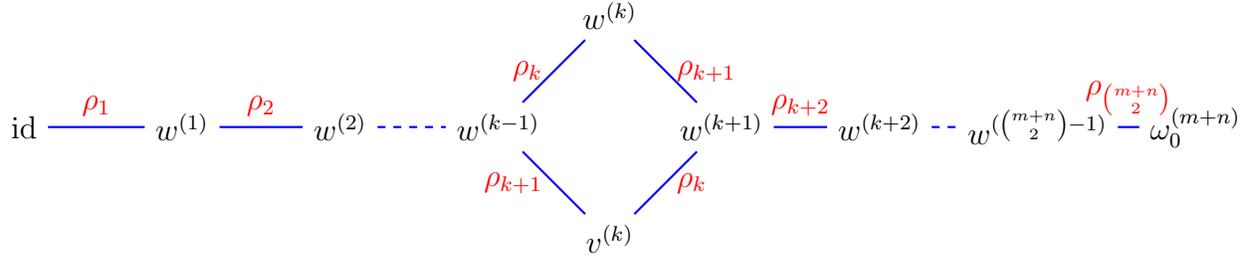

\begin{proposition}
  \label{prop:MLCsAndCSs} Let $m$ and $n$ be integers with $m\ge1$ and $n\ge0$ and let
  $\rho$ be a reduced expression for $\longest^{(m+n)}$. Then $\rho$
  represents a maximum length chain in $\tp{\lolli mn}$ if and only if
  $\rho\in\shuf mn$.  \end{proposition}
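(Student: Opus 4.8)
The plan is to prove both implications by comparing an arbitrary reduced word $\rho\in\redw{\longestk{m+n}}$ with the sorted concatenation $\sigma\tau$, where $\sigma\in\redw{\vmn}$ and $\tau\in\shredw{\longestk{n+1}}$, exploiting two facts: commutation moves preserve the property of being a maximum length chain (Lemma~\ref{lem:commute}), and a chain in $\tp{\lolli mn}$ is maximum length exactly when all of its permutations are $m$-$132$ free (Lemma~\ref{lem:m132}). Throughout I would use Claim~\ref{claim:vmn}, which gives that $\sigma\tau$ is reduced and equals $\longestk{m+n}$, together with the direct computation $\Inv(\vmn)=\{(a,b):a>b,\ b\le m-1\}$; the latter shows that $\tau=\vmn^{-1}\longestk{m+n}$ reverses the first $n+1$ positions while fixing the last $m-1$.

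For $\rho\in\shuf mn\Rightarrow\rho$ is a maximum length chain, I would first show the representative $\sigma\tau$ is maximum length and then invoke Lemma~\ref{lem:commute}, since every commuting shuffle is obtained from $\sigma\tau$ by commutation moves. To see $\sigma\tau$ is maximum length I would verify that every prefix permutation is both $m$-$312$ and $m$-$132$ free; by Proposition~\ref{prop:m312} and Proposition~\ref{prop:max's and min's of L(G)} each such permutation is then the unique (simultaneously minimal and maximal) element of its class, so the projection to $\tp{\lolli mn}$ preserves the length $\binom{m+n}{2}$, which is maximal. The prefixes fall into two phases. In the $\sigma$-phase a prefix $w$ satisfies $\Inv(w)\subseteq\Inv(\vmn)$, and since every inversion of $\vmn$ has smaller entry $\le m-1$, any $m$-$312$ or $m$-$132$ pattern would force an inversion with smaller entry $\ge m$, a contradiction. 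In the $\tau$-phase every permutation has the form $[w_1,\ldots,w_{n+1},m-1,\ldots,1]$ with $\{w_1,\ldots,w_{n+1}\}=\{m,\ldots,m+n\}$, so $m$-$312$ (resp.\ $m$-$132$) freeness is equivalent to the first block being $312$- (resp.\ $132$-) free; relabelling values by subtracting $m-1$, the $\tau$-phase becomes a chain from $\ii_{n+1}$ to $\longestk{n+1}$ all of whose permutations avoid $312$ and $132$, which by the Tamari case ($m=2$) of Lemma~\ref{lem:m132} and Proposition~\ref{prop:FN} is exactly a maximum length chain in the Tamari lattice, i.e.\ $\tau\in\shredw{\longestk{n+1}}$, true by hypothesis.

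For the converse, suppose $\rho$ is a maximum length chain. I would label each position $t$ of $\rho$ by the unique inversion it creates, calling $t$ a $\tau$-position if that inversion has both entries $\ge m$ and a $\sigma$-position otherwise; because commuting generators act on disjoint position-pairs, this labelling is invariant under commutation moves. I would then sort $\rho$ into the form $\sigma\tau$ by repeatedly swapping an adjacent pair in which a $\tau$-position is immediately followed by a $\sigma$-position. The crucial point, which I expect to be the main obstacle, is that such an adjacent pair always commutes. If it did not, write $w$ for the permutation before the first of the two steps and $s_p$ for that step, which creates an inversion among two values $w(p),w(p+1)\ge m$. If the next step is $s_{p+1}$ it creates an inversion with smaller entry $w(p)\ge m$, contradicting that $\sigma$-positions create inversions with smaller entry $\le m-1$; if the next step is $s_{p-1}$, then the intervening prefix permutation contains the $m$-$132$ pattern $w(p-1)<w(p)<w(p+1)$ with $w(p)\ge m$, contradicting Lemma~\ref{lem:m132}. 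Hence the sort proceeds by genuine commutation moves, stays among maximum length chains by Lemma~\ref{lem:commute}, and terminates at a word $\sigma\tau$.

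It remains to identify the sorted word. The $\sigma$-block creates precisely the inversions with smaller entry $\le m-1$, which is exactly $\Inv(\vmn)$; since a permutation is determined by its inversion set, the $\sigma$-block reaches $\vmn$, so $\sigma\in\redw{\vmn}$. The $\tau$-block then carries $\vmn$ to $\longestk{m+n}$, and because the sorted word still lies in the commutation class of $\rho$, all of its prefixes remain $m$-$312$ and $m$-$132$ free; by the $\tau$-phase analysis this forces $\tau\in\shredw{\longestk{n+1}}$. Finally, reversing the sorting moves exhibits $\rho$ as a sequence of commuting-shuffle moves applied to $\sigma\tau$, so $\rho\in\cshuf\sigma\tau\subseteq\shuf mn$, completing the argument.
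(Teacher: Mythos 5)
Your proof is correct, and while your forward implication (commuting shuffle implies maximum length) follows the same path as the paper's --- verify the concatenation $\sigma\tau$ directly and spread to its commutation class via Lemma~\ref{lem:commute}, using that permutations which are both $m$-312 and $m$-132 free are alone in their classes (Propositions~\ref{prop:m312} and~\ref{prop:max's and min's of L(G)}) --- your converse is genuinely different from the paper's. The paper extracts $\tau$ \emph{in place}: it inductively defines the index set $J$ of the steps creating inversions among values at least $m$, proves $\tau_j\le n$ and the reducedness of $\tau$ and $\sigma$ through the technical restriction identity~\eqref{eq:same_start}, identifies $\tau$ as a maximum-length Tamari chain via Bj\"orner--Wachs and Proposition~\ref{prop:FN}, and then argues letter by letter, again leaning on~\eqref{eq:same_start}, that each $\tau$-letter commutes with the $\sigma$-letters it must pass. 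You instead label every position of $\rho$ by the single inversion it creates (a labelling you correctly observe is commutation-invariant), show any adjacent $\tau$-then-$\sigma$ pair commutes --- the $q=p+1$ case contradicts the labelling, the $q=p-1$ case produces an $m$-132 pattern forbidden by Lemma~\ref{lem:m132} --- and bubble-sort within the commutation class, which stays among maximum length chains by Lemma~\ref{lem:commute}. The sort pays off twice: the $\sigma$-block is identified for free, since its created inversions are exactly $\{(a,b):a>b,\ b\le m-1\}=\Inv(\vmn)$ and inversion sets determine permutations, so $\sigma\in\redw{\vmn}$ with no analogue of~\eqref{eq:same_start}; and reversing the sorting moves exhibits $\rho\in\cshuf{\sigma}{\tau}$ immediately, where the paper needs a separate positional argument for membership in the shuffle set. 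What the paper's heavier bookkeeping buys is the explicit structural identity~\eqref{eq:same_start}, which it reuses several times within its own proof; your route is more self-contained. One expository nit: in your $q=p-1$ case you assert the chain of inequalities $w(p-1)<w(p)<w(p+1)$ without comment, and the first inequality does need a word --- it holds because the second step is a $\sigma$-position, so the smaller entry $w(p-1)$ of its created inversion $\{w(p-1),w(p+1)\}$ satisfies $w(p-1)\le m-1<m\le w(p)$, the latter since the first step is a $\tau$-position. With that sentence added, the argument is complete.
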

\begin{proof}
\omitt{  Use $M$ as shorthand for $\binom{n+m}{2}$; $A$ for $(n+1)(m-1)+\binom{m-1}{2}$, the length of $\vmn$; and $B$ for $\binom{n+1}{2}$, the length of $\longest^{(n+1)}$.}

We use $A$ as shorthand for $(n+1)(m-1)+\binom{m-1}{2}$, the length of
$\vmn$ and $B$ for $\binom{m+1}{2}$.

Let $\rho\in\redw{\longestk{m+n}}$ and let $C=\{\uch{0}\lessdot\uch{1}\lessdot\cdots\lessdot\uch{\binom{(n+m)}{2}}=\longestk{m+n}\}$ its corresponding maximal chain in $\fS_{m+n}$, and assume
$C$ and $\rho$ represent a maximum length chain in $\tp{\lolli mn}$. The chain $C$ is $m$-132 and $m$-312 free by Proposition~\ref{prop:m312} and Lemma~\ref{lem:m132}. We must produce
$\sigma\in\redw{\vmn}$ and $\tau\in\shredw{\longest^{(n+1)}}$ such
that $\rho\in\cshuf\sigma\tau$. We define $\tau$ inductively, through
defining its index set $J=\{j_1<j_2<\cdots <j_B\}$ in $\rho$. Let $j_1$
be the smallest $j$ such that there exists $i$ where $m\leq
u^{(j)}(i+1)<u^{(j)}(i)$. Set $\tau_1$ equal to
$\rho_{j_1}=i$. That is, $j_1$ is the first index $j$ where $\rho_j$ causes an
inversion between two numbers which are at least $m$. Assume
$j_1<\cdots<j_k$ have been defined. Let $j_{k+1}$ be the smallest
$j>j_k$ such that there exists $i$ where $m\leq
u^{(j)}(i+1)<u^{(j)}(i)$ and $m\leq u^{(j-1)}(i)<u^{(j-1)}(i+1)$.
Again, we set $\tau_{k+1}=\rho_{j_{k+1}}=i$. There are $n+1$ numbers
$m,m+1,\ldots,m+n$ whose swaps $\tau$ records, so $\tau=\tau_1\tau_2\cdots\tau_{\binom{n+1}{2}}$.

We claim $\tau_j\le n$ for $j=1,\ldots,B$. If not, there is an $i>n$
such that $\uch{j}(i)>\uch{j}(i+1)\geq m$. Since only $n+1$ of the elements of $[m+n]$ are at least $m$, there must exist
$h\le n<i$ such that $\uch{j}(h)<m$. The permutation $\uch{j}$ thus contains an $m$-132 pattern, contradicting the condition to be maximum length.

We must show that $\tau$ is reduced. Let $\vch{k}$ be obtained from $\uch{j_k}$ by restricting $\uch{j_k}$, in one-line notation, to $\{m,\ldots,m+n\}$, then replacing $m+h$ by $h+1$ for $1\le h\le n$ and set $\vch{0}=\ii_{n+1}$. In fact, if we define $p_k$ to be the largest integer $i$ such that $s_i$ appears in $\tau_1\tau_2\cdots\tau_k$, then 

\begin{equation}\label{eq:same_start}\vch{k}(h)=\uch{j_k}(h)-m+1\text{, for }1\le h\le p_k+1.\end{equation} Note that \eqref{eq:same_start} implies 
\begin{equation}
\label{eq:v rec}
    \vch{k}=\vch{k-1}\tau_k.
\end{equation}
We justify \eqref{eq:same_start} by induction on $k$. Since $\uch{i}$ is $m$-132 avoiding, $\tau_1=s_1$ and the claim holds. Let $k>1$. Assume that $\vch{k-1}(h)=\uch{j_{k-1}}(h)-m+1$ for $1\le h\le p_{k-1}+1$. It must then be true that $\vch{k-1}(h)=\uch{j}(h)-m+1$ for $1\le h\le p_{k-1}$ and $j_{k-1}< j<j_k$, because for $j, h$ in these ranges, $\uch{j}(h)\ge m$ and $\rho_j$ does not invert $x$ and $y$ where both $x$ and $y$ are greater than $m$. Suppose $\tau_k=s_r$. If $r\le p_{k-1}$, then \eqref{eq:same_start} is true for $\vch{k}$ by the induction hypothesis. Now consider $p_k=r>p_{k-1}$ and let $x=\uch{j_k-1}(r)=\vch{k-1}(r)$ and $y=\uch{j_k-1}(r+1)$, where $x<y$. Since $\uch{j_k}$ is $m$-132 avoiding, $\uch{j_k}(r-1)>x\ge m$, which means there is an inversion in positions $r-1$ and $r$ of $\uch{j_{k-1}}$. In particular, $p_{k-1}\ge r-1$, so that $p_k=p_{k-1}+1$. By the induction hypothesis, $\vch{k}(h)=\uch{j_k}(h)-m+1\text{, for }1\le h\le p_k-1$ so we are only left with showing that $\vch{k}(r)$ is equal to $\uch{j_k}(r)-m+1$ and that $\vch{k}(r+1)$ is equal to $\uch{j_k}(r+1)-m+1$. But this holds by definition of $\vch{k}$, since $\uch{j_k}(r)\geq m$, as is $\uch{j_k}(r+1)$. \omitt{and $\vch{k}(r+1)=\vch{k-1}(r)=\uch{j_k}(r+1)-m+1$, so we are only left with showing that $\vch{k}(r)$ is equal to $\uch{j_k}(r)-m+1$. Note that $\vch{k}(r)=\vch{k-1}(r+1)=r+1$ and $\uch{j_k}(r)=\uch{j_k-1}(r+1)$. Since $\tau_{i}\in\{s_1,s_2,\ldots,s_{r-1}\}$ for $1\le i\le k-1$, $\uch{j_k-1}(r+1)>\uch{j_k-1}(h)\ge m$ for $1\le h\le r$, so that $\uch{j_k-1}(r+1)\ge m+r$. We must have $\uch{j_k-1}(r+1)=m+r$, because otherwise there would be an $h$, $h>r+1$, such that $\uch{j_k-1}(r+1)>\uch{j_k-1}(h)\geq m$, but this inversion could not have taken place by the restrictions on $\tau$.} Thus \eqref{eq:same_start} and therefore \eqref{eq:v rec} holds.

To complete the proof that $\tau$ is reduced, assume for contradiction that it is not. Let $k$ be the smallest such that $\tau_1\tau_2\cdots\tau_{k-1}=\vch{k-1}$ is reduced and $\tau_1\tau_2\cdots\tau_{k-1}\tau_{k}=\vch{k}$ is not, and suppose $\tau_k=s_r$. Then $\vch{k-1}(r)>\vch{k-1}(r+1)$ and $\vch{k}(r)<\vch{k}(r+1)$-- in other words, $(\vch{k-1}(r),\vch{k-1}(r+1))$ is an inversion for $\vch{k-1}$ and not one for $\vch{k}$. By \eqref{eq:same_start}, $(\vch{k-1}(r)+m-1,\vch{k-1}(r+1)+m-1)$ is thus an inversion for $\uch{j_{k-1}}$ and not for $\uch{j_k}$, contradicting our assumption that $C$ is a maximal chain. The permutation $\tau$ is reduced.


\omitt{
Since the final element of the chain $C$ is $\longestk{n+m}$ and $\uch{j}(i)$ and $\uch{j}(i+1)$ maintain their relative positions in $\uch{k}$ for $k\ge j$, there must exist $j'>j$ where the relative order is $\ldots\uch{j}(i)\ldots\uch{j}(h)\ldots\uch{j}(i+1)$, which means $\uch{j'}$ contains an $m$-312 pattern. Thus $\tau_j\le n$ for $1\le j\le B.$}

  We may view $\tau$ as a an element of $\redw{\longest^{(n+1)}}$,
  since when restricted to $\{m,m+1,\ldots,n+m\}$, it transforms
  $[m,m+1,\ldots,m+n]$ to $[m+n,m+n-1,\ldots,m]$. What's more, the
  permutations $u^{(j_1)},u^{(j_2)},\ldots,u^{(j_B)}$ are all
  312-avoiding on the values $\{m,m+1,\ldots,n+m\}$-- this can be seen
  by induction and the fact that $C$ is $m$-312 avoiding. Bj\"orner and Wachs \cite{BW}
  showed that the induced subposet of the weak order on permutations
  which are 312-avoiding is the Tamari, thus $\tau$ is a reduced
  expression which which corresponds to a maximal length chain in the
  Tamari lattice for $n+1$. By Proposition~\ref{prop:FN}, we have that
  $\tau\in\shredw{\longest^{(n+1)}}$.

Let $\sigma$ be $\rho$ with $\rho_{j_1},\rho_{j_2}, \dots,\rho_{j_B}$
removed. We can show that $\sigma$ is reduced by a proof similar to the proof that $\tau$ is reduced. \omitt{All adjacent transpositions which affected the relative order
of $m,m+1,\ldots,m+n$ have been removed and
$\ell(\rho)-\ell(\tau)=\ell(\sigma)$, so $\sigma\in\redw{\vmn}$.}

Finally, we must explain why $\rho\in\cshuf \sigma \tau$. Since $\tau$
is a lattice word, $\tau_1=1$, and both $\uch{j_1}(1)$ and $\uch{j_1}(2)$ are at least $m$. For $k>j_1$, $\uch{k}(1)>\uch{k}(2)\geq m$, so if $k>j_1$ and $\rho_k=1$ or $\rho_k=2$, then $k\in J$. \omitt{Any inversion in positions $1$, $2$, or $3$ would therefore have  If $\rho_k$ were equal to $1$ or $2$ for $k>j_1$ and $k\not\in J$, then either $u^{(k)}(1)$ or $u^{(k)}(2)$ would be strictly less than $m$ and we would have a violation of $m$-132 free.} Now suppose $\tau_f=i$, $f>1$. By \eqref{eq:same_start}, $\uch{k}(h)\geq m$ for $1\leq h\leq i+1$ and $k\geq j_f$. Therefore, if $\rho_k$ is $i-1$ or $i$ and $k>j_1$, then $k\in J$. If $k>j_1$ and $\rho_k=i+1$, then $\uch{k-1}(i+2)>\uch{k-1}(i+1)\geq m$, so again $k\in J$. \omitt{If $\rho_k$ were equal to $i-1, i, i+1$ for $k>j_f$ and $k\not\in J$,
then at least one of $u^{(k)}(i-1), u^{(k)}(i), u^{(k)}(i+1)$ would be
strictly less than $m$ and we would have a permutation which is not $m$-132 free. }

For the converse, if $\rho\in\shuf m n$, then there is $\sigma\in\redw{\vmn}$ and
$\tau\in\shredw{\longestk{n+1}}$ such that
$\rho\in\shuf{\sigma}{\tau}$. Then $\rho$ is related to $\sigma\tau$
by commutation relations, so by Lemma~\ref{lem:commute}, it is enough
to show that $\sigma\tau$ represents a maximal length chain in
$\tp{\lolli m n}$.

Let $C$ be the chain corresponding to $\sigma\tau$ and write
\begin{equation}\label{eq:chain}C=\{\ii=\uch{0}\lessdot
  \uch{1}\lessdot\cdots\lessdot \uch{\binom{m+n}{2}}\}.\end{equation} Since $\uch{A}=\vmn$ and there are no inversions among values that are at least $m$ in $\vmn$, $\uch{k}$ is $m$-132 free for $1\le k\le A$. For $A+1\le k\le\binom{m+n}{2}$, $\tau_{k-A}=\rho_k$ acts on $\{m,m+1,\ldots,m+n\}$ only. A reduced word in $\shredw{M}$, for any $M$, corresponds to a maximal length chain in the Tamari and is therefore both 132 and 312 free \cite{BW}. Thus $\tau$ is 132 free since it is in $\shredw{n+1}$ and cannot produce a $m$-132 pattern. By \eqref{eq:same_start}, there are no $a<b<c$ where $\uch{k}(a)<\uch{k}(c)<\uch{k}(b)$ and $\uch{k}(a)<m$ 
\omitt{
Suppose that
  $\rho=\rho_1\rho_2\cdots\rho_M\in\shuf mn$. Let $u^{(i)}$ be the
  permutation $\rho_1\rho_2\cdots\rho_i$, so that \begin{equation}\label{eq:chain}C=\{e=u^{(0)}\lessdot
  u^{(1)}\lessdot\cdots\lessdot u^{(M)}\}\end{equation} is a maximal chain in the
  weak order for $\S_{m+n}$. We must show that for all $k$, $u^{(k)}$ and
  $u^{(k+1)}$ are in different classes with respect to the equivalence
  relation defined in Section~\ref{sec:L_Gintro}; that is, they have
  different $G$-trees.

  By Lemma~\ref{lem:ab-cut}, it is enough to prove that $C$ satisfies the following condition:
  \begin{condition}
    \label{cond:ab-cut}
  There do not exist integers $h,i,k$ such that \begin{itemize}\item $m\le u^{(k)}(i)<u^{(k)}(i+1)$, \item $h< i$ and $u^{(k)}(h)<u^{(k)}(i)$, and \item $\rho_{k+1}=s_i$.\end{itemize}
\end{condition}
\susanna{Can I change this to $312$ and $132$ avoiding, where the $2$ is greater than or equal to $m$?}
  In the special case where $\rho_1\rho_2\cdots\rho_A=\sigma$, then
  $u^{(A)}=\vmn$ and Condition~\ref{cond:ab-cut} cannot hold since
  $u^{(A)}(i)\ge m$ for $i=1,2,\ldots,n+1$ and no other values of $i$.

  Let $J=\{j_1<j_2<\cdots<j_B\}$ be the set of indices for $\tau$ in $\rho$ in the general case. Since $\rho_{j_1}=\tau_1=1=s_1$ and $s_1$ commuted past $\rho_k$ for $k>j_1,k\not\in J$, the last $s_1$ and $s_2$ of $\sigma$ occur in $\rho$ before position $j_1$. This forces $u^{(j_1-1)}(1)=m$ and $u^{(j_1-1)}(2)=m+1.$ Since $\tau_2=2$, the last $2$ and $3$ of $\sigma$ occur in $\rho$ before $j_2$, so that $u^{(j_2-1)}(3)=m+2$. In general, inductively we see that if $\rho_{j_k}=i$, then $u^{(j_k-1)}(h)\ge m$ for $h=1,2,\ldots,i$ and therefore Condition~\ref{cond:ab-cut} does not hold.
} 
\end{proof}

\subsection{Standard Young tableaux}
In this subsection, we discuss the image of commuting shuffles under
the Edelman-Greene bijection. We begin with a lemma on the effect of a
single transposition.

\omitt{
\susanna{Is there a $yxz\sim yzx$ around we could use instead?}
\tvi{(Dec 18) There isn't anything currently set up, but you are welcome to define it and use any new notation that helps. }
}
\begin{lemma}
  \label{lem:commutingInsert}
  Let $\rho\in\cshuf m n$ and suppose $\rho$ is a commuting shuffle of $\sigma\in\redw{\vmn}$ and $\tau\in\shredw{\longestk{n+1}}$. Let $I$ and $J$ be the indices of $\sigma$ and $\tau$ respectively in $\rho$. Suppose $i\in I$ and $i+1\in J$ and that $\rho_i$ and $\rho_{i+1}$ commute. Let $\rho'=\rho_1\rho_2\cdots\rho_{i+1}\rho_i\rho_{i+2}\cdots\rho_N$. If $Q=\EGQ(\rho)$ and $Q'=\EGQ(\rho')$, then $Q'=s_i(Q)$, where $s_i$ acts on $Q$ by swapping the places in $Q$ of $i$ and $i+1$. 
  \end{lemma}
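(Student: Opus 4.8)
The plan is to reduce the statement to a purely local claim about two consecutive Edelman--Greene insertions and then establish that claim by a bump-path analysis that exploits the structure of commuting shuffles. Write $a=\rho_i$ and $c=\rho_{i+1}$; since $\rho_i$ and $\rho_{i+1}$ commute we have $|a-c|\ge 2$. Because $\rho$ and $\rho'$ agree in positions $1,\dots,i-1$ and in positions $i+2,\dots,N$, running the insertion of Definition~\ref{def:EG} on both words in parallel gives the common partial data $P^{(i-1)}=:P$ and $Q^{(i-1)}$. It therefore suffices to prove two things: (I) inserting $a$ then $c$ into $P$ yields the same tableau $P^{(i+1)}$ as inserting $c$ then $a$ into $P$; and (II) the cell created by the first of the two insertions differs between the two orders. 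Granting (I), the insertions of $\rho_{i+2},\dots,\rho_N$ proceed identically for $\rho$ and $\rho'$, so $Q$ and $Q'$ agree on every entry except possibly at the two cells $\{X,Y\}=P^{(i+1)}\setminus P$; granting (II), the labels $i$ and $i+1$ get attached to $X$ and $Y$ in opposite orders by the two words, which is exactly the assertion $Q'=s_i(Q)$.

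The heart of the argument is the structural input that distinguishes the two letters. By the construction of the index sets in the proof of Proposition~\ref{prop:MLCsAndCSs}, in particular \eqref{eq:same_start}, the $\tau$-letter $c$ records an inversion between two values that are both at least $m$ and occupy the leftmost positions of the relevant permutation, whereas a $\sigma$-letter $a\in\redw{\vmn}$ can never invert two values that are both $\ge m$: since $\vmn=[m,m+1,\dots,m+n,m-1,\dots,1]$ has its entries $\ge m$ already in increasing order, every inversion created along $\sigma$ involves a value strictly less than $m$. I would convert this value separation into a separation of bump paths: tracking the carried values $x_0\le x_1\le\cdots$ along each Edelman--Greene bump path, with care for the two cases of Definition~\ref{def:EG} (the equal-entry ``carry $z+1$'' case and the strict ``replace'' case), the insertion of $c$ travels through cells holding large entries while the insertion of $a$ travels through a disjoint family of cells, so whichever letter is inserted second never reaches a cell the first altered. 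This disjointness yields both (I) and (II) simultaneously.

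The main obstacle, and the reason this is a genuine lemma, is that a bare two-letter commutation does \emph{not} preserve Edelman--Greene insertion: two-letter swaps are not Coxeter--Knuth moves. One checks directly that inserting $3,5$ versus $5,3$ into $\left(\begin{smallmatrix}1&2\end{smallmatrix}\right)$ produces different tableaux, and that commuting an interior commuting pair of an arbitrary reduced word for $\longestk{N}$ can shift $Q$ by some $s_j$ with $j\ne i$ (or by nothing resembling a single transposition at all). Consequently the non-interference of the two bump paths cannot be deduced from $|a-c|\ge 2$ alone; it must be extracted from the provenance of the letters, namely from $i\in I$, $i+1\in J$, together with the value constraints recorded in \eqref{eq:same_start}. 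Making the passage from value separation to bump-path separation fully rigorous---ruling out every way the later insertion could reach a cell modified by the earlier one, across all rows and both bumping cases---is where essentially all of the work lies.
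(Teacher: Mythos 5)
Your skeleton coincides with the paper's: the proof there also reduces to showing that inserting $\rho_i$ and $\rho_{i+1}$ into $(P(i-1),Q(i-1))$ in either order yields the same insertion tableau while the two new cells are created in opposite orders, after which the remaining insertions run identically and $Q'=s_i(Q)$ follows. You are also right --- and your $3,5$ versus $5,3$ example shows it cleanly --- that a bare two-letter commutation does not preserve Edelman--Greene insertion, so the provenance of the letters ($i\in I$, $i+1\in J$) must enter. The problem is that you stop exactly where the lemma begins. The ``passage from value separation to bump-path separation'' that you defer is not a technicality to be filled in later; it is the entire content of the lemma, and as written your proposal asserts the disjointness of the two bump paths rather than deriving it. A plan that grants itself (I) and (II) has not proved anything beyond the (correct, but easy) framing.

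Moreover, the heuristic you offer for the deferred step points in the wrong direction. The $\tau$-letter $z=\rho_{i+1}$ does not ``travel through cells holding large entries.'' What is actually needed, and what the paper establishes, is an explicit description of the top of the insertion tableau at the moment any $\tau$-letter is inserted: using Claim~\ref{claim:vmn}, part~\ref{claim:decseq} (the increasing subsequence $1,2,\ldots,m+n-1$ and the disjoint decreasing subsequences $k,k-1,\ldots,k-m+2$ in $\sigma$, which the commuted $\tau$-letters cannot pass), row $h$ of $P(\ell-1)$ contains $h,h+1,\ldots,z+h$ for $1\le h\le m-1$. Consequently $z$ undergoes only equal bumps (case 2(a) of Definition~\ref{def:EG}) straight down column $z$ --- through the \emph{small} entries $z,z+1,\ldots,z+m-2$ --- which leave rows $1$ through $m-1$ completely unchanged, and its new cell lands in row $\ge m$. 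Separately, since $\tau$ is a lattice and reverse lattice word (Proposition~\ref{prop:FN}), every $\tau$-letter $z$ that has commuted past the $\sigma$-letter $x$ satisfies $x-z>1$, so $x$'s insertion through rows $1,\ldots,m-1$ is verbatim the pure-$\sigma$ insertion and its new cell lies in rows $\le m-1$. The separation is thus by row and column position, extracted from this tableau-level description, not by entry magnitude; and the permutation-level fact you lean on --- that $\sigma$ never inverts two values $\ge m$, via \eqref{eq:same_start} --- cannot substitute for it, because the entries of the Edelman--Greene $P$-tableau are letters of the word, not values of the permutation, and \eqref{eq:same_start} says nothing about bump paths. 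Without these two concrete ingredients your argument does not close.
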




\begin{proof}
For $h\le \binom{n+n}{2}$, let $(P(h),Q(h))$ denote the Edelman-Greene tableaux for $\rho_1\rho_2\cdots\rho_{h}$. It is enough to show that inserting $\rho_{i+1}$ and then $\rho_i$ into $(P(i-1),Q(i-1))$ simply swaps $i$ and $i+1$ in the recording tableau obtained from inserting first $\rho_i$ and then $\rho_{i+1}$. We do this by determining their bump paths and the cells containing $i$ and $i+1$. \omitt{Denote $\rho_i$ by $x$ and $\rho_{i+1}$ by $y$. Since $y$ came from $\tau$, $y\le n$.} We begin by describing $P(\ell-1)$ for any $\ell\in J$.

Let $z$ denote $\rho_{\ell}$. We know a great deal about $P(\ell-1)$ using
Claim~\ref{claim:vmn} and the properties of words that insert to
shiftable tableaux. The reduced expression $\rho$ is the result of
commuting some, or possibly no, elements of $\tau\in\shredw{\longestk{n+1}}$ leftward past
elements of $\sigma\in\redw{\vmn}$ in $\sigma\tau$.   By
Claim~\ref{claim:vmn}, Part~\ref{claim:decseq}, we know there is a increasing subsequence $12\cdots m+n-1$
in $\sigma$. The simple transposition $z$ could not have commuted past $z+1$, so $1\ldots z+1$
are among $\rho_1\rho_2\ldots\rho_{j-1}$ and so are in row 1 of
$P(j-1)$. Further, we have decreasing subsequences in $\sigma$ of the
form $k,k-1,\ldots,k-m+2$ for $k=m-1,m,\ldots,m+n-1$. The $1$s before
$z$ in $\tau$ cannot commute past the decreasing sequences ending
with $1$ or $2$ in $\rho$ coming from $\sigma$, so these decreasing sequences precede $z=\rho_{j}$ in $\rho$. Similarly,
the $2$s before $z$ in $\tau$ cannot commute past the decreasing
subsequences ending with $1$, $2$, or $3$ in $\rho$ coming from $\sigma$, so the decreasing
subsequences for $k=m-1,m,m+1$ all precede $z$ in $\rho$. In short,
disjoint decreasing subsequences of the form $k,k-1,\ldots,k-m+2$
precede $z$ in $\rho$ for $k=m-1,m,\ldots,m+z-1$.
When the decreasing subsequence $k,k-1,\ldots,k-m+2$ from $\sigma$ is
fully inserted, then there is a $k-m+1+h$ in row $h$ of the
insertion tableau for $h$ from $1$ to $m-1$. Each element bumps the previous element from the subsequence down one row. In
summary, row $h$ of $P(j-1)$ contains at least $h,h+1,\ldots,z+h$ for $h$ from $1$ to $m-1$. See Figure~\ref{fig:commutingInsertSchematic}. We see immediately that the bump path of $z$ will be in column $z$ for rows $1$ through $m-1$ and that the cell labelled $\ell$ in $Q(\ell)$ will be in row $h\ge m$. 
\ytableausetup{boxsize=2.6em,mathmode}
\begin{figure}
    \centering
    \begin{ytableau}
1&2&3&\none[\cdots]&z&z+1&\none[\cdots]\\
2&3&4&\none[\cdots]&z+1&z+2&\none[\cdots]\\
\none[\vdots]&\none[\vdots]&\none[\vdots]&\none[\vdots]&\none[\vdots]&\none[\vdots]&\none[\cdots]\\
\scriptstyle m&\scriptstyle m+1&\scriptstyle m+2&\none[\cdots]&\scriptstyle z+m-1&\scriptstyle z+m&\none[\cdots]\\
\end{ytableau}
    \caption{The tableau $P(\ell-1)$ described in the proof of Lemma~\ref{lem:commutingInsert}. It is the Edelman-Green $P$-tableau for $\rho_1\rho_2\cdots\rho_{\ell-1}$ where $\ell\in J$.}
    \label{fig:commutingInsertSchematic}
\end{figure}

\ytableausetup{mathmode,boxsize=normal}

We are now in a position to consider what occurs when we insert
$x=\rho_{\ell}$ for some $\ell\in I$. If no elements of $\tau$ have commuted past $x$, then the position of the cell added when $x$ is inserted as element of $\rho$ is in the same row and column as the cell added when $x$ is inserted as an element of $\sigma$. Now suppose at least one element of $\tau$ has commuted past $x$. Any elements of $\tau$ that have commuted past $x$ have bumped through to a row with index strictly greater than $m-1$. If some $z$ from $\tau$ has commuted to the left of $x$ in $\rho$, then $|z-x|>1$. The reduced expression $\tau\in\shredw{\longestk{n+1}}$ is a (double) lattice word \cite{FN} so $1,2,\ldots,z-1$ precede $z$ in $\tau$ and have thus also commuted past $x$ in $\rho$, showing that $x-z>1$. From this, we see that none of the entries in rows $1$ through $m-1$ were changed by the element of $\tau$.  Therefore, as in the case when no elements from $\tau$ have commuted past $x$, the cell added when $x$ is inserted as an element of $\rho$ is in the same row and column as the cell added when $x$ is inserted as an element of $\sigma$.   

In particular, when $x=\rho_i$ and $z=\rho_{i+1}$, where $i\in I$ and $i+1\in J$ we see that the bump paths of $x$ and $z$ are independent
of the order in which $x$ and $z$ are inserted.
\end{proof}

\omitt{
and $y$. Let $x=x_0,x_1,\ldots,x_k$ be the bumped
elements and $c_1,\ldots,c_k$ be their columns when $x$ is inserted and let $y=y_0,y_1,\ldots,y_{\ell}$ and
$d_1,\ldots,d_{\ell}$ be the elements and columns of the bump path for
$y$ when inserting $y$. We claim that are the same whether we first

Since $x$ is strictly greater than $y+1$, either $x$
bumps $x_1\ge y+2$ or there is no bump. In either case, $c_1\ge y+2$
and if there is a bump, $x_1\ge y+3$. If there is a bump, then $x_1$
will not bump the $y+2$ in row 2 and it will either settle in row 2 in
column $c_2\ge y+2$ or it will bump $x_3\ge y+4$. This process
continues and we have $c_i\ge y+2$ for $i=1,2,\ldots,k$. On the other hand, consider the
insertion of $y$: in row 1 it bumps $y$, so $d_1=y$ and
$y_1=y+1$. The presence of $x$ in row 1 has no bearing on the
situation. Next, $y_1$ bumps $y+1$, which is in column $y$ in row 2,
so $d_2=y$. We find that $d_i=y$ for all $i$ and the bump paths do not
interact.

Since $x$ and $y$
commute, $|x-y|>1$. The reduced expression
$\tau\in\shredw{\longestk{n+1}}$ is a (double) lattice word
\cite{FN} so $1,2,\ldots,y-1$ precede $y$ in $\tau$ and have thus
commuted past $x$ in $\rho$, showing that $x-y>1$.

 Further, let $x=x_0,x_1,\ldots,x_k$ be the bumped
elements and $c_1,\ldots,c_k$ be their columns when $x$ is inserted
into $(\tilde{P},\tilde{Q})$ and let $y=y_0,y_1,\ldots,y_{\ell}$ and
$d_1,\ldots,d_{\ell}$ be the elements and columns of the bump path for
$y$ when inserting $y$ into $x\to (\tilde{P},\tilde{Q})$. More
precisely, when $x$ is inserted, $x_0$ lands in row 1 and column $c_1$
of $\tilde{P}$, $x_1$ is bumped from row 1 and column $c_1$ and
inserted into row 2 in column $c_2$, $x_2$ is bumped by $x_1$ from row
2 and column $c_2$ and inserted into row 3, and so on; $y_0,y_1\ldots$
and $d_1,\ldots,d_{\ell}$ have similar meanings except that we are
inserting into $(x\to(\tilde{P},\tilde{Q}))$.  We are done if we can
show that $d_i<c_i$ for $i=1,2,\ldots,\min(k,\ell)$.

}

\begin{example}\label{ex:commutingInsert}
Let $m=5$ and $n=3$, so that $\vmn$ is the permutation
$[5,6,7,8,4,3,2,1]$. We take the reduced words
$\sigma=2132435467567213425345\in\redw{v_{53}}$,
$\tau=121321\in\shredw{\longestk{4}}$, and
$\rho=213243546756721342\mathbf{1}53\mathbf{2}4\mathbf{1321}5\in\cshuf{\sigma}{\tau}$,
where the elements of $\tau$ are bold-faced in $\rho$. The indices for the elements of $\tau$ in $\rho$ are $J=\{19,22,24,25,26,27\}$. We may commute the elements $\rho_{23}=4$ and $\rho_{24}=1$.  Please see Figure~\ref{fig:commutingInsert} for the bump paths.
  
\end{example}

\begin{figure}[h]
\centering
  \begin{tikzpicture}
\node at (0,0){\begin{ytableau}1&2&3&4&5&6&7\\2&3&4&5&6&7\\3&4&5&7\\4&5&6\\5&6\end{ytableau}};
\node at (5,0){\begin{ytableau}1&2&3&\mathbf{4}&5&6&7\\2&3&4&\mathbf{5}&6&7\\3&4&5&\mathbf{6}\\4&5&6&\circled{7}\\5&6\end{ytableau}};
    \node at (10,0){\begin{ytableau}\mathbf{1}&2&3&4&5&6&7\\\mathbf{2}&3&4&5&6&7\\\mathbf{3}&4&5&7\\\mathbf{4}&5&6\\\mathbf{5}&6\\\circled{6}\end{ytableau}};

    \end{tikzpicture}

\caption{The tableau $P(22)$ on the left is the result of Edelman-Greene insertion of $\rho_1\cdots\rho_{22}$ from Example~\ref{ex:commutingInsert}. The tableau in the middle shows the bump path, in bold, after inserting $4=\rho_{23}$ into $P(22)$. On the right, we have the result of inserting $1=\rho_{24}$ into $P(22)$. The entries in the cell added by insertion are circled. The bump paths do not interact.}
\label{fig:commutingInsert}
  \end{figure}
  \omitt{
\susanna{Make sure bump path matches description of EG, $x$ not $x+1$. Use Linnusson-Potka description of EF}
\tvi{We should make a name for the Edelman-Green bijection. I would like to use notation so I can more clearly describe a proof later. How does $EG:\mathcal{R}\rightarrow SYT$ sound. Or just $EG(\sigma)$ is the $Q$-tableaux of the Edelman-Green bijection for reduced word $\sigma$? Or just $Q(\sigma)$?}
\tvi{(Dec 18) We addressed this wonderfuly. We can likely delete these comments when ready.}
}

The \mydef{$k$-staircase partition} is the partition $(k-1,k-2,\ldots,2,1)$.

\omitt{
\susanna{

\begin{definition}
\label{def:n-row-shift}
(What I'd like: )
  Let $N$ and $n$ be nonnegative integers and $n<N$. Let $T$ be a standard Young tableau of shape $\lambda=(\lambda_1,\ldots,\lambda_{N-1})$, where the rows $\lambda_{N-n-1}, \lambda_{N-n+1}, \ldots,\lambda_{N-1}$ are distinct. We say $T$ is \mydef{$n$-row-shiftable} if the entry in row $i$ and column $j$ of $T$ is less than the entry in row $i+1$ and column $j-1$ for $i$ from $N-n-1$ to $N-2$ and for $j$ from $2$ to $\lambda_i$. For the case $n=N-1$, we consider the row above the top row of $T$ to have index $0$ and the entries in row $0$ to be $0$.
  \end{definition}
  
  (3/24/24)}

\susanna{In essence, $n$-row shiftable means that if we only consider the last $n$ rows of a tableau $T$, we have a shiftable tableau: a standard shifted tableau which has been left-justified. All tableaux are trivially $0$-shiftable. If all rows of a standard shifted tableau with $n$ rows are left-justified, then it is an $n$-row-shiftable tableau, which is the same as an $(n-1)$-row-shiftable tableau. (3/24/24),4/17/24} 
\susanna{In the case $G=\lolli m n$, where we set $N=m+n$, the $n$-row-shiftable tableaux correspond under the Edelman-Greene bijection to maximal chains in $\tp{\lolli m n}$ which are $m$-132 and $m$-312 free.}
\susanna{The original definition of shiftable for a tableau with $N$-rows is equivalent to $(N-1)$-shiftable or $N$-shiftable. (3/24/24)
I think the comment in the top part of this box is from Sam 4/14/24}
} 

\begin{definition}
\label{def:n-row-shift}
\label{def:m-row-shift}
Let $N$ and $n$ be nonnegative integers and $n<N$. Let $T$ be a standard Young tableau of shape $\lambda=(\lambda_1,\ldots,\lambda_{N-1})$, where the rows $\lambda_{N-n-1}, \lambda_{N-n+1}, \ldots,\lambda_{N-1}$ are distinct. We say $T$ is \mydef{$n$-row-shiftable} if the entry in row $i$ and column $j$ of $T$ is less than the entry in row $i+1$ and column $j-1$ for $i$ from $N-n-1$ to $N-2$ and for $j$ from $2$ to $\lambda_i$. For the case $n=N-1$, we consider the row above the top row of $T$ to have index $0$ and the entries in row $0$ to be $0$.
\end{definition}

In essence, $n$-row shiftable means that if we only consider the last $n+1$ rows of a tableau $T$, we have a shiftable tableau: a standard shifted tableau which has been left-justified. All tableaux are trivially $0$-shiftable. If all rows of a standard shifted tableau with $n$ rows are left-justified, then it is an $n$-row-shiftable tableau, which is the same as an $(n-1)$-row-shiftable tableau. 

\omitt{ 
(Sam's rephrasing)
  Let $N$ and $m$ be nonnegative integers and $m\leq N$ (We could restrict $m<N$, but then we would need to delete our reference to the top row having index 0). Let $T$ be a standard Young tableau of shape $\lambda=(\lambda_1,\ldots,\lambda_N)$, where the last $m+1$ rows, $\lambda_{N-m}, \lambda_{N-m+1}, \ldots,\lambda_N$, are distinct. We say $T$ is \mydef{$m$-row-shiftable} if the entry in row $i$ and column $j$ of $T$ is less than the entry in row $i+1$ and column $j-1$ for $i$ from $N-m$ to $N-1$ and for $j$ from $2$ to $\lambda_i$. We consider the row above the top row of $T$ to have index $0$ and if needed any entry in row 0 to be 0.

\ttre{In essence, $m$-row shiftable means that if restrict ourselves to only consider the last $(m+1)$ rows of a tableau $T$ we have a standard shiftable tableau. All tableaux are trivially $0$-shiftable. The original definition of shiftable for a tableau with $N$-rows is equivalent to $(N-1)$-shiftable or $N$-shiftable.}

\susanna{Roughly speaking, instead of only fully shifted tableau, we now consider tableaux where the last $N-m$ rows least $m$ could be shifted. 

Dump this. 4/17/24}
} 
\omitt{
\tvi{Suggestion: Can we call these $m$-row-shiftable, meaning that rows $i\geq m$  have the shiftable condition? While I like the terminology of $(m,n)$-shiftable, it will help one of my proofs to talk about shiftability more generally. While I think it is good to introduce the technicality of the indices when defining, the technicality of indices in proofs will make it hard for a reader to follow.}
\susanna{Is this ok? Oct 2023}
\tvi{(Dec 18) Looks good to me. Could we add the intuitive definition too, that the last $n$ rows are shiftable if we are $n$-row shiftable? I'll make sure I'm matching these definitions in my section. }
\tvi{I am a little confused about this definition. The top row has index 0? I am not sure what that means.  (2-29-24)}
\susanna{Added an intuitive def.  It's the row above the top row which has index 0, for the m=1 case. Maybe I should add that the entries of row 0 are 0, but seems like overkill. Also, I didn't read your Dec 18 entry closely enough the last time I changed this def, and I hope my changes today are right. It means both m=1 and m=2 are shifted tabs. 3/14/24}
\tvi{TO DO SAM (write/ check definition so it works with my work later (march 15)}
\tvi{Task completed. See in red above. Old definition is still present, just not in red. (3-19-24)}
} 
Let $\stab m n$ denote the set of $n$-row-shiftable tableaux of staircase shape $\stair_N$
 where $N=m+n$. The rest of this subsection is devoted to showing that $\cshuf mn$ maps to pairs $(P,Q)$ with $Q\in\stab mn$ under the Edelman-Greene bijection.  
\omitt{In Table~\ref{tab:syt}, tableaux 1 and 2 are $(2,2)$-shifted and tableaux 1,2,6-10 are $(3,1)$-shifted. 

\begin{table}[h]
\input{sytTable}
\caption{All standard Young tableaux of shape (3,2,1)}
\label{tab:syt}
\end{table}

\begin{theorem}
A reduced word $w\in\redw{\longest^{(m+n)}}$ represents a longest chain in $\tp{\lolli mn}$ if and only if it is mapped to a $(m,n)$-shifted tableau under the Coxeter-Knuth algorithm.   
  \end{theorem}
}

\begin{example}\label{ex:shiftableTabs}
Consider the standard Young tableaux in the $5$-staircase partition in
Figure~\ref{fig:shiftableTabs}. The first on the left corresponds to
the reduced word $1241321432$ and a longest length maximal chain in $\tp{\lolli 5 0}$, the second to $342312343\mathbf{1}$ and a longest length maximal chain in  $\tp{\lolli 4 1}$, the third to $234123\mathbf{12}4\mathbf{1}$ and a longest length maximal chain in $\tp{\lolli 3 2}$, and the last to $\mathbf{1231423121}$ and a longest length maximal chain in $\tp{\lolli 1 4}$. In each reduced word, the subword which corresponds to $\tau$ has been highlighted.
\end{example}

\begin{figure} 
\begin{center}
\begin{tikzpicture}
  \def\h{4.5}
\node (1) at (0*\h,0){    \begin{ytableau}
    1&2&3&8\\4&5&9\\6&10\\7\\
    \end{ytableau}};
\node (3) at (2*\h,0){
  \begin{ytableau}
1&2&3&9\\4&5&6\\7&8\\10\\    
    \end{ytableau}};
\node (2) at (1*\h,0){
      \begin{ytableau}
1&2&7&8\\3&4&9\\5&6\\10\\      
    \end{ytableau}};
\node (4) at (3*\h,0){
          \begin{ytableau}
1&2&3&5\\4&6&7\\8&9\\10\\           
    \end{ytableau}};
\end{tikzpicture}
\end{center}

  \caption{The first tableau from the left is not $n$-row-shiftable except for
  $n=0$. The second tableau is $1$-row-shiftable and not $2$-row-shiftable, the third is
  $2$-, and therefore also $1$-, row-shiftable, and the final
  tableau is shiftable or
  $4$-row-shiftable, which is the same as $3$-row-shiftable.}  \label{fig:shiftableTabs}

  \end{figure}

\omitt{
\susanna{We should talk to make sure I have the shiftable stuff right before I change the caption for this fig. 3/14/24}
}

Suppose $\rho\in\shuf m n$ is a commuting shuffle of $\sigma\in
\redw{v_{m,n}}$ and $\tau\in\shredw{\longestk{n+1}}$. We continue to denote the set of positions
in $\rho$ held by the letters from $\sigma$ by $I$ and those
held by letters from $\tau$ by $J$. We are interested in the
positions of the elements from
$I=\{i_1<i_2<\ldots<i_{(m-1)(n+1)+\binom{m-1}{2}}\}$ and
$J=\{j_1<j_2<\cdots<j_{\binom{n+1}{2}}\}$ in the tableau
$\EG(\rho)$. Let $\EG(\sigma)|_I$ be the tableau created by replacing
$k$ with $i_k$ in $\EG(\sigma)$. Similarly, $\EG(\tau)|_J$ is
created from $\EG(\tau)$ by replacing $k$ with $j_k$. Finally, if
$T_1$ and $T_2$ are tableaux with $r_1$ and $r_2$ rows respectively
and such that row $r_1$ of $T_1$ is longer than row $1$ of $T_2$, let
$\stacktab{T_1}{T_2}$ denote the tableau with rows $1$ through $r_1$ given by
the rows of $T_1$ and rows $r_1+1$ through $r_1+r_2$ given by
$T_2$. Informally, $\stacktab{T_1}{T_2}$ is $T_1$ stacked on top of $T_2$. 

\begin{lemma}
  \label{lem:stackedTabs}
Fix integers $m$ and $n$, where $m\ge1$, and let $\rho=\sigma\tau$, where
$\sigma\in\redw{\vmn}$ and $\tau\in\shredw{\longest^{n+1}}$. Let
$(P_1,Q_1)=\EG(\sigma)$, $(P_2,Q_2)=\EG(\tau)$, and
$J=\{(m-1)(n+1)+\binom{m-1}{2}+1,\ldots,\binom{m+n}{2}\}$. Then
the $Q$ tableau of $\EG(\rho)$ is $\stacktab{Q_1}{Q_2|_{J}}$.
  \end{lemma}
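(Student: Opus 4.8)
The plan is to insert $\sigma$ first, pin down the structure of the resulting tableau $P_1$, and then show that inserting the letters of $\tau$ disturbs only the rows lying below $P_1$, reproducing $Q_2$ (suitably relabeled) in those rows. Since $\rho=\sigma\tau$ is the plain concatenation with no commuting, the positions of $\sigma$ in $\rho$ are $\{1,\ldots,\ell(\sigma)\}$ and $J=\{\ell(\sigma)+1,\ldots,\binom{m+n}{2}\}$ is the final consecutive block, where $\ell(\sigma)=(m-1)(n+1)+\binom{m-1}{2}$. Hence $j_k=\ell(\sigma)+k$, and $Q_2|_J$ is simply $Q_2$ with every entry increased by $\ell(\sigma)$. (When $m=1$ the word $\sigma$ is empty, the shift is $0$, and the statement reduces to $Q=Q_2$; so I would assume $m\ge 2$ below.)

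First I would describe $P_1=\EG(\sigma)$. By Claim~\ref{claim:vmn}(\ref{claim:vex}) and the fact that the $P$-tableau of $\EG$ depends only on the underlying permutation, $P_1$ has shape $(m+n-1,m+n-2,\ldots,n+1)$, which has exactly $m-1$ rows. The increasing and decreasing subsequences furnished by Claim~\ref{claim:vmn}(\ref{claim:decseq}), combined with the bump-path computation already carried out in the proof of Lemma~\ref{lem:commutingInsert}, show that row $h$ of $P_1$ is exactly $[h,h+1,\ldots,m+n-1]$ for $1\le h\le m-1$; in particular each such row contains the consecutive block $h,h+1,\ldots,n+h$.

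Next I would track the insertion of $\tau$, whose letters all lie in $\{1,\ldots,n\}$ since $\tau\in\redw{\longestk{n+1}}$. Inserting a letter $z\le n$ into row $1$ finds $z$ already present, so the equal-value rule 2(a) of Definition~\ref{def:EG} bumps $z+1$ into row $2$ while leaving row $1$ untouched; the value $z+1$ is present in row $2$, bumping $z+2$ into row $3$, and so on. Because row $h$ contains $z+h-1$ for every $1\le h\le m-1$ (here $z+h-1\le n+m-2<m+n-1$), the letter descends straight down column $z$ through all of rows $1$ through $m-1$ via rule 2(a), never modifying them, and reaches row $m$ carrying the value $z+m-1$. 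Consequently: (i) no letter of $\tau$ ever creates a cell in, or alters, rows $1$ through $m-1$, so the top $m-1$ rows of the final $P$ and $Q$ remain exactly $P_1$ and $Q_1$; and (ii) the part of the insertion occurring in rows $m,m+1,\ldots$ is precisely the $\EG$ insertion of the word $(\tau_1+m-1)(\tau_2+m-1)\cdots$ into an initially empty tableau, since $P_1$ has no rows past $m-1$ and the bumping rules are local to rows $\ge m$.

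Finally I would invoke the translation-invariance of Edelman--Greene insertion: adding a fixed constant $c$ to every letter of a word preserves all of the order and equality comparisons that drive the algorithm, hence leaves the recording tableau unchanged and merely adds $c$ to each entry of the insertion tableau. Applying this with $c=m-1$ to $\tau$ shows that the cells created in rows $m,m+1,\ldots$ are recorded by $Q_2$, but as the $(\ell(\sigma)+1)$-st through $\binom{m+n}{2}$-th insertions overall; that is, each label $k$ of $Q_2$ appears as $\ell(\sigma)+k=j_k$, so the lower block of the $Q$ tableau is exactly $Q_2|_J$. Stacking this beneath the unchanged $Q_1$ yields $\stacktab{Q_1}{Q_2|_J}$, and the stacking is legitimate because row $m-1$ of $Q_1$ has length $n+1$ while row $1$ of $Q_2$ has length $n$. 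The step I expect to require the most care is the first one --- verifying the exact row content of $P_1$ and that every descent through rows $1$ to $m-1$ triggers rule 2(a) --- but this is essentially contained in the bump-path analysis of Lemma~\ref{lem:commutingInsert}, so it can be quoted rather than redone.
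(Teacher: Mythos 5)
Your proposal is correct and follows essentially the same route as the paper's proof: both establish via Claim~\ref{claim:vmn} that $P_1$ has shape $(m+n-1,\ldots,n+1)$ with row $h$ equal to $h,h+1,\ldots,m+n-1$, and both observe that each letter $z\le n$ of $\tau$ bumps straight down column $z$ through rows $1$ through $m-1$ by the equal-value rule, entering row $m$ as $z+m-1$ without disturbing the top rows, so the lower block of $Q$ is $Q_2|_J$. Your explicit appeal to translation-invariance of Edelman--Greene insertion (and your handling of the $m=1$ case) merely spells out a step the paper leaves implicit, so this is a matter of detail rather than a different argument.
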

\begin{proof}
Claim~\ref{claim:vmn}, part~\ref{claim:vex} lets us conclude that
$P_1$ and $Q_1$ both have shape $(m+n-1,m+n-2,\ldots,n+1)$. Row $i$ of
$P_1$ has entries $i,i+1,\ldots,m+n-1$. Since
$\tau\in\shredw{\longestk{n+1}}$, both $P_2$ and $Q_2$ with have
$(n+1)$-staircase shape and $\stacktab{Q_1}{Q_2|_J}$ is well-defined. All entries in $\tau$ are less than or equal to $n$. Each $i\in\tau$ will be straight bumped through $P_1$ (see the proof of Lemma~\ref{lem:commutingInsert}) and into row $m$ as $i+m-1$ because row $j$ of $P_1$ contains $i+j-1$ and $i+j$. Thus the bottom portion of $Q$ will be given by $Q_2|_J$
\end{proof}

\begin{proposition}
  \label{prop:stackedTabs}
Suppose $\rho\in\shuf m n$ is a commuting shuffle of
$\tau\in\redw{v_{m,n}}$ and $\sigma=\shredw{\longestk{n+1}}$. Let $I$ be indices of $\sigma$ in $\rho$, and $J$ those of $\tau$. We have
that
$$\EGQ(\rho)=\stacktab{\EGQ(\tau)|_I}{\EGQ(\sigma)|_J}.$$
\end{proposition}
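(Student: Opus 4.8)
The plan is to bootstrap from the concatenation case, handled in Lemma~\ref{lem:stackedTabs}, and then transport the identity across the commuting moves using Lemma~\ref{lem:commutingInsert}. I adopt the convention of the paragraph preceding Lemma~\ref{lem:stackedTabs}: write $\rho$ as a commuting shuffle of $\sigma\in\redw{\vmn}$ and $\tau\in\shredw{\longestk{n+1}}$, let $I$ be the set of positions of the letters of $\sigma$ in $\rho$ and $J$ those of $\tau$, and prove $\EGQ(\rho)=\stacktab{\EGQ(\sigma)|_I}{\EGQ(\tau)|_J}$. By the inductive definition of $\cshuf\sigma\tau$, every such $\rho$ is reached from the concatenation $\sigma\tau$ by a finite sequence of elementary moves, each swapping positions $i$ and $i+1$ with $i\in I$, $i+1\in J$, and $\rho_i,\rho_{i+1}$ commuting, and each replacing $I$ by $(I\cup\{i+1\})\setminus\{i\}$ and $J$ by $(J\cup\{i\})\setminus\{i+1\}$. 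I would induct on the number of moves.

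The base case is $\rho=\sigma\tau$, where $I=\{1,\dots,A\}$ and $J=\{A+1,\dots,\binom{m+n}{2}\}$ with $A=(m-1)(n+1)+\binom{m-1}{2}$. Then $\EGQ(\sigma)|_I=\EGQ(\sigma)$ and $\EGQ(\tau)|_J$ relabels the entry $k$ by $A+k$, so the desired identity is exactly the conclusion of Lemma~\ref{lem:stackedTabs}. For the inductive step, assume the identity for $\rho$ with index sets $I,J$, and let $\rho'$ arise from an elementary move at $i,i+1$, producing $I'=(I\cup\{i+1\})\setminus\{i\}$ and $J'=(J\cup\{i\})\setminus\{i+1\}$. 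Lemma~\ref{lem:commutingInsert} gives $\EGQ(\rho')=s_i(\EGQ(\rho))$, that is, $\EGQ(\rho)$ with the labels $i$ and $i+1$ interchanged in place. On the other side, since $i\in I$, $i+1\notin I$, and no integer lies strictly between $i$ and $i+1$, the order statistics of $I'$ coincide with those of $I$ in every rank except that the rank formerly occupied by $i$ is now occupied by $i+1$; hence $\EGQ(\sigma)|_{I'}$ equals $\EGQ(\sigma)|_I$ with the single entry $i$ replaced by $i+1$ in its cell, and symmetrically $\EGQ(\tau)|_{J'}$ equals $\EGQ(\tau)|_J$ with $i+1$ replaced by $i$. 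Stacking, $\stacktab{\EGQ(\sigma)|_{I'}}{\EGQ(\tau)|_{J'}}$ is $\stacktab{\EGQ(\sigma)|_I}{\EGQ(\tau)|_J}$ with $i$ and $i+1$ swapped in place, which by the induction hypothesis equals $s_i(\EGQ(\rho))=\EGQ(\rho')$. This closes the induction.

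The step I expect to be the crux is matching the in-place swap $s_i$ on $\EGQ(\rho)$ to the rewriting of the index sets. One must check that the cell of $\EGQ(\rho)$ labeled $i$ lies in the top ($\sigma$) block and the cell labeled $i+1$ lies in the bottom ($\tau$) block, so that swapping them corresponds exactly to moving $i$ into $J'$ and $i+1$ into $I'$, and that passing from $I$ to $I'$ disturbs only one cell of the relabeling. Both facts rest on $I\cap J=\varnothing$ together with the absence of an integer strictly between $i$ and $i+1$, which is precisely what the commuting-shuffle move guarantees. Finally, the stack $\stacktab{\cdot}{\cdot}$ stays well defined at every stage, since the required inequality between the last row of the top block (length $n+1$) and the first row of the bottom block (length $n$) depends only on the shapes, which $s_i$ does not alter.

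I note in passing that the roles of $\sigma$ and $\tau$ and the subscripts $I,J$ in the statement should be read through this correspondence: the $\vmn$-word produces the top block relabeled by its own positions, and the $\longestk{n+1}$-word produces the bottom block relabeled by its positions.
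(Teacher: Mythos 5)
Your proposal is correct and follows exactly the paper's argument: induction on the number of commutation moves taking $\sigma\tau$ to $\rho$, with Lemma~\ref{lem:stackedTabs} as the base case and Lemma~\ref{lem:commutingInsert} driving the inductive step. The paper states this in one sentence; you have merely supplied the index-set bookkeeping (and correctly read through the swapped roles of $\sigma$ and $\tau$ in the proposition's statement), so there is no substantive difference.
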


\begin{proof}
This is a straightforward proof by induction on the number of
commutation moves needed to transform  $\sigma\tau$ into $\rho$. The base case is
Lemma~\ref{lem:stackedTabs} and the induction step is done using
Lemma~\ref{lem:commutingInsert}.
\end{proof}

When the reduced expression $\rho$ is equal to the concatenation
$\sigma\tau$, Proposition~\ref{prop:stackedTabs} proves that
$\EGQ(\rho)=T$ is $n$-row shiftable. Our only concern would be at the
``boundary'' between row $m-1$ and row $m$, but all entries of $J$ are
greater than all entries of $I$, so that the entry in cell $(m-1,j+1)$
is less than the entry in $(m,j)$. More work is required in
general. Luckily, the description of $\Gamma$, which is the inverse to
the Edelman-Greene map and is given in Definition~\ref{def:promo}, is
particularly simple when the tableau $T$ is shiftable.

\begin{lemma}
  \label{lem:EGForShifted}
  Let $\tau\in\shredw{\longestk{n+1}}$ and let $T=\EGQ(\tau)$. Denote the entry in row $i$, column $j$ of $T$ by $T_{i,j}$. We have that $\tau_{T_{i,j}}=j$. 
\end{lemma}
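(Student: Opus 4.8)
The plan is to track the Edelman--Greene insertion of $\tau$ letter by letter and to show, by induction on the number $k$ of letters inserted, that the cell adjoined at step $k$ lies in column $\tau_k$. Write $\tau=\tau_1\cdots\tau_M$ with $M=\binom{n+1}{2}$ and let $(P(k),Q(k))$ be the tableau pair after inserting $\tau_1\cdots\tau_k$, so that $Q(M)=T$ and the entry $k=T_{i,j}$ records that the cell adjoined at step $k$ sits in row $i$ and column $j$. Thus the lemma is equivalent to the assertion that this new cell always lies in column $\tau_k$.

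The engine of the induction would be a shape-and-filling invariant for the insertion tableau: I claim that for every $k$, each row of $P(k)$ is a block of consecutive integers beginning with its row index, i.e. row $i$ of $P(k)$ equals $(i,i+1,\ldots,i+\ell_i-1)$, where $\ell_i$ is its length. Granting this, inserting a letter $v=\tau_k$ proceeds entirely by the \emph{bump-in-place} branch of Edelman--Greene insertion (Definition~\ref{def:EG}, rule 2(a)): the value $v$ occurs in row $1$ exactly when $\ell_1\ge v$, in which case it bumps $v+1$ into row $2$ without altering row $1$, and inductively $v+i-1$ is bumped into row $i+1$ precisely when $\ell_i\ge v$, so the letter finally comes to rest at the end of the first row $r$ with $\ell_r<v$. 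Since the entry placed there is $v+r-1=r+\ell_r$, the consecutive-block invariant is preserved automatically, and it only remains to compute $r$ and $\ell_r$.

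Here the combinatorics of $\tau$ enters. By Proposition~\ref{prop:FN}, $\tau\in\shredw{\longestk{n+1}}$ is a lattice word, and combining this with the induction hypothesis (which identifies the height of column $c$ in $P(k-1)$ with the number of occurrences of $c$ among $\tau_1\cdots\tau_{k-1}$) translates the row lengths $\ell_i$ into prefix statistics. Writing $h_c$ for the number of $c$'s in $\tau_1\cdots\tau_{k-1}$, the lattice condition $h_1\ge h_2\ge\cdots$ forces the column heights to be weakly decreasing, so the shape is a genuine Young diagram, and one reads off that $v$ settles in row $r=h_v+1$. The remaining point is the exact equality $\ell_r=v-1$: the lattice property for $c>v$ gives $h_c\le h_v$, so no column past $v$ reaches height $h_v+1$; and applying the lattice inequality to the prefix $\tau_1\cdots\tau_k$ (where $\tau_k=v$) gives $h_{v-1}\ge h_v+1$, so columns $1,\ldots,v-1$ all reach height $h_v+1$. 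Hence $\ell_{h_v+1}=v-1$ and the adjoined cell is $(h_v+1,v)$, in column $v=\tau_k$, which closes the induction; the base case $k=1$ is immediate since a lattice word begins with $1$, giving $\tau_1=1$ and first cell $(1,1)$.

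The main obstacle will be precisely this last exact count. The lattice word property by itself only yields $h_{v-1}\ge h_v$, which is too weak and would permit the new cell to appear in a column strictly to the left of $v$; one must feed in the inserted letter $\tau_k=v$ itself, using the lattice inequality one step further along the word, to upgrade this to $h_{v-1}\ge h_v+1$ and thereby pin the new cell to column $v$. A secondary point requiring care is the verification that the consecutive-block invariant is genuinely maintained, so that the insertion never triggers the row-replacement branch 2(b); this is exactly what makes the analysis of the bump path so rigid and is what ultimately makes the inverse map $\Gamma$ transparent on shiftable tableaux.
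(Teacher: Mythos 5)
Your proof is correct, and it runs in the opposite direction from the paper's. The paper establishes the lemma by analyzing the inverse map $\Gamma$ of Definition~\ref{def:promo}: starting from the shiftable tableau $T$, it shows that each elementary promotion $\delta$ shifts an entire column downward (the evacuation path of the largest entry is $(i,j),(i-1,j),\ldots,(1,j),(1,j-1),\ldots,(1,1)$, and no positive entry ever changes column), and it pins down which column is active at each step using Proposition~\ref{prop:FN} — chiefly the \emph{reverse} lattice condition, which is natural there because $\Gamma$ builds the word from its last letter backwards. You instead run Edelman--Greene insertion forwards with a structural invariant on the insertion tableau: each row $i$ of $P(k)$ is the consecutive block $(i,i+1,\ldots,i+\ell_i-1)$, which forces every step of the bump path to use only rule 2(a) of Definition~\ref{def:EG}, so inserting $v=\tau_k$ rigidly passes $v,v+1,v+2,\ldots$ down successive rows and lands in row $h_v+1$; the decisive equality $\ell_{h_v+1}=v-1$ then follows from the lattice inequalities $h_1\ge h_2\ge\cdots\ge h_{v-1}\ge h_v+1$, the last one obtained — as you correctly flag — by applying the lattice condition to the prefix that includes $\tau_k$ itself. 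I checked the details, including the $v=1$ edge case (where row $h_1+1$ is empty and $\ell_{h_1+1}=0=v-1$ automatically) and the preservation of the consecutive-block invariant upon appending $v+r-1=r+\ell_r$, and the argument is sound. The trade-offs: your route is more elementary and self-contained (no promotion operator at all) and uses only the lattice half of the characterization in Proposition~\ref{prop:FN}, so it in fact proves the formally stronger statement that any lattice-word insertion adds its $k$th cell in column $\tau_k$; the paper's promotion-based route, by contrast, develops column-preservation facts about $\Gamma$ on shiftable tableaux that are reused downstream (in Lemmas~\ref{lem:shiftTabForm} and~\ref{lem:biggerhigher} toward Theorem~\ref{thm:3equiv}), which a forward-insertion argument would not supply directly.
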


For example, in Figure~\ref{fig:shiftableTabs}, the fourth tableau has $1,4,8,10$ in its first column and these are the positions of the 1s in its reduced word. 
\begin{proof}The key to the proof is that the evacuation path for cell $(i,j)$,
where $i+j=n$, is
$\{(i,j),(i-1,j),\ldots,(1,j),(1,j-1),\ldots,(1,1)\}$ and that no
positive entry ever changes column during evacuation. By Proposition~\ref{prop:FN},
$\Gamma(T)$ is both a lattice word and a reverse lattice word, so the
last letter of $\Gamma(T)$ is a $1$, meaning the whole first column of
$T$ shifted down and $T_{1,1}$ is set to $0$. In order to be a reverse
lattice word, the second to last letter of $\Gamma(T)$ must be a
$2$. Since the first column of was shifted down,
$\delta(T)_{i,2}>\delta(T)_{i+1,1}=T_{i1}$, so the whole second column
of $\delta(T)$ shifts down, $0$ moves to column $2$, and
$\delta(T)_{1,1}=-1$. In general, if the largest entry of $\delta^k(T)$
is in column $j+1$ and column $j+1$ has been shifted down $x$ times,
then column $j$ has been shifted down $x+1$ times and column $j+1$
will be simply shifted down by applying $\delta$ to $\delta^k(T)$ and
the entries in row $1$ in columns $j-1,j-1,\ldots,1$ are not positive.
\end{proof}

Consider $\sigma\in\redw{v_{m,n}}$, $\tau=\shredw{\longestk{n+1}}$,
and $\rho\in\cshuf \sigma\tau$. We claim that $\rho$ inserts to an
$n$-row shiftable tableau. Again, consider this inductively, based on
the number of commutations from $\sigma\tau$ to $\rho$. By the
discussion following the proof of Lemma~\ref{prop:stackedTabs}, the base
case is true. By Lemma~\ref{prop:stackedTabs}, we need only be concerned
with cells on the in rows $m-1$ and $m$. Suppose that after some
commutation moves, in cell $(m-1,j)$ we have $i$ and in cell $(m,j-1)$
we have $k$, where $k>i$, where $i\in I$ and $k\in J$. We must show
that no commutation between a letter from $\sigma$ and a letter from
$\tau$ will create a violation in these cells. If $k>i+1$, then by
Lemma~\ref{lem:commutingInsert}, no violation can be created.

We are left with the case $k=i+1$. We claim that $\rho_i=j$ and $\rho_{i+1}=j-1$, so these elements may not be commuted. The reasoning is as in the proof of Lemma~\ref{lem:EGForShifted}, because $\tau$ is a lattice and reverse lattice word. Although elements from $\sigma$ may be commuted to the right of elements of $\tau$ in $\rho$, either the elements from $\sigma$ are strictly greater than $n$ or they are strictly less than the elements they commute past.

We have shown the following corollary. 

\begin{corollary}
\label{cor:shufImpliesShif}
Suppose $\rho\in\shuf m n$ is a commuting shuffle of
$\tau\in\redw{v_{m,n}}$ and $\sigma=\shredw{\longestk{n+1}}$. The tableau $\EGQ(\rho)$ is $n$-row shiftable.
\end{corollary}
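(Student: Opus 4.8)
The plan is to induct on the number $d$ of commutation moves needed to pass from the concatenation $\sigma\tau$ to $\rho$, where $\sigma\in\redw{\vmn}$ occupies the positions $I$ and $\tau\in\shredw{\longestk{n+1}}$ occupies the positions $J$ in $\rho$. Throughout, I would invoke Proposition~\ref{prop:stackedTabs}, which guarantees that for every such $\rho$ the tableau $\EGQ(\rho)$ has the stacked form $\stacktab{T}{T'}$, where the top $m-1$ rows record the letters of $\sigma$ and the bottom $n$ rows (of $(n+1)$-staircase shape) record the letters of $\tau$. Because $n$-row-shiftability (Definition~\ref{def:n-row-shift}) only constrains rows $m-1$ through $m+n-2$, the top $m-2$ rows impose no condition, so I only have to verify the shiftable inequalities strictly inside the bottom block and across the single seam between rows $m-1$ and $m$.

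For the base case $d=0$ we have $\rho=\sigma\tau$, so $\EGQ(\rho)=\stacktab{Q_1}{Q_2|_J}$ by Lemma~\ref{lem:stackedTabs}, with $Q_2=\EGQ(\tau)$. Since $\tau\in\shredw{\longestk{n+1}}$ means precisely that $Q_2$ is shiftable, the inequalities among rows $m$ through $m+n-2$ hold, and relabelling the cells of $Q_2$ by the increasing sequence $J$ preserves their relative order. At the seam, every entry of the bottom block lies in $J$ and every entry of the top block lies in $I$, and for the concatenation every element of $J$ exceeds every element of $I$; hence the entry in cell $(m-1,j)$ is automatically smaller than that in $(m,j-1)$, and $\EGQ(\sigma\tau)$ is $n$-row-shiftable.

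For the inductive step, suppose $\rho$ is obtained from a commuting shuffle $\rho'$ (reachable in $d-1$ moves, hence $n$-row-shiftable by hypothesis) by one commutation of $\rho'_p$ and $\rho'_{p+1}$ with $p\in I$ and $p+1\in J$. By Lemma~\ref{lem:commutingInsert}, $\EGQ(\rho)=s_p(\EGQ(\rho'))$, i.e.\ the cells holding the labels $p$ and $p+1$ are interchanged. The shiftable inequalities strictly interior to the bottom block are unaffected, since their validity is the intrinsic shiftability of the $\tau$-tableau, so the only inequalities that can be disturbed sit at the seam. A seam inequality between cells $(m-1,j)$ and $(m,j-1)$, holding labels $a<b$, can be reversed by swapping $p\leftrightarrow p+1$ only when $\{p,p+1\}=\{a,b\}$, that is when $b=a+1$; if $b>a+1$, the swap either raises $a$ to at most $a+1<b$ or leaves $b$ as the larger entry, so no violation is created.

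The crux, and the step I expect to be the main obstacle, is the remaining case $b=a+1$: here I must rule out that $\rho'_p$ and $\rho'_{p+1}$ commute at all, which rests on pinning down the two seam letters exactly. The bottom label $b$ sits in row $1$ of the $\tau$-block's own $(n+1)$-staircase, in column $j-1$; since $\tau$ is a lattice and reverse lattice word, Lemma~\ref{lem:EGForShifted} gives $\rho_{b}=j-1$. The top label $a$ sits in row $m-1$, column $j$ of the $\sigma$-block, and the disjoint increasing and decreasing subsequence structure of $\sigma$ recorded in Claim~\ref{claim:vmn}, Part~\ref{claim:decseq}, forces the corresponding letter to be $\rho_{a}=j$. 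Thus $|\rho_p-\rho_{p+1}|=|j-(j-1)|=1$, so $\rho_p$ and $\rho_{p+1}$ do not commute and the offending move is not an allowed commutation in a commuting shuffle. This contradiction shows the seam inequality survives, completing the induction and proving that $\EGQ(\rho)$ is $n$-row-shiftable.
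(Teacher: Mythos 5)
Your proposal is correct and takes essentially the same route as the paper's own proof: induction on the number of commutation moves, with Proposition~\ref{prop:stackedTabs} reducing the problem to the seam between rows $m-1$ and $m$, Lemma~\ref{lem:commutingInsert} showing a commutation merely swaps the labels $p$ and $p+1$, and the crux case of consecutive seam labels $b=a+1$ ruled out because Lemma~\ref{lem:EGForShifted} (for the $\tau$-letter) and the subsequence structure of $\sigma$ from Claim~\ref{claim:vmn} pin the two letters to $j$ and $j-1$, which do not commute. Your write-up merely makes explicit two points the paper leaves terse — the case analysis showing only $b=a+1$ can flip a seam inequality, and the identification $\rho_a=j$ — but the argument is the same.
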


We've almost arrived at the main result of this section, Theorem~\ref{thm:3equiv}. We'll need two lemmas to finish its proof. Lemma~\ref{lem:shiftTabForm} is an extension of Lemma~\ref{lem:EGForShifted} and Lemma~\ref{lem:biggerhigher} is a converse to Lemma~\ref{lem:commutingInsert}.

\begin{lemma}
\label{lem:shiftTabForm}
    Let $T$ be an $n$-row shiftable tableau, let
$J=\{j_1<j_2<\cdots<j_{\binom{n+1}{2}}\}$ be the entries from rows $m,
m+1,\ldots,m+n-1$ of $T$, and let $I$ be the entries from rows
$1,2,\ldots,m-1$, so that $I\cup J=[\binom{m+n}{2}]$. Let
$\rho=\Gamma(T)$ be the reduced expression for $\longestk{m+n}$ which
corresponds to $T$. Then $\rho_{T_{i,j}}=j$ for $i=m,m+1,\ldots,m+n-1$.
\end{lemma}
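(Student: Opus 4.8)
The plan is to analyze $\Gamma$ directly on the $n$-row shiftable tableau $T$, adapting the argument of Lemma~\ref{lem:EGForShifted} but localizing it to the bottom $n$ rows. Recall that $\rho=\Gamma(T)$ is read off by repeated elementary promotion: writing $T^{(k)}=\delta^k(T)$ and $M=\binom{m+n}{2}$, the letter $\rho_{M-k+1}$ is the column of the largest entry of $T^{(k-1)}$. Since each application of $\delta$ removes the current maximum and introduces a new minimum, the largest entry of $T^{(M-v)}$ is exactly the value $v$, so $\rho_v$ is the column occupied by $v$ in $T^{(M-v)}$, i.e. in the tableau immediately before $v$ is promoted away. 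Thus the assertion $\rho_{T_{i,j}}=j$ for $i=m,\dots,m+n-1$ is precisely the statement that every entry coming from rows $m,\dots,m+n-1$ stays in its original column throughout the promotions that precede its removal.

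First I would record the combinatorial input. Because $T$ is $n$-row shiftable, the cells in rows $m,\dots,m+n-1$ have shape $(n,n-1,\dots,1)=\stair_{n+1}$ and, after standardizing, form a shiftable tableau $B$; applying Lemma~\ref{lem:EGForShifted} and Proposition~\ref{prop:FN} to $B$, its reading word $\Gamma(B)$ is both a lattice and a reverse lattice word, and promoting $B$ keeps every positive entry in its own column. I would also isolate the boundary inequalities $T_{m-1,j}<T_{m,j-1}$ furnished by $n$-row shiftability, which govern the one place where the top block (rows $1,\dots,m-1$) meets the bottom block.

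The heart of the proof is an invariant, proved by induction on the promotion step: whenever the evacuation path of $\delta$ visits a cell $(p,q)$ with $p\geq m$ and $q\geq 2$, it moves up rather than left, i.e. the current entry above that cell exceeds the current entry to its left. Granting this, the path can reach a bottom-block cell only through an up-move, so an entry originating in rows $m,\dots,m+n-1$ is only ever pushed straight down its column, never pulled rightward, and it remains in rows $\geq m$ until it becomes the maximum and is read off from its original column $j$; this yields $\rho_{T_{i,j}}=j$. To establish the invariant I would track, exactly as in Lemma~\ref{lem:EGForShifted}, the staggered way in which the columns of the bottom block are shifted down—column $q-1$ one more time than column $q$—so that by the time a path rising from a bottom-row maximum reaches an interior or boundary cell $(p,q)$, the original shiftable inequality has flipped to favor the up-move. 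The order of these shifts is dictated by the reverse lattice word structure of the reading of $B$, available from Proposition~\ref{prop:FN}.

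I expect the main obstacle to be the interface between the two blocks: the boundary cells $(m,q)$ whose entry above, $T^{(\ell)}_{m-1,q}$, is a migrated top-block value rather than a bottom-block value, so the up-move there cannot be read off from $B$ alone. The work is to show that the top block never disrupts the staggered shift of the bottom block, so that a path exits the bottom block cleanly and any left-veering occurs only in rows $<m$, where it moves top-block entries only. The boundary inequalities $T_{m-1,j}<T_{m,j-1}$, combined with the inductive hypothesis that the bottom columns shift in the staggered pattern, are what force the up-move at each boundary cell; the delicate bookkeeping is verifying that these conditions persist after every promotion in the presence of the top block, and this is the step on which the whole argument rests.
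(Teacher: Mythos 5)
Your plan is correct and takes essentially the same route as the paper's proof: both reduce the lemma to showing that lower-row entries never change column under promotion, and both establish this by induction on promotion steps via the staggered column-shift invariant (column $j$ shifted once more than column $j+1$ between successive lower-row removals, which is exactly what the paper tracks with its multiplicities $\remmult_t(T,j)$), so that the $n$-row-shiftable inequalities flip to force up-moves and the path reads each lower-row entry off from its original column, while entries from rows $1,\ldots,m-2$ may shift rightward without affecting the lower rows. The boundary bookkeeping you flag as the delicate step is precisely the point the paper's own proof also treats informally (``the process continues in this manner''), so your proposal matches the published argument in both structure and level of completeness.
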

\begin{proof}
Recall that $\rho=\Gamma(T)$. The algorithm for $\Gamma$ proceeds in steps, where $\delta^kT$ is the image of $T$ after $k$ evacuations. Let $\mult{t}{T}{j}$ be the number of removals from column $j$ of $T$ in steps $1,\ldots,t$.

Because of the shiftability condition, we can describe in detail how the elements of $T$ in rows $m,\ldots,m+n-1$ are removed. As in the proof of
Lemma~\ref{lem:EGForShifted}, the fact that $\delta$ moves the entries of $T$ downward is key, but we must take more care because the shiftability condition holds only on rows $m-1,m-2,\ldots,m+n-1$, which we refer to as the lower rows of $T$. Let $t_k=\binom{m+n}{2}-j_k+1$, so that $t_k$ is the step where the $k^{\text{th}}$ element is removed from the lower rows. For an $n$-row shiftable tableau $T$ and $t=t_k,1\le k\le\binom{n+1}{2}$, we claim
\begin{enumerate}
\item \label{part:multbounds} $\mult{t}{T}{j+1}\le\mult{t}{T}{j}\le\mult{t}{T}{j+1}+1$, for $j+1\le n$ and $\mult{t}{T}{j+1}\le n-j$.
\item \label{part:eqrems}If $\mult{t}{T}{j+1}=\mult{t}{T}{j}$ and $\mult{t}{T}{j+1}\le n-j$, then $(\delta^tT)_{i+1,j}>(\delta^tT)_{i,j+1}$.
\item \label{part:uneqrems}If $\mult{t}{T}{j+1}=\mult{t}{T}{j}-1$ and $\mult{t}{T}{j+1}\le n-j$, then $(\delta^tT)_{i+1,j}<(\delta^tT)_{i,j+1}$.
\item \label{part:form} $(\delta^tT)_{i+\mult{t_k}{T}{j},j}=T_{i,j}$, where $t_k\le t<t_{k+1}$ and $\mult{t_k}{T}{j}\le n-j+1$.
\end{enumerate}

The reasoning behind the claim is straightforward, although there is a lot of notation. The column where the first element is removed from the lower rows of $T$ will have to be $t_1$ because the largest element in the lower rows of $T$ is $T_{m+n-1,1}$. The claims all hold at the first step, since the removal of $T_{m+n-1,1}$
simply shifts column 1 down. Since $T_{m+n-2,1}<T_{m+n-2,2}$, after
step $t_1$, the entry at the bottom of column 2 will be larger than
the entry at the bottom of column 1 and so the next removal from the
lower rows will be from column 2. By \eqref{part:uneqrems}, after step
$t_2$, all entries in column 2 will shift downward and there will be
no rightward shift of cells from column 1. After step $t_2$,
$\mult{t_2}{T}{j+1}=\mult{t_2}{T}{j}$ and $T$'s original shiftable
condition causes $(\delta^tT)_{i+1,j}>(\delta^tT)_{i,j+1}$ for $j=1$
and $m\le i\le m+n-2$. The process continues in this manner. Entries
from rows $1,\ldots, m-2$ may shift rightward, but that will not
affect the lower rows.

The result of the claim is that $\rho_{T_{ij}}=j$, for $i=m,m,+1,\ldots,m+n-1$.
\end{proof}

\begin{lemma}
\label{lem:biggerhigher}
Let $T$ be an $n$-row shiftable tableau of  $(m+n)$-staircase shape and $T=\EGQ(\rho)$ for $\rho\in\redw{\longestk{m+n}}$. Suppose $k$ is in an entry in row $i_k> m-1$ and $k+1$ is in row $i_{k+1}\le m-1$. Then $\rho_k$ and $\rho_{k+1}$ commute.  Further, let $T'$ be the tableau $T$ with entries $k$ and $k+1$ swapped. Then $\Gamma(T')$ is the reduced expression $\rho$ with the positions of $\rho_k$ and $\rho_{k+1}$ swapped.
\end{lemma}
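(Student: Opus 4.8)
The plan is to prove the two assertions in the opposite of the order stated: first to identify $\Gamma(T')$ explicitly, and then to read the commutation relation off from it. Throughout write $M=\binom{m+n}{2}$, let $c_k,c_{k+1}$ be the columns of $k,k+1$ in $T$, and recall that $\Gamma$ is the two-sided inverse of the Edelman--Greene recording map $\EGQ\colon\redw{\longestk{m+n}}\to\SYT(\stair_{m+n})$, so $\Gamma(T)=\rho$. Since $k$ lies in a lower row ($i_k\ge m$) while $k+1$ lies in a higher row ($i_{k+1}\le m-1$), a one-line standardness argument gives $c_{k+1}\ge c_k+1$: if instead $c_{k+1}\le c_k$, the cell $(i_k,c_{k+1})$ would exist, lie weakly left of $k$ in its row (value $\le k$) and strictly below $k+1$ in its column (value $>k+1$), which is impossible. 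In particular $k$ and $k+1$ occupy distinct rows and distinct columns, so interchanging their entries produces a genuine standard Young tableau $T'$ of the same staircase shape, and $\Gamma(T')$ is a reduced word for $\longestk{m+n}$.

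I would compute $\Gamma(T')$ directly from the evacuation description of $\Gamma$ in Definition~\ref{def:promo}, in the same spirit as the proofs of Lemma~\ref{lem:shiftTabForm} and Lemma~\ref{lem:EGForShifted}. The crux is that the promotion operator $\delta$ intertwines with the involution interchanging the two consecutive values $k$ and $k+1$: as long as the current maximum exceeds $k+1$, the evacuation path is decided by comparisons of pairs of entries, and the outcome of such a comparison is unaffected by swapping $k\leftrightarrow k+1$ unless the two compared cells hold exactly $k$ and $k+1$. By Definition~\ref{def:promo} the compared cells are $(p-1,q)$ and $(p,q-1)$, so the swap can matter only when $k$ and $k+1$ sit in these two anti-diagonally adjacent cells. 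In $T$ itself this cannot happen: with $k$ in a lower row and $k+1$ in a higher one, such a configuration would place $k$ at some $(m,c)$ and $k+1$ at $(m-1,c+1)$, and then the $n$-row-shiftability inequality $T_{m-1,c+1}<T_{m,c}$ reads $k+1<k$, which is absurd. Granting that this obstruction never arises at any stage, induction yields $\delta^{t}(T')=\mathrm{swap}_{k,k+1}(\delta^{t}(T))$ for $0\le t\le M-k-1$; hence $\Gamma(T')$ agrees with $\rho$ in every position $\ge k+2$, and $\Gamma(T')_{k+1}$ equals the column of $k$ in $\delta^{M-k-1}(T)$, which is $c_k=\rho_k$ by Lemma~\ref{lem:shiftTabForm}. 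A parallel analysis of the two successive removals (of $k+1$, then $k$) shows these evacuations have a swap-symmetric net effect on the smaller entries, so $\delta^{M-k+1}(T')=\delta^{M-k+1}(T)$, giving $\Gamma(T')_{k}=\rho_{k+1}$ and agreement in all positions $\le k-1$. This proves the second assertion: $\Gamma(T')$ is $\rho$ with the letters in positions $k$ and $k+1$ transposed.

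With the second assertion established, the first is immediate. Both $\rho=\Gamma(T)$ and $\Gamma(T')$ are reduced words for $\longestk{m+n}$, identical except that the letters in positions $k$ and $k+1$ are interchanged. Cancelling the common prefix of length $k-1$ on the left and the common suffix on the right in the symmetric group gives $s_{\rho_k}s_{\rho_{k+1}}=s_{\rho_{k+1}}s_{\rho_k}$; since $\rho$ is reduced we have $\rho_k\ne\rho_{k+1}$, and two distinct simple transpositions commute precisely when $|\rho_k-\rho_{k+1}|\ge 2$. Hence $\rho_k$ and $\rho_{k+1}$ commute. As a consistency check, the ascent/descent rule for Edelman--Greene recording tableaux already forces $\rho_{k+1}>\rho_k=c_k$, because $k+1$ lies strictly above $k$ in $T$; so the only case the shiftability obstruction genuinely rules out is the adjacent ascent $\rho_{k+1}=\rho_k+1$.

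The main obstacle is the claim used in the second paragraph: that the anti-diagonal configuration of $k,k+1$ never occurs during the whole evacuation, and that the two consecutive removals act symmetrically under the swap. Shiftability excludes the bad configuration in $T$ itself, and the structural description of how lower-row entries evacuate (Lemma~\ref{lem:shiftTabForm}, which keeps $k$ in column $c_k$ and moves it only downward) pins down where $k$ can be. The delicate point is tracking the upper-row entry $k+1$, whose cell may drift both downward and rightward under $\delta$, and verifying it never lands immediately northeast of $k$ while $k$ still occupies row $m$. This is exactly where the hypothesis that $T$ is $n$-row-shiftable, rather than merely standard, is indispensable.
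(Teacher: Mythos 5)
There is a genuine gap, and you have flagged it yourself: your whole second paragraph rests on the claim that $k$ and $k+1$ are never the two entries compared during the evacuation, and you only exclude the bad configuration in $T$ itself, writing ``granting that this obstruction never arises at any stage'' and conceding in your last paragraph that tracking $k+1$ is ``the delicate point.'' The reason you cannot close this from your setup is that your column separation is too weak. Standardness gives only $c_{k+1}\ge c_k+1$, and your shiftability argument rules out anti-diagonal adjacency of $k$ and $k+1$ only when they sit in rows $m$ and $m-1$ of $T$ at the start. With a column gap of one permitted, the entry $k+1$, whose column can grow under $\delta$ but whose starting column you have not bounded away from $c_k+1$, could a priori become anti-diagonally adjacent to $k$ at a later stage, and nothing in your proposal excludes this; so the induction $\delta^t(T')=\mathrm{swap}_{k,k+1}(\delta^t(T))$ is unproven, and your (otherwise correct) cancellation argument deducing commutation from the second assertion inherits the gap.

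The missing ingredient is a stronger separation that shiftability gives immediately. Apply the $n$-row-shiftability inequality at row $i_k-1$ (legitimate since $i_k-1\ge m-1$, and the cell $(i_k-1,c_k+1)$ exists in the staircase whenever $(i_k,c_k)$ does) to get $T_{i_k-1,c_k+1}<T_{i_k,c_k}=k$; column-strictness then makes every entry of column $c_k+1$ in rows $\le i_k-1$ smaller than $k$, so $k+1$ cannot lie anywhere in column $c_k+1$, whence $c_{k+1}\ge c_k+2$. Combined with Lemma~\ref{lem:shiftTabForm} (whose proof shows the lower-row entry $k$ stays in column $c_k$, moving only downward) and the fact that no entry's column ever decreases under $\delta$, the columns of $k$ and $k+1$ differ by at least two throughout, so they never occupy the anti-diagonally adjacent cells compared in step~\eqref{part:promo_comp} of Definition~\ref{def:promo}. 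This is exactly the paper's remark that the two entries are ``always separated by at least a column,'' and it replaces all the tracking of $k+1$ you were worried about. With it in place your argument goes through; note that the paper proves commutation directly from $\rho_k=c_k$ and $\rho_{k+1}\ge c_{k+1}\ge c_k+2$ rather than by your algebraic cancellation, but that reversal of order is harmless once the separation is established.
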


\begin{proof}
    Let $T$ and $\rho$ be as  in the statement of the lemma, where $k$ is the entry in row $i_k$ and column $j_k$ and $k+1$ in row $i_{k+1}$ and column $j_{k+1}$. By Lemma~\ref{lem:shiftTabForm}, $\rho_k=j_k$. Since $T$ is $n$-row shiftable, $k+1$ cannot be in column $j_k$ or $j_{k+1}$. Therefore, $j_{k+1}>j_k+1$. In any tableau, an entry moves to the right (higher column index) or lower (higher row index) as the elementary promotion operator $\delta$ is applied. Therefore $$\rho_{k+1}\ge j_{k+1}>j_k+1=\rho_k+1,$$ showing that $\rho_k$ and $\rho_{k+1}$ commute.

    Define the \mydef{exit path} of a label $i$ in a tableau $R$ to be the sequence of cells $i$ occupies $R$, $\delta(R)$, $\delta^2(R),\ldots.$ For the second part of the lemma, let $T'$ be the tableau $T$ with entries $k$ and $k+1$ swapped. We claim that $\delta^tT$ is equal to $\delta^tT'$ with $k$ and $k+1$ swapped for $t$ such that $0\le t\le\binom{m+n}{2}-k$ and they are simply equal if $t>\binom{m+n}{2}-k$. This is because $k$ and $k+1$ will never be compared in step \eqref{part:promo_comp} of the definition of $\delta$ in Definition~\ref{def:promo}, since they are always separated by at least a column. Any comparison between $k$ and $j$, $j\neq k+1$, will have the same result as a comparison between $k+1$ and $j$. Therefore, the exit path of $k$ in $T$ is the same as the exit path of $k+1$ in $T'$ and exit path of $k+1$ in $T$ is the same as the exit path of $k$ in $T'$, forcing $\Gamma(T')$ to be $\rho$ with $\rho_k$ and $\rho_{k+1}$ swapped. 
\end{proof}
\begin{theorem}\label{thm:3equiv}
  The following are equivalent. Let $m$ and $n$ be integers, with $m\ge1$, $\rho\in\redw{\longestk{m+n}}$, and $C$ the chain of permutations which corresponds to $\rho$.
  \begin{enumerate}
  \item\label{thmpart:shuf} $\rho\in\shuf m n$
  \item \label{thmpart:shif}$T=\EGQ(\rho)$ is an $n$-row-shiftable tableau of shape $(m+n)$-staircase.
  \item \label{thmpart:tp}$C\in\lc {\lolli m n}$
    \end{enumerate}
  
  \end{theorem}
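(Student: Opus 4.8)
The plan is to close a cycle of implications by invoking two results already in hand and then supplying the one genuinely new direction. The equivalence of \eqref{thmpart:shuf} and \eqref{thmpart:tp} is exactly Proposition~\ref{prop:MLCsAndCSs}: a reduced expression $\rho$ represents a maximum length chain in $\tp{\lolli m n}$ precisely when $\rho\in\shuf m n$. The implication \eqref{thmpart:shuf}$\Rightarrow$\eqref{thmpart:shif} is Corollary~\ref{cor:shufImpliesShif}, which says a commuting shuffle inserts to an $n$-row-shiftable tableau. Thus the whole theorem reduces to proving \eqref{thmpart:shif}$\Rightarrow$\eqref{thmpart:shuf}: every $n$-row-shiftable staircase tableau $T=\EGQ(\rho)$ arises from a commuting shuffle.

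For this direction I would argue by an explicit sorting procedure on $T$, using Lemma~\ref{lem:biggerhigher} as the engine. Call a cell \emph{upper} if it lies in one of the rows $1,\dots,m-1$ and \emph{lower} if it lies in rows $m,\dots,m+n-1$, and write $I,J$ for the corresponding sets of entries. Whenever an entry $k$ sits in a lower row with $k+1$ sitting in an upper row, Lemma~\ref{lem:biggerhigher} guarantees that $\rho_k$ and $\rho_{k+1}$ commute and that swapping $k$ and $k+1$ in $T$ corresponds exactly to swapping the positions of $\rho_k,\rho_{k+1}$ in the reduced word. I would first check that the swapped tableau $T'$ is again standard and again $n$-row-shiftable: since the proof of Lemma~\ref{lem:biggerhigher} forces $k$ and $k+1$ into columns differing by more than one, no single shiftability inequality $T_{i,j}<T_{i+1,j-1}$ can involve both of them at once, and a short case check on the values $k,k+1$ against their paired entries shows each such inequality survives the swap. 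Applying these swaps repeatedly---a bubble sort moving lower entries toward large labels and upper entries toward small labels---strictly decreases the sum of the labels occupying upper rows, so the process terminates.

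At termination no lower entry $k$ has $k+1$ in an upper row, which forces every lower label to exceed every upper label; that is, $T$ has been carried to the stacked form $\stacktab{Q_1}{Q_2}$, where $Q_1$ fills rows $1,\dots,m-1$ with the labels $1,\dots,(m-1)(n+1)+\binom{m-1}{2}$ and $Q_2$ fills the remaining rows. Here $Q_1$ is a standard Young tableau of shape $(m+n-1,\dots,n+1)$, so by Claim~\ref{claim:vmn}\eqref{claim:vex} together with the Edelman--Greene bijection it equals $\EGQ(\sigma)$ for a unique $\sigma\in\redw{\vmn}$, while $Q_2$ is a shiftable standard Young tableau of $(n+1)$-staircase shape, hence $\EGQ(\tau)$ for some $\tau\in\shredw{\longestk{n+1}}$ by the very definition of $\shredw{\longestk{n+1}}$. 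Lemma~\ref{lem:stackedTabs} then identifies $\Gamma$ of this stacked tableau with the concatenation $\sigma\tau$. Since each swap was a commutation move transporting a letter of $\tau$ past a letter of $\sigma$, reversing the whole sequence exhibits $\rho$ as obtained from $\sigma\tau$ by commuting letters of $\tau$ leftward, which is precisely the condition $\rho\in\cshuf\sigma\tau\subseteq\shuf m n$.

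I expect the main obstacle to be the bookkeeping in the second paragraph: verifying that the class of $n$-row-shiftable tableaux is closed under the Lemma~\ref{lem:biggerhigher} swaps (so the lemma may be reapplied at every stage) and that the terminal configuration is genuinely the stacked form rather than some spurious local minimum. Both hinge on the column separation $j_{k+1}>j_k+1$ extracted from Lemma~\ref{lem:shiftTabForm} and Lemma~\ref{lem:biggerhigher}; once that separation is in place, the shiftability inequalities decouple from the swap and the argument goes through cleanly.
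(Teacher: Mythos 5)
Your proposal is correct and takes essentially the same route as the paper: the paper also reduces everything to \eqref{thmpart:shif}$\Rightarrow$\eqref{thmpart:shuf} via Proposition~\ref{prop:MLCsAndCSs} and Corollary~\ref{cor:shufImpliesShif}, and then proves that direction by induction on the number of pairs $(k,k+1)$ with $k$ in a lower row and $k+1$ in an upper row (its ``shuffle inversions''), swapping such pairs via Lemma~\ref{lem:biggerhigher} until the stacked tableau $\stacktab{Q_1}{Q_2}$ is reached and handled by Lemma~\ref{lem:stackedTabs}. Your bubble-sort with the decreasing sum of upper-row labels, and your case check that the swaps preserve $n$-row-shiftability, are just explicit unrollings of the paper's induction and of its unproved assertion that $T'$ remains $n$-row shiftable.
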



In Section~\ref{sec:balancedTabs}, we add a fourth item to this list.

\begin{proof}

 In Proposition~\ref{prop:MLCsAndCSs}, we showed that
 \eqref{thmpart:shuf} and \eqref{thmpart:tp} are equivalent. By
 Corollary~\ref{cor:shufImpliesShif}, \eqref{thmpart:shuf} implies
 \eqref{thmpart:shif}. To finish, we show \eqref{thmpart:shif} implies
 \eqref{thmpart:shuf}.

We actually prove a bit more. Let $T$ be an $n$-row shiftable tableau, let $\rho=\Gamma(T)$, let
$J=\{j_1<j_2<\cdots<j_{\binom{n+1}{2}}\}$ be the entries from rows $m,
m+1,\ldots,m+n-1$ of $T$, and let $I$ be the entries from rows
$1,2,\ldots,m-1$, so that $I\cup J=[\binom{m+n}{2}]$. For any subset $K$ of $[\binom{m+n}{2}]$, set $\rho_K$ to be the subsequence of $\rho$ with indices in $K$ We claim that $\rho\in\cshuf \sigma \tau$, where $\sigma$ is $\rho_I$, $\tau$ is $\rho_J$, $\sigma\in\redw{\vmn}$, and $\tau\in\shredw{\longestk{n+1}}$. We use induction on the number of \mydef{shuffle inversions}. The pair $(i,j)$ is a shuffle inverison if $i<j$, and if $i$ is in row $r_i$ and $j$ is in row $r_j$, then $r_i\ge m$, and  $r_j<m$. The set of shuffle inversions of $T$ is
denoted $\INV(T)$. 

Suppose $|\INV(T)|=0$. Then $I=[B]$, where $B=\ell(\vmn)$ and $J=\{B+1,\ldots,\binom{m+n}{2}\}$, and $\rho$ is the concatenation of $\rho_I$ and $\rho_J$. We see right away that $\EGQ(\rho_I)$ is $T|_I$ and therefore $\EGQ(\rho_J)$ must be the tableau given by rows $m,m+1,\ldots,m+n-1$ of $T$, with the entries replaced by $[\binom{n+1}{2}]$ in order. It is straightforward to see that $\sigma\in\redw{\vmn}$ and $\tau\in\shredw{\longestk{n+1}}$ Let $\sigma=\rho_I$ and $\tau=\rho_J$ and we are done with the base step.

Assume $\INV(T)$ is nonempty. Then there is at least one pair of the form $(k,k+1)\in\INV(T)$. Let $k$ be the least of all $k$ such that $(k,k+1)\in\INV(T)$. Let $T'$ be $T$ with the positions of $k$ and $k+1$ swapped. The tableau $T'$ is also $n$-row shiftable and $|\INV(T')|=|\INV(T)|-1$, so by induction, $\Gamma(T')=\rho'$ is a commuting shuffle of $\rho'_{I'}$ and $\rho'_{J'}$, where $I'$ is the set of entries from rows $1,\ldots,m-1$ of $T'$ and $J'$ is the remainder of the entries of $T'$. We now apply Lemma~\ref{lem:biggerhigher}: since $\rho=\rho'$ with $\rho_k$ and $\rho_{k+1}$ swapped, $I=I'\setminus\{k\}\cup\{k+1\}$ and $J=J'\setminus\{k+1\}\cup\{k\}$, and $\rho_k$ and $\rho_{k+1}$ commute, we are done.   \omitt{
Let $T$ be an $n$-row shiftable tableau, let
$J=\{j_1<j_2<\cdots<j_{\binom{n+1}{2}}\}$ be the entries from rows $m,
m+1,\ldots,m+n-1$ of $T$, and let $I$ be the entries from rows
$1,2,\ldots,m-1$, so that $I\cup J=[\binom{m+n}{2}]$. Let
$\rho=\Gamma(T)$ be the reduced expression for $\longestk{m+n}$ which
corresponds to $T$, let $\sigma$ be the subexpression of $\rho$ with
indices from $I$, and let $\tau$ be the subexpression with indices
from $J$. The algorithm for $\Gamma$ proceeds in steps. Let
$\mult{t}{T}{j}$ be the number of removals from column $j$ of $T$ in
steps $1,\ldots,t$.

Because of the shiftability condition, we can describe in detail how the
elements of $T$ in rows $m,\ldots,m+n-1$ are removed. As in the proof of
Lemma~\ref{lem:EGForShifted}, the fact that $\delta$ moves the
entries of $T$ downward is key, but we must take more care because the shiftability
condition holds only on the lower rows of $T$. Let
$t_k=\binom{m+n}{2}-j_k$, so that $t_k$ is the step where the
$k^{\text{th}}$ element is removed from the lower rows; that is, from row $m$ through $m+n-1$. For an
$n$-row shiftable tableau $T$, we claim
\begin{enumerate}
\item \label{part:multbounds} $\mult{t}{T}{j+1}\le\mult{t}{T}{j}\le\mult{t}{T}{j+1}+1$, for $j+1\le n$ and $\mult{t}{T}{j+1}\le n-j$.
\item \label{part:eqrems}If $\mult{t}{T}{j+1}=\mult{t}{T}{j}$ and $\mult{t}{T}{j+1}\le n-j$, then $(\delta^tT)_{i+1,j}>(\delta^tT)_{i,j+1}$.
\item \label{part:uneqrems}If $\mult{t}{T}{j+1}=\mult{t}{T}{j}-1$ and $\mult{t}{T}{j+1}\le n-j$, then $(\delta^tT)_{i+1,j}<(\delta^tT)_{i,j+1}$.
\item \label{part:form} $(\delta^tT)_{i+\mult{t_k}{T}{j},j}=T_{i,j}$, where $t_k\le t<t_{k+1}$ and $\mult{t_k}{T}{j}\le n-j+1$.
\end{enumerate}

The idea behind the claim is straightforward. The step where the first
element is removed from the lower rows of $T$ will have to be $t_1$
because the largest element in the lower rows of $T$ is
$T_{m+n-1,1}$. The claims all hold at the first step, since the removal of $T_{m+n-1,1}$
simply shifts column 1 down. Since $T_{m+n-2,1}<T_{m+n-2,2}$, after
step $t_1$, the entry at the bottom of column 2 will be larger than
the entry at the bottom of column 1 and so the next removal from the
lower rows will be from column 2. By \eqref{part:uneqrems}, after step
$t_2$, all entries in column 2 will shift downward and there will be
no rightward shift of cells from column 1. After step $t_2$,
$\mult{t_2}{T}{j+1}=\mult{t_2}{T}{j}$ and $T$'s original shiftable
condition causes $(\delta^tT)_{i+1,j}>(\delta^tT)_{i,j+1}$ for $j=1$
and $m\le i\le m+n-2$. The process continues in this manner. Entries
from rows $1,\ldots, m-2$ may shift rightward, but that will not
affect the lower rows.

The result of the claim is that $\rho_{T_{ij}}=j$, for $i=m,m,+1,\ldots,m+n-1$. 
}

\omitt{
Let $T_J$ be the tableau formed from rows $m,m+1,\ldots,m+n-1$, with
the entries relabeled, preserving order, by
$1,\ldots,\binom{n+1}{2}$. Since $T_J$ is shifted,
$\Gamma(T_J)\in\shredw{\longestk{n+1}}$, and so by
Lemma~\ref{lem:EGForShifted} plus the result of the claim at the start
of this proof, $\sigma=\Gamma(T_J)$ and $\sigma\in\shredw{\longestk{n+1}}$.

Shiftability is crucial here. See Figure~\ref{fig:badtab}.

We still must show that $\sigma\in\redw{\vmn}$ and that
$\rho\in\cshuf{\sigma}{\tau}$. To see that $\rho\in\cshuf{\sigma}{\tau}$, it is enough to show that if $x\in J$ and $x+1\in I$, then $|\rho_x-\rho_{x+1}|>1$. Suppose $T_{i,j}=x$ and $T_{k,\ell}=x+1$, where $i\ge m$ and $k<m$. We have that $\rho_x=j$, since $x$ was removed from the shiftable part of $T$ and $\rho_{x+1}\ge\ell$, since entries cannot move to the left during promotion. Because $x+1>x>T_{i-1,j+1}$, $x+1$ cannot be in column $j+1$, so that $\ell>j+1$.

We may now commute elements of $\sigma$ and $\tau$ to obtain
$\rho'=\sigma\tau$ and $\EG(\rho')$ has $(m+n-1)$-staircase shape. By
Lemma~\ref{lem:stackedTabs}, the tableau $\EG(\sigma)$ is rows $1$
thorough $m-1$ of $\EG(\rho')$. We again use Corollary~4.2
of \cite{stan84}, this time to show that $\sigma\in\redw{\vmn}$.
}
\end{proof}
\begin{figure}
\centering
\begin{tikzpicture}
\node at (0,0) {$T$=\begin{ytableau} 1&2&3&4\\5&7&9\\6&10\\8\end{ytableau}};
\node at (4,0) {$T_J$=\begin{ytableau} 1&3\\2\end{ytableau}};

\end{tikzpicture}
\caption{Consider $T$ above. Let $m=3$ and $n=2$, so that $T$ is not $(m,n)$-shiftable and $J=\{6,8,10\}$. Then $\rho=\Gamma(T)=1234312132$, $\rho|_J=112$, but $\sigma=\Gamma(T_J)=212$.}
\label{fig:badtab}
\end{figure}

\omitt{
Suppose $\rho\not\in\lc{\lolli m n}$. Let $C=\{e=\wch 0\lessdot\wch
 1\lessdot\cdots\lessdot\wch{\binom{m+n}{2}}=\longestk{m+n}\}$ be the
 chain in $\S_{m+n}$ under weak order such that $\wch i=\wch
 {i-1}\rho_i$. We say $w\in\S_{m+n}$ has a \mydef{violation at $h$}
 if $\lolli m n\mid_{\{w_1,\ldots,w_{h-1}\}}$ is connected but $\lolli
 m n\mid_{\{w_1,\ldots,w_{h-1},w_{h}\}}$ is not. Since
 $\rho\not\in\lc{\lolli m n}$, there exists $i$ and $h$ such that
 $\wch{i}\in C$ has a violation at $h$. Let $k-1$ be the largest such
 $i$ and $h$ be the smallest for $\wch{k-1}$: $\wch{k-1}$ has a violation and $\wch{i}$ does not for $i\ge k$. 
} 

\omitt{
Let $T$ be the tableau associated to $\rho$ by the Edelman-Greene
bijection. We claim that $h\le n$, that $k$ is the last entry in row
$m+n-h$ of $T$, and that the last entry of row $m+n-h+1$ is less than
$k$. Together, these claims show that $T$ is not
$(m,n)$-shiftable. The rest of the proof explains why the claims are valid.
\susanna{Claim 2 and 3 not necessarily true}

Because $\lolli m n\mid_{\{\wch{k-1}_1,\ldots,\wch{k-1}_{h-1}\}}$ is
 connected but $\lolli m
 n\mid_{\{\wch{k-1}_1,\ldots,\wch{k-1}_{h-1},\wch{k-1}_{h}\}}$ is not
 and $\wch{k}$ has no violation, we may conclude that $\rho_k=h$
 and $\wch{k-1}_{h+1}>\wch{k-1}_h\geq m$. Furthermore, either
 $\wch{k-1}_1,\ldots,\wch{k-1}_{h-1}<\wch{k-1}_h$ or
 $w_1,\ldots,w_{h-1}>w_{h}$. The graph $\lolli m
 n\mid_{\{\wch{k-1}_1,\ldots,\wch{k-1}_{h-1},\wch{k-1}_{h+1},\wch{k-1}_{h}\}}=\lolli
 m
 n\mid_{\{\wch{k}_1,\ldots,\wch{k}_{h-1},\wch{k}_{h},\wch{k}_{h+1}\}}$
 is connected, which would not be possible if
 $\wch{k-1}_1,\ldots,\wch{k-1}_{h-1}$ were all less than
 $\wch{k-1}_h$. Therefore,
 $\wch{k-1}_1,\ldots,\wch{k-1}_{h-1}>\wch{k-1}_h$. We remark that
 there are thus at least $h-1$ values from $[m+n]$ which are greater
 than $\wch{k-1}_h$, which together with the fact that $\wch{k-1}_h\ge
 m$, shows that $h\le n$, the first of our claims.

We may also conclude that
$\wch{k-1}(1)=\wch{k-1}(2)+1, \wch{k-1}(2)=\wch{k-1}(3)+1, \ldots,\wch{k-1}(h-1)=\wch{k-1}(h)+1$,
since $h$ is the first violation in $\wch{k-1}$. 
} 
\omitt{
\susanna{Not done, following paragraph is not true}
What's more,
$\wch{k-1}_1=m+n,\wch{k-1}_2=m+n-1,\ldots,\wch{k-1}_{h-1}=m+n-(h-1)+1$,
since there are no violations in $\wch{k-1}$ which are less than
$h$. Indeed,
$\wch{k}_1=m+n,\wch{k}_2=m+n-1,\ldots,\wch{k}_{h-1}=m+n-(h-1)+1, \wch{k}_{h}=m+n-h+1$. Therefore,
$\rho_i>h$ for $i>k$.

Now we consider $\Gamma$, the inverse of Edelman-Greene's bijection,
to see why the second and third of our claims are valid and finish the
proof. At each step, the algorithm removes the largest entry in the
tableau, which is necessarily at the end of a row. If the row of the
entry removed at step $j$ is $i$, then $\rho_{\binom{m+n}{2}-j+1}$ is
set to $m+n-i$. Since $\rho_k=h\le n$ and $\rho_i>h$ for $i>k$, we
have that at step $\binom{m+n}{2}-k+1$, the entry was taken from row
$m+n-h\ge m$, and that all entries larger than this entry were removed
from rows $m+n-\rho_i<m+n-h$. In particular, the entry at the end of
row $m+n-h+1$ is less than the entry at the end of row $m+h-h$.
} 
\omitt{
\begin{example}\label{ex:chain2Tab}
  Let $m=4,n=3$ and $\rho$ be the reduced word

  \begin{tabu}{c c c c c c c c c c c c c c c c c c c c c}\rowfont{\color{blue}\small}$3$&$2$&$4$&$5$&$1$&$2$&$3$&$6$&$4$&$2$&$3$&$5$&$2$&$4$&$3$&$1$&$2$&$1$&$3$&$6$&$5$\\\rowfont{\color{ForestGreen}\small}1&2&3&4&5&6&7&8&9&10&11&12&13&14&15&16&17&18&19&20&21\end{tabu}

    The corresponding chain in $\S_7$ is in Figure~\ref{fig:chain2Tab}. There are violations in permutations $\wch{13},\ldots,\wch{18}$. The $k$ and $h$ defined in the proof that Theorem~\ref{thm:3equiv}\eqref{thmpart:shif} implies
 Theorem~\ref{thm:3equiv}\eqref{thmpart:tp} are $19$ and $3$ respectively. The violation in the tableau is at the end of rows $m+n-h=4$ and $5$

  \end{example}

\begin{figure}[h]    
    \begin{tikzpicture}
      \tikzset{bad node/.style={rectangle,rounded corners,inner sep=.5ex,draw=red,thick}}
      \def\h{2.1}
      \def\v{1.3}
      \def\td{.2}
      \node (0) at (0*\h,0*\v) {$1234567$} node[above=\td,midway,orange]{$\wch{0}$};
      \node (1) at (1*\h,0*\v) {$1243567$};
      \node[above=\td,orange] at (1) {$\wch{1}$};
      \node (2) at (2*\h,0*\v) {$1423567$};
      \node[above=\td,orange]at (2) {$\wch{2}$};
      \node (3) at (3*\h,0*\v) {$1425367$};
      \node[above=\td,orange]at (3) {$\wch{3}$};
      \node (4) at (4*\h,0*\v) {$1425637$};
      \node[above=\td,orange] at (4) {$\wch{4}$};
      \node (5) at (5*\h,0*\v) {$4125637$};
      \node[above=\td,orange]at (5) {$\wch{5}$};
      \node (6) at (6*\h,0*\v) {$4215637$};
      \node[above=\td,orange] at (6) {$\wch{6}$};
      \node (7) at (7*\h,0*\v) {$4251637$};
      \node[above=\td,orange] at (7) {$\wch{7}$};

      \draw[->] (0)--(1) node[midway,above,blue]{$3$} node[midway,below,ForestGreen]{1};
      \draw[->] (1)--(2) node[midway,above,blue]{$2$} node[midway,below,ForestGreen]{2};
      \draw[->] (2)--(3) node[midway,above,blue]{$4$} node[midway,below,ForestGreen]{3};
      \draw[->] (3)--(4) node[midway,above,blue]{$5$} node[midway,below,ForestGreen]{4};
      \draw[->] (4)--(5) node[midway,above,blue]{$1$} node[midway,below,ForestGreen]{5};
      \draw[->](5)--(6) node[midway,above,blue]{$2$} node[midway,below,ForestGreen]{6};      
      \draw[->](6)--(7) node[midway,above,blue]{$3$} node[midway,below,ForestGreen]{7};      
      \node (8) at (7*\h,-1*\v) {$4251673$};
      \node (9) at (6*\h,-1*\v) {$4256173$};
            \node[above=\td,orange] at (9) {$\wch{9}$};
      \node (10) at (5*\h,-1*\v) {$4526173$};
            \node[above=\td,orange] at (10) {$\wch{10}$};
      \node (11) at (4*\h,-1*\v) {$4562173$};
            \node[above=\td,orange] at (11) {$\wch{11}$};
      \node (12) at (3*\h,-1*\v) {$4562713$};
            \node[above=\td,orange] at (12) {$\wch{12}$};
      \node[bad node] (13) at (2*\h,-1*\v) {$4652713$};
            \node[above=\td,orange] at (13) {$\wch{13}$};
      \node[bad node] (14) at (1*\h,-1*\v) {$4657213$};
            \node[above=\td,orange] at (14) {$\wch{14}$};
      \node[bad node] (15) at (0*\h,-1*\v) {$4675213$};
            \node[above=\td,orange] at (15) {$\wch{15}$};
      \draw[->] (7)--(8) node[midway,left,blue]{$6$} node[midway,right,ForestGreen]{8};
      \draw[->] (8)--(9) node[midway,above,blue]{$4$} node[midway,below,ForestGreen]{9};
      \draw[->] (9)--(10) node[midway,above,blue]{$2$} node[midway,below,ForestGreen]{10};
      \draw[->] (10)--(11) node[midway,above,blue]{$3$} node[midway,below,ForestGreen]{11};
      \draw[->] (11)--(12) node[midway,above,blue]{$5$} node[midway,below,ForestGreen]{12};
      \draw[->] (12)--(13) node[midway,above,blue]{$2$} node[midway,below,ForestGreen]{13};
      \draw[->] (13)--(14) node[midway,above,blue]{$4$} node[midway,below,ForestGreen]{14};
      \draw[->] (14)--(15) node[midway,above,blue]{$3$} node[midway,below,ForestGreen]{15};                     
      \node[bad node] (16) at (0*\h,-2*\v) {$6475213$};
      \node[bad node] (17) at (1*\h,-2*\v) {$6745213$};
            \node[above=\td,orange] at (17) {$\wch{17}$};
      \node[bad node] (18) at (2*\h,-2*\v) {$7645213$};
            \node[above=\td,orange] at (18) {$\wch{18}$};
      \node (19) at (3*\h,-2*\v) {$7654213$};
            \node[above=\td,orange] at (19) {$\wch{19}$};
      \node (20) at (4*\h,-2*\v) {$7654231$};
            \node[above=\td,orange] at (20) {$\wch{20}$};
      \node (21) at (5*\h,-2*\v) {$7654321$};
            \node[above=\td,orange] at (21) {$\wch{21}$};
      \draw[->] (15)--(16) node[midway,right,blue]{$1$} node[midway,left,ForestGreen]{16};                     
      \draw[->] (16)--(17) node[midway,above,blue]{$2$} node[midway,below,ForestGreen]{17};
      \draw[->] (17)--(18) node[midway,above,blue]{$1$} node[midway,below,ForestGreen]{18};                           
      \draw[->] (18)--(19) node[midway,above,blue]{$3$} node[midway,below,ForestGreen]{19};
      \draw[->] (19)--(20) node[midway,above,blue]{$6$} node[midway,below,ForestGreen]{20};
      \draw[->] (20)--(21) node[midway,above,blue]{$5$} node[midway,below,ForestGreen]{21};

\node at (3*\h,-5*\v){\begin{ytableau}1&3&4&8&12&20\\2&6&7&9&21\\5&10&11&14\\13&15&19\\16&17\\18\end{ytableau}};
    \end{tikzpicture}
    \caption{This is the chain discussed in Example~\ref{ex:chain2Tab} and its image under the Edelman-Greene bijection.}
      \label{fig:chain2Tab}
\end{figure}
} 

\subsection{Balanced tableaux}
\label{sec:balancedTabs}
It is straightforward to characterize the balanced tableaux which correspond to maximal length chains in $\tp{\lolli m n}$, because of Corollary~\ref{cor:intraclassLmn}. Let $N=m+n$. To be maximal length, there must be no intra-class hyperplanes in the corresponding walk. According to Corollary~\ref{cor:intraclassLmn}, this means that for all $a<b$ and for all $j$ such that $a<N-j\le b-1$ and $N-j\ge m$ ($n\ge j$), the entry in $(a,N-b+1)$ is smaller than the entry in $(N-j,N-b+1)$.

\section{Maximum length chains in $\tp{G}$ and a Young quasisymmetric Schur function expansion}
\label{sec:LMF}


In this section we will show that Stanley's symmetric function for the longest permutation $\longestk{m+n}$ summing over only the reduced decompositions corresponding to longest maximal chains, rather than all chains,  of $\tp{L_{m,n}}$ is a positive sum of  Young quasisymmetric Schur functions.  To do this we will briefly introduce symmetric functions, quasisymmetric functions and Stanley's symmetric function. For more details see~\cite{LMv13,S99}.

A function $f(x)\in \mathbb{Q}[x_1,x_2,\ldots]$ is called {\it symmetric} if the coefficient of 
$x_1^{\alpha_1}x_2^{\alpha_2}\cdots x_k^{\alpha_k}$ is the same as the coefficient of 
$x_{w(1)}^{\alpha_1}x_{w(2)}^{\alpha_2}\cdots x_{w(k)}^{\alpha_k}$
for all permutations $w\in\fS_n$ for $n\geq 1$. A function $f(x)\in \mathbb{Q}[x_1,x_2,\ldots]$ is called {\it quasisymmetric} if the coefficient of $x_1^{\alpha_1}x_2^{\alpha_2}\cdots x_k^{\alpha_k}$ is the same as the coefficient of $x_{i_1}^{\alpha_1}x_{i_2}^{\alpha_2}\cdots x_{i_k}^{\alpha_k}$ for any sequence of positive integers $i_1<i_2<\cdots<i_k$.
\begin{example}
The function $x_1^2x_2+x_1^2x_3+x_2^2x_3+\cdots$ is quasisymmetric, but is not symmetric. Whereas $x_1^2x_2+x_1x_2^2+x_1^2x_3+x_1x_3^2+x_2^2x_3+x_2x_3^2+\cdots$ is both symmetric and quasisymmetric. All symmetric functions are quasisymmetric. 
\end{example}
All bases for the algebra of symmetric functions are indexed by integer partitions.  While there are many bases for the algebra of symmetric functions we define the Schur basis. 

A {\it semi-standard tableau (SSYT)} $T$ is a Young diagram of an integer partition where each cell is filled with a positive integer so that 
\begin{enumerate}
\item rows weakly increase, $T(i,j)\leq T(i,j+1)$ for all $i,j\geq 1$, 
\item and columns strictly increase, $T(i,j)<T(i+1,j)$ for all $i,j\geq 1$,
\end{enumerate}
where we consider cells $T(i,j)$ with $i,j\geq 1$ outside the Young diagram to be filled with $\infty$. The standard Young tableaux (SYT) from Section~\ref{sec:prelim tableaux} are SSYT but requires that the union of all cell entries is $[n]$ if $\lambda\vdash n$. Let $\text{SSYT}(\lambda)$ be the collection of all SSYT with underlying Young diagram $\lambda$.
The {\it Schur function} associated to integer partition $\lambda$ is 
$$s_{\lambda}=\sum_{T\in\text{SSYT}(\lambda)}x^{T}$$
where $x^T=\prod x_i^{m_i(T)}$ and  $m_i(T)$ is the number of times $i$ appears in $T$. We call a symmetric function  {\it Schur positive} if it is a nonnegative sum of Schur symmetric functions. 

The algebra of quasisymmetric functions also has many famous bases. The two we  introduce are the fundamental basis and the  Young quasisymmetric Schur functions. Any basis of the quasisymmetric functions is indexed by  integer compositions. An {\it integer composition} $\alpha = (\alpha_1,\alpha_2,\ldots ,\alpha_l)$ is an ordered list of positive integers where each $\alpha_i$ is called a {\it part}. If the parts sum to $n$ we write $\alpha\models n$ or $|\alpha|=n$. Integer compositions of $n$ are in bijection with ordered pairs $(S,n)$, where $S$ is a subset of the positive integers and $n>\max(S)$. Given an integer partitions $\alpha=(\alpha_1,\alpha_2,\cdots,\alpha_{l})$ of $n$, we pair this with $(S,n)$ where $S=\{\alpha_1,\alpha_1+\alpha_2,\ldots, \alpha_1+\alpha_2+\ldots\alpha_{{l}-1}\}$. For notation,  let $\set(\alpha)=S$. 
Let $S$ be a subset of the positive integers, $n>\max(S)$ and $\alpha$ be the corresponding integer composition of $n$. The {\it fundamental basis} is 
$$Q_{S,n}(x)=Q_{\alpha}(x)=\underset{i_s\neq i_{s+1}\forall s\in S}{\sum_{i_1\leq i_2\leq \cdots\leq i_{n}}}x_{i_1}x_{i_2}\cdots x_{i_{n}}$$

To define Young quasisymmetric Schur functions we  define a new kind of tableaux, semi-standard Young column tableaux (SSYCT) based on Young diagrams of compositions. Let $\alpha=(\alpha_1,\alpha_2,\ldots,\alpha_{l})$. Row 1 of the Young diagram is at the top with $\alpha_1$ boxes and row $l$ is at the bottom with $\alpha_{l}$ boxes. All rows are left aligned. The Young diagram of the composition $\alpha$ is filled with positive integers to form the tableau $\tau$ with the following rules:
\begin{enumerate}
\item Rows  weakly increase, so $\tau(i,j)\leq \tau(i,j+1)$ for all $i,j\geq 1$. 
\item The first column strictly increases, so $\tau(i,1)<\tau(i+1,1)$ for all $i\geq 1$
\item If $(j,k+1)\in \alpha$ and $\tau(i,k)\leq \tau(j,k+1)$ for pairs $i$ and $j$ with $i>j$, then $\tau(i,k+1)<\tau(j,k+1)$. 
\end{enumerate}
If $(i,j)$ is not a box in the Young diagram, i.e. $(i,j)\notin \alpha$, and $i,j\geq 1$ then we consider $\tau(i,j)=\infty$. We call a SSYCT a standard Young column tableau (SYCT) if the set of the entries of $\tau$ equals $[|\alpha|]$. See Figure~\ref{fig:SSYCT} for an example.

\begin{figure}
\begin{center}
\begin{tikzpicture}
\draw (0,0) rectangle (.5,.5);
\draw (.25,.25) node {$a$};
\draw[dashed] (-.5,-1) rectangle (0,-.5);
\draw (-.25,-.75) node {$b$};
\draw[dashed] (0,-1) rectangle (.5,-.5);
\draw (.25,-.75) node {$c$};

\begin{scope}[shift={(6,0)}]
\node at (5,0){\begin{ytableau}
1&2&3&11&17&22&24\\
4&5&13&15&18&23\\
6&9&14&16&27\\
7&10&19&28\\
8&12&20\\
21&25\\
26
\end{ytableau}};
\node at (0,0){\begin{ytableau}
1&2&3&11&27\\
4&5&20\\
6&12&19&28\\
7&10&14&16&18&23\\
8&9&13&15&17&22&24\\
21&25\\
26
\end{ytableau}};
\end{scope}

\end{tikzpicture}
\end{center}
\caption{On the left we illustrate the layout of condition 3 for SSYCT: if $a\geq  b$, then $a>c$. Note that only the cell with $a$ has to be in the composition. In the middle we have a SYCT $Y$ of shape $(5,3,4,6,7,2,1)$. On the right we have the SYT of shape $\stair_8$ such that $\hat{\rho}_{\emptyset}(Y)=T$, where $\hat{\rho}_{\emptyset}$ is defined later.}
\label{fig:SSYCT}
\end{figure}
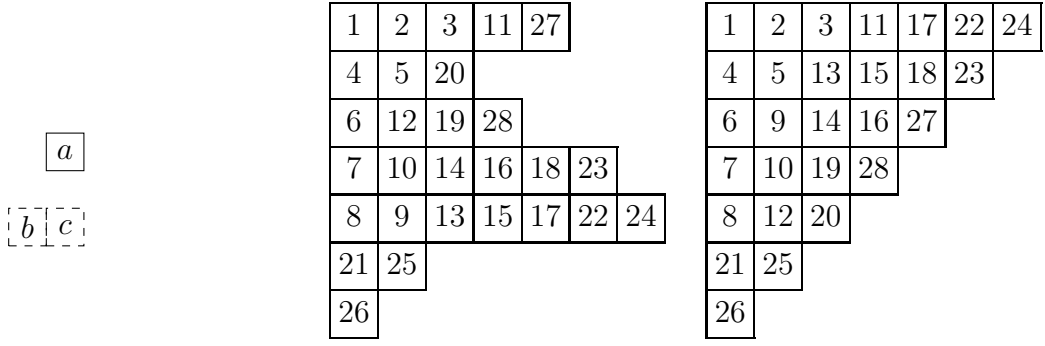

The {\it Young quasisymmetric Schur function} for composition $\alpha$ is 
$$\hat{\mathscr{S}}_{\alpha}=\sum_{\beta}\hat{d}_{\alpha,\beta}Q_{\beta}$$
where 
$\beta$ is a composition of $|\alpha|$ and $\hat{d}_{\alpha,\beta}$ counts the number of SYCT of shape $\alpha$ and $\Des(\tau)=\set(\beta)$. Let $\hat{D}_{\alpha}$ be the collection of all SYCT of shape $\alpha$. 

\begin{example}
For example $Q_{(2,2)}=x_1x_1x_2x_2+x_1x_2x_3x_3+x_1x_1x_2x_3+x_1x_2x_3x_4+\cdots$. The term $x_1x_2x_2x_3$ does not appear in  $Q_{(2,2)}$ (i.e. has a coefficient of 0). The Young quasisymmetric Schur function for $\alpha=(2,2)$ is $\hat{\mathscr{S}}_{\alpha}=Q_{(1,2,1)}+Q_{(2,2)}$.
\end{example}

The quasisymmetric function we define and investigate in this section comes from summing over a subset of reduced expressions that are in the definition of Stanley's symmetric function.  Given a permutations $w\in\fS_N$ the {\it Stanley symmetric function} is  
$$\cF_w(x)=\sum_{\sigma\in \cR(w)}Q_{\Des(\sigma),|w|}$$
where $\Des(\sigma)=\{i:\sigma(i)>\sigma(i+1)\}$. The Stanley symmetric function for each permutation is indeed a symmetric function even through it is defined as a sum of quasisymmetric functions. This function helped prove the first enumeration formula for the number of reduced words for a permutation and proved that the number of reduced words for $\longestk{N}$ is the number of SYT of the staircase shape $\stair_N$~\cite{stan84}. A bijection was found later by Edelman and Greene~\cite{EG}, which we describe in Definition~\ref{def:EG}.

\begin{theorem}[Stanley~\cite{stan84}] 
The Stanley symmetric function $\cF_w$ is Schur-positive for all permutations $w\in\fS_N$ and for the longest permutation $\longestk{N}$ Stanley's symmetric function is the Schur function of the staircase shape,
$$\cF_{\longestk{N}}=s_{\stair_N}.$$
\label{thm:staley symmetric function}
\end{theorem}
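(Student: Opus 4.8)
The plan is to derive both statements from the Edelman--Greene bijection of Definition~\ref{def:EG} together with the expansion of a Schur function in the fundamental quasisymmetric basis. First I would recall Gessel's descent expansion: for a partition $\lambda\vdash M$,
$$s_\lambda = \sum_{Q\in\SYT(\lambda)} Q_{\Des(Q),M},$$
where the descent set of a standard Young tableau is $\Des(Q)=\{i: i+1 \text{ lies in a strictly lower row of } Q \text{ than } i\}$. I would prove this identity by standardization: grouping the semistandard tableaux appearing in $s_\lambda=\sum_T x^T$ according to their standardization $Q$, the tableaux standardizing to a fixed $Q$ contribute precisely the monomials of $Q_{\Des(Q),M}$.

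Next I would establish the descent-compatibility of Edelman--Greene insertion: for every reduced word $\sigma$, one has $\Des(\sigma)=\Des(Q_\sigma)$, where $(P_\sigma,Q_\sigma)=\EG(\sigma)$. This is the technical heart of the argument, and I would verify it by a local analysis of the bumping rule in Definition~\ref{def:EG}, showing that $\sigma_k>\sigma_{k+1}$ (a descent of $\sigma$) occurs exactly when the cell labeled $k+1$ lies strictly below the cell labeled $k$ in $Q_\sigma$. Granting this, I would reorganize the Stanley sum. Since $\sigma\mapsto(P_\sigma,Q_\sigma)$ is a bijection from $\redw{w}$ onto the pairs $(P,Q)$ with $P$ an Edelman--Greene tableau arising from $w$ and $Q\in\SYT(\operatorname{sh}(P))$, I can split
$$\cF_w = \sum_{\sigma\in\redw{w}} Q_{\Des(\sigma),|w|} = \sum_{P}\ \sum_{Q\in\SYT(\operatorname{sh}(P))} Q_{\Des(Q),|w|} = \sum_{P} s_{\operatorname{sh}(P)},$$
where the outer sum runs over the Edelman--Greene $P$-tableaux of $w$. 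The coefficient of $s_\lambda$ is the nonnegative number of such $P$ of shape $\lambda$, which gives Schur positivity.

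Finally, for $w=\longestk{N}$, Definition~\ref{def:EG} records that the insertion tableau $P_\alpha$ is the \emph{same} tableau for every $\alpha\in\redw{\longestk{N}}$ and has shape $\stair_N$. Hence exactly one term survives in the sum above, yielding $\cF_{\longestk{N}}=s_{\stair_N}$, as claimed.

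The main obstacle is the descent-compatibility statement $\Des(\sigma)=\Des(Q_\sigma)$: it demands a careful tracking of how inserting $\sigma_{k+1}$ after $\sigma_k$ places the new recording cell, so as to pin down the relative vertical position of consecutive labels in $Q_\sigma$. By contrast, the Gessel expansion and the final specialization to $\longestk{N}$ are routine once standardization and the uniqueness of the $P$-tableau are in hand.
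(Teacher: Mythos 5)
The paper itself offers no proof of this theorem: it is quoted from Stanley~\cite{stan84}, with the supporting combinatorics set up separately in Definition~\ref{def:EG} and Theorem~\ref{thm:longestwordSets}. What you have written is a reconstruction of the standard Edelman--Greene bijective proof, and its skeleton is sound: Gessel's standardization expansion $s_\lambda=\sum_{Q\in\SYT(\lambda)}Q_{\Des(Q),M}$ is routine; the descent compatibility $\Des(\sigma)=\Des(\EGQ(\sigma))$ is true and is exactly the fact this paper later invokes, citing \cite{EG} (Theorem 6.27), in the proof of Theorem~\ref{thm:Longest chain quasi Schur}; and for $w=\longestk{N}$ the constancy of the insertion tableau $P_\alpha$ of shape $\stair_N$, recorded in Definition~\ref{def:EG}, collapses your sum to the single term $s_{\stair_N}$.

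One caution about where the difficulty actually sits. You call the descent lemma ``the technical heart,'' but it admits a local bumping analysis much as in ordinary RSK; the clause you pass over in a single ``since'' --- that $\sigma\mapsto(P_\sigma,Q_\sigma)$ is a bijection from $\redw{w}$ onto pairs in which, for each attainable $P$, the recording tableau $Q$ ranges over \emph{all} of $\SYT(\operatorname{sh}(P))$ --- is the genuinely deep content of Edelman--Greene's theorem, proved via Coxeter--Knuth equivalence and the promotion operator $\Gamma$ of Definition~\ref{def:promo}. Without that fiber structure the regrouping $\cF_w=\sum_P s_{\operatorname{sh}(P)}$ does not go through, so you should either prove it or cite it explicitly rather than fold it into the setup. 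It is also worth knowing that Stanley's own argument, which is what the theorem's attribution points to, is different in kind: he establishes symmetry of $\cF_w$ directly and obtains $\cF_{\longestk{N}}=s_{\stair_N}$ from the vexillarity of $\longestk{N}$, the bijective proof arriving only later with \cite{EG}. Your route is the bijective one, which has the advantage of meshing with the machinery this paper actually deploys (descent preservation under $\EGQ$ and the constancy of the $P$-tableau).
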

The quasisymmetric function that this section studies, $\LMF{G}$ where $G$ is a connected  filled graph, is only a certain portion of  Stanley's symmetric function for the longest word $\longestk{N}$. This is the portion  associated to longest maximal chains in $\tp{G}$,  
$$\LMF{G}=\sum_{\sigma\in \LMR{G}}Q_{\Des(\sigma),|\sigma|}.$$

\begin{example}The two reduced words associated to the two longest maximal chains in $\tp{P_4}$ are $123121$ and $121321$. This can be seen in Figure~\ref{fig:PathEquiv}. 
Then, $\LMF{P_4}=Q_{(3,2,1)}+Q_{(2,2,1,1)}=\hat{\mathscr{S}}_{(3,2,1)}$.
\end{example}

\begin{theorem} 
\label{thm:Longest chain quasi Schur}
For $m\geq 1$ and $n\geq 0$,  $\LMF{L_{m,n}}$ is a positive sum of Young quasisymmetric Schur functions. In particular
$$\LMF{L_{m,n}}=\sum_{\alpha\in \text{comp}(m,n)}\hat{\mathscr{S}}_{\alpha}$$
where $\text{comp}(m,n)$ is the set of compositions $(\alpha_1,\alpha_2,\ldots ,\alpha_{m-1},n,\cdots,2,1)$ where $[\alpha_1,\alpha_2,\ldots, \alpha_{m-1}]$ is a permutation of $[n+1,n+m-1]$.

\label{thm:LCF}
\end{theorem}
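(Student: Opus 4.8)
The plan is to reduce the identity to a single descent-preserving bijection between tableaux and then invoke the standard machinery of Young quasisymmetric Schur functions. First I would rewrite the left-hand side entirely in terms of standard Young tableaux. By definition $\LMF{L_{m,n}}=\sum_{\sigma\in\LMR{L_{m,n}}}Q_{\Des(\sigma),|\sigma|}$, and Theorem~\ref{thm:3equiv} identifies $\LMR{L_{m,n}}$ with the reduced words $\rho$ of $\longestk{m+n}$ whose Edelman--Greene recording tableau $\EGQ(\rho)$ is $n$-row-shiftable of staircase shape, i.e.\ lies in $\stab m n$. The crucial classical input here is that Edelman--Greene insertion preserves descents, $\Des(\rho)=\Des(\EGQ(\rho))$, where the descent set of a standard Young tableau $T$ is $\{i: i+1\text{ lies in a strictly lower row than }i\}$ (one checks this on the running example $\rho=213213$). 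Since $\EGQ$ is a bijection from $\LMR{L_{m,n}}$ onto $\stab m n$, this gives
\[
\LMF{L_{m,n}}=\sum_{T\in\stab m n}Q_{\Des(T),\,\binom{m+n}{2}}.
\]

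Next I would expand the right-hand side. By definition $\hat{\mathscr{S}}_{\alpha}=\sum_{\tau\in\hat{D}_{\alpha}}Q_{\Des(\tau),|\alpha|}$, and every $\alpha\in\text{comp}(m,n)$ rearranges to the staircase $\stair_{m+n}$, hence has size $\binom{m+n}{2}$, so
\[
\sum_{\alpha\in\text{comp}(m,n)}\hat{\mathscr{S}}_{\alpha}=\sum_{\tau\in\,\bigsqcup_{\alpha\in\text{comp}(m,n)}\hat{D}_{\alpha}}Q_{\Des(\tau),\,\binom{m+n}{2}}.
\]
Comparing the two displays, the theorem is equivalent to producing a descent-preserving bijection
\[
\Phi:\stab m n\;\longrightarrow\;\bigsqcup_{\alpha\in\text{comp}(m,n)}\hat{D}_{\alpha}.
\]

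To build $\Phi$ I would use the bijection $\hat{\rho}_{\emptyset}$ between standard Young column tableaux and standard Young tableaux that underlies the expansion $s_{\lambda}=\sum_{\alpha^{\ast}=\lambda}\hat{\mathscr{S}}_{\alpha}$: it is descent-preserving and sends an SYCT of composition shape $\alpha$ to an SYT of shape $\alpha^{\ast}$ (the decreasing rearrangement of $\alpha$). Its inverse attaches to each SYT $T$ of shape $\stair_{m+n}$ a composition $\alpha(T)$ rearranging $\stair_{m+n}$ together with an SYCT of that shape. The heart of the argument is then the claim that $T\in\stab m n$ if and only if $\alpha(T)\in\text{comp}(m,n)$. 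I would analyze how $\hat{\rho}_{\emptyset}^{-1}$ reorders the rows of $T$: the $m-1$ rows of length $\ge n+1$ (the top rows of the staircase) are reordered into some permutation $[\alpha_1,\ldots,\alpha_{m-1}]$ of $\{n+1,\ldots,n+m-1\}$, whereas the $n$ rows of length $\le n$ are forced to keep their decreasing lengths $n,n-1,\ldots,1$. The link to shiftability is that $n$-row-shiftability of $T$ — the condition of Definition~\ref{def:n-row-shift} relating rows $m-1$ through $m+n-1$ — is exactly what guarantees that the short rows remain in order under $\hat{\rho}_{\emptyset}^{-1}$ and that the resulting array obeys the triple (column-strict) condition defining an SYCT; conversely, membership of $\alpha(T)$ in $\text{comp}(m,n)$ forces the shiftability inequalities back onto $T$.

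The main obstacle is precisely this last equivalence: pinning down the behaviour of $\hat{\rho}_{\emptyset}^{-1}$ on rows of small length and matching it against the shiftability condition. I expect it to require a careful induction on the column-by-column rectification underlying $\hat{\rho}_{\emptyset}$, in the same spirit as the evacuation analysis of Lemma~\ref{lem:shiftTabForm}, where the shiftability inequalities dictate the order in which short rows are emptied. Once the equivalence $T\in\stab m n\Leftrightarrow\alpha(T)\in\text{comp}(m,n)$ is in hand, $\Phi=\hat{\rho}_{\emptyset}^{-1}|_{\stab m n}$ is the required descent-preserving bijection, the two displayed sums agree term by term, and this simultaneously exhibits $\LMF{L_{m,n}}$ as a positive sum of Young quasisymmetric Schur functions.
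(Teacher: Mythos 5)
Your proposal is exactly the paper's argument: the paper also reduces via the descent-preserving Edelman--Greene map (citing \cite{EG}) to $n$-row-shiftable tableaux and then applies $\hat{\rho}_{\emptyset}$ (descent-preserving by \cite{LMv13}), with your key claim that $T\in\stab{m}{n}$ if and only if the associated SYCT shape lies in $\text{comp}(m,n)$ appearing verbatim as Lemma~\ref{lem:rho restricts}, proved by the same column-by-column induction you sketch (Lemma~\ref{lem:rho details}). The one step you leave open is precisely that lemma, and your proposed induction is the route the paper takes, so the plan is sound as stated.
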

In order to prove Theorem~\ref{thm:LCF} we will need to use the bijection $\hat{\rho}_{\emptyset}$, originally appearing in~\cite{LMv13}, between SSYCT and SSYT. The bijection $\hat{\rho}_{\emptyset}$ also restricts to a bijection between SYCT and SYT. Both  $\hat{\rho}_{\emptyset}$ and  $\hat{\rho}^{-1}_{\emptyset}$ are easy to define. To define $\hat{\rho}_{\emptyset}$ we start with a SSYCT $Y$.
\omitt{
\tvi{(Sam (Dec 18): Using $\tau$ breaks our convention of reserving Greek letters for reduced words. What might be better?) Try $Y$. }
\tvi{(3-19-24) It should be adjusted now. }}
We form $\hat{\rho}_{\emptyset}(Y)$ with two steps:
\begin{enumerate}
\item Rearrange all rows in $Y$ so that the underlying diagram is a Young diagram of an integer partition.
\item Write all  columns of $Y$ in increasing order, so that the smallest element is in row one.   
\end{enumerate}
See Example~\ref{fig:SSYCT}. The inverse $\hat{\rho}_{\emptyset}^{-1}$ is slightly more complicated. We start with a SSYT $T$ and we form $\hat{\rho}_{\emptyset}^{-1}(T)$ inductively column by column. Column one is fixed. 
\begin{enumerate}
\item Start with column 2 and the smallest element in column 2. Place this number and cell in the row with the highest index such that
\begin{enumerate}[(i)]
\item there is a cell in the previous column in the same row and 
\item we maintain the condition that rows weakly increase. 
\end{enumerate}
\item Repeat this for all other elements in column 2 of $T$. 
\item Repeat steps 1 and 2 for columns $3, 4, \ldots,t$ of $T$ in order where $T$ has $t$ columns.
\end{enumerate}
See Example~\ref{fig:SSYCT}. 

We want to show that the map $\hat{\rho}_{\emptyset}$ restricts to bijection between $n$-row-shiftable tableaux of the staircase shape $\stair_{m+n}$ and SYCT of shapes in $\text{comp}(m,n)$. Since the map $\hat{\rho}^{-1}_{\emptyset}$ is defined inductively, we will set up our proof inductively. Let $\stair(n,c)$ be the partial staircase shape $(c^{n-c},c-1,c-2,\ldots,1)$ with $n-1$ rows and $c$ columns. 
Essentially $\stair(n,c)$ is $\stair_n$, but we are only keeping the first $c$ columns, i.e. columns $c+1,c+2,\ldots,n-1$ are removed. 
See Figure~\ref{fig:rho proof aid} for an example and visual aid for the proof in Lemma~\ref{lem:rho details}.

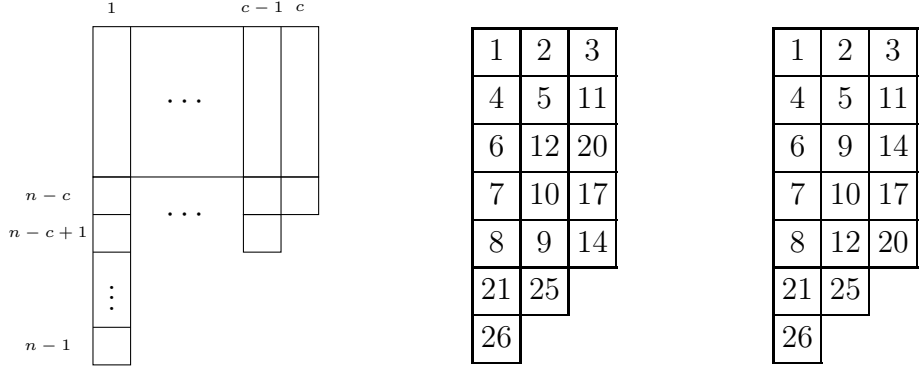
\begin{figure}
\begin{center}
\begin{tikzpicture}

\begin{scope}[shift={(0,.25)}]
\draw (0,0) rectangle (3,2);
\draw (.5,0) --(.5,2);
\draw (2,0) --(2,2);
\draw (2.5,0) --(2.5,2);
\draw (1.25,1) node {$\cdots$};
\draw (1.25,-.5) node {$\cdots$};
\draw (2,-.5) rectangle (3,0);
\draw (2,-1) rectangle (2.5,-.5);
\draw (2.5,0)--(2.5,-.5);
\draw (0,0) rectangle (.5,-2.5);
\draw(0,-.5) -- (.5,-.5);
\draw(0,-1) -- (.5,-1);
\draw(0,-2) -- (.5,-2);
\draw (0.25,2.25) node {\tiny 1};
\draw (2.25,2.25) node {\tiny$ c-1$};
\draw (2.75,2.25) node {\tiny$ c$};
\draw (.25,-1.5) node {$\vdots$};
\draw (-.6,-.25) node {\tiny$ n-c$};
\draw (-.6,-.75) node {\tiny$ n-c+1$};
\draw (-.6,-2.25) node {\tiny$ n-1$};
\end{scope}
\begin{scope}[shift={(6,0)}]
\node at (0,0){\begin{ytableau}
1&2&3\\
4&5&11\\
6&12&20\\
7&10&17\\
8&9&14\\
21&25\\
26
\end{ytableau}};
\end{scope}
\begin{scope}[shift={(10,0)}]
\node at (0,0){\begin{ytableau}
1&2&3\\
4&5&11\\
6&9&14\\
7&10&17\\
8&12&20\\
21&25\\
26
\end{ytableau}};
\end{scope}

\end{tikzpicture}
\end{center}
\caption{On the left we have a general picture of $\stair(n,c)=(c^{n-c}, c-1,\ldots, 2, 1)$. In the middle we have a  SYCT $Y$ with shape $\stair(8,3)$ and its associated SYT $T$ on the right where $\hat{\rho}_{\emptyset}(Y)=T$ and  $T$ is $2$-row shiftable.}
\label{fig:rho proof aid}
\end{figure}

\begin{lemma} 
\label{lem:rho details}
Suppose we have a SYT $T$ and a SYCT $Y$ with $\hat{\rho}_{\emptyset}(Y)=T$. Then,
\begin{enumerate}[(i)]
\item If $T$ has shape $\stair(n,c)$, then $Y$ has shape $\alpha$ where the parts of $\alpha$ are a rearrangement of the parts of $\stair(n,c)$. In particular if $T$ in addition is $(c-1)$-row shiftable, then the shapes of $T$ and $Y$ are equal.
\item For a fixed $r$ and $c$ if $Y(r,c)\geq Y(i,c)$ for all $i<r$, then $Y(r,j)>Y(i,j+1)$ and $Y(r,j)>Y(i,j)$ for all $i<r$ and $j< c$. 
\item If $Y$ has shape $\stair(n,c)$, then  $Y$ and $T$ are equal in rows $i\geq n-c+1$ and $T$ is $(c-1)$-row shiftable.
\end{enumerate}
\end{lemma}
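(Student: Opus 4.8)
The plan is to prove part (ii) first as a stand-alone property of column tableaux, and then to deduce parts (i) and (iii) from it, organizing the latter two as an induction on the number of columns $c$: deleting the rightmost column sends $\stair(n,c)$ to $\stair(n,c-1)$, and since $\hat{\rho}_{\emptyset}^{-1}$ processes columns from left to right this relates each statement to its analogue with one fewer column. Throughout I use the three defining conditions of a SSYCT, and in particular the \emph{triple condition}: if $(j,k+1)$ is a cell and $Y(i,k)\le Y(j,k+1)$ with $i>j$, then $Y(i,k+1)<Y(j,k+1)$ (so, under the convention $Y(i,k+1)=\infty$ off the shape, $(i,k+1)$ must itself be a cell).

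For part (ii) I would use downward induction on the column index. Suppose $Y(r,c)\ge Y(i,c)$ for all $i<r$. Reading the triple condition in contrapositive for the cell $(i,c)$, the row $r>i$, and $k=c-1$: since $Y(r,c)\ge Y(i,c)$ rules out $Y(r,c)<Y(i,c)$, we must have $Y(r,c-1)>Y(i,c)$. Weak monotonicity of row $i$ gives $Y(i,c-1)\le Y(i,c)<Y(r,c-1)$, so also $Y(r,c-1)>Y(i,c-1)$. This establishes both inequalities of (ii) at column $c-1$, and $Y(r,c-1)>Y(i,c-1)$ is exactly the hypothesis of (ii) shifted one column to the left, so the induction carries all the way to the first column (standardness makes the weak hypothesis strict wherever needed).

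For part (i) the rearrangement statement is immediate: $\hat{\rho}_{\emptyset}^{-1}$ fixes the first column and inserts the entries of each later column $j$ one per row into rows already occupied in column $j-1$, so column $j$ of $Y$ has the same number of cells as column $j$ of $T$. Hence $Y$ and $T$ have equal column-length multisets, their conjugate shapes agree, and the row composition $\alpha$ of $Y$ is a rearrangement of the parts of $\stair(n,c)$. The refinement, that $(c-1)$-row-shiftability forces $\alpha=\stair(n,c)$, is the inverse reading of (iii) and uses the same mechanism described below: shiftability guarantees that the insertion rule drops the bottom entry of each column $j$ into row $n-j$, which keeps the row lengths weakly decreasing.

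For part (iii) the heart is a \emph{domination} statement for the lower rows. Fix a lower row $r$ with $n-c+1\le r\le n-1$; its last cell $(r,n-r)$ sits at the bottom of column $n-r$, and $r$ has no cell in column $n-r+1$ (the staircase ``wall''), whereas every higher row $i<r$ does reach column $n-r+1$. Applying the triple condition in contrapositive to each cell $(i,n-r+1)$ against the row $r$ forces $Y(r,n-r)>Y(i,n-r+1)\ge Y(i,n-r)$, because $(r,n-r+1)$ is not a cell; thus $Y(r,n-r)$ is the maximum of its column. Part (ii) then upgrades this to $Y(r,j)>Y(i,j)$ and $Y(r,j)>Y(i,j+1)$ for all $i<r$ and all columns $j$ of row $r$. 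Domination says that in every column the lower-row entries are already the largest and already increase downward, so the normalization effected by $\hat{\rho}_{\emptyset}$ leaves rows $n-c+1,\dots,n-1$ in place and only permutes the rectangular rows $1,\dots,n-c$ above them, giving equality of $Y$ and $T$ in rows $\ge n-c+1$; shiftability of $T$ then reads off as $T(i+1,j-1)>T(i,j)$ from the ``$Y(r,j)>Y(i,j+1)$'' half with $r=i+1$. The main obstacle is the interface $i=n-c$ between the rectangular top and the steps, where $T(n-c,j)$ may be a rearranged rectangular entry: here I would exploit that the domination of the first step row is really an inequality about the \emph{contents} of column $j$ (its step entry exceeds every column-$j$ entry lying in a rectangular row), so it survives the permutation of equal-length rows, and running the induction on $c$ so that only one new column, hence one new step, is added at a time keeps this boundary check honest.
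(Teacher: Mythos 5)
Your parts (ii), (iii), and the first assertion of (i) are sound. Part (ii) is the paper's own argument: the contrapositive of the triple condition gives $Y(r,c-1)>Y(i,c)$, weak row increase converts this to $Y(r,c-1)>Y(i,c-1)$, and the hypothesis propagates leftward column by column. For (iii) the paper proceeds row-by-row inside a delete-the-last-column induction, using $Y(n-c+1,c)=\infty$ plus part (ii); your version runs the same $\infty$-wall trick at every step row at once to get a global domination statement (each step entry exceeds everything above it in its column and in the next column), and then reads off both the equality of the lower rows and the shiftability of $T$ directly from the definition of $\hat{\rho}_{\emptyset}$. That is a legitimate reorganization, and your observation at the interface row $i=n-c$ --- that domination is an inequality against the \emph{multiset} of column-$j$ entries in rectangular rows, hence survives the permutation of equal-length rows and the column sort --- correctly handles the one delicate point; it is arguably cleaner than the paper's treatment. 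Likewise your column-content argument for the rearrangement claim in (i) is correct and slicker than the paper's induction (in fact, since step 1 of $\hat{\rho}_{\emptyset}$ literally rearranges the rows of $Y$ into a partition, that claim is immediate).

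The genuine gap is the second assertion of (i), that $(c-1)$-row shiftability of $T$ forces the shape of $Y$ to be exactly $\stair(n,c)$. This is \emph{not} ``the inverse reading of (iii)'': (iii) assumes $Y$ has staircase shape and concludes $T$ is shiftable, whereas here you need the converse, and your whole domination mechanism presupposes the staircase shape of $Y$ (the wall trick needs to know which rows lack a cell in the next column), so it cannot be invoked when the shape of $Y$ is precisely what is unknown. Your one-line substitute --- ``shiftability guarantees that the insertion rule drops the bottom entry of each column $j$ into row $n-j$'' --- is exactly the nontrivial statement that must be proved, and you give no argument for it. The paper proves it by induction on $c$ through the placement rule of $\hat{\rho}_{\emptyset}^{-1}$: with $T'$, $Y'$ the first $c-1$ columns, the inductive hypothesis gives $Y'$ of shape $\stair(n,c-1)$ with $n-c+1$ rows of length $c-1$; then strict column increase and the shiftability inequality $T(n-c,c)<T(n-c+1,c-1)$ show that no entry of column $c$ of $T$ can be appended to row $n-c+1$ while keeping that row weakly increasing, so the $n-c$ entries of column $c$ are forced into the $n-c$ rows above, preserving weakly decreasing row lengths. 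Carrying this out also requires tracking that the bottom full-length row of each intermediate tableau ends with the maximum of its column (smaller column entries are barred from it by chains such as $T(i,k)<T(i+1,k-1)$ combined with column increase); without some such invariant the bar on row $n-c+1$ does not close. As written, your proposal asserts the conclusion of this induction rather than supplying it.
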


\begin{proof}
{\it (i)} Suppose we have a SYT $T$ of shape $\stair(n,c)$ and $\hat{\rho}^{-1}_{\emptyset}(T)=Y$. If $c=1$, then both $Y$ and $T$ have shape $(1^{n-1})$, and we are done. Now suppose $c>1$. Let $T'$ be $T$ with column $c$ removed and $Y'$ be $Y$ with column $c$ removed. We know $T'$ has shape $\stair(n,c-1)$. By the definition of the map $\hat{\rho}^{-1}_{\emptyset}$ we have that $\hat{\rho}^{-1}_{\emptyset}(T')=Y'$.  
By induction we can  conclude that $Y'$ has a shape that is a rearrangement of the shape $\stair(n,c-1)$, even though $T'$ and $Y'$ aren't exactly SYT and SYCT respectively  since  they are not standardized to numbers $1$ through $N$ for some $N$.
We can get $Y$ from $Y'$ if we repeat step 1 of $\hat{\rho}^{-1}_{\emptyset}$ for column $c$ in $T$. 
We know $Y'$ has $n-c+1$ rows of length $c-1$ and column $c$ of $T$ has $n-c$ entries. By rule 1{\it(i)} of $\hat{\rho}^{-1}_{\emptyset}$ each of the $n-c$ entries of column $c$ of $T$ must be placed in one of the $n-c+1$ rows of length $c-1$. As a result, $Y$ will have $n-c$ rows of length $c$, one row of length $c-1$ and by induction one row of length $i$ for $1\leq i<c-1$. This is a shape that is a rearrangement of $\stair(n,c)$. Now additionally suppose that $T$ is $(c-1)$-row shiftable for $c>1$. This means that $T'$ is $(c-2)$-row shiftable and by induction $Y'$ and $T'$ have the same shape $\stair(n,c-1)$. 
Because columns of $T$ strictly increase and because $T(n-c+1,c-1)>T(n-c,c)$, when forming $Y$ from $Y'$ using column $c$ in $T$, we get that no entry in column $c$ can go into row $n-c+1$ and keep that row weakly increasing. This means that the shapes of $T$ and $Y$ are equal. 

{\it (ii)} Suppose that our SYCT $Y$ has $Y(r,c)\geq Y(i,c)$ for all $i<r$. By condition 3 of SSYCT we must have that $Y(r,c-1)>Y(i,c)$ for all $i<r$.  Since rows weakly increase, condition 1 of SSYCT, we have $Y(r,c-1)>Y(i,c-1)$ for all $i<r$. 
This is identical to the assumption for part {\it(ii)} of this lemma, but for column $c-1$. Continuing this argument, but using column $c-1$ and the other columns of smaller indices, gives us all aspects of our conclusion. In simple terms, if we have one entry $Y(r,c)$ greater than or equal to all elements above it in the same column, then any entry $Y(r,j)$ to the left of $Y(r,c)$ is strictly greater than everything in above it in the same column, $j$, and everything above it in the one greater column, $j+1$. 

{\it (iii)} 
Now instead suppose that we have a SYCT $Y$ of shape $\stair(n,c)$ and that $\hat{\rho}_{\emptyset}(Y)=T$. If $c=1$, then $T=Y$ are trivially $(c-1)$-row shiftable. Suppose that $c>1$ and that $T'$ and $Y'$ are $T$ and $Y$ with column $c$ removed. By the definition of the map $\hat{\rho}_{\emptyset}$ we have that $\hat{\rho}_{\emptyset}(Y')=T'$.  
Because $Y'$ has shape $\stair(n,c-1)$ by induction we know that the entries of $Y'$ and $T'$ are equal on rows $i\geq n-c+2$ and $T'$ is $(c-2)$-row shiftable.  
Since $T$ and $T'$, and also the pair $Y$ and $Y'$, are equal in the first $c-1$ columns, 
this means that $Y$ and $T$ are equal on rows $i\geq n-c+2$ and are both at least $(c-2)$-row shiftable. This is almost everything we need short the shiftability condition between rows $n-c$ and $n-c+1$. 
Note that $Y(n-c+1,c)=\infty$, so is greater than all entries in column $c$. By part {\it(ii)} of this lemma, we have that $Y(n-c+1,j)>Y(i,j)$ for all $i<n-c+1$ for $j<c$. It follows from the inductive assumption and the mapping of $\hat\rho_{\emptyset}(Y')=T'$ that any entry of $Y$ in row $n-c+1$ must be less than the entry in the  row with larger index $n-c+2$ below it. We can conclude that $Y$ and $T$ must be equal on row $n-c+1$ as well. Again since $Y(n-c+1,c)=\infty$, so is greater than all entries in column $c$ by part {\it(ii)} of this lemma, we have that $Y(n-c+1,j)>Y(i,j+1)$ for all $i<n-c+1$. This assures us that the entry in $Y$ that will be moved to position $(n-c,j)$ when forming $T$ from $Y$ will have $T(n-c+1,j-1)>T(n-c,j)$ for any $2\leq j\leq c$. Hence, $T$ is $(c-1)$-row shiftable. 
\end{proof}

\begin{lemma}
\label{lem:rho restricts}
The map $\hat{\rho}_{\emptyset}$ restricts to bijection between $n$-row-shiftable tableaux and SYCT of shapes in $\text{comp}(m,n)$ for $m\geq 1$ and $n\geq 0$.
\end{lemma}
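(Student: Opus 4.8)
The plan is to push everything down to the first $n+1$ columns, where Lemma~\ref{lem:rho details} applies verbatim. Since $\hat{\rho}_{\emptyset}$ is already known to be a bijection between all SYT and all SYCT, I only need to check that it carries the shape data correctly: that a SYCT of shape in $\text{comp}(m,n)$ maps to an $n$-row-shiftable SYT of shape $\stair_{m+n}$, and conversely. The engine of the argument is the fact that $\hat{\rho}_{\emptyset}$ and $\hat{\rho}_{\emptyset}^{-1}$ commute with truncation to an initial block of columns. This is immediate from the column-by-column inductive definition of $\hat{\rho}_{\emptyset}^{-1}$ (the placement of column $j$ depends only on columns $1,\ldots,j$), and is exactly the reduction already exploited in the proof of Lemma~\ref{lem:rho details}; so if $T=\hat{\rho}_{\emptyset}(Y)$ and $Y',T'$ denote the restrictions of $Y,T$ to their first $k$ columns, then $T'=\hat{\rho}_{\emptyset}(Y')$.

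First I would record three elementary observations. (a) For $\beta=(\alpha_1,\ldots,\alpha_{m-1},n,n-1,\ldots,1)\in\text{comp}(m,n)$ the multiset of parts is exactly $\{1,2,\ldots,m+n-1\}$, so step~1 of $\hat{\rho}_{\emptyset}$ sorts any $Y\in\hat{D}_{\beta}$ into a tableau of shape $\stair_{m+n}$. (b) Truncating either a $\text{comp}(m,n)$-shape or $\stair_{m+n}$ to its first $n+1$ columns gives the partial staircase $\stair(m+n,n+1)=((n+1)^{m-1},n,n-1,\ldots,1)$, because each $\alpha_i\ge n+1$ while the remaining parts are at most $n$. (c) In Definition~\ref{def:n-row-shift} with $N=m+n$, the inequalities defining $n$-row-shiftability for a tableau of shape $\stair_{m+n}$ compare only cells in columns $1,\ldots,n+1$; hence such a tableau is $n$-row-shiftable if and only if its restriction to the first $n+1$ columns is.

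For the forward inclusion, let $Y$ be a SYCT of shape $\beta\in\text{comp}(m,n)$ and $T=\hat{\rho}_{\emptyset}(Y)$, which has shape $\stair_{m+n}$ by (a). Let $Y',T'$ be the restrictions to the first $n+1$ columns; by the commutation with truncation $T'=\hat{\rho}_{\emptyset}(Y')$, and by (b) the shape of $Y'$ is $\stair(m+n,n+1)$. Lemma~\ref{lem:rho details}(iii), applied with partial-staircase parameter $m+n$ and $c=n+1$, shows $T'$ is $(c-1)=n$-row-shiftable, and then (c) upgrades this to $n$-row-shiftability of $T$. For the reverse inclusion, let $T$ be an $n$-row-shiftable SYT of shape $\stair_{m+n}$ and $Y=\hat{\rho}_{\emptyset}^{-1}(T)$. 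Its restriction $T'$ has shape $\stair(m+n,n+1)$ and, by (c), is $n$-row-shiftable, so Lemma~\ref{lem:rho details}(i) gives that $Y'=\hat{\rho}_{\emptyset}^{-1}(T')$ again has shape $\stair(m+n,n+1)$. Consequently $Y$ has $m+n-1$ rows, its bottom $n$ rows have lengths $n,n-1,\ldots,1$ (these were not truncated), and its top $m-1$ rows each have length $\ge n+1$; since $\hat{\rho}_{\emptyset}(Y)=T$ has shape $\stair_{m+n}$, the multiset of all row lengths of $Y$ is $\{1,\ldots,m+n-1\}$, forcing the top $m-1$ lengths to be a permutation of $\{n+1,\ldots,n+m-1\}$. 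Thus the shape of $Y$ lies in $\text{comp}(m,n)$.

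These two inclusions, together with the bijectivity of $\hat{\rho}_{\emptyset}$ on the ambient sets of SYT and SYCT, force $\hat{\rho}_{\emptyset}$ to restrict to a bijection from SYCT of shapes in $\text{comp}(m,n)$ onto $n$-row-shiftable tableaux of shape $\stair_{m+n}$. I expect the only real friction to be bookkeeping: verifying cleanly that column-truncation commutes with $\hat{\rho}_{\emptyset}$, and reconciling the indexing of Lemma~\ref{lem:rho details} (the parameter $\stair(n',c)$ with $n'=m+n$, $c=n+1$, and the identity $(c-1)$-row-shiftable $=$ $n$-row-shiftable) with Definition~\ref{def:n-row-shift}. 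Once those conventions are aligned, the result is a direct appeal to parts (i) and (iii) of Lemma~\ref{lem:rho details}.
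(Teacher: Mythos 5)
Your proof is correct and follows essentially the same route as the paper's: both arguments reduce to the initial columns via the column-by-column compatibility of $\hat{\rho}_{\emptyset}$ with truncation and then invoke parts \emph{(i)} and \emph{(iii)} of Lemma~\ref{lem:rho details} for the two inclusions. If anything, your consistent use of $c=n+1$ (shape $\stair(m+n,n+1)$, which pins the row of length exactly $n$ to position $m$) is slightly more careful than the paper's forward direction, which truncates to the first $n$ columns and leaves that positional point implicit.
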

\begin{proof}This proof essentially comes down to showing that $\hat{\rho}_{\emptyset}$ is well defined under this restriction. 

Suppose that $T$ is a $n$-row-shiftable tableau of the staircase shape $\stair_{m+n}=(m+n-1, m+n-2,\ldots, 2,1)$ and  $T(i+1,j-1)>T(i,j)$  for all $i\geq m-1$ and $2\leq j\leq m+n-i$.  We want to show that $Y=\hat{\rho}^{-1}_{\emptyset}(T)$ is a SYCT with a shape in $\text{comp}(m,n)$. 
By Lemma~\ref{lem:rho details}{\it(i)} we know that the shape of $Y$ is a rearrangement of $\stair_{m+n}$.
Since $T$ is $n$-row shiftable, by Lemma~\ref{lem:rho details}{\it(i)} we must have the first $n$-columns of $Y$ be shape $\stair(m+n,n)$, which  means that the shape of $Y$ is in $\text{comp}(m,n)$.

Now suppose that $Y$ is a SYCT of shape $\alpha\in\text{comp}(m,n)$. We can easily see that the shape of $T=\hat{\rho}^{-1}_{\emptyset}(Y)$ is the staircase shape $(m+n-1, m+n-2,\ldots, 2,1)$. What is left to show is that $T$ is $n$-row-shiftable. Let $T'$ and $Y'$ be $T$ and $Y$ respectively restricted to the first $n+1$ columns. By Lemma~\ref{lem:rho details}{\it(iii)} since $Y'$ has shape $\stair(m+n,n+1)$ we know that $T'$ is $n$-row shiftable. Since $T$ and $T'$ are equal in the first $n+1$ columns, we have that $T$ is also $n$-row shiftable.
\end{proof}
\omitt{
\tvi{Sam (Dec 12): Note that our LMR notations might be difference since Sam's is using parentheses and Susanna's is using subscripts. A decision will need to be made, but it is easy since we are using macros.}
\tvi{(3-19-24) I believe this was addressed. And in any case this is an easy fix later as we do a read through. No worries about these comments anymore.}
}
\begin{proof}[Proof of~\ref{thm:Longest chain quasi Schur}]
We will need to show that there is a bijection between reduced words of the longest maximal chains in $\tp{L_{m,n}}$, $\LMR{L_{m,n}}$, and SYCT of shapes in $\text{comp}(m,n)$, $\varphi:\LMR{L_{m,n}}\rightarrow \cup_{\alpha\in\text{comp}(m,n)}\hat{D}_{\alpha}$. 
We additionally want the descent set to be preserved, meaning for $\sigma\in\LMR{L_{m,n}}$ we want $\Des(\sigma)=\Des(\varphi(\sigma))$. 
We actually already have all pieces of this bijection, $\varphi=\hat{\rho}_{\emptyset}\circ \EGQ$, the composition of the Edelman-Greene bijection and the $\hat{\rho}_{\emptyset}$ map defined just before this proof. 
By properties in \cite{LMv13} (Section 4.3) and in \cite{EG} (Theorem 6.27) the descent set is preserved. All that is left to show is that this map is well defined, which was done in Lemma~\ref{lem:rho restricts}. 
\end{proof}

\section{Shortest length maximal Chains}
\label{sec:SMC}

In this section instead of looking at longest maximal  chains we will look at the other extreme, shortest maximal  chains. Interestingly, these chains in $\tp{L_{m,n}}$ are in bijection with reduced words of the permutation $w_{m,n}=[m+n,m-1,m-2,\ldots, 1, m, m+1, \ldots, m+n-1]$. 


First let us recall that maximal  chains of $\tp{G}$ for a filled graph $G$ 
are in bijection with several objects described in Theorem~\ref{thm:general chains} and in particular for lollipop graphs, Theorem~\ref{thm:lollipop chains}. We will be primarily be focused on describing the objects associated to balanced tableaux.  
We will be particularly focusing on equivalence classes of balanced tableaux. Given a connected  filled $N$-vertex graph $G$. Two balanced tableaux of the staircase shape $\stair_N$ will be in the same equivalence class if the tableaux are  order isomorphic after removing all cells and entries associated to intraclass hyperplanes. Let $\MB{G}$ be the collection of balanced tableaux created after deleting all cells and the entries within them associated to intraclass hyperplanes and standardizing, i.e. replacing the $i$th smallest number with $i$.  Let $\SMB{G}$ be the tableaux in $\MB{G}$ with the smallest number of remaining cells. See Figure~\ref{fig:smc example} for some examples.

We  describe the exact conditions for elements in $\SMB{L_{m,n}}$ next and will use $\SMB{L_{m,n}}$ to prove that shortest maximal chains of $\tp{L_{m,n}}$ are in bijection with reduced words of the permutation $w_{m,n}$ defined in Theorem~\ref{thm:varsigma bijection}. Later in Section~\ref{sec:further} we  describe in more detail what the shortest maximal chains of $\tp{L_{m,n}}$ look like and  define a variation on Stanley symmetric function associated to $\tp{L_{m,n}}$. We will also show that the portion of the function defined in Section~\ref{sec:further} associated to shortest maximal chains is Schur positive.


The objects in  $\SMB{L_{m,n}}$ have shapes of skew tableaux. We  now define skew Young  diagrams. Given two integer partitions $\lambda$ and $\mu$ such that $\lambda_i\geq \mu_i$ for all $i$ supposing that $\mu_i=0$ for $i>\ell(\mu)$ the  {\it skew Young diagram} $\lambda//\mu$ will be the Young  diagram of $\lambda$ with the Young diagram of $\mu$ removed. Let $|\lambda//\mu|$ be the number of cells in the skew Young diagram. 
Suppose each cell of the skew Young diagram of  $\lambda//\mu$ is filled with a positive integer. 
We will call this a {\it balanced skew tableaux} if for any cell $(i,j)$ filled with $x$, the number of entries in the hook (cells right of $x$, below $x$ and including $x$) at most $x$ is equal to the number of cells below $x$ and including $x$. 
If the outer shape $\lambda$ is a staircase shape, then this condition is equivalently that the number of entries to the right of $x$ less than $x$ equals the number of entries below $x$ greater than $x$. The following two technical lemmas  prove and describe exactly what objects are in $\SMB{L_{m,n}}$, which we summarize in Corollary~\ref{cor:SMB conditions}. See Figure~\ref{fig:smc technical2}   for illustrations.




\begin{figure}
\begin{center}
\begin{tikzpicture}

\begin{scope}[shift={(0,0)}]
\node at (0,0){\begin{ytableau}
{ }&{ }&4&1\\
{ }&{\bullet}&5\\
{ }&3\\
2
\end{ytableau}};
\node at (0,-1.7) {$P=SE$};
\node at (3,0){\begin{ytableau}
{ }&{\bullet}&5&1\\
{ }&{4}&6\\
{ }&3\\
2
\end{ytableau}};
\node at (3,-1.7) {$P=ESSE$};
\node at (6,0){\begin{ytableau}
{ }&{4}&6&1\\
{ }&{5}&7\\
{\bullet}&3\\
2
\end{ytableau}};
\node at (6,-1.7) {$P=SE$};

\node at (9,0){\begin{ytableau}
{ }&{5}&7&1\\
{\bullet}&{6}&8\\
{3}&4\\
2
\end{ytableau}};
\node at (9,-1.7) {$P=SSEE$};
\node at (12,0){\begin{ytableau}
{\bullet}&{6}&8&1\\
{4}&{7}&9\\
{3}&5\\
2
\end{ytableau}};
\node at (12,-1.7) {$P=ESSSEE$};
\node at (15,0){\begin{ytableau}
{4}&{7}&9&1\\
{5}&{8}&10\\
{3}&6\\
2
\end{ytableau}};
\end{scope}

\end{tikzpicture}
\end{center}
\caption{On the left we have $B\in\mathcal{SMB}(L_{3,2})$ that satisfies all conditions stated in Lemma~\ref{lem:shortest balanced}. The sequence of $B_i'$ illustrates the argument in  Lemma~\ref{lem:shortest balanced inverse}. On the right $B'$ has all cells in $\mu=(2,2,1)$ associated to intraclass hyperplanes as per the conditions in Theorem~\ref{thm:lollipop chains}, whose removal and standardization gives us $B$.
}
\label{fig:smc technical2}
\end{figure}
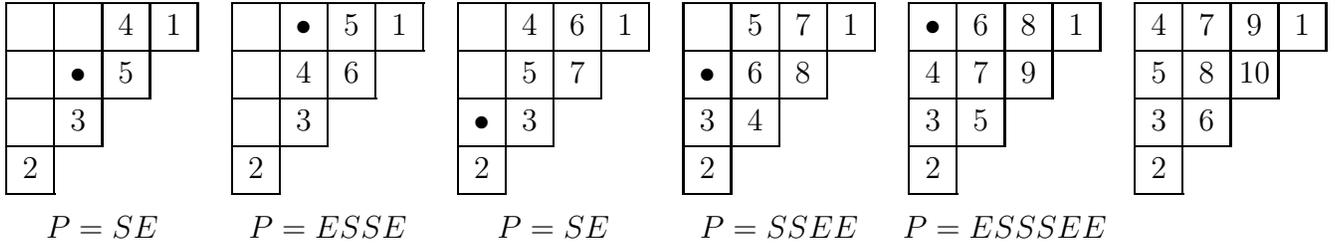

\begin{lemma}
\label{lem:shortest balanced}
For $m\geq 1$ and $n\geq 0$, if $T$ is a balanced tableau of shape $\stair_{m+n}$ associated to a shortest maximal chain in $\tp{L_{m,n}}$, then all cells in $\mu=(n^{m-1},n-1,n-2,\ldots, 1)$ are associated to intra-class hyperplanes, and 
\begin{enumerate}[(i)]
\item the smallest entry in columns $i\in[n]$ is in the row with the highest index, i.e. at the bottom of the column, 
\item $T(m+n-1,1)<T(m+n-2,2)<\cdots<T(m,n)$, and 
\item $T(m,n)<T(i,n+1)$ for all $i\in[m-1]$.
\end{enumerate}
\end{lemma}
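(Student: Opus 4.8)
The plan is to argue entirely with the balanced-tableau description of the chain and the characterization of intra-class cells in Corollary~\ref{cor:intraclassLmn}(3). Reading that corollary with $c=N-j$ (so $j\le n$ becomes $c\ge m$ and $a<N-j\le b-1$ becomes $a<c\le b-1$), a cell $(a,N-b+1)$ is intra-class exactly when its own column $N-b+1$ contains, in some row $c$ with $m\le c$ and $a<c\le b-1$, an entry strictly smaller than $T(a,N-b+1)$. In particular only non-edge cells can be intra-class, and for $\lolli m n$ these are precisely the cells of the Young diagram of $\mu=(n^{m-1},n-1,\dots,1)$. Consequently every maximal chain in $\tp{\lolli m n}$ uses at least $\binom{m+n}{2}-|\mu|$ cover relations, and a shortest maximal chain attains this bound, so each of its $\mu$-cells must be intra-class. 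The one non-local point is that the bound is attained at all; I would secure this by exhibiting a single balanced tableau collapsing every cell of $\mu$ (equivalently via the construction of the companion Lemma~\ref{lem:shortest balanced inverse}), after which ``shortest'' and ``every $\mu$-cell intra-class'' are synonymous.

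Granting that all non-edge cells are intra-class, part (i) is immediate. In a column $i\le n$ the non-edge cells occupy rows $1,\dots,m+n-i-1$ and the unique edge cell is the bottom cell $(m+n-i,i)$. If the column minimum sat in a non-edge cell it would have no strictly smaller entry below it in a row $\ge m$, so it could not be intra-class; hence the column minimum lies at the bottom, which is exactly (i).

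For part (ii) I would work one column at a time. Fix $1\le i\le n-1$ and consider the cell $A=(m+n-i-1,i)$ directly above the bottom cell $P_i=(m+n-i,i)$. Since $A$ is a non-edge cell its only admissible witness in column $i$ is $P_i$, so intra-classness forces $T(A)>T(P_i)$. Now $A$ is the next-to-last cell of its row, its unique cell to the right being $P_{i+1}=(m+n-i-1,i+1)$ and its unique cell below being $P_i$; because $T(P_i)<T(A)$, the ``larger below'' count in the balance condition at $A$ is $0$, hence the ``smaller to the right'' count is $0$ as well, i.e.\ $T(P_{i+1})>T(A)$. Chaining gives $T(P_i)<T(A)<T(P_{i+1})$, that is $T(m+n-i,i)<T(m+n-i-1,i+1)$; letting $i$ run from $1$ to $n-1$ yields (ii).

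Part (iii) is the most delicate step and is where I expect the real work. Write $P_n=(m,n)$, the minimum of column $n$ by (i). I would prove $T(i,n+1)>T(m,n)$ for every $i\le m-1$ by downward induction on $i$ (the base $i=m-1$ being the empty-hypothesis instance). Suppose $T(i,n+1)<T(m,n)$. The balance condition at $(i,n+1)$, whose cells below in column $n+1$ are $(i+1,n+1),\dots,(m-1,n+1)$ and are all $>T(m,n)>T(i,n+1)$ by the inductive hypothesis, sets the number of cells to the right of $(i,n+1)$ that are smaller than $T(i,n+1)$ equal to this below-count; since the number of cells to the right of $(i,n+1)$ in row $i$ equals that count, every one of them is smaller than $T(i,n+1)$. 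Hence all $m-i$ cells to the right of $(i,n)$ in row $i$ are $<T(m,n)<T(i,n)$. But the balance condition at the non-edge cell $(i,n)$ equates this number with the number of entries below $(i,n)$ in column $n$ exceeding $T(i,n)$, and that number is at most $m-i-1$ because one of the $m-i$ cells below is $T(m,n)$, which is smaller. This contradiction completes the induction and establishes (iii). The main obstacle is thus this propagation step: unlike (i) and (ii), which are single-cell arguments, (iii) requires pushing the inequality up the whole of column $n+1$ and reading the contradiction off a balance equation in column $n$.
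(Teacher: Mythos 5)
Your proof is correct and matches the paper's argument essentially step for step: the same identification of the potentially intra-class cells with the non-edge cells forming $\mu$ (with attainability of the bound deferred, exactly as in the paper, to the construction in Lemma~\ref{lem:shortest balanced inverse}), the same column-minimum argument for (i), the same single-cell balance computation for (ii), and for (iii) your downward induction is just the paper's choice of the largest row $i$ in column $n+1$ with $T(i,n+1)<T(m,n)$ followed by the identical pair of balance arguments at $(i,n+1)$ and $(i,n)$. No gaps; if anything, you flag the attainability point more explicitly than the paper does.
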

\begin{proof}
This will follow from Theorem~\ref{thm:lollipop chains}(3), which we will use continuously in this proof. Let $T$ be a balanced tableau of shape $\stair_{m+n}$ associated to some shortest maximal chain in $\tp{L_{m,n}}$. Recall that the only cells that can be associated to an intra-class hyperplane are the non-edge cells, which is defined right before Theorem~\ref{thm:intra_class_conditions}. For $G=L_{m,n}$, the non-edge cells form the integer partition  $\mu=(n^{m-1},n-1,n-2,\ldots, 1)$. For $T$ to be  associated to some shortest maximal chain, all cells in $\mu$ should be associated to an intra-class hyperplanes. 

Let us first justify condition {\it(i)}. To have a balanced tableau $T$ to associated to a shortest maximal chain we need to have all cells in $\mu=(n^{m-1},n-1,n-2,\ldots, 1)$ disappear. Let's fix the  column $j\in [n]$. To have cell $(i,j)$ in $\mu$ disappear for any $i\in [m,m+n-j]$ we need 
$T(i,j)>\min\{T(a,j):a\in [i+1,m+n-j]\}$. This implies that $T(m+n-j,j)=\min\{T(a,j):a\in [m,m+n-j]\}$.
To have cell $(i,j)$ in $\mu$ disappear for $i\in [1,m-1]$ we need 
$T(i,j)>\min\{T(a,j):a\in [m,m+n-j]\}$, and we know this minimum is $T(m+n-j,j)$. This implies that $T(m+n-j,j)=\min\{T(a,j):a\in [1,m+n-j]\}$. Hence in the first $n$ columns the cell in the row with the highest index has the minimum entry in the column.

Now let us justify condition {\it(ii)}. By part {\it(i)} we know for $j\in [n-1]$ that 
$T(m+n-j-1,j)>T(m+n-j,j)$. Since $T$ is a balanced tableau, we must have the entry right of $T(m+n-j-1,j)$ be larger so $T(m+n-j-1,j)<T(m+n-j-1,j+1)$. Putting these two inequalities together we have $T(m+n-j,j)<T(m+n-j-1,j+1)$ for $j\in [m-1]$, which proves condition {\it(ii)}. 

Finally, let us prove condition {\it(iii)} by contradiction. Let us suppose that $T(m,n)=x$ is not smaller than all entries in column $n+1$. Specifically suppose that $x<T(i,n+1)$ and $i$ is the largest index where $x<T(i,n+1)$, so $x>T(a,n+1)$ for $a\in [i+1,m-1]$. Let us check the balanced condition for $y=T(i,n+1)$. Notice that all entries below $y$ are larger than $x$ and since $x>y$ we know all entries below $y$ are larger than $y$ too. Since $T$ is balanced, all entries to the right of $y$ are smaller than $y$. 
Now let us consider the balanced condition for $z=T(i,n)$, the entry just to the left of $y$. Since the cell $(i,n)$ and its entry will disappear, we know that $x<z$ which implies $y<z$. 
Since everything to the right of $y$ is less than $y$ and $z>y$, we know that everything to the right of $z$ is less than $z$. Since $T$ is balanced we should have everything below $z$ be greater than $z$. 
However, we require $x<z$ when $x$ is below $z$. This is a contradiction, so condition {\it(iii)} must hold. 
\end{proof}

\begin{lemma}
\label{lem:shortest balanced inverse}
For $m\geq 1$ and $n\geq 0$, suppose $B$ is a balanced skew tableaux of shape $\stair_{m+n}//\mu$ where $\mu = n^{m-1}(n-1)(n-2)\ldots 21$. Assume further that
\begin{enumerate}[(i)]
\item $B(m+n-1,1)<B(m+n-2,2)<\cdots<B(m,n)$, and 
\item $B(m,n)<B(i,n+1)$ for all $i\in[m-1]$.
\end{enumerate}
Then, there exists a balanced tableaux  $T$ of shape $\stair_{m+n}$ where all cells in $\mu = n^{m-1}(n-1)(n-2)\ldots 21$ are associated to intra-class hyperplanes of $\tp{L_{m,n}}$. 
Specifically we will recursively construct a balanced tableau $T$ of shape $\stair_{m+n}$ so that the entries in any column $j\in[n]$ in union form a consecutive range of integers. 
\end{lemma}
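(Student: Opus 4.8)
The plan is to prove the statement by induction on $n$, reconstructing $T$ from $B$ by filling in the removed cells of $\mu$ one column at a time, from column $n$ down to column $1$. The base case $n=0$ is immediate: here $\mu$ is empty, so $B$ is already a balanced tableau of shape $\stair_m$ and we take $T=B$. For the inductive step, I would fill in the $m-1$ cells of $\mu$ lying in column $n$, namely the cells in rows $1,2,\ldots,m-1$, working from the bottom cell $(m-1,n)$ upward to $(1,n)$. Each fill is an \emph{insertion}: I place a new value in the target cell and increment by one every entry that is weakly larger, so that the filling remains a standard (distinct-entry) filling throughout. The value placed is the unique one that extends column $n$ to a consecutive block of integers whose minimum stays pinned in the bottom (skew) cell $(m,n)$; as illustrated in Figure~\ref{fig:smc technical2}, the cells whose entries get incremented form a monotone lattice path $P$ emanating from the inserted cell. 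Since incrementing ``all entries $\geq v$'' is an order-preserving shift, every strict inequality among existing entries survives each insertion, a fact I would use repeatedly.

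Once column $n$ has been completed, the partial tableau has outer shape $\stair_{m+n}$ with removed shape $((n-1)^{m},\,n-2,\,\ldots,1)$, which is exactly the non-edge partition of $\lolli{m+1}{n-1}$ on $m+n$ vertices. I would then verify that this object is again a balanced skew tableau satisfying the two hypotheses (i),(ii) with $(m,n)$ replaced by $(m+1,n-1)$. Condition (i) for $\lolli{m+1}{n-1}$ asks that the bottom-diagonal entries $B(m+n-1,1)<\cdots<B(m+1,n-1)$ increase; these cells all lie in rows $\geq m+1$ and columns $\leq n-1$, so they are untouched in position and their strict order is inherited from (i) for $\lolli m n$ by the order-preserving remark above, after the top term $B(m,n)$ is dropped (column $n$ is now full, not a skew diagonal cell). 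Condition (ii) for $\lolli{m+1}{n-1}$ becomes the statement that every entry of the completed column $n$ exceeds $B(m+1,n-1)$; since column $n$ is consecutive with its minimum at $(m,n)$, this reduces to $B(m,n)>B(m+1,n-1)$, which is the top inequality of (i) for $\lolli m n$. Applying the induction hypothesis to $\lolli{m+1}{n-1}$ then finishes the construction, and after $n$ steps every cell of $\mu$ has been filled.

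The heart of the argument, and the step I expect to be the main obstacle, is verifying that each single insertion preserves the balanced condition and produces an intra-class cell. For balancedness I would argue cell by cell using the hook characterization: because the incremented cells form the monotone path $P$, the insertion alters the number of smaller entries to the right of a given cell and the number of larger entries below it by matching amounts, so the defining hook-equality is preserved. The delicate cases are the cells lying on $P$ itself and those in the adjacent columns $n$ and $n+1$. Here hypotheses (i) and (ii) are precisely what is needed: (ii) guarantees that every entry of column $n+1$ exceeds the bottom of column $n$, so inserting values into column $n$ can never create a violation to the right, while (i) together with the running consecutiveness invariant governs the interaction with the already-completed lower columns. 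This is the hook-counting core where the bookkeeping of the path $P$ must be carried out carefully.

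Finally, the intra-class property is comparatively easy once the balance verification is in hand. By construction each inserted cell $(r,c)$ with $c\le n$ lies strictly above the column minimum, which is fixed at the bottom cell in row $m+n-c$. Taking $j=c\le n$ in the intra-class criterion of Corollary~\ref{cor:intraclassLmn}, the row $m+n-c=N-j$ lies below $(r,c)$, is at least $m$, and carries the column minimum, so the entry at $(r,c)$ exceeds it and the cell is intra-class. Hence in the final tableau $T$ every cell of $\mu$ is intra-class, $T$ is balanced of shape $\stair_{m+n}$, and the entries of each column $j\in[n]$ form a consecutive range, matching Lemma~\ref{lem:shortest balanced} and completing the proof.
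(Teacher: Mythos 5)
Your scaffolding is essentially the paper's: fill the removed cells of $\mu$ column by column from column $n$ leftward, bottom cell upward within each column, by inserting a value and incrementing every weakly larger entry (an order-preserving shift), while maintaining the invariant that each column $j\in[n]$ holds a consecutive block of integers with its minimum pinned at the bottom cell in row $m+n-j\ge m$; that invariant is also how the paper gets the intra-class property via Theorem~\ref{thm:lollipop chains}(3)/Corollary~\ref{cor:intraclassLmn}, and your verification of that part is correct. Your repackaging as induction on $n$ through the instance $\lolli{m+1}{n-1}$, with the hypothesis bookkeeping you carry out, is also sound. But there is a genuine gap exactly where you flag ``the main obstacle,'' and it is not a bookkeeping detail one can defer. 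First, your recipe for the inserted value is not well-defined: if the partially filled column currently holds the consecutive block $\{a,a+1,\ldots,b\}$ with $a$ at the bottom, then inserting \emph{any} $v$ with $a<v\le b+1$ and shifting all entries $\ge v$ again yields a consecutive block with minimum $a$ at the bottom, so ``the unique one that extends the column to a consecutive block'' picks out an interval of candidates, not one value --- and the choice matters, since most of these choices destroy the balanced condition at the new cell. Second, your balance argument rests on a misreading: the incremented entries (all entries $\ge v$, scattered throughout the tableau) do not form a monotone lattice path; in the paper, $P$ is the word in $\{E,S\}$ obtained by sorting, in increasing order, the entries strictly east and strictly south of the target cell $(i,k)$ and recording the direction of each.

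The paper's resolution, which your proposal never supplies, is to pin the inserted value by a median argument on $P$. The cell $(i,k)$ has equally many ($m+n-k-i$) east and south entries, all already filled at that stage; the consecutiveness invariant makes the $S$'s of $P$ consecutive, and hypotheses (i) and (ii) are used precisely to show $P$ does not end in $S$, whence the letter in position $m+n-k-i$ is an $S$ carrying some value $y$. Inserting $y+1$ (and shifting) then makes the number of larger entries east of the new cell equal the number of $E$'s in the second half of $P$, and the number of smaller entries south equal the number of $S$'s in the first half, and these two counts agree, giving balance at the new cell. Without this (or an equivalent specification), your induction cannot close. One simplification you could have exploited, which the paper notes: in this filling order the new cell lies only in hooks of still-unfilled cells, and the shift preserves the relative order of existing entries, so no previously placed cell's balance ever needs re-checking --- the \emph{entire} burden of the proof is the single hook-count at the inserted cell, which is exactly the step your proposal leaves open.
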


\begin{proof}
Suppose that  $B$ is a balanced skew tableaux of the stated shape that satisfies {\it(i)} and {\it(ii)}. We will form the balanced tableaux $T$ of shape $\stair_{m+n}$ inductively column by column and cell by cell. We will fill each columns of $B$ one by one starting with column $n$ and proceeding left to columns of smaller index. We will fill each column $k$ cell by cell starting with the bottom most cell in cell $(m+n-k-1,k)$ and proceeding upwards in column $k$ to rows with lower index. 
Suppose so far we have formed $B'$ where we have filled in columns $c>k$, all cells below cell $(i,k)$ in column $k$ and are  now going to fill the cell $(i,k)$. We will form $T'$ from $B'$ by picking an $x$ to place in cell $(i,k)$ and increase all entries $z\geq x$ by one. 
We will have our conclusion if $T'$ is a balanced skew tableaux, the set of entries in column $k$  form a consecutive list of integers and there exists an entry in a row $r\geq m$ below $x$ in $T'$ that is less than $x$. 
Take the collection of numbers to the right and below the cell $(i,k)$ in $B'$. Write them in increasing order to form a word $h$. We will next form another word $P$ from $h$ by letting $P_j=E$ if $h_j$ is a number  right (east) of cell $(i,k)$ and by letting $P_j=S$ if $h_j$ is a number below (south) of cell $(i,k)$. See Figure~\ref{fig:smc technical2} for
some examples. 
Then $P$ is a sequence of $S$'s and $E$'s of length $2(m+n-k-i)$ with $m+n-k-i$ $E$'s and $S$'s. We also know that all the $S$'s will be consecutive since the integers below cell $(i,k)$ in union form a consecutive list of integers. 
We can also conclude that $P$ doesn't end in a $S$ because either  $B'(m+n-k,k)<B'(i,n+1)$ if $i\leq m-1$ or $B'(m+n-k,k)>B'(i,m+n-i)$ by our assumptions on $B$ and how $B'$ and $B$ are order isomorphic, entries have the same relative order, on the cells that they both share (i.e. the cells that $B$ has). This means that the $(m+n-k-i)$th entry of $P$ is a $S$ associated to some number $y$ below cell $(i,k)$. 
Place $y+1$ in cell $(i,k)$ and increase all $z\geq y+1$ by one to form $T'$. 
Note that this process assures us that the smallest entry in column $k$ is the bottom most one. As a result we are assured  that there is some entry below cell $(i,k)$ in a row $r\geq m$, specifically the bottom most cell in column $k$. 
Thus, cell $(i,k)$ will be associated to an intra-class hyperplane by Theorem~\ref{thm:lollipop chains}(3). The set of entries of $T'$  in column $k$ are also guaranteed  to form a consecutive list of integers by this choice. 
All that is left is for us to show that $T'$ is a balanced skew tableau. It will be sufficient to show the balanced condition for cell $(i,k)$ since all other cells satisfy this condition since $B'$ was a balanced skew tableau. Let $P=P^{(1)}P^{(2)}$ where $P^{(1)}$ is the first half of $P$ and $P^{(2)}$ is the second half. The number of entries in $T'$ to the right of $y+1$ in cell $(i,k)$ greater than $y+1$ equals the number of $E$'s in $P^{(2)}$. 
The number of entries in $T'$ below $y+1$ in cell $(i,k)$ less than $y+1$ equals the number of $S$'s in $P^{(1)}$. If there are $e_2$ $E$'s in $P^{(2)}$, then there are $e_1=m+n-k-i-e_2$ $E's$ in $P^{(1)}$, leaving the remaining $m+n-k-i-e_1=e_2$ entries in $P^{(1)}$ to be $S$'s. Thus $T'$ is a balanced skew tableaux.
\end{proof}

The last two lemmas allow us to explicitly describe the elements of  $\SMB{L_{m,n}}$.

\begin{corollary}
\label{cor:SMB conditions}
Let $m\geq 1$ and $n\geq 0$. The set $\SMB{L_{m,n}}$ contains all 
skew balanced tableaux $B$ of the the shape $\stair_{m+n}//\mu$ where $\mu = n^{m-1}(n-1)(n-2)\ldots 21$ such that 
\begin{enumerate}[(i)]
\item $B(m+n-1,1)<B(m+n-2,2)<\cdots <B(m,n)$ and
\item  $B(m,n)<B(i,m)$ for any $i\in [m-1]$. 
\end{enumerate}
\end{corollary}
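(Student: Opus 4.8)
The plan is to derive this characterization by combining Lemma~\ref{lem:shortest balanced} and Lemma~\ref{lem:shortest balanced inverse} with one counting observation about which maximal chains are shortest. First I would pin down the meaning of ``shortest.'' By Theorem~\ref{thm:lollipop chains}, maximal chains in $\tp{\lolli m n}$ are in bijection with the tableaux of $\MB{\lolli m n}$, obtained from balanced tableaux of shape $\stair_{m+n}$ by deleting the cells of the intra-class hyperplanes and standardizing. A chain is shortest exactly when its tableau has the fewest surviving cells, equivalently when the largest possible number of cells is deleted. Since only non-edge cells can be intra-class (the discussion preceding Theorem~\ref{thm:intra_class_conditions}) and the non-edge cells of $\lolli m n$ fill precisely the sub-diagram $\mu=(n^{m-1},n-1,\ldots,1)$, at most $|\mu|$ cells can ever be removed. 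Hence a chain is shortest if and only if every cell of $\mu$ is intra-class, and in that case its image in $\MB{\lolli m n}$ is a balanced skew tableau of shape $\stair_{m+n}//\mu$.

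Having reduced to this, the two inclusions come directly from the two lemmas. For the reverse inclusion, suppose $B$ is a balanced skew tableau of shape $\stair_{m+n}//\mu$ satisfying (i) and (ii). These are exactly the hypotheses of Lemma~\ref{lem:shortest balanced inverse}, which produces a balanced tableau $T$ of full staircase shape $\stair_{m+n}$ in which every cell of $\mu$ is intra-class and whose entries in each column $j\in[n]$ form a consecutive block; deleting the $\mu$-cells of $T$ and standardizing returns a tableau order-isomorphic to $B$. By the maximality observation of the first paragraph, $T$ realizes a shortest maximal chain, so $B\in\SMB{\lolli m n}$.

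For the forward inclusion, take $B\in\SMB{\lolli m n}$ and let $T$ be a balanced tableau of shape $\stair_{m+n}$ representing the corresponding shortest maximal chain. By the counting observation all cells of $\mu$ are intra-class, so Lemma~\ref{lem:shortest balanced} applies: its condition (i) is the internal ``smallest entry at the bottom of each of the first $n$ columns'' requirement that forces the $\mu$-cells to vanish, while its conditions (ii) and (iii) survive deletion and standardization to become exactly conditions (i) and (ii) of the corollary on $B$ (the increasing chain $B(m+n-1,1)<\cdots<B(m,n)$ along the bottoms of the first $n$ columns, and $B(m,n)$ lying below the leftmost surviving entries of rows $1,\ldots,m-1$). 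Thus $B$ has shape $\stair_{m+n}//\mu$ and meets (i)--(ii), completing both inclusions.

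The genuinely non-routine content is already absorbed into the two lemmas, so the only things left here are bookkeeping, and I expect the coordinate matching to be the one spot needing care. Concretely, I would verify that deletion followed by standardization sends a balanced tableau to a balanced skew tableau in the sense defined for staircase outer shapes — balance on the staircase depends only on the relative order of the entries to the right of and below a cell, which standardization preserves and from which the deleted upper-left cells drop out cleanly — and I would reconcile the column indexing that identifies Lemma~\ref{lem:shortest balanced}'s cell $(i,n+1)$ with the leftmost surviving cell of row $i$ in $B$. This final step of translating the full-shape statement of Lemma~\ref{lem:shortest balanced} into the skew-shape statement of the corollary is entirely mechanical but is the place where the argument must be written out most carefully.
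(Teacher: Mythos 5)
Your proposal is correct and takes essentially the same route as the paper: the forward inclusion applies Lemma~\ref{lem:shortest balanced} and transfers its conditions \emph{(ii)} and \emph{(iii)} through deletion-and-standardization, and the reverse inclusion applies the construction of Lemma~\ref{lem:shortest balanced inverse}, exactly as the paper's proof does. Your explicit counting observation that a chain is shortest if and only if every cell of $\mu$ is intra-class (since only non-edge cells can be intra-class) is left implicit in the paper, absorbed into the statement and proof of Lemma~\ref{lem:shortest balanced}, so making it explicit is a harmless refinement rather than a different argument.
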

\begin{proof}
Let $T$ be a balanced tableau of shape $\stair_{m+n}$ associated to a shortest maximal chain in $\tp{L_{m,n}}$. By Lemma~\ref{lem:shortest balanced} we know that all the cells in $\mu = n^{m-1}(n-1)(n-2)\ldots 21$ will be associated to intra-class hyperplanes, so will essentially disappear. 
If we standardize the remaining cells of $T$ to form a skew tableau $B$ by replacing the $i$th smallest number with $i$, then  because $T$ satisfies Lemma~\ref{lem:shortest balanced} {\it(ii)} and {\it(iii)} we know that $B$ satisfies conditions {\it(i)} and {\it(ii)} of this lemma. This proves if $B\in \SMB{L_{m,n}}$, then $B$ satisfies all conditions in this corollary.

Now let us suppose $B$ is a balanced skew tableau as described in this corollary. We want to show that there is balanced tableau $T$ of shape $\stair_{m+n}$ associated to a shortest maximal chain in $\tp{L_{m,n}}$. This means we require first that  $T$ have all cells in $\mu = n^{m-1}(n-1)(n-2)\ldots 21$ be associated to intra-class hyperplanes. We secondly require that if we delete all cell in $\mu$ and standardize, then we will get $B$, i.e. the cells in $B$ and $T$ restricted to $\stair//\mu$ have the same relative order. 

We construct $T$ as in Lemma~\ref{lem:shortest balanced inverse}. By construction the cells in $B$ and $T$ restricted to $\stair//\mu$ have the same relative order. Also by Lemma~\ref{lem:shortest balanced inverse} all cells in $\mu$ are associated to intra-class hyperplanes. 
\end{proof}

We now will focus on showing that $\SMB{L_{m,n}}$ is in bijection with reduced words of 
$$w_{m,n}=[m+n,m-1,m-2,\ldots, 1, m, m+1, \ldots, m+n-1].$$ 
To do this we will be relying on  an underlying  structure of $\cR(w_{m,n})$ created from braid moves and commutation moves. 
Tits~\cite{T69} proved that  we can transform a reduced word of $w\in\fS_N$ into any other reduced word of $w$ by a sequence of small mutation:
\begin{enumerate}
\item Exchange concurrent $i(i+1)i$ with $(i+1)i(i+1)$, or vice versa, which we will call a {\it braid move}.
\item Exchange adjacent $ij$  with $ji$, or vice versa,  if $|i=j|>1$, which we will call a  {\it commutation move}.
\end{enumerate}
\begin{remark}
If we have a set $S$ such that $\sigma\in S$ for some reduced word $\sigma\in\cR(w)$, you can get to any element in $S$ from $\sigma$ with braid or commutation moves and  $S$ is closed under all braid and commutation moves, then $S=\cR(w)$. 
\label{rmk:R_closed}
\end{remark}

Since the reduced words  of the longest permutation $\cR(\longestk{N})$ are in bijection with hyperplane walks and balanced tableaux of shape $\stair_{N}$, as stated in  Theorem~\ref{thm:longestwordSets}, there are associated braid and commutation moves on hyperplane walks and balanced tableaux. The braid and commutation moves on hyperplane walks $H_{a_1,b_1}H_{a_2,b_2}\ldots H_{a_l,b_l}$ are as follows. 
\begin{enumerate}
\item Braid moves apply to contingent triples of the form $H_{a,b}H_{a,c}H_{b,c}$ with $a<b<c$, which are switched with $H_{b,c}H_{a,c}H_{a,b}$ or vice versa. 
\item Commutation moves apply to adjacent $H_{a_i,b_i}H_{a_{i+1},b_{i+1}}$ if $a_i,b_i,a_{i+1},b_{i+1}$ are all distinct, which are switched with $H_{a_{i+1},b_{i+1}}H_{a_i,b_i}$. 
\end{enumerate}
We can translate the commutation and braid moves to the balanced tableaux in  $\text{bal}(\stair_{N})$. 
\begin{enumerate}
\item A braid applies to the cells with numbers $i,i+1,i+2$ if they appear in cells $(a,N-c+1),(a,N-b+1),(b,N-c+1)$ for some $a<b<c$ with $i+1$ appearing in cell $(a,N-c+1)$. The braid move exchanges the $i$ and $i+2$. 
\item A commutation applies to the cells with numbers $i,i+1$ if they do not appear in the same row, the same column or in cells of the form $(a,N-b+1),(c,N-a+1)$ for some $a,b,c$. The commutation move exchanges the $i$ and $i+1$. 
\end{enumerate}

We will now describe the bijection $$\varsigma:\SMB{L_{m,n}}\rightarrow \cR(w_{m,n}).$$ 
Given $B\in \SMB{L_{m,n}}$ let $I_1=\{B(m+n-1,1),B(m+n-2,2),\ldots, B(m,n)\}$ and $I_2$ be the remaining numbers in $B$ in the remaining   columns $j>n$. We know that the portion of the balanced skew tableaux in columns $j>n$ is balanced. Let $R$ be a standardized version of this balanced tableaux. We know by Theorem~\ref{thm:longestwordSets} that there is an associated reduced word $\rho$ of $\longestk{m}$. We map 
$B\mapsto \sigma$
where $\sigma$ is $\rho$ on the indices in $I_2$ and $\sigma$ is the decreasing sequence $m+n-1, m+n-2, \ldots, m$ on indices $I_1$. See Figure~\ref{fig:smc example} for an example. 


Before we can prove that $\varsigma$ is a bijection we will first need to prove a technical lemma about reduced words of the longest permutations. 

\begin{lemma}
\label{lem:omega_column1}
Given a reduced word $\sigma\in \cR(\longestk{N})$ and associated balanced tableau $B$. Let $I$ be the collection of numbers in column one of $B$. Then $\sigma$ restricted to indices $I$ is $(N-1)(N-2) \ldots 1$. Additionally, this $N-1$ is the first $N-1$ in $\sigma$. 
\end{lemma}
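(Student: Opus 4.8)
The plan is to track the trajectory of the largest value $N$ through the chain of permutations $\ii = \wch{0} \lessdot \wch{1} \lessdot \cdots \lessdot \wch{M} = \longestk{N}$ (with $M = \binom{N}{2}$) associated to $\sigma = \sigma_1 \sigma_2 \cdots \sigma_M$, and to identify the entries recorded in column one of $B$ with exactly the steps at which $N$ moves.

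First I would recall the correspondence between $\sigma$, its hyperplane walk $H = H_{a_1,b_1}\cdots H_{a_M,b_M}$, and the balanced tableau $B$: the $k$-th hyperplane $H_{a_k,b_k}$ is recorded in cell $(a_k, N-b_k+1)$ of $B$, and $H_{a_k,b_k}$ separates $\wch{k-1}$ from $\wch{k}$ with $\wch{k-1}(\sigma_k) = a_k$ and $\wch{k-1}(\sigma_k+1) = b_k$, where $a_k < b_k$. Column one of $B$ corresponds to $N-b_k+1 = 1$, that is, $b_k = N$. Hence the set $I$ of entries in column one is precisely $\{k : b_k = N\}$, the set of steps at which the value $N$ is the larger (hence, in $\wch{k-1}$, the right) entry of the swapped pair, i.e. the steps at which $N$ moves one position to the left.

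Next I would establish the key monotonicity. Because $N$ is the global maximum, it can never be the smaller (left) entry of an ascending adjacent transposition, so $N$ never moves rightward along the chain; it therefore travels from position $N$ in $\ii$ to position $1$ in $\longestk{N}$ by exactly $N-1$ leftward steps. At the step $k \in I$ where $N$ passes from position $p$ to position $p-1$, the transposition $\sigma_k$ acts on positions $p-1$ and $p$ with $b_k = N$ sitting at position $p$, so $\sigma_k = p-1$. Reading the steps of $I$ in the order they occur in $\sigma$, the value $N$ occupies positions $N, N-1, \ldots, 2, 1$ in turn, so the corresponding letters are $N-1, N-2, \ldots, 1$. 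This yields $\sigma|_I = (N-1)(N-2)\cdots 1$.

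Finally, for the additional claim I would observe that the only adjacent transposition touching position $N$ is $s_{N-1}$; hence position $N$ retains its initial value $N$ until the first occurrence of the letter $N-1$ in $\sigma$, at which moment $N$ makes its first leftward move. Therefore the first $N-1$ in $\sigma$ coincides with the leading letter of $\sigma|_I$. I do not expect a serious obstacle here: the argument is clean bookkeeping, and the only point needing care is the strict leftward monotonicity of $N$ together with the fact that successive leftward moves use strictly decreasing positions, both of which follow immediately from $N$ being the maximal value.
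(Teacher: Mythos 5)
Your proof is correct, but it takes a genuinely different route from the paper. The paper proves Lemma~\ref{lem:omega_column1} by induction over the reduced-word graph: it verifies the statement on one explicit reduced word (the column-by-column filling of the balanced tableau, giving $((N-1)\cdots 1)((N-1)\cdots 2)\cdots(N-1)$), invokes Tits' theorem that all of $\cR(\longestk{N})$ is connected by braid and commutation moves, and then does a case analysis showing each such move preserves both the decreasing subword on the column-one indices and the ``first $N-1$'' property. You instead argue directly: you identify column one of $B$ with the steps $k$ whose hyperplane is $H_{a_k,N}$, i.e.\ the steps at which the value $N$ moves, observe that $N$, being maximal, can only ever be the right entry of an ascent swap and hence marches monotonically left from position $N$ to position $1$ in exactly $N-1$ unit steps, and read off that the letters at those steps are $N-1,N-2,\ldots,1$ in order; the ``first $N-1$'' claim follows because only $s_{N-1}$ touches position $N$, which holds the value $N$ until its first move. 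All the ingredients you use (the cell $(a,N-b+1)$ convention, the cover description via $H_{u(i),u(i+1)}$) are exactly as set up in the paper, so there is no gap. Your argument is shorter and more transparent --- it explains structurally why column one records the trajectory of $N$, rather than verifying invariance move by move --- while the paper's proof has the side benefit of exercising the braid/commutation calculus on balanced tableaux that it immediately reuses in the closure arguments of Theorem~\ref{thm:varsigma bijection}, where a trajectory-style shortcut is not available.
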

\begin{proof}
Consider the balanced tableau $B$. Fill column one bottom to top with $1,2,\ldots, N-1$. Fill column 2 bottom to top with $N,N+1,\ldots, 2N-3$ and so on until we place all the numbers 1 through $\binom{N}{2}$. The associated reduced word is $$((N-1)(N-2)\cdots 1)((N-1)(N-2)\cdots 2)((N-1)(N-2)\cdots 3)\cdots ((N-1)(N-2))(N-1).$$ We can see on indices $I=[n]$, the first column of $B$, we have  $(N-1)(N-2) \ldots 1$, and this $N-1$ is the first $N-1$ in $\sigma$. 

Suppose $\sigma\in \cR(\longestk{N})$ and its associated balanced tableau $B$ satisfy the conditions of this lemma. If we can show that the lemma is still true after a braid move or commutation move on $B$, then we have proven the lemma. Suppose we get $\sigma'$ from a commutation move on positions $i$ and $i+1$, which means we switch $i$ and $i+1$ in $B$ to get $B'$ associated to $\sigma'\in \cR(\longestk{N})$. 
If  $i$ and $i+1$ both appear in columns $j>1$ then we are done since column one is unchanged. 
We can not have both $i$ and $i+1$ in column one since then  $\sigma_i$ and $\sigma_{i+1}$ would be consecutive by our assumptions and we couldn't perform a commutation move. Only one case is left,  that is when exactly one of $i$ or $i+1$ appears in column one. Let $I$ and $I'$ be the set of entries in column one of $B$ and $B'$ respectively. If $i\in I$, then $I'=I-\{i\}\cup \{i+1\}$. 
By assumption $\sigma_i=a$ is part of the decreasing subsequence $(N-1)(N-2)\ldots 1$ of $\sigma$ restricted to indies $I$. Also by assumption $\sigma'_{i+1}=a$, so the subsequence of $\sigma'$ restricted to $I'=I-\{i\}\cup \{i+1\}$ is also $(N-1)(N-2)\ldots 1$. 
It is also easy to see that if the  $N-1$ in $\sigma$ was the the first $N-1$ in $\sigma$, then the $N-1$ in $\sigma'$ was the the first $N-1$ in $\sigma'$.
The case where $i+1\in I$ is similar. 

Suppose $\sigma\in \cR(\longestk{N})$ and its associated balanced tableau $B$ satisfy the conditions of this lemma. Suppose we get $\sigma'$ from a braid move on positions $i$, $i+1$ and $i+2$, which means we switch around $i$, $i+1$ and $i+2$ in $B$ to get $B'$ associated to $\sigma'\in \cR(\longestk{N})$. 
If  $i$, $i+1$ and $i+2$ all appear in columns $j>1$ then we are done since column one is unchanged. 
Suppose that at least one of $i$, $i+1$, $i+2$ is in column one. 
By the description of braid moves on balanced tableaux stated before this lemma, we know that $i+1$ must be in column one and that exactly one of $i$ or $i+2$ is also in column one. Let $I$ and $I'$ be the set of entries in column one of $B$ and $B'$ respectively. If $i\in I$, then $I'=I-\{i\}\cup \{i+2\}$. By assumption $\sigma_i\sigma_{i+1}\sigma_{i+2}=(a+1)a(a+1)$ where $\sigma_i\sigma_{i+1}=(a+1)a$ is part of the decreasing subsequence $(N-1)(N-2)\ldots 1$ of $\sigma$ restricted to indies $I$. Also by assumption $\sigma'_{i}\sigma'_{i+1}\sigma'_{i+2}=a(a+1)a$ so $\sigma'_{i+1}\sigma'_{i+2}=(a+1)a$. Thus, the subsequence of $\sigma'$ restricted to $I'=I-\{i\}\cup \{i+2\}$ is also $(N-1)(N-2)\ldots 1$. It is also easy to see that if the  $N-1$ in $\sigma$ was the the first $N-1$ in $\sigma$, then the $N-1$ in $\sigma'$ was the the first $N-1$ in $\sigma'$. The case where $i+2\in I$ is similar. 
\end{proof}

We now have all the details needed to  prove that $\varsigma$ is a bijection. See Figure~\ref{fig:smc example} for an example. 

\begin{theorem}
\label{thm:varsigma bijection}
For $m\geq 1$ and $n\geq 0$, the map $\varsigma$ is a bijection $\SMB{L_{m,n}}$ to $\cR(w_{m,n})$ where $w_{m,n}=[m+n,m-1,m-2,\ldots, 1, m, m+1, \ldots, m+n-1]$.
\end{theorem}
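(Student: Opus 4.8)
The plan is to prove injectivity of $\varsigma$ by an explicit reconstruction and surjectivity by a closure argument under braid and commutation moves, exactly in the spirit of Remark~\ref{rmk:R_closed} and Lemma~\ref{lem:omega_column1}. Write $d=(m+n-1)(m+n-2)\cdots m$ for the strictly decreasing word on the letters $\ge m$. By the definition of $\varsigma$, for $B\in\SMB{L_{m,n}}$ the word $\sigma=\varsigma(B)$ places the letters $m,m+1,\dots,m+n-1$, in decreasing value, on the increasing index set $I_1$ (this is condition (i) of Corollary~\ref{cor:SMB conditions}), while the letters $1,\dots,m-1$ occupy $I_2$ and spell out a reduced word $\rho\in\cR(\longestk m)$ read off, via Theorem~\ref{thm:longestwordSets}, from the balanced tableau of shape $\stair_m$ sitting in columns $>n$ of $B$.

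The first thing I would establish is the structural lemma that plays the role of Lemma~\ref{lem:omega_column1}: \emph{for every $\tau\in\cR(w_{m,n})$, each letter $m,\dots,m+n-1$ occurs exactly once, the subword of $\tau$ on these letters is precisely $d$, and the subword on $\{1,\dots,m-1\}$ is a reduced word of $\longestk m$.} I would prove this by the closure principle of Remark~\ref{rmk:R_closed}: produce one base word with the property, then show braid and commutation moves preserve it. A commutation move cannot interchange two letters $\ge m$, since in a word whose $\ge m$-subword is $d$ any two adjacent such letters are consecutive entries of $d$ and hence differ by $1$; the statement about $\{1,\dots,m-1\}$ follows because $w_{m,n}$ acts as $\longestk m$ on the block $\{1,\dots,m\}$. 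For the base word take $B_0\in\SMB{L_{m,n}}$ whose diagonal cells carry the entries $1,\dots,n$; then $\varsigma(B_0)=d\,\rho$, and a direct check shows this represents $w_{m,n}$: applying $d$ to the identity gives $[1,\dots,m-1,m+n,m,\dots,m+n-1]$, after which $\rho$ reverses the first $m$ positions to produce $[m+n,m-1,\dots,1,m,\dots,m+n-1]=w_{m,n}$; since $\ell(d\rho)=n+\binom m2=\ell(w_{m,n})$ the word is reduced. Thus $\varsigma$ is well defined and its image meets $\cR(w_{m,n})$.

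Injectivity is then immediate from the structural lemma: from $\sigma=\varsigma(B)$ the value $\sigma_k=m+n-j$ forces the entry $k$ of $B$ into the diagonal cell $(m+n-j,j)$, the remaining indices form $I_2$, their subword is $\rho$, and Theorem~\ref{thm:longestwordSets} recovers the balanced tableau of $\stair_m$ and hence the relative order of the entries of $B$ in columns $>n$. Since the shape $\stair_{m+n}//\mu$ is fixed, $B$ is completely determined, so $\varsigma$ is injective.

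For surjectivity I would transport moves through $\varsigma$. Using the translation of braid and commutation moves to balanced tableaux given just before Lemma~\ref{lem:omega_column1}, I would show that each move taking $\tau=\varsigma(B)$ to a neighbour $\tau'\in\cR(w_{m,n})$ is realized by the corresponding move on the balanced tableau underlying $B$, producing some $B'\in\SMB{L_{m,n}}$ with $\varsigma(B')=\tau'$: the structural lemma keeps the $\ge m$-subword equal to $d$, so conditions (i)--(ii) of Corollary~\ref{cor:SMB conditions} survive, while the cells in columns $>n$ transform among balanced tableaux of $\stair_m$, preserving the balanced-skew condition. Hence the image of $\varsigma$ is closed under the moves generating $\cR(w_{m,n})$ and contains $\varsigma(B_0)$; since $\cR(w_{m,n})$ is connected under these moves (Tits, as in Remark~\ref{rmk:R_closed}), $\varsigma$ is onto, and with injectivity it is a bijection. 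The hard part will be the braid case of the move-invariance: one must rule out the appearance of a factor $(m-1)\,m\,(m-1)$, whose braiding would make the letter $m$ occur twice and break the decreasing subword $d$. Showing that such a factor never occurs — equivalently, that the diagonal letters can never be reordered or duplicated by a legal move on the skew tableau — is the heart of the argument and is precisely what the analogue of Lemma~\ref{lem:omega_column1} is meant to supply.
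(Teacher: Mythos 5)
Your overall architecture mirrors the paper's proof quite closely: injectivity by explicit reconstruction of $B$ from the letters $\ge m$ versus $<m$, the base word $d\rho=(m+n-1)(m+n-2)\cdots m\,\rho$ checked directly against $w_{m,n}$, Tits connectivity plus closure under braid and commutation moves, and a structural lemma in the role of Lemma~\ref{lem:omega_column1}. (One small note: the braid exclusion you flag as the heart of the matter is actually easy once you carry the structural property along in the induction — a factor $(m-1)\,m\,(m-1)$ makes the two letters $m-1$ adjacent in the subword on $\{1,\dots,m-1\}$, so that subword contains $(m-1)(m-1)$ and cannot be reduced; this is exactly how the paper dispatches it. Where Lemma~\ref{lem:omega_column1} genuinely earns its keep is in the commutation cases, e.g.\ ruling out a commutation of the letter $m$ past the first $m-1$ of $\rho$, which is what protects condition (ii) of Corollary~\ref{cor:SMB conditions}.)

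However, there is a genuine gap: you never prove that $\varsigma(B)\in\cR(w_{m,n})$ for an \emph{arbitrary} $B\in\SMB{L_{m,n}}$ — you verify this only for the base tableau $B_0$ and then declare $\varsigma$ ``well defined.'' Your surjectivity argument establishes that the image of $\varsigma$ \emph{contains} $\cR(w_{m,n})$, and injectivity is fine as a statement about sets of words, but injectivity plus containment in one direction does not give a bijection onto $\cR(w_{m,n})$: if some $\varsigma(B)$ lay outside $\cR(w_{m,n})$, the theorem as stated would fail. This containment cannot be waved through from your structural lemma, because the structural conditions alone are not sufficient: for $m=3$, $n=1$ the word $1213$ contains the letter $3$ exactly once and its subword on $\{1,2\}$ is $121\in\cR(\longestk{3})$, yet it is a reduced word of $[3,2,4,1]\ne w_{3,1}=[4,2,1,3]$ — the \emph{interleaving} of $d$ with $\rho$ matters, since $m$ and $m-1$ do not commute. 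The paper closes exactly this hole: using Corollary~\ref{cor:SMB conditions}(i)--(ii) together with Lemma~\ref{lem:omega_column1}, it shows $\max(I_1)=B(m,n)$ precedes every entry of column $n+1$, where the first $m-1$ of $\rho$ lives, so every $\rho$-letter occurring before the last $d$-letter is at most $m-2$; hence the $d$-letters commute to the front, $\varsigma(B)$ is move-equivalent to $d\rho$, and since moves preserve the underlying permutation and $|\varsigma(B)|=n+\binom{m}{2}=\ell(w_{m,n})$, every $\varsigma(B)$ is indeed a reduced word of $w_{m,n}$. You need to add this step (your structural lemma and Corollary~\ref{cor:SMB conditions} supply all the ingredients), after which your argument goes through.
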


\begin{proof}
Let $S$ be the set of all $\varsigma(B)$ for $B\in \SMB{L_{m,n}}$. We will first show that $\varsigma$ is a bijection  from $\SMB{L_{m,n}}$ to $S$. It won't be obvious at this point that the elements of $S$ are reduced words. Next we will show that $S=\cR(w_{m,n})$ by using the strategy described in Remark~\ref{rmk:R_closed}. This will imply our result. 

First let us show that $\varsigma$ is a bijection  $\SMB{L_{m,n}}$ to $S$. We know  by the construction of $S$ that $\varsigma$ is surjective. We can show that $\varsigma$ is  injective by showing how we can recover $B$ from $\varsigma(B)=\sigma$. Let $B\in\SMB{L_{m,n}}$ and $I_1$, $I_2$ and $\rho$ be defined as in the definition of $\varsigma$. Because $\rho\in \cR(\longestk{m})$, the maximum letter in $\rho$ is $m-1$. Since $\rho$ is placed on indices $I_2$ with letters at most $m-1$ and we only place letters at least $m$ on the remaining indices of $\sigma$, we easily can recover $I_1$ and $I_2$ from $\varsigma(B)=\sigma$. This means we can recover $\rho$ as well, which implies we can reconstruct $B$ from $\sigma$. Hence $\varsigma$ is injective and thus a bijection from $\SMB{L_{m,n}}$ to $S$. 

We will now show that $S$ contains a reduced word of $w_{m,n}$. After that we will show that all other elements of $S$ are connected to this initial element via braid and commutation moves. Consider the  word $\sigma=(m+n-1)(m+n-2)\cdots m \rho$ for some fixed $\rho\in\cR(\longestk{m})$. We can see that the length of this word is $n+\binom{m}{2}$ which equals the length of $\sigma$, $\inv(w_{m,n})$. Now we will argue $\sigma$ is a reduced word of $w_{m,n}$. 
Let us apply $\sigma$ to the identity permutation $[1,2,\ldots ,m+n]$. If we apply the first portion $(m+n-1)(m+n-2)\cdots m$ of $\sigma$ to the identity we get $[1,2,\ldots m-1,m+n,m,m+1,\ldots, m+n-1]$. If we further apply $\rho$ we reverse the order of the first $m$ numbers and will get $w_{m,n}$. Hence $\sigma\in\cR(w_{m,n})$. 
Now consider the skew tableau $B$ of shape $\stair_{m+n}//\mu$ where $\mu = n^{m-1}(n-1)(n-2)\ldots 21$. Fill cells $(m+n-1,1), (m+n-2, 2),\ldots, (m,n)$ with $1,2,\ldots, n$ respectfully. There is some balanced tableau $R$ associated to $\rho$ of shape $\stair_m$. Increase all fillings of $R$ by $n$ and place a copy of this new $R$ in the right-most $m-1$ columns of $B$, i.e. in columns $j>n$. Note that $B$ satisfies all the conditions in Corollary~\ref{cor:SMB conditions} and so $B\in \SMB{L_{m,n}}$ proving that $\sigma\in S$. 

Suppose that we have a general $\sigma'\in S$ where $\varsigma(B')=\sigma'$ and $I_1'$, $I_2'$, and $\rho'$ are as in the definition of $\varsigma$. We will show that we can transform $\sigma'$ into $\sigma$ via commutation and braid moves. Later we will show that $S$ is closed under braid and commutation moves, so we will not prove right now whether each step of the transformation from $\sigma'$ to $\sigma$ is in $S$ since that will follow from $S$ being closed under commutation and braid moves. To transform  $\sigma'$ to $\sigma$ we will first perform commutation moves to move the subsequence $m+n-1, m+n-2,\ldots m$ on indices $I_1'$ in $\sigma'$ to indices $[n]$ so that we have $\sigma''=(m+n-1)(m+n-2)\ldots m \rho'$. This is possible if the maximum of $\sigma'$ on indices $[\max(I_1')]-I_2'$ is $m-2$.  By Corollary~\ref{cor:SMB conditions}{\it(i)} we have that $\max(I_1')=B'(m,n)$ and by Corollary~\ref{cor:SMB conditions}{\it(ii)} we have that $B'(m,n)<B'(i,m)$ for all $i\in[m-1]$. This means that all portions of $\rho'$ coming from column $n+1$ of $B'$ are not part of $\sigma'$ restricted to $[\max(I_1')]-I_2'$. By Lemma~\ref{lem:omega_column1} this means that the first $m-1$ in $\rho'$ is not part of $\sigma'$ restricted to $[\max(I_1')]-I_2'$. Hence the maximum of $\sigma'$ restricted to $\sigma'$ restricted to $[\max(I_1')]-I_2'$ is at most $m-2$. We can transform $\sigma''$ into $\sigma$ with braid and commutation moves by transforming $\rho'$ into $\rho$, which is possible since $\rho',\rho\in \cR(\longestk{m})$. All we have left to show is that $S$ is closed under braid and commutation moves. 

Suppose that we have a $\sigma\in S$ and $\sigma'$ is $\sigma$ after a commutation move performed on indices $i$ and $i+1$. We will show that $\sigma'\in S$. There are three cases. The first case is when  $i$ and $i+1$  are both in $I_2$. This means that the commutation move is purely happening to the associated reduced word  $\rho\in\cR(\longestk{m})$, which results in another reduced word $\rho'\in \cR(\longestk{m})$. This means $B'$ associated to $\sigma'$ may only differ from $B$ on columns $j>n$, which are changed to be associated to $\rho'$. Then $B'$ will also satisfy all conditions in  Corollary~\ref{cor:SMB conditions} proving that  $B'\in\SMB{L_{m,n}} $. Thus, $\sigma'\in S$. 
The second case is when one of $i$ and $i+1$  is in $I_1$ and the other is in  $I_2$.  This means in $B'$ we switch the numbers $i$ and $i+1$ where one is in columns $j\leq n$ and the other is in columns $j>n$. 
The only condition $B'$ may not satisfy is condition {\it(ii)} in Corollary~\ref{cor:SMB conditions}. This means that $B'(m,n)$ is no longer smaller than everything in column $n+1$. Then in $B$, we must have had $i$ in cell $(m,n)$ and $i+1$ in column $n+1$. Thus, $\sigma(i)=m$. By Lemma~\ref{lem:omega_column1} since $i+1$ is the smallest entry in column $n+1$ of $B$ the reduced word $\rho$ has $m-1$ at index $i+1$. Hence, we couldn't perform a commutation move and we have a contradiction. The third and last case is if $i$ and $i+1$ are both in $I_1$. However, this would means that $\sigma_i$ and $\sigma_{i+1}$ are consecutive by Lemma~\ref{lem:omega_column1} and again we couldn't perform a commutation move. Thus, $S$ is closed under commutation moves.

Suppose that we have a $\sigma\in S$ and $\sigma'$ is $\sigma$ after a braid move performed on indices $i$, $i+1$ and $i+2$. We will show that $\sigma'\in S$. By the construction of $\sigma$ from its associated $B\in\SMB{L_{m,n}}$ using  $\varsigma$, the braid move can not involve any $\sigma_j>m$ since there is exactly one copy of $a\in [m,m+n-1]$ in $\sigma$ by construction. 
Suppose that the braid move involves the unique $m$ in $\sigma$. This means that $\sigma_{i}\sigma_{i+1}\sigma_{i+2}=(m-1)m(m-1)$ where $i+1\in I_1$ and $i,i+2\in I_2$. 
As a consequence the subword $\sigma_{i}\sigma_{i+2}=(m-1)(m-1)$ is part of a reduced word of $\longestk{m}$, which is a contradiction. 
The only case we have to consider is when the braid move happens on indices $i,i+1,i+2$, which are all in  $I_2$. 
This means in $B$, the braid move is happening on columns $j>n$ to form $B'$. This also means that $B'$ is $B$ except for a mixing around  the numbers $i$, $i+1$ and $i+2$ that appear on columns $j>n$. This will not affect any  condition on $B$ stated in Corollary~\ref{cor:SMB conditions}. Thus, $B'\in\SMB{L_{m,n}} $, which implies that $\sigma'\in S$ and $S$ is closed under braid moves. 
\end{proof}

\begin{figure}
\begin{center}
\begin{tikzpicture}
\begin{scope}[shift={(-7,3)}]
\node at (0,0) {$\SMB{L_{m,n}}$};
\draw[->] (1.5,0)--(2.5,0) node[midway,above] {$\varsigma$};
\node at (4,0) {$\cR(w_{m,n})$};
\draw[->] (0,-1)--(0,-2); 
\draw[->] (4,-1)--(4,-2)node[midway,right] {$\psi$}; 
\node at (0,-3) {$\balanced{\stair_{m+n}}$};
\node at (4,-3) {$\SMR{L_{m,n}}$};
\draw[->] (1.5,-3)--(2.5,-3);
\end{scope}

\begin{scope}[shift={(0,3)}]
\node at (0,0){\begin{ytableau}
{ }&{ }&4&1\\
{ }&{ }&5\\
{ }&3\\
2
\end{ytableau}};
\node at (1.5,0) {$=B$};
\draw[->] (1.75,-.5)--(1.75,-2.5);
\end{scope}
\begin{scope}[shift={(0,0)}]
\node at (0,0){\begin{ytableau}
{4}&{7}&9&1\\
{5}&{8}&10\\
{3}&6\\
2
\end{ytableau}};
\node at (1.5,0) {$=T$};
\node at (1.5,-1) {$w=1432143243$};
\end{scope}
\begin{scope}[shift={(7,3)}]
\draw[->] (-4.5,0)--(-1.5,0) node[midway,above] {$\varsigma$};
\node at (0,0) {$\sigma = 14321$};
\end{scope}
\begin{scope}[shift={(7,0)}]
\draw[->] (-5,0)--(-4,0);
\draw[->] (0,2.5)--(0,.5)node[midway,right] {$\psi$};
\node at (0,0) {$\gamma=(1,2)(1,5,4,3,2)(2,5,4,3)(4,5)(3,4)$};
\end{scope}
\begin{scope}[shift={(0,-2)}]
\node at (0,0) {$[1,2,3,4,5],[2,1,3,4,5],[5,2,1,3,4],[5,4,2,1,3],[5,4,2,3,1],[5,4,3,2,1]\in \mathcal{SMC}(L_{3,2})$};
\end{scope}

\end{tikzpicture}
\end{center}
\caption{On the right we have an example of various objects related to shortest maximal chains of $\tp{L_{3,2}}$ related by bijections. 
We have the sequence of $3$-132 avoiding permutations at the bottom which is associated to $w\in \cR(w_{3,2})$, $w\in \mathcal{SMR}(L_{3,2})$, $B\in \mathcal{SMB}(L_{3,2})$, and $T\in \balanced{\stair_{m,n}}$ associated to $B$ as described in Lemma~\ref{lem:shortest balanced inverse} and pictured in Figure~\ref{fig:smc technical2}.  On the left we have a diagram that we prove to be commuting in Theorem~\ref{thm:R to SMR bijection}. }
\label{fig:smc example}
\end{figure}

In the following theorem we summarize all objects we have proven to be in bijection with shortest maximal chains of $\tp{L_{m,n}}$. We include one additional object here $\SMC{L_{m,n}}$, which is defined in detail in Section~\ref{sec:further}. In essence, $\SMC{L_{m,n}}$ is the collection of sequence of  $m$-132 avoiding permutations as described in Theorem~\ref{thm:lollipop chains}. Each permutation is a maximal element of its associated equivalence class in a shortest maximal chain of $\tp{L_{m,n}}$.  
An example can be found in Figure~\ref{fig:smc example}.

\begin{theorem} 
\label{thm:SMC bijection}
Let $m\geq 1$ and $n\geq 0$. The following are in bijection:
\begin{enumerate}[(i)]
\item Shortest maximal chains of $\tp{L_{m,n}}$, which are $\SMC{L_{m,n}}$.
\item Skew balanced tableaux $B\in \SMB{L_{m,n}}$ of the the shape $\stair_{m+n}//\mu$ where $\mu = n^{m-1}(n-1)(n-2)\ldots 21$ such that $B(m+n-1,1)<B(m+n-2,2)<\cdots B(m,n)$ and $B(m,n)<B(i,m)$ for any $i\in [m-1]$. 
\item Reduced words of $w_{m,n}=[m+n,m-1,m-2,\ldots, 1, m, m+1, \ldots, m+n-1]$.
\end{enumerate}
\end{theorem}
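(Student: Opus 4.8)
The plan is to assemble the theorem from results already proved, since its three items have essentially been analyzed pairwise in the preceding development. The equivalence of (ii) and (iii) is exactly the content of Theorem~\ref{thm:varsigma bijection}: the map $\varsigma$ is shown there to be a bijection from $\SMB{L_{m,n}}$ onto $\cR(w_{m,n})$, and Corollary~\ref{cor:SMB conditions} gives the explicit description of the members of $\SMB{L_{m,n}}$ as precisely the skew balanced tableaux of shape $\stair_{m+n}//\mu$ satisfying conditions (i) and (ii). So nothing new is needed for the equivalence of (ii) and (iii) beyond citing these two results.

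The remaining work is to connect (i) to (ii). First I would recall from Theorem~\ref{thm:lollipop chains} that maximal chains of $\tp{L_{m,n}}$ are in bijection with the $L_{m,n}$-balanced tableaux $\MB{L_{m,n}}$, where each chain maps to the balanced tableau of its lifted hyperplane walk with the intra-class cells deleted and the remainder standardized. The key observation, which I would make explicit, is that the length of a maximal chain equals the number of inter-class hyperplanes in its walk, that is, the number of cells surviving in the associated element of $\MB{L_{m,n}}$. Consequently, minimizing chain length is the same as minimizing the number of surviving cells, and the shortest maximal chains are exactly the chains whose $G$-balanced tableaux lie in $\SMB{L_{m,n}}$, which is by definition the subfamily of $\MB{L_{m,n}}$ with the fewest cells. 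Restricting the bijection of Theorem~\ref{thm:lollipop chains} to these minimal tableaux then yields the bijection between $\SMC{L_{m,n}}$ and $\SMB{L_{m,n}}$, matching (i) with (ii). Composing, (i), (ii), and (iii) form the stated chain of bijections.

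The only point requiring care, and the likeliest hidden obstacle, is verifying that the combinatorial description of $\SMB{L_{m,n}}$ in Corollary~\ref{cor:SMB conditions} truly captures the global minimizers of cell count among all of $\MB{L_{m,n}}$, rather than merely a family of tableaux achieving some locally minimal shape. Here Lemma~\ref{lem:shortest balanced} supplies one direction, namely that any tableau of a shortest chain has all cells of $\mu$ intra-class and hence deleted, forcing the skew shape $\stair_{m+n}//\mu$; and Lemma~\ref{lem:shortest balanced inverse} supplies the converse, that every such skew balanced tableau is realized by an honest balanced tableau all of whose $\mu$-cells are intra-class. Together they pin down that the minimum cell count is exactly $|\stair_{m+n}//\mu| = \binom{m}{2} + n$ and that it is attained precisely on the family singled out in Corollary~\ref{cor:SMB conditions}. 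I would conclude by noting that this common cardinality $\binom{m}{2} + n$ equals $\inv(w_{m,n})$, a consistency check already implicit in the proof of Theorem~\ref{thm:varsigma bijection}, where the candidate reduced word of $w_{m,n}$ is seen to have length $n + \binom{m}{2}$. This confirms that the three descriptions count the same objects and that the composite maps are genuine bijections.
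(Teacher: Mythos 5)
Your proof is correct and takes essentially the same route as the paper, whose own proof is exactly this assembly: the passage from (i) to (ii) via Corollary~\ref{cor:SMB conditions} and from (ii) to (iii) via the bijection $\varsigma$ of Theorem~\ref{thm:varsigma bijection}. The only difference is expository: you spell out what the paper leaves implicit --- that chain length in $\tp{L_{m,n}}$ equals the number of surviving cells in the associated $G$-balanced tableau, so shortest chains correspond precisely to the minimal-cell tableaux $\SMB{L_{m,n}}$, with Lemmas~\ref{lem:shortest balanced} and~\ref{lem:shortest balanced inverse} pinning the minimum at $\binom{m}{2}+n=\inv(w_{m,n})$ --- a useful tightening, but not a different argument.
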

\begin{proof} In  {\it(i)} we have the bijection between shortest maximal chains of $\tp{L_{m,n}}$ and $\SMC{L_{m,n}}$ by definition. We get from {\it(i)} to  {\it(ii)} from  Corollary~\ref{cor:SMB conditions}. Finally, we get from  {\it(ii)} to {\it(iii)} using  Theorem~\ref{thm:varsigma bijection}.
\end{proof}
\section{Expanding on Stanley's Symmetric Function}
\label{sec:further}

In Section~\ref{sec:LMF} we introduced Stanley's symmetric function $\cF_{w}$ for a permutation $w$. We proved in Theorem~\ref{thm:Longest chain quasi Schur} that $\cF_{\longestk{m+n}}$  restricted to the longest chains in $\tp{L_{m,n}}$ is a positive sum of Young quasisymmetric Schur Functions. 
We also proved in Theorem~\ref{thm:SMC bijection} that the shortest maximal chains of $\tp{L_{m,n}}$ are in bijection with reduced words of a specific permutation $w_{m,n}=[m+n,m-1,m-2,\ldots, 1, m, m+1, \ldots, m+n-1]$. Since $\cF_{w}$ is known to be {\it Schur positive}, a positive sum of Schur functions,  for all permutations $w$ this implies that there might be a generalization of $\cF_{\longestk{m+n}}$ that connect these results. We present one possible generalization of Stanley's symmetric function $\cF_{\longestk{N}}$  that encapsulates Theorem~\ref{thm:Longest chain quasi Schur}  and Theorem~\ref{thm:SMC bijection}. 

Recall that Stanley's symmetric function $\cF_{\longestk{N}}$ is defined by summing $Q_{\Des(\sigma),N}$ over all reduced words $\sigma\in\cR(\longestk{N})$, i.e. chains in the weak order $\fS_n$, Theorem~\ref{thm:longestwordSets}. While we have a notion of chains in $\tp{G}$ for a filled graph $G$, we do not have a notion of a descent set for these chains. We note that given  two chains of the Bruhat order $\fS_N$ within the same equivalence class in $\tp{G}$, we are not guaranteed the reduced words associated to the two chains have the same descent set.
Our approach will not view maximal chains of $\tp{G}$ as equivalence classes of maximal chains of $\fS_N$, i.e. $\cR(\longestk{N})$ or equivalently sequences of permutations related by adjacent transpositions. Instead we will view maximal chains of $\tp{G}$ as sequences of permutation, where each permutation is a maximal element of its equivalence class in $\tp{G}$. We will find that these sequences of permutations are related by cycles rather than just adjacent transpositions. 
Specifically, these cycles are what we will call {\it adjacent cycles}, $c\in\fS_N$. Meaning that that there is some $a<b$ where $c(i)=i$ for all $i\notin [a,b]$, $c(i)=i-1$ for $i\in (a,b]$ and $c(a)=b$. We write $c=(a,b,b-1,\ldots, a+1)$. 
See Figure~\ref{fig:smc example} for an example.  We will first prove that when one equivalence class is covers another in $\tp{G}$, for a connected filled graph $G$, that the respective maximal elements are related by an adjacent cycle.

\begin{proposition}
Let $G$ be a connected filled graph and suppose that $u,v\in\fS_n$ with $[\![ u]\!]\lessdot [\![ v]\!]$ in $\tp{G}$ are both maximal elements in their equivalence classes. Then $u\circ c = v$ for some cycle $c=(a,b,b-1,\ldots, a+1)$ with $a<b$. 
\label{prop:chains to cycle seq}
\end{proposition}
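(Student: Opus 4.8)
The plan is to analyze the covering relation $[\![u]\!]\lessdot[\![v]\!]$ in $\tp{G}$ by lifting it to the weak order. Since both $u$ and $v$ are the \emph{maximal} elements of their equivalence classes, and by \cite[Theorem 4.16]{BM} together with \cite{reading2004} each equivalence class is an interval in the weak order, I would first locate a cover relation in $\fS_n$ that witnesses the cover in $\tp{G}$. Because the projection $\Psi_G$ is a lattice quotient map (hence order-preserving and surjective), there exist representatives realizing the cover: I would choose $u'\in[\![u]\!]$ and $v'\in[\![v]\!]$ with $u'\lessdot v'$ in $\fS_n$, so $v'=u'\circ s_i$ for some adjacent transposition $s_i$. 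The key structural fact to exploit is that $u$ is the maximum of its class while $v'$ may be an interior or minimal representative of $[\![v]\!]$, so I expect $v'$ to differ from $v=\max[\![v]\!]$ by exactly the sequence of intra-class transpositions that raise $v'$ to the top of its interval.

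The heart of the argument is to track what happens at the top of each class. First I would use Lemma~\ref{lem:ab-cut}: since $u=u'$ can be taken to be the maximum of $[\![u]\!]$, every adjacent transposition $s_j$ with $u\lessdot u\circ s_j$ in $\fS_n$ must \emph{stay} inside the class or leave it, and the maximality of $u$ forces the relevant $\{u(j+2),\ldots,u(n)\}$ to behave in a controlled way. The plan is to show that passing from $[\![u]\!]$ up to $[\![v]\!]$ corresponds, on the level of maximal representatives, to moving a single value past a consecutive block. Concretely, write $u$ in one-line notation; the cover $u'\lessdot v'$ swaps adjacent entries $u'(i)=a<b=u'(i+1)$ across an inter-class hyperplane $H_{a,b}$, and then reaching the maximum $v$ of $[\![v]\!]$ requires applying a maximal chain of intra-class transpositions. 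By the characterization in Theorem~\ref{thm:intra_class_conditions}, these intra-class moves all involve the same entry being shifted past a contiguous edge-interval of values. I would show the net effect of one inter-class step followed by the intra-class ``fill-up'' to the top is precisely a cyclic left-rotation on an interval $[a,b]$, i.e. the cycle $c=(a,b,b-1,\ldots,a+1)$, so that $u\circ c=v$.

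I expect the main obstacle to be proving that the composite of the inter-class transposition with the block of intra-class transpositions is an \emph{adjacent} cycle (a single contiguous rotation) rather than a more general product of cycles. The danger is that raising $v'$ to the maximum of its class could in principle involve transpositions touching disjoint intervals. To control this, I would argue that because $u$ is already maximal in $[\![u]\!]$, the only newly available ascents after crossing $H_{a,b}$ are those created by this single crossing, and Lemma~\ref{lem:filled_cut} together with the filled, connected hypothesis guarantees that the cut structure forces a single interval of values $[a,b]$ to be exactly the set that can be rearranged. Thus the intra-class moves form one consecutive cascade, and composing them with the initial swap yields the rotation $c$. I would verify on the level of $G$-trees (or equivalently via the cut-set condition of Lemma~\ref{lem:ab-cut}) that after applying $c$ the result $u\circ c$ lies in $[\![v]\!]$, is maximal there, and equals $v$ by uniqueness of the maximal element (Proposition~\ref{prop:max's and min's of L(G)} in the lollipop case, and the interval property in general).
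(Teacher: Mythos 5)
Your proposal follows essentially the same route as the paper's proof: start from the maximal element $u$ itself, cross a single inter-class cover $u\lessdot u\circ s_t$ into $[\![v]\!]$, climb by intra-class adjacent transpositions that shift one value leftward past a contiguous block so that the composite $s_t\circ s_{t-1}\circ\cdots\circ s_{t-k+1}$ is an adjacent cycle, and conclude $u\circ c=v$ by showing the endpoint is maximal in its class (via the cut-set criterion of Lemma~\ref{lem:ab-cut} and path arguments in the filled, connected graph $G$) and invoking uniqueness of class maxima. The one tactical difference is that the paper never needs your uniqueness-of-cascade claim: rather than arguing that the fill-up from $v'$ to $v$ \emph{must} be a single cascade, it exhibits the explicit sequence $u_k=u\circ s_t\circ s_{t-1}\circ\cdots\circ s_{t-k+1}$ with a stopping rule and verifies directly, by a case analysis of every ascent of $u_k$ (the delicate case being the ascent between $a$ and the entry following it, settled by concatenating two paths avoiding the suffix set), that $u_k$ is the maximum of $[\![v]\!]$, which by uniqueness forces $u_k=v$ and silently rules out any competing intra-class moves such as $a$ drifting rightward.
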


\begin{proof}
Suppose that $u$ is the maximal element of $[\![ u]\!]$ and $v$ is the maximal element of $[\![ v]\!]$ for $[\![ u]\!]\lessdot [\![ v]\!]$ in $\tp{G}$ for a connected filled graph $G$. 
This means that $u\lessdot u_1$ for some $u_1\in [\![ v]\!]$ where $u\circ s_{t}=u_1$. Consider $u_k=u\circ s_t\circ s_{t-1}\circ\cdots\circ s_{t-k+1}$ where $u=u_0$. 
Note that $s_t\circ s_{t-1}\circ\cdots\circ s_{t-k+1}$ is an adjacent cycle for all $k\geq 1$.  There will be some smallest $k\geq 1$ where either $u_{k-1}\lessdot u_{k}$ but $u_{k}\not< u_{k+1}$
or $u_k\in [\![ v]\!]$ but $u_{k+1}\notin [\![ v]\!]$. We claim that $v=u_k$. This will prove the lemma since equivalence classes have unique maximal and minimal elements~\cite{BM} (Theorem 4.14). 

It will suffice to prove that $u_k$ is a maximal element of its equivalence class. By Lemma~\ref{lem:ab-cut} a permutation $w\in\fS_N$ is a maximal element of its equivalence class if whenever $w(i)<w(i+1)$ we have that $\{w(i+2),\ldots, w(N)\}$ is not a $w(i)w(i+1)$-cut set in $G$. By our assumptions we know that $u$ satisfies this condition and we will prove that $u_k$ satisfies this condition. 

First let us set up some notation to make the rest of the proof easier. Let $u=XYabZ$ be the breakdown of $u$ into  subwords where $a=u(t)$, $b=u(t+1)$, $X=u(1)u(2)\ldots u(t-k)$, $Y=u(t-k+1)\ldots u(t-1)$, and $Z=u(t+2)u(t+3)\ldots u(N)$. This way $u_k=XbYaZ$. Note that by how we chose $k$ that $b>Y(i)$ for all $i$ and because  $u\lessdot u_1$ we know that $b>a$.  

We will how show that $u_k$ is a maximal element by considering every  case where  $u_k(i)<u_k(i+1)$ for  $i\in[N-1]$. 
We will conclude in all cases that $\{u_k(i+2),\ldots, u_k(N)\}$ is not a $u_k(i)u_k(i+1)$-cut set. 
First consider the case where $u_k(i)u_k(i+1)$ is part of $X$ or $Z$. 
Since $u$ is a maximal element we know that $\{u(i+2),\ldots, u(N)\}=\{u_k(i+2),\ldots, u_k(N)\}$ is not a $u(i)u(i+1)$-cut set, equivalently $u_k(i)u_k(i+1)$-cut set since $u(i)=u_k(i)$ and $u(i+1)=u_k(i+1)$ in this case. 
Next consider when $u_k(i)u_k(i+1)$ is part of $Y$. Suppose that $u_k(i)=Y(j)$. Since $u$ is a maximal element we know that $S=\{Y(j+2),\ldots, Y(k-1),a,b,Z(1),\ldots,Z(N-t-1)\}$ is not a $Y(j)Y(j+1)$-cut set. We have our conclusion if $S-\{b\}$ is also not a $Y(j)Y(j+1)$-cut set. This is true because $G$ is filled and $b>Y(j),Y(j+1)$. 

The next case is if $u_k(i+1)=b$. We have our conclusion by how we chose $k$. Now consider if $u_k(i)=b$ so $u_k(i+1)=a$ if $k=1$ or $u_k(i+1)=Y(1)$ if $k>1$. In either case $b>u_k(i+1)$, so there is nothing to check. 

Next consider if $u_k(i+1)=a$ so $u_k(i)=b$ if $k=1$ and $u_k(i)=Y(k-1)$ if $k>1$. There is nothing to check in this case because $b>a$ and $b>Y(k-1)$ if $k>1$. 

Finally consider if $u_k(i)=a$, which forces $u_k(i+1)=Z(1)$. We  will suppose that the length of $Z$ is more than one  and $a<Z(1)$ since otherwise there is nothing to show.  First consider the case where $b>Z(1)$. By our assumption on $u$ we know that $\{Z(1),Z(2),\ldots, Z(N-k-1)\}$ is not an $ab$-cut set. Because $a<Z(1)<b$ as vertices of $G$ and $\{Z(1),Z(2),\ldots, Z(N-k-1)\}$ is not an $ab$-cut set we can conclude that $\{Z(2),\ldots, Z(N-t-1)\}$ is not an $aZ(1)$-cut set, which completes this case. Next instead consider when $b<Z(1)$. Because $u$ is a maximal element we know that $\{Z(2),Z(2),\ldots,Z(N-t-1)\}$ is not a $bZ(1)$-cut set, so there is a path $P$ from $b$ to $Z(1)$ not involving vertices in $\{Z(2),Z(3),\ldots,Z(N-t-1)\}$. Because $u$ is a maximal element we know that $\{Z(1),Z(2),\ldots,Z(N-t-1)\}$ is not a $ab$-cut set, so there is a path $Q$ from $a$ to $b$ not involving vertices in $\{Z(1),Z(2),\ldots,Z(N-t-1)\}$. If we concatenate the paths $P$ and $Q$ we get a path from $a$ to $Z(1)$ not involving vertices in $\{Z(2),Z(3),\ldots,Z(N-t-1)\}$, which is exactly what we want. Hence, $u_k$ must be a maximal element and since maximal elements are unique in their equivalence classes in $\tp{G}$ we are done.
\end{proof}


Using Proposition~\ref{prop:chains to cycle seq} we can define the objects that will take the place of reduced words in our generalization of Stanley's symmetric function. 
Let $\MR{G}$ be the collection of sequences of cycles associated to the maximal chains of $\tp{G}$. 
This is a generalization of $\cR(\longestk{N})$ since $\MR{K_N}$ and $\cR(\longestk{N})$ are essentially equivalent. Given a $\gamma\in \MR{G}$ we define the {\it length} of $\gamma=(a_1,b_1,b_1-1,\ldots,a_1+1)(a_2,b_2,b_2-1,\ldots,a_2+1)\ldots(a_\ell,b_\ell,b_\ell-1,\ldots,a_\ell+1)$ to be $|\gamma|=\ell$ and the {\it descent set} of $\gamma$ to be $$\Des(\gamma)=\{i:b_i\geq b_{i+1}\}.$$ 
The generalization of Stanley's symmetric function for a connected filled graph $G$ is 
$$\cF_G=\sum_{\gamma\in \MR{G}}Q_{\Des(\gamma),|\gamma|}.$$

\begin{example}
For the path $P_4=L_{1,3}$ we have $\MR{P_4}$ containing nine elements for the the nine maximal chains that you can see in Figure~\ref{fig:PathEquiv}. These nine elements of $\MR{P_4}$ are:
\begin{enumerate}
\item $\gamma=(1,2)(2,3)(3,4)(1,2)(2,3)(1,2)$ and $\Des(\gamma)=\{3,5\}$
\item $\gamma=(1,2)(2,3)(1,2)(3,4)(2,3)(1,2)$ and $\Des(\gamma)=\{2,4,5\}$
\item $\gamma=(1,2)(2,3)(3,4)(1,3,2)(2,3)$ and $\Des(\gamma)=\{3,4\}$
\item $\gamma=(1,3,2)(2,3)(3,4)(2,3)(1,2)$ and $\Des(\gamma)=\{1,3,4\}$
\item $\gamma=(1,2)(1,4,3,2)(3,4)(2,3)$  and $\Des(\gamma)=\{2,3\}$
\item $\gamma=(1,3,2)(2,4,3)(3,4)(1,2)$ and $\Des(\gamma)=\{2,3\}$
\item $\gamma=(1,3,2)(2,4,3)(1,2)(3,4)$ and $\Des(\gamma)=\{2\}$
\item $\gamma=(1,4,3,2)(2,3)(3,4)(2,3)$ and $\Des(\gamma)=\{1,3\}$
\item $\gamma=(1,4,3,2)(2,4,3)(3,4)$  and $\Des(\gamma)=\{1,2\}$
\end{enumerate}
Certain homogeneous portions of $\cF_{P_4}$ are positive sums of Young quasisymmetric Schur functions or positive sums of Schur symmetric functions, but the whole function is not homogeneous, not a positive sum of Schur symmetric functions and not a positive sum of Young quasisymmetric Schur functions. Specifically the portion that has homogeneous degree five is not positive sum of  Young quasisymmetric Schur functions. 
\begin{align*}
\cF_{P_4}&=Q_{(3,2,1)}
+Q_{(2,2,1,1)}
+Q_{(3,1,1)}
+Q_{(1,2,1,1)}
+2Q_{(2,1,1)}
+Q_{(2,2)}
+Q_{(1,2,1)}
+Q_{(1,1,1)}\\
&=\hat{\mathscr{S}}_{(3,2,1)}
+Q_{(3,1,1)}
+Q_{(1,2,1,1)}
+2Q_{(2,1,1)}
+Q_{(2,2)}
+Q_{(1,2,1)}
+s_{(1,1,1)}
\end{align*}
\end{example}

We  already know by Theorem~\ref{thm:SMC bijection} that the  elements of  $\MR{L_{m,n}}$ associated to shortest maximal chains are in bijection with reduced words of $w_{m,n}=[m+n,m-1,m-2,\ldots, 1, m, m+1, \ldots, m+n-1]$. 
Let us call this set $\SMR{L_{m,n}}$  for easy reference. We now will explicitly describe this bijection $\varphi:\cR(w_{m,n})\rightarrow \SMR{L_{m,n}}$. Our goal is to show that $\Des(\sigma)=\Des(\varphi(\sigma))$ for any $\sigma\in\cR(w_{m,n})$. As a result, the portion of $\cF_{L_{m,n}}$ associated to shortest maximal chains will be Schur positive. 

The explicit description of the bijection
$$\psi:\cR(w_{m,n})\rightarrow \SMR{L_{m,n}}$$
is as follows. In Section~\ref{sec:SMC}, we define the bijection $\varsigma:\SMB{L_{m,n}}\rightarrow \cR(w_{m,n})$. In the proof of Theorem~\ref{thm:varsigma bijection} we argue from $\sigma\in\cR(w_{m,n})$ we can find $I_1$, 
the collection of indices on which the unique $m+n-1, m+n-2, \ldots, m$ appear and 
on the remaining indices $I_2$ appears a reduced word $\rho\in\cR(\longestk{m})$. 
Map $\sigma_i$ for $i\in I_1$ to $(m+n-\sigma_i, m+n-\sigma_i+1,\ldots, m+n)$. For $i\notin I_1$ there will be $l$ numbers in $I_1$ less than $i$. In this case map $\sigma_i$ to $(\sigma_i+l,\sigma_i+l+1)$. See Figure~\ref{fig:smc example} for an example. 

\begin{theorem}
Let $m\geq 1$ and $n\geq 0$. The map $\psi:\cR(w_{m,n})\rightarrow \SMR{L_{m,n}}$ is a bijection and $\Des(\sigma)=\Des(\psi(\sigma))$ for all $\sigma\in\cR(w_{m,n})$. 
\label{thm:R to SMR bijection}
\end{theorem}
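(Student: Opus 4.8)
The plan is to prove the two assertions separately: first that $\psi$ is a bijection, and then, the main content, that it preserves descent sets. For the bijection claim, I would observe that $\psi$ is built from the already-established bijection $\varsigma:\SMB{\lolli m n}\to\cR(w_{m,n})$ of Theorem~\ref{thm:varsigma bijection} together with the correspondence between $\SMB{\lolli m n}$ and shortest maximal chains (equivalently $\SMR{\lolli m n}$) recorded in Theorem~\ref{thm:SMC bijection}. Concretely, the recipe defining $\psi$ reconstructs from $\sigma\in\cR(w_{m,n})$ the index sets $I_1$ and $I_2$ (where $m+n-1,\ldots,m$ and the reduced word $\rho\in\cR(\longestk m)$ respectively live) exactly as in the proof of Theorem~\ref{thm:varsigma bijection}; since those index sets and $\rho$ are recoverable from $\sigma$, the map is injective, and surjectivity onto $\SMR{\lolli m n}$ follows because every element of $\SMR{\lolli m n}$ arises from some $B\in\SMB{\lolli m n}$ via $\varsigma^{-1}$. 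So the first half reduces to checking that $\psi$ really is the composite of known bijections, which is routine.

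The heart of the theorem is $\Des(\sigma)=\Des(\psi(\sigma))$. Recall that for a reduced word $\sigma$, a descent at $i$ means $\sigma_i>\sigma_{i+1}$, while for $\gamma=\psi(\sigma)$, written as a sequence of adjacent cycles $(a_j,b_j,b_j-1,\ldots,a_j+1)$, a descent at $i$ means $b_i\ge b_{i+1}$. Here $b_j$ is the largest element moved by the $j$th cycle. The plan is to track, position by position, how $\sigma_i$ determines $b_i$. From the definition of $\psi$: if $i\in I_1$ then $\sigma_i$ is mapped to the cycle $(m+n-\sigma_i,\ldots,m+n)$, so $b_i=m+n$; if $i\notin I_1$ then with $\ell=|\{j\in I_1: j<i\}|$ the value $\sigma_i$ is mapped to the transposition $(\sigma_i+\ell,\sigma_i+\ell+1)$, so $b_i=\sigma_i+\ell+1$. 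I would then verify the equivalence $\sigma_i>\sigma_{i+1}\iff b_i\ge b_{i+1}$ by examining the cases according to which of $i,i+1$ belong to $I_1$.

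The case analysis is where the real work lies, and I expect it to be the main obstacle. When both $i,i+1\in I_2$, the running count $\ell$ is the same for both positions, so $b_i-b_{i+1}=\sigma_i-\sigma_{i+1}$ and the equivalence is immediate (noting $\sigma_i\ne\sigma_{i+1}$ since consecutive equal letters cannot occur in a reduced word). When both $i,i+1\in I_1$, Lemma~\ref{lem:omega_column1} forces the subword on $I_1$ to be the strictly decreasing $m+n-1,m+n-2,\ldots,m$, so $\sigma_i>\sigma_{i+1}$ always holds, while $b_i=b_{i+1}=m+n$ gives $b_i\ge b_{i+1}$ always — consistent. The delicate cases are the mixed ones, $i\in I_1, i+1\in I_2$ and $i\in I_2, i+1\in I_1$: here I must compare $b_i=m+n$ against $b_{i+1}=\sigma_{i+1}+\ell+1$ (or symmetrically), using that $\rho\in\cR(\longestk m)$ has all letters at most $m-1$ so that $\sigma_j+\ell+1<m+n$ for $j\in I_2$, which will pin down whether a descent occurs. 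I would handle these by carefully bounding $b$-values for $I_2$-positions strictly below $m+n$ and checking the sign of $\sigma_i-\sigma_{i+1}$ in each configuration, appealing again to Lemma~\ref{lem:omega_column1} to control the relative order of the $I_1$-letters. Once all four cases agree, the descent-preservation follows and, combined with Stanley's Schur positivity (Theorem~\ref{thm:staley symmetric function}) applied to $w_{m,n}$, yields the advertised Schur positivity of the shortest-chain portion of $\cF_{\lolli m n}$.
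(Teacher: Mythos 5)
Your descent-preservation plan is essentially the paper's own argument: the same four-case split according to membership of $i,i+1$ in $I_1$ and $I_2$, with $b_i=m+n$ at $I_1$-positions and $b_i=\sigma_i+l+1$ at $I_2$-positions. But there is a genuine gap in the first half. You declare it ``routine'' to check that the explicit formula for $\psi$ ``really is the composite of known bijections,'' yet this is precisely where the bulk of the paper's proof lives, and it is not routine. Knowing that $\varsigma:\SMB{\lolli m n}\to\cR(w_{m,n})$ is a bijection and that $\SMB{\lolli m n}$ corresponds to shortest maximal chains does not tell you that the cycle sequence produced by the explicit recipe (long cycles $(m+n-\sigma_i,m+n,\ldots,m+n-\sigma_i+1)$ at $I_1$-positions, shifted transpositions at $I_2$-positions) is even an element of $\SMR{L_{m,n}}$, i.e.\ that each intermediate permutation is the maximal ($m$-132-avoiding, by Proposition~\ref{prop:max's and min's of L(G)}) element of its class and that consecutive ones differ by exactly these adjacent cycles. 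The paper establishes this by an induction on prefixes $\sigma^{(k)}$: it lifts $B^{(k)}=\varsigma^{-1}(\sigma)^{(k)}$ to a balanced tableau $T^{(k)}$ via the construction of Lemma~\ref{lem:shortest balanced inverse}, and tracks that the partial permutation $w^{(k)}$ always has the form (prefix $m+n,m+n-1,\ldots,m+n-l^{(k)}+1$; suffix $m+1,\ldots,m+n-l^{(k)}$; the permutation $p^{(k)}$ of $[m]$ in the middle), from which both $m$-132-avoidance and the matching of $\tau^{(k)}$ with $\gamma^{(k)}$ follow. Without some version of this commuting-diagram induction, neither well-definedness nor surjectivity of $\psi$ is proved; injectivity alone (recovering $I_1$, $I_2$, $\rho$ from $\sigma$) does not suffice.

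There is also a concrete error in your descent sketch: the claim that $b$-values at $I_2$-positions are \emph{strictly} below $m+n$ is false. If all $n$ elements of $I_1$ precede $j\in I_2$ and $\sigma_j=m-1$, then $b_j=\sigma_j+n+1=m+n$; this occurs already in the example of Figure~\ref{fig:smc example}, where $\sigma=14321$ gives the fourth cycle $(4,5)$ with $b_4=5=m+n$. The case analysis survives only because descents of $\gamma$ are defined by the weak inequality $b_i\geq b_{i+1}$: in the configuration $i\in I_1$, $i+1\in I_2$ the possible equality $b_i=b_{i+1}=m+n$ still correctly records a descent (matching $\sigma_i\geq m>\sigma_{i+1}$), while in the configuration $i\in I_2$, $i+1\in I_1$ the needed strict inequality $b_i<m+n$ follows not from $\sigma_i\leq m-1$ alone but from $l^{(i)}\leq n-1$, which holds because $i+1\in I_1$ itself lies beyond $i$. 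Finally, a small misattribution: the fact that $\sigma$ restricted to $I_1$ is the decreasing word $m+n-1,\ldots,m$ is by the construction of $\varsigma$ (definition of $I_1$), not a consequence of Lemma~\ref{lem:omega_column1}, which concerns column one of balanced tableaux for $\longestk{m}$ and is used elsewhere (e.g.\ in the closure arguments of Theorem~\ref{thm:varsigma bijection}).
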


\begin{proof}
Consider the diagram on the left in Figure~\ref{fig:smc example}. Let $\sigma\in\cR(w_{m,n})$, $\gamma=\psi(\sigma)$, $B=\varsigma^{-1}(\sigma)$, $T$ be the balanced tableau of shape $\stair_{m+n}$ described as in Lemma~\ref{lem:shortest balanced inverse} associated to $B$ (we will call this the {\it lift} of $B$)   and $\tau$ be the reduced word of $\longestk{m+n}$ associated to $T$. 
We will show that the diagram commutes by considering what happens each time we add in a transposition in the reduced word $\sigma\in\cR(w_{m,n})$. Let $B^{(k)}$ be $B$ restricted to the numbers in $[k]$, $\sigma^{(k)}=\sigma_1\sigma_2\cdots \sigma_k$, $T^{(k)}$ be the associated lift of $B^{(k)}$ and $\gamma^{(k)}=\gamma_1\gamma_2\cdots \gamma_k$. Let $\tau^{(k)}$ be the portion of the reduced word of $\tau$ associated to $T^{(k)}$. We will show that the diagram commutes, thus showing that $\psi$ is well defined by showing two things. First that $\tau^{(k)}$ and $\gamma^{(k)}$ are essentially the same reduced word when we replace each cycle of $\tau^{(k)}$ with its associated unique reduced word. Second, that the permutation $w^{(k)}$ equal to $\gamma^{(k)}$ is a $m$-132 avoiding permutation, thus showing that $\gamma^{(k)}$ is indeed an element of $\MR{L_{m,n}}$ by Proposition~\ref{prop:max's and min's of L(G)}. 
We will do this by inducting on $k$. 

Let $\sigma\in\cR(w_{m,n})$. In this proof  we will need to consider the structure of $\sigma$, so we will first make some observations. Let $I_1$, $I_2$ and $\rho$ be defined from $\sigma$ as they are defined in $\varsigma$ in Section~\ref{sec:SMC}. Consider how $\sigma$  turns the identity into $w_{m,n}$. 
The portion of $\sigma$ associated to $I_1$ turns $m,m+1,\ldots, m+n$ in the identity into $m+n, m,m+1,\ldots, m+n-1$. 
The portion of $\sigma$ associated to $I_2$ turns $1,2,\ldots, m-1, m+n$ in the identity into $m+n, m-1,m-2,\ldots, 1$. 
Let $s^{(k)}$ be the permutation associated to 
$\sigma^{(k)}$, $\rho^{(k)}$ be the portion of $\rho$ in  $\sigma^{(k)}$ and $p^{(k)}$ be the associated permutation of $[m]$. 
By our observation, $p^{(k)}$ is equal to  $s^{(k)}$ if we restrict $s^{(k)}$ to $\{1,2,\ldots, m-1, m+n\}$ and replace the $m+n$ with $m$. 
Also note that in the indices $[m]$ of $s^{(k)}$ we either have the numbers  $\{1,2,\ldots, m-1, m\}$ with $m$ at index $m$ or the numbers $\{1,2,\ldots, m-1, m+n\}$. 
This makes $s^{(k)}$ equal to $p^{(k)}$ in the first $m$ indices when $s^{(k)}(m)=m$ or $s^{(k)}$ equal to $p^{(k)}$ in the first $m$ indices if we replace $m+n$ with $m$ when $s^{(k)}(m)\neq m$. 
This means we can essentially work with the first $m$ numbers of $s^{(k)}$ and $p^{(k)}$  interchangeably. 

Finally, we include two additional inductive assumptions. Let $l^{(k)}$ be the number of indices in $I_1$ at most $k$. We will show that $w^{(k)}$ starts with $m+n,m+n-1,\ldots, m+n-l^{(k)}+1$, ends with $m+1,m+2,\ldots, m+n-l^{(k)}$ and in the middle there is $p^{(k)}$. From this structure we automatically have that $w^{(k)}$ is $m$-132 avoiding.  

The base case is when $k=0$. The tableaux $B^{(0)}$ and $T^{(0)}$ are empty and are naturally associated to the empty words $\sigma^{(0)}$ and $\gamma^{(0)}$. The diagram commutes for $k=0$. Also $w^{(0)}=12\cdots (m+n)$ is the identity permutation and is $m$-132 avoiding. Also $l^{(0)}=0$, $p^{(0)}=[1,2,\ldots, m-1]$ and certainly $w^{(0)}$ is starting with the empty word, ending in $m+1,m+2, \ldots, m+n$ and in the middle we have $p^{(0)}$. 

Now suppose that $k> 0$. Suppose the diagram commutes for $k-1$ and  $w^{(k-1)}$ starts with $m+n,m+n-1,\ldots, m+n-l^{(k-1)}+1$, ends with $m+1,m+2,\ldots m+n-l^{(k-1)}$ and in the middle has $p^{(k-1)}$. 

The first case is when $k\in I_1$.
According to our definition of $\psi$, we have that $\gamma^{(k)}=
\gamma^{(k-1)}(m+n-\sigma_k,m+n, m+n-1,\ldots,m+n-\sigma_k+1)$
If we can show that that the reduced word $\tau^{(k)}=\tau^{(k-1)}(m+n-1)(m+n-2)\cdots (m+n-\sigma_k)$, then the diagram commutes. 
We know by assumption that $l^{(k)}=l^{(k-1)}+1$ and $p^{(k)}=p^{(k-1)}$, so  $w^{(k-1)}$
starts with $m+n,m+n-1,\ldots, m+n-l^{(k)}+2$, ends with $m+1,m+2,\ldots, m+n-l^{(k)}+1$ and has $p^{(k)}$ in the middle. Suppose that $k$ in $B$ is in cell $(r,c)$.  
Let us lift $B^{(k)}$ to make  $T^{(k)}$ as implied by how we would like $B$ to make $T$ as described in Lemma~\ref{lem:shortest balanced inverse}. We know that column $c$ of $T^{(k)}$ is filled with the $r$ highest numbers in $T^{(k)}$. 
This means when going from $w^{(k-1)}$ to $w^{(k)}$ we don't have an inversion $(i,r+1)$ in $w^{(k-1)}$ but we will  have an inversion  $(i,r+1)$ in $w^{(k)}$ for all $i\leq r$. This means we are bringing $r+1$, which is right of all $i\in [r]$ in $w^{(k-1)}$, to the left of all $i\in [r]$ to form $w^{(k)}$. 
Note that $c=l^{(k)}$ and $c=m+n-r$. We know that $w^{(k-1)}$
starts with $m+n,m+n-1,\ldots, r+2$, ends with $m+1,m+2,\ldots, r+1$ and has $p^{(k)}$ in the middle. To form $w^{(k)}$ we move the $r+1$ at index $m+n$ left until it moves past all $i\in [r]$ (i.e. $p^{(k)}$) , which are in indices $[a^{(k)},m+a^{(k)}]$ in $w^{(k-1)}$. 
This is describing  concatenating the reduced word $(m+n-1)(m+n-2)\cdots r$ to the end of $\tau^{(k)}$, 
which is the cycle 
$(r,m+n,m+n-1,\ldots, r+1)=(m+n-\sigma_k,m+n,m+n-1,\ldots, m+n-\sigma_k+1)$ since $r=m+n-\sigma_k$. It follows that $w^{(k)}$ will start with $m+n,m+n-1,\ldots, m+n-l^{(k)}+1$, end with $m+1,m+2,\ldots, m+n-l^{(k)}$ and will still have $p^{(k)}$ in the middle. Thus this case is complete. 

The second case is if $k\notin I_1$.
According to our definition of $\psi$, we have that $\gamma^{(k)}=\gamma^{(k-1)}(\sigma_k+l^{(k)},\sigma_k+l^{(k)}+1)$. If we can show that that the reduced word $\tau^{(k)}=\tau^{(k-1)}(\sigma_k+l^{(k)})$, then the diagram commutes. 
We know by our assumptions that $l^{(k)}=l^{(k-1)}$ and  $p^{(k)}=p^{(k-1)}\sigma_k$.
We also know that $w^{(k-1)}$
starts with $m+n,m+n-1,\ldots, m+n-l^{(k)}+1$, ends with $m+1,m+2,\ldots, m+n-l^{(k)}$ and has $p^{(k-1)}$ in the middle. 
Suppose that $k$ in $B$ is in cell $(r,c)$. We know that $r\leq m-1$ and $c\geq n+1$.
Suppose that the lift of $B^{(k)}$ is $T^{(k)}$ and the lift of $B^{(k-1)}$ is $T^{(k-1)}$ just like we lift $B$ to form $T$. The only difference between $B^{(k-1)}$ and $B^{(k)}$ is a $k$ in some cell $(r,c)$. 
This means the only difference between $T^{(k-1)}$ and $T^{(k)}$ is some $j$ in  cell $(r,c)$. 
This means in $p^{(k-1)}$ we don't have the inversion $(r,m+n-c+1)$ but in $p^{(k-1)}$ we have the inversion $(r,m+n-c+1)$. 
Further, this inversion is in the indices  $\sigma_k,\sigma_{k}+1$. To form  $w^{(k)}$ from $w^{(k-1)}$ we will also be forming the inversion $(r,m+n-c+1)$. 
According to our inductive assumptions since $r,m+n-c+1$ occur at indices $\sigma_k,\sigma_{k}+1$ in $p^{(k-1)}$ we know that $r,m+n-c+1$ occur at indices $\sigma_k+l^{(k)},\sigma_{k}l^{(k)}+1$ in $w^{(k-1)}$. 
This means that $\tau^{(k)}=\tau^{(k-1)}(\sigma_k+l^{(k)})$ as reduced words. This matches $\gamma^{(k)}=\gamma^{(k-1)}(\sigma_k+l^{(k)},\sigma_k+l^{(k)}+1)$ as the map $\psi$ described. It is easy to also see that $w^{(k)}$ starts with $m+n,m+n-1,\ldots, m+n-l^{(k)}+1$, ends with $m+1,m+2,\ldots, m+n-l^{(k)}$ and has $p^{(k)}$ in the middle, which completes out inductive argument. 

Finally, we will show that $\Des(\sigma)=\Des(\psi(\sigma))$ for all $\sigma\in\cR(w_{m,n})$. Suppose that $\sigma\in\cR(w_{m,n})$ and $\gamma = \psi(\sigma)$ with $\gamma=(a_1,b_1,b_1-1,\ldots,a_1+1)(a_2,b_2,b_2-1,\ldots,a_2+1)\ldots(a_\ell,b_\ell,b_{\ell}-1,\ldots,a_\ell+1)$. We will show that $i\in\Des(\sigma)$ if and only if  $i\in\Des(\gamma)$ for all $i$.

Let $I_1$, $I_2$ and $\rho\in\cR(\longestk{m})$ be defined from $\sigma$ as is done in the proof of Theorem~\ref{thm:varsigma bijection}. Let $l^{(i)}$ equal the number of elements in $I_1$ less than $i$. 

Our first case is when $i,i+1\in I_1$, then $\sigma_i\sigma_{i+1}$ is part of the decreasing subword $m+n-1, m+n-2, \ldots, m$, so $i\in\Des(\sigma)$. By the definition of $\psi$ we have that $\gamma_i\gamma_{i+1}=
(m+n-\sigma_i,m+n,m+n-1,\ldots, m+n-\sigma_i+1)
(m+n-\sigma_{i+1}, m+n,m+n-1,\ldots, m+n-\sigma_{i+1}+1)$. This means $b_i=b_{i+1}$ and $i\in \Des(\gamma)$. 

Our second case is when $i,i+1\in I_2$. This means that $l^{(i)}=l^{(i+1)}=l$ and $\sigma_i\sigma_{i+1}$ maps to $(\sigma_i+l,\sigma_i+l+1)(\sigma_{i+1}+l,\sigma_{i+1}+l+1)$ so $b_ib_{i+1}=(\sigma_i+l+1)(\sigma_{i+1}+l+1)$. Clearly $b_i>b_{i+1}$ if and only if $\sigma_i>\sigma_{i+1}$, so $i\in \Des(\sigma)$ if and only if $i\in \Des(\gamma)$. 

Our third case is when $i\in I_1$ and $i+1\in I_2$. By our study of the map $\varsigma$ we know we will have $\sigma_i\geq m$ and $\sigma_{i+1}<m$. Hence $i\in \Des(\sigma)$. 
By our map $\psi$ we have that $\gamma_i\gamma_{i+1}=
(m+n-\sigma_i, m+n,m+n-1,\ldots, m+n-\sigma_i+1)
(\sigma_{i+1}+l^{(i+1)},\sigma_{i+1}+l^{(i+1)}+1)$ so $b_ib_{i+1}=(m+n)(\sigma_{i+1}+l^{(i+1)}+1)$. Clearly we also have $i\in\Des(\gamma)$. 

Our last case is when $i\in I_2$ and $i+1\in I_1$. By our study of the map $\varsigma$ we know we will have $\sigma_i< m$ and $\sigma_{i+1}\geq m$. Hence $i\notin \Des(\sigma)$. By our map $\psi$ we have that $\gamma_i\gamma_{i+1}=
(\sigma_{i}+l^{(i)},\sigma_{i}+l^{(i)}+1)(m+n-\sigma_{i+1}, m+n,m+n-1,\ldots, m+n-\sigma_{i+1}+1)$ so $b_ib_{i+1}=(\sigma_{i}+l^{(i)}+1)(m+n)$. Clearly we also have $i\notin\Des(\gamma)$. 
\end{proof}

We summarize what we have shown in this section. 

\begin{corollary}
For $m\geq 1$ and $n\geq 0$   the quasisymmetric function $\cF_{L_{m,n}}$ 
\begin{enumerate}[(i)]
\item Has degrees ranging from $\binom{m}{2}+n$ to $\binom{m+n}{2}$. 
\item The homogeneous degree $\binom{m+n}{2}$ portion is a positive sum of Young quasisymmetric Schur functions.
\item The homogeneous degree $\binom{m}{2}+n$ portion is a positive sum of Schur symmetric functions. 
\end{enumerate}
\end{corollary}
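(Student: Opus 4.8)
The plan is to read off each of the three claims directly from results already established in the excerpt, treating the corollary as a bookkeeping summary rather than a new computation. For item (i), the degrees of $\cF_{L_{m,n}}=\sum_{\gamma\in\MR{L_{m,n}}}Q_{\Des(\gamma),|\gamma|}$ are exactly the lengths $|\gamma|$ of the maximal chains in $\tp{L_{m,n}}$. First I would observe that the longest such chains have length $\binom{m+n}{2}$ (they are the full-length maximal chains, coinciding with maximal chains in $\fS_{m+n}$, by Theorem~\ref{thm:3equiv} and Lemma~\ref{lem:m132}) and that the shortest have length $\binom{m}{2}+n$. The shortest-length value I would extract from Theorem~\ref{thm:varsigma bijection}: the shortest maximal chains are in bijection with $\cR(w_{m,n})$, and $|\gamma|=\ell(w_{m,n})=\inv(w_{m,n})$. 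A direct count of inversions of $w_{m,n}=[m+n,m-1,m-2,\ldots,1,m,m+1,\ldots,m+n-1]$ gives $\binom{m}{2}+n$: the block $m-1,m-2,\ldots,1$ contributes $\binom{m}{2}$ internal inversions, and the leading $m+n$ forms exactly $n$ inversions with the trailing ascending run $m,\ldots,m+n-1$, while all other pairs are non-inverting. This pins the two extreme degrees; every intermediate chain length lies between them, so the degrees range over $[\binom{m}{2}+n,\binom{m+n}{2}]$ as claimed.

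For item (ii), the homogeneous degree-$\binom{m+n}{2}$ part of $\cF_{L_{m,n}}$ is precisely the sum over the longest maximal chains, since those are the chains $\gamma$ with $|\gamma|=\binom{m+n}{2}$. I would note that for a maximum-length chain each cover relation is an adjacent transposition (by Lemma~\ref{lem:m132}, the chain collapses nowhere, so every adjacent cycle is a transposition), whence $Q_{\Des(\gamma),|\gamma|}$ agrees with the fundamental quasisymmetric function indexed by the descent set of the corresponding reduced word in $\LMR{L_{m,n}}$. Thus the degree-$\binom{m+n}{2}$ part equals $\LMF{L_{m,n}}$, and Theorem~\ref{thm:Longest chain quasi Schur} identifies this as $\sum_{\alpha\in\text{comp}(m,n)}\hat{\mathscr{S}}_\alpha$, a positive sum of Young quasisymmetric Schur functions. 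The only point requiring care is confirming that the descent-set conventions and the indexing $Q_{\Des(\gamma),|\gamma|}$ match those used in Section~\ref{sec:LMF}; I would check that $\Des(\gamma)$ for a transposition-only chain reduces to the ordinary descent set of the associated reduced word.

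For item (iii), the homogeneous degree-$(\binom{m}{2}+n)$ part is the sum over the shortest maximal chains, i.e.\ over $\SMR{L_{m,n}}$. Here the key input is Theorem~\ref{thm:R to SMR bijection}: the map $\psi\colon\cR(w_{m,n})\to\SMR{L_{m,n}}$ is a bijection that preserves descent sets, $\Des(\sigma)=\Des(\psi(\sigma))$. Consequently the degree-$(\binom{m}{2}+n)$ part of $\cF_{L_{m,n}}$ equals $\sum_{\gamma\in\SMR{L_{m,n}}}Q_{\Des(\gamma),\,\binom{m}{2}+n}=\sum_{\sigma\in\cR(w_{m,n})}Q_{\Des(\sigma),\,\ell(w_{m,n})}=\cF_{w_{m,n}}$, Stanley's symmetric function for the permutation $w_{m,n}$. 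By Theorem~\ref{thm:staley symmetric function} (Stanley), $\cF_{w_{m,n}}$ is Schur positive, which is exactly the assertion.

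The main obstacle I anticipate is not any of the three positivity statements themselves—each is essentially a citation to an already-proved theorem—but rather the reconciliation of conventions at the two boundaries. Specifically, I must verify that the generalized descent set $\Des(\gamma)=\{i:b_i\ge b_{i+1}\}$ for sequences of adjacent cycles specializes correctly to the ordinary reduced-word descent set in the maximum-length case (so that item (ii) genuinely matches $\LMF{L_{m,n}}$) and that the degree grading of $\cF_{L_{m,n}}$ is by $|\gamma|=\ell$, the number of cycles, so that the two extreme homogeneous components are exactly the longest and shortest chain contributions. Once these identifications are made explicit, the three parts follow immediately from Theorems~\ref{thm:Longest chain quasi Schur}, \ref{thm:R to SMR bijection}, and \ref{thm:staley symmetric function}, together with the inversion count of $w_{m,n}$.
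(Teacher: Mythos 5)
Your proposal is correct and takes essentially the same route as the paper, which likewise proves (ii) and (iii) by citing Theorems~\ref{thm:staley symmetric function}, \ref{thm:Longest chain quasi Schur} and~\ref{thm:SMC bijection} (your explicit appeal to the descent-preserving bijection of Theorem~\ref{thm:R to SMR bijection}, and your check that $\Des(\gamma)$ specializes to the reduced-word descent set on transposition-only chains, make precise exactly what the paper's citation implicitly uses for (iii)), and obtains (i) from the lengths of the shortest and longest maximal chains. One small bookkeeping slip in your inversion count of $w_{m,n}$: the decreasing block $m-1,m-2,\ldots,1$ contributes $\binom{m-1}{2}$ internal inversions, not $\binom{m}{2}$, and the leading $m+n$ forms $m-1$ further inversions with that block (so not all other pairs are non-inverting); the two errors cancel since $\binom{m-1}{2}+(m-1)=\binom{m}{2}$, and your total $\binom{m}{2}+n$ agrees with the length the paper computes in the proof of Theorem~\ref{thm:varsigma bijection}.
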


\begin{proof}
Parts {\it(ii)} and {\it(iii)} follow from Theorem~\ref{thm:staley symmetric function}, Theorem~\ref{thm:Longest chain quasi Schur} and Theorem~\ref{thm:SMC bijection}. Part {\it(i)} is an observation on range of degree resulting from the degrees we see from shortest maximal chains in Theorem~\ref{thm:SMC bijection}, which is the length of $w_{m,n}$, and longest maximal chains in Theorem~\ref{thm:Longest chain quasi Schur}, which is the length of $\longestk{m+n}$.
\end{proof}

\printbibliography

\end{document}